\documentclass[reqno,11pt]{amsart}

\usepackage[top=2.0cm,bottom=2.0cm,left=3cm,right=3cm]{geometry}
\usepackage{amsthm,amsmath,amssymb,dsfont}
\usepackage{mathrsfs,amsfonts,functan,extarrows,mathtools}
\usepackage[colorlinks]{hyperref}

\usepackage{marginnote}
\usepackage{xcolor}
\usepackage{stmaryrd}
\usepackage{esint}
\usepackage{graphicx}
\usepackage{bbm}
\usepackage{wasysym}
\usepackage{textcomp}
\usepackage{float}
\usepackage{wasysym}

\usepackage{tikz}
\usetikzlibrary{calc,decorations.pathreplacing}


\newtheorem{theorem}{Theorem}[section]

\newtheorem{remark}{Remark}[section]
\newtheorem{lemma}{Lemma}[section]

\numberwithin{equation}{section}
\allowdisplaybreaks

\arraycolsep=1.5pt



\def\d{\mathrm{d}}
\def\no{\nonumber}
\def\R{\mathbb{R}}
\def\eps{\varepsilon}

\def\u{\mathbf{u}}

\def\exp{\mathrm{exp}}
\def\M{\mathfrak{M}}

\def\B{\mathcal{B}}

\def\u{\mathfrak{u}}
\def\L{\mathcal{L}}

\def\P{\mathcal{P}}

\def\LL{[\![}
\def\RR{]\!]}


\newcounter{wronumber}\setcounter{wronumber}{1}




\begin{document}
\title[Knudsen layer equation]
			{Knudsen boundary layer equations with incoming boundary condition: full range of cutoff collision kernels and Mach numbers of the far field}

\author[Ning Jiang]{Ning Jiang}
\address[Ning Jiang]{\newline School of Mathematics and Statistics, Wuhan University, Wuhan, 430072, P. R. China}
\email{njiang@whu.edu.cn}

\author[Yi-Long Luo]{Yi-Long Luo}
\address[Yi-Long Luo]
{\newline School of Mathematics, South China University of Technology, Guangzhou, 510641, P. R. China}
\email{luoylmath@scut.edu.cn}

\author[Yulong Wu]{Yulong Wu}
\address[Yulong Wu]{\newline School of Mathematics and Statistics, Wuhan University, Wuhan, 430072, P. R. China}
\email{yulong\string_wu@whu.edu.cn}
\author[Tong Yang]{Tong Yang}
\address[Tong Yang]
{\newline Institute for Math \string& AI, Wuhan University, Wuhan, 430072, P. R. China}
\email{tongyang@whu.edu.cn}


\maketitle

\begin{abstract}
   This paper establishes the existence and uniqueness of the nonlinear Knudsen layer equation with incoming boundary conditions. It is well-known that the solvability conditions of the problem vary with the Mach number of the far Maxwellian $\mathcal{M}^\infty$. We consider full ranges of cutoff collision kernels (i.e., $- 3 < \gamma \leq 1$) and all the Mach numbers of the far field in the $L^\infty_{x,v}$ framework.  Additionally, the solution exhibits exponential decay $\exp \{- c x^\frac{2}{3 - \gamma} - c |v|^2 \}$ for some $c > 0$. To address the general angular cutoff collision kernel, we introduce a $(x,v)$-mixed weight $\sigma$. The proof is essentially based on adding an artificial damping term. \\

   \noindent\textsc{Keywords.} Knudsen layer equation, incoming boundary condition, exponential decay, existence, uniqueness  \\

   \noindent\textsc{MSC2020.}  35Q20, 76P05, 35F30, 35B45, 35A01, 35A02
\end{abstract}

%
%


\section{Introduction and main results}

\subsection{The description of the problem}

In the studies of the hydrodynamic limits of the scaled Boltzmann equation in the domains with boundary, a (linear or nonlinear) kinetic boundary layer equation, known as the Knudsen layer equation usually will be generated, see \cite{Aoki-2017-JSP,Guo-Huang-Wang-2021-ARMA,JLT-2021-arXiv,JLT-2021-arXiv-2,Sone-Book-2002,Sone-Book-2007} for instance. In this paper, we consider the nonlinear Knudsen layer equation over $(x,v) \in \R_+ \times \R^3$ with the incoming (sometimes called absorbing) boundary condition at $x = 0$:
\begin{equation}\label{KL-NL}
	\begin{aligned}
		\left\{
		    \begin{aligned}
		    	& v_3 \partial_x F = \mathcal{B} (F, F) + H \,, \ x > 0, v \in \R^3 \,, \\
		    	& F (0, v) |_{v_3 > 0 } =  F_b (v) \,, \\
		    	& \lim_{x \to + \infty} F (x,v) = \M (v) \,,
	    	\end{aligned}
		\right.
	\end{aligned}
\end{equation}
where $H = H(x,v) \in \mathrm{Null}^\perp (\L)$, with the linearized Boltzmann collision operator $\L$ defined in \eqref{Lf}, and $F_b (v) = 0$ if $v_3 < 0$, and $\M$ denotes the global Maxwellian in the far field, defined as
\begin{equation*}
	\begin{aligned}
		\M (v) = \frac{\rho}{(2 \pi T)^{3/2}} \exp \big[ - \tfrac{|v - \u|^2}{2 T} \big] \,.
	\end{aligned}
\end{equation*}
Here $\rho, T > 0$ and $\u = (\u_1, \u_2, \u_3) \in \R^3$ are given constants representing the macroscopic density, temperature, and velocity, respectively. By a shift of the variables $v_2,v_3$, we can assume without loss of generality that $\u_1=\u_2 = 0$. Then, the sound speed and Mach number of this equilibrium state are given by
\begin{equation}
	c=\sqrt{\frac{5}{3}T} \,,\quad \mathcal{M}^\infty=\frac{\u_3}{c}\,.
\end{equation}
In this paper, we consider all the ranges of the Mach number of the far Maxwellian, i.e. $\mathcal{M}^\infty \in \mathbb{R}$.

The Boltzmann collision operator $\mathcal{B} (F, F)$ is defined as
\begin{equation}
	\begin{aligned}
		\mathcal{B} (F, F) = \iint_{\mathbb{R}^3 \times \mathbb{S}^2} (F' F_*' - F F_*) b (\omega, v_* - v) \d \omega \d v_* \,.
	\end{aligned}
\end{equation}
Here $\omega \in \mathbb{S}^2$ is a unit vector, and $\d \omega$ represents the rotationally invariant surface integral on $\mathbb{S}^2$. The terms $F_*'$, $F'$, $F_*$ and $F$ denote the number density $F(\cdot)$ evaluated at the velocities $v_*'$, $v'$, $v_*$ and $v$ respectively, i.e.,
\begin{equation*}
	\begin{aligned}
		F_*' = F(v_*') \,, \ F' = F(v') \,, \ F_* = F(v_*) \,, \ F = F(v) \,.
	\end{aligned}
\end{equation*}
Here $(v_*', v')$ are the velocities after an elastic binary collision between two molecules with initial velocities $(v_*, v)$, and vice versa. Since both momentum and energy are conserved during the elastic collision, $v_*'$ and $v'$ can be expressed in terms of $v_*$ and $v$ as
\begin{equation}
	\begin{aligned}
		v_*' = v_* - [ (v_* - v) \cdot \omega ] \omega \,, \ v' = v + [ (v_* - v) \cdot \omega ] \omega \,,
	\end{aligned}
\end{equation}
where the unit vector $\omega \in \mathbb{S}^2$ is parallel to the deflections $v_*' - v_*$ and $v'-v$, and is therefore perpendicular to the plane of reflection. In the collision term $\mathcal{B} (F, F)$, $F_*'F'$ is the gain term, while $F_* F$ is referred to the loss term.

According to Grad's work \cite{Grad-1963}, the collision kernel $b (\omega, v_* - v)$ can be expressed in the factored form
\begin{equation}\label{b}
	\begin{aligned}
		b (\omega, v_* - v) = \tilde{b} (\theta) |v_* - v|^\gamma \,, \ \cos \theta = \frac{(v_* - v) \cdot \omega}{|v_* - v|} \,, \ -3 < \gamma \leq 1 \,,
	\end{aligned}
\end{equation}
where $\tilde{b} (\theta)$ satisfies the small deflection cutoff condition
\begin{equation}\label{Cutoff}
	\begin{aligned}
		\int_{\mathbb{S}^2} \tilde{b} (\theta) \d \omega = 1 \,, \ 0 \leq \tilde{b} (\theta) \leq \tilde{b}_0 |\cos \theta|
	\end{aligned}
\end{equation}
for some constant $\tilde{b}_0 > 0$. The cases $-3 < \gamma < 0$ and $0 \leq \gamma \leq 1$ are respectively referred to as the ``soft" and ``hard" potential cases. In particular, $\gamma = 0$ is the Maxwell potential case, and $\gamma = 1$ is the hard sphere case, in which $\tilde{b} (\theta) = \tilde{b}_0 |\cos \theta|$.

Considering the perturbation $F = \M + \sqrt{\M} f$ around the far Maxwellian $\M$, and neglecting the nonlinear term, we obtain the linearized Knudsen layer problem with a source term:
\begin{equation}\tag{KL}\label{KL}
	\left\{
	  \begin{aligned}
	  	& v_3 \partial_x f + \L f = S \,, \\
	  	& f (0,v) |_{v_3 > 0} = f_b ( v) \,, \\
	  	& \lim_{x \to + \infty} f (x,v) = 0 \,,
	  \end{aligned}
	\right.
\end{equation}
where $S = S(x,v) \in \mathrm{Null}^\perp (\L)$ and $f_b (v) = 0$ if $v_3 < 0$. The symbol $\L$ denotes the linearized Boltzmann collision operator $\B (F, F)$ around the far Maxwellian $\M$, defined as
\begin{equation}\label{Lf}
	\begin{aligned}
		\L f = & - \M^{- \frac{1}{2}} (v) \big[ \mathcal{B} (\M, \sqrt{\M} f) + \mathcal{B} ( \sqrt{\M} f , \M ) \big] \\
		= & \nu (v) f (v) - K f (v) \,,
	\end{aligned}
\end{equation}
where $\nu (v)$ denotes the collision frequency, given by
\begin{equation}\label{nu}
	\begin{aligned}
		\nu (v) = \iint_{\mathbb{R}^3 \times \mathbb{S}^2} \M (v_*) b (\omega, v_* - v) \d \omega \d v_* \overset{\eqref{Cutoff}}{=} \int_{\R^3} |v_* - v|^\gamma \M (v_*) \d v_* \,,
	\end{aligned}
\end{equation}
and the operator $K f (v)$ can be decomposed into two parts:
\begin{equation}
	\begin{aligned}\label{K-K1-K2}
		K f (v) = - K_1 f (v) + K_2 f (v) \,.
	\end{aligned}
\end{equation}
Here the loss term $K_1 f (v)$ is
\begin{equation}\label{K1}
	\begin{aligned}
		K_1 f(v) = \M^\frac{1}{2} (v) \iint_{\mathbb{R}^3 \times \mathbb{S}^2} f (v_*) \M^\frac{1}{2} (v_*) b (\omega, v_* - v) \d \omega \d v_* \,,
	\end{aligned}
\end{equation}
and the gain term $K_2 f (v)$ is
\begin{equation}\label{K2}
	\begin{aligned}
		K_2 f(v) = & \M^\frac{1}{2} (v) \iint_{\mathbb{R}^3 \times \mathbb{S}^2} \big[ \M^{- \frac{1}{2}} (v') f (v') + \M^{- \frac{1}{2}} (v_*') f(v_*') \big] \M (v_*) b (\omega, v_* - v) \d \omega \d v_* \\
		= & \iint_{\mathbb{R}^3 \times \mathbb{S}^2} \big[ \M^{\frac{1}{2}} (v'_*) f (v') + \M^{\frac{1}{2}} (v') f(v_*') \big] \M^\frac{1}{2} (v_*) b (\omega, v_* - v) \d \omega \d v_* \,,
	\end{aligned}
\end{equation}
where the last equality is derived from the collisional invariant $\M (v) \M (v_*) = \M (v') \M (v_*')$.

To deal with the general Mach number,  we define the following linearized entropy flux as in \cite{Coron-Golse-Sulem-1988-CPAM}:
\begin{equation*}
	 P(f,g)=\int_{\R ^3}v_3 fg \d v\,.
\end{equation*}
The null space $\mathrm{Null} (\L)$ of the operator $\L$ is spanned by the basis (Coron-Golse-Sulem \cite{Coron-Golse-Sulem-1988-CPAM})
\begin{equation}\label{psi-i-bases}
	\begin{aligned}
		\psi_0 = \xi_1 \sqrt{\M} \,, \  \psi_1 = \xi_2 \sqrt{\M} \,, \ \psi_2 = \Big( \sqrt{ \tfrac{ 5}{ 2}} - \tfrac{ |\xi|^2 }{\sqrt{10}}\Big) \sqrt{\M} \,, \\
		\psi_3 = \Big(  \tfrac{ \xi_3 }{ \sqrt{2}} + \tfrac{ |\xi|^2 }{\sqrt{30}}\Big) \sqrt{\M} \,, \
		\psi_4 = \Big(  -\tfrac{ \xi_3 }{ \sqrt{2}} + \tfrac{ |\xi|^2 }{\sqrt{30}}\Big) \sqrt{\M} \,.
	\end{aligned}
\end{equation}
where $\xi=\frac{v- \u }{ \sqrt{T}}$. In fact, the basis is chosen to be orthogonal in the Hilbert space $L^2 : = L^2 (\R^3)$ and the entropy flux $P$. Specifically,
\begin{equation}\label{psi-P}
	\begin{aligned}
		\int_{\R^3} \psi_i \psi_j \d v = P(\psi_i, \psi_j) =0\,, \quad  i\neq j \,.
	\end{aligned}
\end{equation}
It is well-known (see \cite{Coron-Golse-Sulem-1988-CPAM,Ukai-Yang-Yu-2003-CMP,Chen-Liu-Yang-2004-AA,Yang-2008-JMAA}) that the solvability of the Knudsen layer problem varies with the Mach number of the far Maxwellian, which is related to the signature of the entropy flux. It is important to note that
\begin{equation}
	\begin{aligned}
		P(\psi_i,\psi_i) &= \rho c \mathcal{M}^\infty\,, \quad i=0,1,2\,,\\
		P(\psi_3,\psi_3) &= \rho c \Big(\mathcal{M}^\infty + 1\Big)\,,\\
		P(\psi_4,\psi_4) &= \rho c \Big(\mathcal{M}^\infty - 1\Big)\,.
	\end{aligned}
\end{equation}
As we will see below, the number of solvability conditions $n^+$ of the Knudsen layer problem (both linear and nonlinear) changes with the Mach number $\mathcal{M}^\infty$ as follows:
\begin{equation}
		n^+=\# \{ I^+\cup I^0 \}=\left\{\begin{array}{l}
			0\,, \quad \mathcal{M}^{\infty}<-1\,, \\
			1\,, \quad-1 \leq \mathcal{M}^{\infty}<0\,, \\
			4\,, \quad 0 \leq \mathcal{M}^{\infty}<1\,, \\
			5\,, \quad 1 \leq \mathcal{M}^{\infty}\,,
			\end{array}\right.
\end{equation}
where the sets $I^+$ and $I^0$ are defined in \eqref{I+0}. Let $\mathrm{Null}^\perp (\L)$ be the orthogonal space of $\mathrm{Null} (\L)$ in $L^2$, namely,
\begin{equation*}
	\begin{aligned}
		\mathrm{Null} (\L) \oplus \mathrm{Null}^\perp (\L) = L^2 \,.
	\end{aligned}
\end{equation*}
We define the projection $\P : L^2 \to \mathrm{Null} (\L)$ by
\begin{equation}\label{Projt-P}
	\begin{aligned}
		\P f = \sum_{i = 0}^4 a_i (f) \psi_i \,, \quad a_i (f) = \frac{ \int_{\R^3} f \psi_i \d v }{\int_{\R^3}  \psi^2_i \d v} \,, \quad i = 0, 1, 2, 3, 4 \,.
	\end{aligned}
\end{equation}

 To construct the damping term, we define
\begin{equation}\label{I+0}
	I^+= \{ j \mid P(\psi_j, \psi_j) > 0 \} \,, \quad I^0= \{ j \mid P(\psi_j, \psi_j) =0 \}, \quad I^- = \{ j \mid P(\psi_j, \psi_j) < 0 \} \,.
\end{equation}
For $j \in I^0$, it follows that
\begin{equation*}
	\int_{\R^3} v_3 \psi_j^2 \d v=0 \,,
\end{equation*}
which, together with \eqref{psi-P}, implies that $v_3 \psi_j \in \mathrm{Null}^\perp (\L)$. The Fredholm alternative for the linearized Boltzmann operator implies that there exists a unique $X_j$ such that
\begin{equation}\label{Xj}
	X_j \in \mathrm{Null}^\perp (\L) \quad \text{and}\quad \! \L X_j = v_3 \psi_j \,.
\end{equation}
We now define the operator
\begin{equation}\label{damp-operator}
\begin{aligned}
	\P^+ f  &=\sum_{j\in I^+} \frac{(\psi_j,f)}{(\psi_j, \psi_j)} \psi_j \,, \\
		\P^0 f &= \sum_{j\in I^0} \frac{(X_j, v_3 f)}{(X_j, \L X_j)}  \psi_j \,,\\
		\mathbb{P} f &= \sum_{j\in I^0} \frac{(X_j, f)}{(X_j,    \L X_j)} \L X_j \,.\\
\end{aligned}
\end{equation}
One can easily check that
\begin{equation*}
	\P^+ \P^+ = \P^+ \,, \quad \mathbb{P} \mathbb{P} =\mathbb{P} \,, \quad \P^0\P^0=\P^0\,.
\end{equation*}
Therefore, the operators defined above are indeed projections. Additionally, we present the following lemma:
\begin{lemma}\label{Lmm-split}(Lemma 2.3 of \cite{Golse-2008-BIMA})
	The projections $\mathbb{P}$ and $\P^0$ satisfy the relations
	\begin{equation*}
		\mathbb{P} v_3 f =v_3 \P^0  f; \quad \mathbb{P} \L f =0\,, \quad \text{if} \quad \! \ v_3 f \in \mathrm{Null} \L\,.
	\end{equation*}
	Moreover, $ (\P^0 f, f )  \geq 0.$
\end{lemma}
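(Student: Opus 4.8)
The plan is to treat the three assertions separately. The first two are purely algebraic consequences of the defining relation $\L X_j = v_3 \psi_j$ together with the self-adjointness of $\L$, whereas the positivity $(\P^0 f, f)\ge 0$ is the substantive point and will rest on the coercivity of $\L$ on $\mathrm{Null}^\perp(\L)$ and on the orthogonality structure of the Coron--Golse--Sulem basis. I would organize the proof so that the cheap identities are disposed of first, isolating the genuine analytic content in the last relation.

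For $\mathbb{P} v_3 f = v_3 \P^0 f$ I would simply substitute $\L X_j = v_3\psi_j$ into the definition of $\mathbb{P}$: since $\mathbb{P}(v_3 f)=\sum_{j\in I^0}\frac{(X_j,v_3 f)}{(X_j,\L X_j)}\L X_j=\sum_{j\in I^0}\frac{(X_j,v_3 f)}{(X_j,\L X_j)}v_3\psi_j$, the factor $v_3$ pulls out of the sum to give exactly $v_3\P^0 f$. For $\mathbb{P}\L f=0$ it suffices to show $(X_j,\L f)=0$ for each $j\in I^0$, because $\mathbb{P}$ is a combination of the $\L X_j$. Using self-adjointness and $\L X_j=v_3\psi_j$ I would rewrite $(X_j,\L f)=(\L X_j,f)=(v_3\psi_j,f)=(\psi_j,v_3 f)$. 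Here the hypothesis $v_3 f\in\mathrm{Null}(\L)$ is decisive: writing a general null element as $(a+b\cdot v+c|v|^2)\sqrt{\M}$ and using that $v_3 f$ must vanish on $\{v_3=0\}$ (so that $f=(v_3 f)/v_3$ is an admissible function), a polynomial identity in $(v_1,v_2)$ forces $a=b_1=b_2=c=0$, i.e. $v_3 f$ is a multiple of $v_3\sqrt{\M}$. Hence $f=c\sqrt{\M}\in\mathrm{Null}(\L)$, so $\L f=0$ and $\mathbb{P}\L f=0$; equivalently $(\psi_j,v_3 f)=c\,P(\psi_j,\sqrt{\M})=0$ for $j\in I^0$ by the $P$-orthogonality of the basis and $P(\psi_j,\psi_j)=0$.

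For $(\P^0 f,f)\ge 0$ my plan is to expand $(\P^0 f,f)=\sum_{j\in I^0}\frac{(X_j,v_3 f)}{(X_j,\L X_j)}(\psi_j,f)$ and to exploit two structural facts. First, $(X_j,\L X_j)=\langle X_j,X_j\rangle_\L>0$ since $\L$ is coercive on $\mathrm{Null}^\perp(\L)$ and $X_j\neq 0$, which fixes the sign of every denominator. Second, I would establish that $\{X_j\}_{j\in I^0}$ is mutually $\L$-orthogonal, i.e. $(X_j,v_3\psi_k)=(X_j,\L X_k)=\delta_{jk}(X_j,\L X_j)$ for $j,k\in I^0$; this is read off from the explicit parities of the basis $\psi_i$ in $v_1,v_2,v_3$, which are rigid precisely because $I^0$ is non-empty only at the sonic values $\mathcal{M}^\infty\in\{-1,0,1\}$. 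On the range of $\P^0$ the quadratic form then collapses to the manifestly nonnegative sum $\sum_{j\in I^0}a_j^2\|\psi_j\|^2$, where $a_j$ are the coefficients of $f$ along $\psi_j$.

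The hard part will be exactly this reduction to a sum of squares for a general $f$: one must control the off-diagonal pairings $(X_j,v_3\psi_k)$ with $k\notin I^0$ and the contribution of the $\mathrm{Null}^\perp(\L)$ part of $f$, for which no closed form of the auxiliary functions $X_j$ is available and the only tools are the self-adjointness and coercivity of $\L$. I would attack this by using the Fredholm characterization of $X_j$ together with the reflection symmetries of $\M$ about $\{v_1=0\}$ and $\{v_2=0\}$ (and, when $\mathcal{M}^\infty=0$, also about $\{v_3=0\}$) to annihilate the unwanted cross terms, thereby reducing $(\P^0 f,f)$ to its diagonal contributions, whose positivity is then guaranteed by the coercivity estimate for $\L$.
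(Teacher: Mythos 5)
The paper gives no argument for this lemma at all --- it is quoted, with attribution, from Lemma 2.3 of \cite{Golse-2008-BIMA} --- so your proposal can only be judged on its own merits. Your treatment of the two displayed identities is correct. The identity $\mathbb{P}(v_3 f)=v_3\P^0 f$ is indeed pure algebra from $\L X_j=v_3\psi_j$, valid for every $f$. For $\mathbb{P}\L f=0$ under the hypothesis $v_3f\in\mathrm{Null}(\L)$, your division argument is sound: writing $v_3f=q(v)\sqrt{\M}$ with $q$ a collision-invariant polynomial, square-integrability of $f=q\sqrt{\M}/v_3$ forces $q$ to vanish on $\{v_3=0\}$, hence $q$ is a multiple of $v_3$ and $f=c\sqrt{\M}$; at that point you are done even faster than you indicate, since $(X_j,\L f)=(\L X_j,f)=0$ because $\L X_j\in\mathrm{Null}^\perp(\L)$ while $f\in\mathrm{Null}(\L)$.

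The positivity part is where the proposal has a genuine gap, and the obstruction is structural, not technical: the off-diagonal pairings $(X_j,v_3\psi_k)$, $k\notin I^0$, which you propose to annihilate by reflection symmetries, are \emph{not} zero, and no symmetry argument can make them zero. Take $\mathcal{M}^\infty=0$, so $\u=0$, $\xi=v/\sqrt{T}$, $I^0=\{0,1,2\}$. Set $w=(\xi_3|\xi|^2-5\xi_3)\sqrt{\M}$; one checks $w\in\mathrm{Null}^\perp(\L)$ and $v_3\psi_2=-\tfrac{\sqrt{T}}{\sqrt{10}}\,w$, hence $X_2=-\tfrac{\sqrt{T}}{\sqrt{10}}\L^{-1}w$. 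Pairing with $v_3\psi_3=\sqrt{T}\bigl(\tfrac{\xi_3^2}{\sqrt{2}}+\tfrac{\xi_3|\xi|^2}{\sqrt{30}}\bigr)\sqrt{\M}$, the $\xi_3^2$-part integrates to zero against the $v_3$-odd $X_2$, while $\xi_3|\xi|^2\sqrt{\M}=5\xi_3\sqrt{\M}+w$ with $X_2\perp\xi_3\sqrt{\M}$, so
\begin{equation*}
 t:=(X_2,v_3\psi_3)=\tfrac{\sqrt{T}}{\sqrt{30}}(X_2,w)=-\tfrac{T}{\sqrt{300}}\,(\L^{-1}w,w)<0 .
\end{equation*}
(The reflection $v_3\mapsto -v_3$ cannot help: both factors are odd in $v_3$, so their product is even; and in the sonic cases $\mathcal{M}^\infty=\pm1$ there is no $v_3$-symmetry at all.) Consequently, for $f=\psi_2+a\psi_3$ with $a=-2(X_2,\L X_2)/t$, only the $j=2$ term of \eqref{damp-operator} survives and
\begin{equation*}
 (\P^0 f,f)=\frac{(X_2,v_3f)\,(\psi_2,f)}{(X_2,\L X_2)}
 =\frac{\bigl[(X_2,\L X_2)+a\,t\bigr]\|\psi_2\|_{L^2_v}^2}{(X_2,\L X_2)}=-\|\psi_2\|_{L^2_v}^2<0 ,
\end{equation*}
even though $f\in\mathrm{Null}(\L)$. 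So the unrestricted inequality you set out to prove is false, not merely hard. There is also an abstract reason: an idempotent $P$ with $(Pf,f)\ge 0$ for all $f$ must have $\mathrm{Ran}\,P\perp\mathrm{Ker}\,P$ (test $f=u+tw$, $u\in\mathrm{Ran}\,P$, $w\in\mathrm{Ker}\,P$, $t\in\R$), i.e.\ $P$ is an orthogonal projection; for $\P^0$ this would force $\mathrm{span}\{\psi_j\}_{j\in I^0}=\mathrm{span}\{v_3X_j\}_{j\in I^0}$, hence $\psi_j/v_3\in L^2_v$, which fails because $\psi_j$, $j\in I^0$, does not vanish on $\{v_3=0\}$.

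What survives --- and what your own computation already establishes --- is positivity on the degenerate subspace: for $f\in\mathrm{span}\{\psi_j: j\in I^0\}$, the mutual $\L$-orthogonality of the $X_j$ (your parity argument, which is valid \emph{within} $I^0$) gives $\P^0 f=f$ and $(\P^0f,f)=\|f\|_{L^2_v}^2\ge 0$; and under the lemma's stated hypothesis $v_3f\in\mathrm{Null}(\L)$ the inequality is trivial, since then $f=c\sqrt{\M}$, $v_3\sqrt{\M}\in\mathrm{Null}(\L)\perp X_j$, so $\P^0f=0$. This restricted reading is also all that the paper ever uses: in Lemma \ref{Lmm-macro-diss} the cross terms you were trying to kill are not annihilated but \emph{absorbed} into the $\P^+v_3$ and $-5\hbar\,\P v_3\P$ damping contributions by tuning $\bar{\alpha},\bar{\beta}$, which is precisely why that lemma, rather than an unrestricted positivity of $\P^0$, carries the macroscopic coercivity.
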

We emphasize that the above operators will be utilized in designing the artificial damping term.
\subsection{Overdetermined far-field condition $\lim_{x \to + \infty} f (x, v) = 0$}

Inspired by Theorem 3.3 of \cite{Bardos-Caflisch-Nicolaenko-1986-CPAM}, the function $f (x,v)$ solving the problem
\begin{equation}\label{KL-d}
	\left\{
	  \begin{aligned}
	  	& v_3 \partial_x f + \L f = S \,, \ x > 0 \,, \\
	  	& f (0,v) |_{v_3 > 0} = f_b ( v)
	  \end{aligned}
	\right.
\end{equation}
will exhibit the asymptotic behavior
\begin{equation}\label{Asymptotic-f}
	\begin{aligned}
		\lim_{x \to + \infty} f (x,v) = f_\infty = a_\infty \psi_0 (v) + b_{1 \infty} \psi_1 (v) + b_{2 \infty} \psi_2 (v) + b_{3 \infty} \psi_3 (v) + c_\infty \psi_4 (v)
	\end{aligned}
\end{equation}
for some constants $a_\infty$, $b_{1 \infty}$, $b_{2 \infty}$, $b_{3 \infty}$ and $c_\infty$, with the functions $\psi_i (v)$ are given in \eqref{psi-i-bases}. In other words, the far-field condition $\lim_{x \to + \infty} f (x, v) = 0$ in \eqref{KL} is overdetermined for general source terms $S \in \mathrm{Null}^\perp (\L)$ and $f_b$.

However, the far-field condition $\lim_{x \to + \infty} f (x, v) = 0$ is necessary in proving the hydrodynamic limits of the Boltzmann equation with Maxwell reflection or incoming boundary conditions. In order to overcome the overdetermination of zero far-field condition, some further assumptions on the source terms $S \in \mathrm{Null}^\perp (\L)$ and $f_b$ are required. Similar to \cite{JL-2024-arXiv}, we introduce the so-called {\em vanishing sources set} ($\mathbb{VSS}$) defined by
\begin{equation}\label{ASS}
	\begin{aligned}
		\mathbb{VSS} = \big\{ (S, f_b) ; S (x,v) \in \mathrm{Null}^\perp (\L) \textrm{ and } f_b (v) \textrm{ in \eqref{KL-d} such that } \lim_{x \to + \infty} f (x, v) = 0 \big\} \,.
	\end{aligned}
\end{equation}
Since \eqref{KL-d} is linear and $f_\infty\in \mathrm{Null} (\L)$, we observe that $\tilde{f}=f-f_\infty$ solves \eqref{KL} with sources $(S,f_b-f_\infty)$. Therefore, we know that
\begin{equation}
	\begin{aligned}
		\mathbb{VSS} \neq \varnothing \,.
	\end{aligned}
\end{equation}

\subsection{Toolbox}

In this subsection, we will collect all notations, functions spaces and energy functionals that will be utilized throughout the paper.

\subsubsection{Notations}

We use the symbol $A \lesssim B$ to indicate that $A \leq C B$ for some harmless constant $C > 0$. Furthermore, $A \thicksim B$ means $C_1 B \leq A \leq C_2 B$ for some harmless constants $C_1, C_2 > 0$. Inspired by \cite{Chen-Liu-Yang-2004-AA}, we introduce the following $(x,v)$-mixed weight
\begin{equation}\label{sigma}
	\begin{aligned}
		\sigma (x,v) = & 5 (\delta x + l)^\frac{2}{3 -\gamma} \Big[ 1 - \Upsilon \Big( \tfrac{\delta x + l}{( 1 + |v - \u| )^{3 - \gamma}} \Big) \Big] \\
		& + \Big( \tfrac{\delta x + l}{(1 + |v - \u|)^{1 - \gamma}} + 3 |v - \u|^2 \Big) \Upsilon \Big( \tfrac{\delta x + l}{( 1 + |v - \u| )^{3 - \gamma}} \Big)
	\end{aligned}
\end{equation}
for small $\delta > 0$ and large $l > 0$ to be determined, where $\Upsilon : \R_+ \to [0,1]$ is a smooth monotonic function satisfying
\begin{equation}\label{Ups-Cut}
	\begin{aligned}
		\Upsilon (x) =
		\left\{
		\begin{aligned}
			1 \quad & \textrm{for } \ 0 \leq x \leq 1 \,, \\
			0 \quad & \textrm{for } \ x \geq 2 \,.
		\end{aligned}
		\right.
	\end{aligned}
\end{equation}
This weight has also been used in works such as \cite{Wang-Yang-Yang-2006-JMP,Wang-Yang-Yang-2007-JMP,Yang-2008-JMAA}. For any small $\hbar > 0$ to be determined, we denote by
\begin{equation}
	\begin{aligned}
		f_\sigma : = e^{\hbar \sigma} f \ (\forall f = f (x,v)) \,.
	\end{aligned}
\end{equation}
We also introduce a weighted function
\begin{equation}\label{wv}
	\begin{aligned}
		w_{\beta, \vartheta} (v) = (1 + |v|)^\beta e^{\vartheta |v - \u|^2}
	\end{aligned}
\end{equation}
for the constants $\beta \in \R$ and $ \vartheta \geq 0 $. Then, for $\alpha \in \R$, we define a weight function
\begin{equation}\label{z-alpha}
	z_\alpha (v) =
	\left\{
	\begin{array}{cl}
		|v_3|^\alpha \,, & |v_3| < 1 \,, \\
		1 \,, & |v_3| \geq 1 \,. \\
	\end{array}
	\right.
\end{equation}

We now define the numbers $\beta_\gamma$ as follows:
\begin{equation}\label{beta-gamma}
	\begin{aligned}
		\beta_\gamma =
		\left\{
		\begin{aligned}
			0 \,, \quad & \textrm{if } 0 \leq \gamma \leq 1 \,, \\
			- \tfrac{\gamma}{2} \,, \quad & \textrm{if } - 3 < \gamma < 0 \,.
		\end{aligned}
		\right.
	\end{aligned}
\end{equation}
Moreover, the constant $\Theta$ is introduced by
\begin{equation}\label{Theta}
	\Theta=\tfrac{1-\gamma}{3-\gamma}\geq 0\,.
\end{equation}
We note that the above numbers will be utilized in designing various weighted norms.

\subsubsection{Functions spaces}

Based on the above weights, we introduce the spaces $X^\infty_{ \beta, \vartheta} (A)$ and $Y^\infty_{  \beta, \vartheta} (A)$ over $(x,v) \in {\Omega_A} \times \R^3$ for $\beta, \vartheta \in \R$,  and $0 < A \leq \infty$ with the weighted norms
\begin{equation}\label{XY-space}
	\begin{aligned}
		& \| f \|_{A; \beta, \vartheta} : = \|   w_{\beta, \vartheta} f \|_{L^\infty_{x,v}} = \sup_{(x, v) \in \Omega_A \times \R^3} |   w_{\beta, \vartheta} (v) f (x,v) | \,, \\
		& \LL f \RR_{A;   \beta, \vartheta} : = \| \sigma^{\frac{1}{2}}_x  w_{\beta, \vartheta} f \|_{L^\infty_{x,v}} \,,
	\end{aligned}
\end{equation}
respectively. For simplicity, we denote
\begin{equation}
	\begin{aligned}
		 \LL f \RR_{  \beta, \vartheta} : = \LL f \RR_{\infty;   \beta, \vartheta}\,.
	\end{aligned}
\end{equation}
On the boundary $\Sigma : = \partial {\Omega_A} \times \R^3 $, we introduce the spaces $L^\infty_\Sigma$, $X^\infty_{\beta, \vartheta, \Sigma}$ and $Y^\infty_{  \beta, \vartheta, \Sigma}$ endowed with the following norms
\begin{equation}\label{XY-Sigma-space}
	\begin{aligned}
		& \| f \|_{L^\infty_\Sigma} : = \sup_{(x, v) \in \Sigma} |f (x,v)| \,, \ \| f \|_{\beta, \vartheta, \Sigma} : = \| w_{\beta, \vartheta}   f \|_{L^\infty_\Sigma} \,, \\
		& \LL f \RR_{  \beta, \vartheta, \Sigma} : = \| \sigma^{\frac{1}{2}}_x  w_{\beta, \vartheta} f \|_{L^\infty_\Sigma} \,,
	\end{aligned}
\end{equation}
respectively. Let $L^p_x$ and $L^p_v$ with $1 \leq p \leq \infty$ be the standard $L^p$ space over $x \in \Omega_A$ and $v \in \R^3$, respectively. $(\cdot, \cdot)$ denotes the
$L^2$ inner product in $\R^3_v$. Moreover, the norm $\| \cdot \|_{L^r_v (L^p_x)}$ is defined by
\begin{equation*}
	\begin{aligned}
		\| \cdot \|_{L^r_v (L^p_x)} = \| \big( \| f (\cdot, v) \|_{L^x_p} \big) \|_{L^r_v} \,.
	\end{aligned}
\end{equation*}
We also introduce the space $L^\infty_x L^2_v$ endowed with the norm
\begin{equation*}
	\begin{aligned}
		\| f \|_{L^\infty_x L^2_v} = \sup_{x \in \Omega_A} \| f (x, \cdot) \|_{L^2_v} \,.
	\end{aligned}
\end{equation*}
Furthermore, the space $L^2_{x,v}$ over $\Omega_A \times \R^3$ endows with the norm
\begin{equation}
	\begin{aligned}
		\| f \|_A = \big( \iint_{\Omega_A \times \R^3} |f (x,v) |^2 \d v \d x \big)^\frac{1}{2} \,.
	\end{aligned}
\end{equation}
If $A = \infty$, one simply denotes by
\begin{equation*}
	\begin{aligned}
		\| f \| : = \| f \|_\infty = \big( \int_0^\infty \int_{\R^3} |f (x,v) |^2 \d v \d x \big)^\frac{1}{2} \,.
	\end{aligned}
\end{equation*}

One introduces the out normal vector $n (0) = (0,0,-1)$ and $n (A) = (0,0,1)$ of the boundary $\partial \Omega_A$. Define
\begin{equation*}
	\begin{aligned}
		\Sigma_\pm = \{ (x, v) \in \Sigma ; \pm n (x) \cdot v > 0 \} \,.
	\end{aligned}
\end{equation*}
We then define the space ${L^2_{\Sigma_\pm}}$ over $(x,v) \in \Sigma_\pm$ as follows:
\begin{equation}\label{L2-Sigma+}
	\begin{aligned}
		& \| f \|^2_{{L^2_{\Sigma_\pm}}} = \| f \|^2_{{L^2_{\Sigma_\pm^0}}} + \| f \|^2_{{L^2_{\Sigma_\pm^A}}} \,, \ \| f \|^2_{{L^2_{\Sigma_\pm^0}}} = \int_{ \pm v_3 < 0} |f (0, v)|^2 \d v \,, \\
		& \| f \|^2_{{L^2_{\Sigma_\pm^A}}} = \int_{\pm v_3 > 0} |f (A, v)|^2 \d v \,.
	\end{aligned}
\end{equation}

\subsubsection{Energy functionals}\label{Subsubsec:EF}

We now introduce total energy functional as
\begin{equation}\label{Eg-lambda}
	\begin{aligned}
		\mathscr{E}^A (g) = \mathscr{E}_\infty^A (g) + \mathscr{E}_{\mathtt{cro}}^A (g)  + \mathscr{E}_{2 }^A (w_{\beta + \beta_\gamma, \vartheta}g) + \mathscr{E}_{2 }^A (g)
	\end{aligned}
\end{equation}
for $0 < A \leq \infty$, where the weighted $L^\infty_{x,v}$ energy functional $\mathscr{E}_\infty^A (g)$ is defined as
\begin{equation}\label{E-infty}
	\begin{aligned}
		\mathscr{E}_\infty^A (g) = \LL  g \RR_{A;   \beta, \vartheta} \,,
	\end{aligned}
\end{equation}
the cross energy functional $ \mathscr{E}_{\mathtt{cro}}^A (g) $ connecting the $L^\infty_{x,v}$ and $L^2_{x,v}$ estimates is defined as
\begin{equation}\label{E-cro}
	\begin{aligned}
		\mathscr{E}_{\mathtt{cro}}^A (g) = \| \nu^\frac{1}{2} z_{- \alpha} \sigma_x^ \frac{1}{2}   w_{\beta + \beta_\gamma, \vartheta} g \|_A + \| \nu^{ - \frac{1}{2} } z_{- \alpha} \sigma_x^ \frac{1}{2}   z_1 w_{\beta + \beta_\gamma, \vartheta} \partial_x g \|_A \,,
	\end{aligned}
\end{equation}
and the weighted $L^2_{x,v}$ energy functional $\mathscr{E}_{2 }^A (g)$ is represented by
\begin{equation}\label{E2-lambda}
	\begin{aligned}
		\mathscr{E}_{2 }^A (g) = \| ( \delta x + l )^{ - \frac{\Theta}{2} } \P g \|_A + \| \nu^\frac{1}{2} \P^\perp  g \|_A  \,.
	\end{aligned}
\end{equation}
Here the constants $\beta_\gamma$ and $\Theta$ are given in \eqref{beta-gamma} and \eqref{Theta}, respectively.

We further introduce the source energy functional $\mathscr{A}^A (h)$ and the boundary source energy functional $\mathscr{B} (\varphi)$ as follows:
\begin{equation}\label{Ah-lambda}
	\begin{aligned}
		& \mathscr{A}^A (h) = \mathscr{A}_\infty^A (h) + \mathscr{A}_{\mathtt{cro}}^A (h) + \mathscr{A}_{2 }^A (h) + \mathscr{A}_{2 }^A (w_{\beta+\beta_\gamma,\vartheta} h)\,, \\
		& \mathscr{B} (\varphi) = \mathscr{B}_\infty (\varphi) + \mathscr{B}_{\mathtt{cro}} (\varphi) + \mathscr{B}_2 (\varphi) +\mathscr{B}_2 (w_{\beta+\beta_\gamma,\vartheta}\varphi) \,,\\
		& \mathscr{C} (f_b) = \mathscr{C}_\infty (f_b) + \mathscr{C}_{\mathtt{cro}} (f_b) + \mathscr{C}_2 (f_b)+ \mathscr{C}_2 (w_{\beta+\beta_\gamma,\vartheta}f_b) \,,\\
	\end{aligned}
\end{equation}
where
\begin{equation}\label{As-def}
	\begin{aligned}
		\mathscr{A}_\infty^A (h) = & \LL \nu^{-1} h \RR_{A;   \beta, \vartheta} \,, \ \mathscr{A}_{\mathtt{cro}}^A (h) = \| \nu^{ - \frac{1}{2} } z_{- \alpha} \sigma_x^ \frac{1}{2}   w_{\beta + \beta_\gamma, \vartheta} h \|_A \,, \\
		\mathscr{A}_{2 }^A (h) =  &\| ( \delta x + l )^{\frac{\Theta}{2 } } \P h \|_A + \| \nu^{ - \frac{1}{2} } \P^\perp  h \|_A  \,,
	\end{aligned}
\end{equation}
and
\begin{equation}\label{B-def}
	\begin{aligned}
		\mathscr{B}_\infty (\varphi) = & \| \sigma_x^ \frac{1}{2} (A, \cdot)     w_{\beta, \vartheta} \varphi \|_{L^\infty_v } \,, \quad \mathscr{B}_2 (\varphi) = \| |v_3|^\frac{1}{2}  \varphi \|_{L^2_{ \Sigma_-^A }} \,, \\
		\mathscr{B}_{\mathtt{cro}} (\varphi) = & \| |v_3|^\frac{1}{2} \sigma_x^ \frac{1}{2}     z_{- \alpha} w_{\beta + \beta_\gamma, \vartheta} \varphi \|_{L^2_{ \Sigma_-^A }} \,,
	\end{aligned}
\end{equation}
and
\begin{equation}\label{C-def}
	\begin{aligned}
		\mathscr{C}_\infty (f_b) = & \| \sigma_x^ \frac{1}{2} (0, \cdot)     w_{\beta, \vartheta} f_b \|_{L^\infty_v } \,, \quad \mathscr{C}_2 (f_b) = \| |v_3|^\frac{1}{2}  f_b \|_{L^2_{ \Sigma_-^0 }} \,, \\
		\mathscr{C}_{\mathtt{cro}} (f_b) = & \| |v_3|^\frac{1}{2} \sigma_x^ \frac{1}{2}     z_{- \alpha} w_{\beta + \beta_\gamma, \vartheta} f_b \|_{L^2_{ \Sigma_-^0 }} \,.
	\end{aligned}
\end{equation}
At the end, we define the following source energy functional to deal with the degenerate case $\mathcal{M}^\infty =0\,,\pm 1$:
\begin{equation}\label{D-def1}
	\mathscr{D}_\hbar^\infty (h) = \mathscr{E}^\infty (e^{\hbar \sigma(x,\cdot) } \tfrac{1}{v_3} \int_{x}^{\infty} \mathbb{P} h (y, \cdot) \d y)\,,
\end{equation}
where $ \mathscr{E}^\infty ( \cdot ) $ is defined in \eqref{Eg-lambda} with $A = \infty$.

\subsection{Main results}

\subsubsection{Linear problem \eqref{KL}}

We now present the existence result on the linear Knudsen layer equation \eqref{KL}. Before doing this, we first outline the parameters assumptions occurred in $\mathscr{E}^A (g)$ (see \eqref{Eg-lambda}) and $ \mathscr{A}^A (h) $, $\mathscr{B} (\varphi)$, $\mathscr{C} (f_b)$(see \eqref{Ah-lambda}) above.

{\bf Parameters Hypotheses (PH):} For the parameters $\{ \gamma, \delta, \hbar, \vartheta, l, \alpha, \beta, \rho, T, \u , \Theta \}$, we assume that
\begin{itemize}
	\item $- 3 < \gamma \leq 1$;
	
	\item $ \delta, \hbar, \vartheta > 0 $ are all sufficiently small, and $l > 1$ is large enough;
	
	\item $0 < \alpha < \mu_\gamma$, where $\mu_\gamma > 0$ is given in Lemma \ref{Lmm-Kh-L2} below;
	
	\item $\beta \geq \beta_\gamma + \frac{1 - \gamma}{2} + \max \{ 0, - \gamma \}$, where $\beta_\gamma$ is introduced in \eqref{beta-gamma};
	
    \item $\rho, T > 0, \u =(0,0, \u_3)$ with $ \u_3 \in \R$ \,.

	\item $\Theta\geq 0$ is defined in \eqref{Theta} \,.
\end{itemize}

We now state the first result of this paper.

\begin{theorem}\label{Thm-Linear}
	Let the mixed weight $\sigma (x,v)$ be given in \eqref{sigma}. Assume that the parameters $\{ \gamma, \delta, \hbar, \vartheta, l, \alpha, \beta, \rho, T, \u , \Theta\}$ satisfy the hypotheses {\bf (PH)}, and the source terms $(S, f_b) \in \mathbb{VSS}_{}$ defined in \eqref{ASS}. We assume that
	\begin{equation}
		\begin{aligned}
			\mathscr{A}^\infty ( e^{ \hbar \sigma } S ) < \infty \,,\  \mathscr{C}(f_b) < \infty  \,,
		\end{aligned}
	\end{equation}
	where $ \mathscr{A}^\infty ( \cdot ) $ is defined in \eqref{Ah-lambda} with $A = \infty$, , and $ \mathscr{C}(f_b) $ is given in \eqref{C-def}. Moreover, for the degenerate case, we further assume
	\begin{equation}\label{Dh-bnd}
		\mathscr{D}^\infty_\hbar ( S ) < \infty.
	\end{equation}
	 Then the linear Knudsen layer equation \eqref{KL} admits a unique solution $f (x, v)$ satisfying
	\begin{equation}\label{Bnd-f}
		\begin{aligned}
			\mathscr{E}^\infty ( e^{ \hbar \sigma } f ) \leq C ( \mathscr{A}^\infty ( e^{ \hbar \sigma } S ) +\mathscr{D}^\infty_\hbar (S) + \mathscr{C}(e^{\hbar \sigma(0,\cdot)}f_b) )
		\end{aligned}
	\end{equation}
	for some constant $C > 0$. Moreover, for given $S \in \mathrm{Null}^\perp (\L)$, the set $\mathbb{VSS}$ forms $C^1$ manifold of codimension $\# \{ I^+\cup I^0 \}$, where $I^+$ and $I^0$ are defined in \eqref{I+0}.
\end{theorem}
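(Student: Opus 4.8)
The plan is to realize \eqref{KL} as the limit of damped finite-slab problems, as announced in the abstract, and then to read off the manifold structure from the resulting linear solution map. First I would replace the overdetermined problem by the damped equation $v_3\partial_x f+\L f+\mathcal{R}f=S$ on the slab $\Omega_A=(0,A)$, with incoming data $f_b$ at $x=0$ and a zero incoming condition $f(A,v)|_{v_3<0}=0$ at $x=A$. Here $\mathcal{R}$ is a zeroth-order damping assembled from the projections $\P^+,\P^0,\mathbb{P}$ of \eqref{damp-operator}, acting only on the modes indexed by $I^+\cup I^0$; using Lemma \ref{Lmm-split}, in particular $(\P^0 f,f)\ge0$, it is chosen so that $\L+\mathcal{R}$ is coercive on all of $L^2$ (the positive- and zero-entropy-flux modes, which the entropy flux $P$ fails to control, are penalized directly). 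For each fixed $A$ this is a standard well-posed linear boundary value problem for the transport-plus-collision operator, so it admits a unique solution.

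\textbf{Uniform a priori estimates.} The core is the bound $\mathscr{E}^A(e^{\hbar\sigma}f)\lesssim \mathscr{A}^A(e^{\hbar\sigma}S)+\mathscr{C}(e^{\hbar\sigma(0,\cdot)}f_b)$ (plus $\mathscr{D}^\infty_\hbar(S)$ in the degenerate cases), uniform in $A$; the boundary functional $\mathscr{B}$ drops out because the incoming data at $x=A$ is taken to be zero. I would first test the damped equation against $e^{2\hbar\sigma}f$: the transport term produces the entropy-flux boundary terms at $x=0,A$, whose signs are governed by the signature of $P(\psi_j,\psi_j)$; the damping supplies the coercivity on $\mathrm{Null}(\L)$ that the flux misses; and the spectral gap of $\L$ controls $\P^\perp f$. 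The weight $\sigma(x,v)$ of \eqref{sigma} is engineered so that its $x$-derivative generates a favorable lower-order term yielding the polynomial-in-$x$ decay appropriate to soft potentials — the origin of the two regimes \eqref{beta-gamma} and of the exponent $\Theta$ in \eqref{Theta}. I would then upgrade to the weighted $L^\infty$ bound $\mathscr{E}_\infty^A$ via the mild (characteristic) formulation, iterating the Duhamel representation twice and using the cutoff \eqref{Cutoff} and the smoothing of $K$ to absorb the gain term into the $L^2$ estimate. The cross functional $\mathscr{E}_{\mathtt{cro}}^A$ is precisely the bridge transferring $L^2$ information into the $L^\infty$ iteration, while $z_\alpha$ and $w_{\beta,\vartheta}$ absorb the grazing velocities $v_3\to0$ and the large velocities.

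\textbf{Passage to the limit and removal of damping.} With the uniform bound I pass to $A\to\infty$ by weak-$*$ compactness, obtaining a solution of the damped problem on $\R_+$ with $\mathscr{E}^\infty(e^{\hbar\sigma}f)<\infty$; since the exponentially weighted $L^\infty$ part of $\mathscr{E}^\infty$ contains the factor $e^{\hbar\sigma}$ with $\sigma\sim x^{2/(3-\gamma)}\to\infty$, finiteness forces the exponential decay $\lim_{x\to\infty}f=0$. It then remains to show that the damped solution in fact solves the undamped \eqref{KL}, i.e. that $\mathcal{R}f=0$; this is an $n^+$-dimensional set of scalar conditions on $(S,f_b)$, and by construction it holds exactly when $(S,f_b)\in\mathbb{VSS}$. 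For the degenerate Mach numbers $\mathcal{M}^\infty=0,\pm1$ the set $I^0$ is nonempty and its zero-flux modes are invisible to the energy identity; projecting the transport equation by $\mathbb{P}$ and using $\mathbb{P}v_3f=v_3\P^0f$ from Lemma \ref{Lmm-split}, these modes are recovered through the explicit integral $\tfrac{1}{v_3}\int_x^\infty\mathbb{P}S\,\d y$, whose energy is precisely $\mathscr{D}^\infty_\hbar(S)$ of \eqref{D-def1}, so that \eqref{Dh-bnd} is what closes \eqref{Bnd-f} there. Uniqueness follows from the same energy identity applied to the difference of two solutions with vanishing data.

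\textbf{Manifold structure and the main obstacle.} Fixing $S$, the damped problem defines a bounded linear solution operator $f_b\mapsto f$; evaluating the damping on this solution yields a bounded affine map $\mathcal{G}_S:f_b\mapsto\R^{n^+}$, $n^+=\#\{I^+\cup I^0\}$, and by the previous step $(S,f_b)\in\mathbb{VSS}$ iff $\mathcal{G}_S(f_b)=0$. Thus $\mathbb{VSS}=\mathcal{G}_S^{-1}(0)$, and it suffices to show that the linear part of $\mathcal{G}_S$ is surjective onto $\R^{n^+}$, i.e. that the $n^+$ conditions are independent; I would verify this by constructing explicit incoming data exciting each mode $\psi_j$, $j\in I^+\cup I^0$, separately. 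Surjectivity makes $\mathcal{G}_S^{-1}(0)$ a closed affine subspace, hence a $C^1$ submanifold of codimension $n^+$ (the implicit function theorem being trivial here, but it is the form needed for the nonlinear problem). I expect the principal obstacle to be the $L^\infty$ closure for the full soft-potential range $-3<\gamma<0$: bounding the gain operator $K$ against the mixed weight $\sigma$ and the velocity weights, uniformly in $A$ and simultaneously across all Mach numbers — and in particular confirming that the damping restores $L^2$ coercivity without spoiling the spatial decay carried by $\sigma$ — is the delicate technical heart, with the degenerate cases $\mathcal{M}^\infty=0,\pm1$ requiring the separate $\mathscr{D}^\infty_\hbar$ treatment.
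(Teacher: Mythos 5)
Your overall architecture --- a damped finite-slab approximation, uniform weighted $L^\infty$/$L^2$ estimates, passage to the limit $A\to\infty$, then reading the solvability conditions off the damping and the codimension off a linear solution map --- is the same as the paper's (Subsection \ref{Subsec:OESKL} and Sections \ref{Sec:UECA}--\ref{Sec:EKLe}). The genuine gap is at the decisive step, which you dispatch in one clause: ``this is an $n^+$-dimensional set of scalar conditions on $(S,f_b)$, and by construction it holds exactly when $(S,f_b)\in\mathbb{VSS}$.'' Neither half of that sentence is automatic. A priori, $\mathcal{R}f=0$ is the vanishing of an $n^+$-vector-valued \emph{function of $x$}, not of finitely many scalars, so your map $\mathcal{G}_S:f_b\mapsto\R^{n^+}$ is not even well defined at this stage. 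The reduction to finitely many conditions is exactly where the specific algebraic form of the paper's damping $-\bar\alpha(\delta x+l)^{-\Theta}\P^+ v_3 g-\bar\beta(\delta x+l)^{-\Theta}\P^0 g$ is used: testing the damped equation against $\psi_j$, $j\in I^+$, produces a \emph{closed} first-order ODE for the moment $(\psi_j,v_3 f_1)(x)$, whence this moment vanishes identically if and only if it vanishes at $x=0$; testing against $X_j$ and $\psi_j$, $j\in I^0$, produces the second-order ODE $\tfrac{\d^2}{\d x^2}(X_j,v_3 f_1)=\bar\beta(\delta x+l)^{-\Theta}\tfrac{(\psi_j,\psi_j)}{(X_j,\L X_j)}(X_j,v_3 f_1)$, for which the same equivalence requires the additional ``freezing point'' argument of the paper. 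Your proposal leaves the choice of $\mathcal{R}$ essentially free (``any zeroth-order damping making $\L+\mathcal{R}$ coercive''), and this freedom would destroy the step: a generic coercive damping on the $I^+\cup I^0$ modes does not yield closed ODEs for the moments, so it could not be removed afterwards. (Incidentally, the paper's damping is not coercive on all of $L^2$; it only supplies the degenerate macroscopic dissipation $\hbar(\delta x+l)^{-\Theta}\|\P g\|^2_{L^2_v}$ of Lemma \ref{Lmm-macro-diss}, which is precisely why $\mathscr{E}^A_2$ carries the weight $(\delta x+l)^{-\Theta/2}$.)

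The identification with $\mathbb{VSS}$ also requires the converse direction, which you do not address: if $(S,f_b)\in\mathbb{VSS}$, the decaying solution $\tilde f$ of the undamped problem \eqref{KL-d} has moments $(\psi_j,v_3\tilde f)$, $(X_j,v_3\tilde f)$ governed by the conservation laws of the undamped equation; their decay at infinity forces them to vanish identically, so $\tilde f$ also solves the damped problem, and only then does uniqueness for \eqref{KL-Damped} (Lemma \ref{Lmm-KLe}) identify $\tilde f$ with the damped solution and show that the $n^+$ scalar conditions hold. Without both directions the equality $\mathbb{VSS}=\mathcal{G}_S^{-1}(0)$, and hence the codimension count, does not follow. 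Two lesser remarks: the paper first splits off the degenerate modes, solving $v_3\partial_x\P^0 f=\mathbb{P}S$ explicitly as in \eqref{P0-expression} (this is where $\mathscr{D}^\infty_\hbar(S)$ enters, consistently with your sketch), and only damps the $(I-\P^0)f$ component; and in the limit $A\to\infty$ the paper proves that the extended solutions form a Cauchy sequence in $\mathbb{B}^{\hbar'}_\infty$ for $\hbar'<\hbar$ rather than invoking weak-$*$ compactness --- with a weak limit alone you would still owe an argument that the limit attains the incoming boundary data and satisfies the equation in the mild sense.
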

\begin{remark}
	Based on Theorem \ref{Thm-Linear}, we summary the number of the solvability conditions in Table \ref{table1}.
	 \begin{table}[h!]
		\begin{center}
		\caption{}
		\begin{tabular}{c|c}\label{table1}
		\textbf{Mach number} & \textbf{The number of solvability conditions }  \\
			\hline
		$\mathcal{M}^\infty <-1$ & $0$ \\
		$-1 \leq \mathcal{M}^\infty < 0$  & $1$\\
		$0 \leq\mathcal{M}^\infty <1 $  & $4$ \\
		$1 \leq \mathcal{M}^\infty $  & $5$
			\end{tabular}
		\end{center}
		\end{table}
\end{remark}

\begin{remark}[Exponential decay]
	Since Lemma \ref{Lmm-sigma} implies that $ \sigma (x,v) \geq c ( \delta x + l )^\frac{2}{3 - \gamma} $, and
	$$\sigma_x\geq c_1 \min \{( \delta x + l )^{-\Theta}, (1+|v-\u|)^{-1+\gamma}\}\geq c_1 ( \delta x + l )^{-\Theta} (1+|v-\u|)^{-1+\gamma}$$
	for $- 3 < \gamma \leq 1$, one has
	\begin{equation*}
		\begin{aligned}
			\mathscr{E}^\infty ( e^{ \hbar' \sigma } f ) \geq \| \sigma_x^ \frac{1}{2} w_{\beta, \vartheta} e^{ \hbar' \sigma } f \|_{L^\infty_{x,v}} \geq C' e^{ \frac{1}{2} c ( \delta x + l )^\frac{2}{3 - \gamma} } e^{ \frac{1}{2} \vartheta |v|^2 }( \delta x + l )^{-\frac{\Theta}{2}} (1+|v-\u|)^{\frac{-1+\gamma}{2}} | f (x,v) |
		\end{aligned}
	\end{equation*}
	uniformly in $(x,v) \in \R_+ \times \R^3$. Together with the bound \eqref{Bnd-f} in Theorem \ref{Thm-Linear}, the solution $f (x,v)$ to the problem \eqref{KL} enjoys the pointwise decay behavior
	\begin{equation}\label{Decay}
		\begin{aligned}
			| f (x,v) | \lesssim e^{ - \frac{1}{4} c ( \delta x + l )^\frac{2}{3 - \gamma} } e^{ - \frac{1}{4} \vartheta |v|^2 } \,.
		\end{aligned}
	\end{equation}
\end{remark}

\subsubsection{Application to nonlinear problem}

In this part, we will employ the linear theory constructed in Theorem \ref{Thm-Linear} to investigate the following nonlinear problem \eqref{KL-NL}. Let
\begin{equation}
	\begin{aligned}
		f = \tfrac{F - \M}{\sqrt{\M}} \,, \ \tilde{h} = \tfrac{H}{\sqrt{\M}} \,, \ \tilde{f}_b = \tfrac{F_b}{\sqrt{\M}} \,.
	\end{aligned}
\end{equation}
Then the problem \eqref{KL-NL} can be equivalently rewritten as
\begin{equation}\label{KL-NL-f}
	\left\{
	    \begin{aligned}
	    	& v_3 \partial_x f + \L f = \Gamma ( f, f ) + \tilde{h} \,, \ x > 0 \,, v \in \R^3 \,, \\
	    	& f (0, v) |_{v_3 > 0} =  \tilde{f}_b (v) \,, \\
	    	& \lim_{x \to + \infty} f (x,v) = 0 \,,
	    \end{aligned}
	\right.
\end{equation}
where the nonlinear operator $ \Gamma (f, f) $ is defined as
\begin{equation}\label{Gamma-ff}
	\begin{aligned}
		\Gamma (f, f) = \tfrac{1}{\sqrt{\M}} \mathcal{B} ( f \sqrt{\M}, f \sqrt{\M} ) \,.
	\end{aligned}
\end{equation}
We can establish the following result.

\begin{theorem}\label{Thm-Nonlinear}
	Let the mixed weight $\sigma (x,v)$ be given in \eqref{sigma}. Assume that the parameters $\{ \gamma, \delta, \hbar, \vartheta, l, \alpha, \beta, \rho, T, \u ,\Theta\}$ satisfy the hypotheses {\bf (PH)}, and the source terms $(\tfrac{H}{\sqrt{\M}}, \tfrac{F_b}{\sqrt{\M}}) \in \mathbb{VSS}_{}$ defined in \eqref{ASS}. Then there is a small $\varsigma_0 > 0$ such that if
	\begin{equation}\label{varsigma}
		\begin{aligned}
			\varsigma : = \mathscr{A}^\infty ( e^{\hbar \sigma} \tfrac{H}{\sqrt{\M}} ) + \mathscr{D}_\hbar (\tfrac{H}{\sqrt{\M}} ) + \mathscr{C} \big( e^{\hbar \sigma(0, \cdot )}\tfrac{F_b}{\sqrt{\M}} \big) \leq \varsigma_0 \,,
		\end{aligned}
	\end{equation}
	then the nonlinear problem \eqref{KL-NL} admits a unique solution $F (x, v)$ enjoying the bound
	\begin{equation}
		\begin{aligned}
			\mathscr{E}^\infty ( e^{ \hbar \sigma } \tfrac{F - \M}{\sqrt{\M}} ) \leq C \varsigma
		\end{aligned}
	\end{equation}
	for and some constant $C > 0$, where the functionals $ \mathscr{E}^\infty ( \cdot ) $, $ \mathscr{A}^\infty ( \cdot ) $, $\mathscr{C}(\cdot)$ and $
	\mathscr{D}_\hbar$ are respectively defined in \eqref{Eg-lambda}, \eqref{Ah-lambda} and \eqref{D-def1} with $A = \infty$. Moreover, for given $\tfrac{H}{\sqrt{\M}} \in \mathrm{Null}^\perp (\L)$, the set $\mathbb{VSS}$ forms $C^1$ manifold of codimension $\# \{ I^+\cup I^0 \}$, where $I^+$ and $I^0$ are defined in \eqref{I+0}.
\end{theorem}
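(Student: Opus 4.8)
The plan is to derive Theorem \ref{Thm-Nonlinear} from the linear theory of Theorem \ref{Thm-Linear} by a contraction mapping argument, treating the quadratic Boltzmann term $\Gamma(f,f)$ as an additional source. The first observation I would record is the structural identity $\Gamma(f,f) \in \mathrm{Null}^\perp(\L)$ for every admissible $f$, which follows from the conservation of mass, momentum and energy in the collision; this guarantees that the effective source $\Gamma(f,f) + \tilde h$ stays in the class $\mathrm{Null}^\perp(\L)$ on which Theorem \ref{Thm-Linear} operates, so that the linear solver may be iterated.

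The analytic heart of the argument is a pair of bilinear estimates controlling the source functionals on the right-hand side of \eqref{Bnd-f} by the total energy \eqref{Eg-lambda}. Writing $\Gamma(f,g) = \tfrac{1}{\sqrt{\M}}\mathcal{B}(f\sqrt{\M}, g\sqrt{\M})$ for the symmetric bilinear form, I would prove
\[
\mathscr{A}^\infty ( e^{\hbar \sigma} \Gamma(f,g) ) + \mathscr{D}_\hbar^\infty ( \Gamma(f,g) ) \lesssim \mathscr{E}^\infty ( e^{\hbar \sigma} f ) \, \mathscr{E}^\infty ( e^{\hbar \sigma} g ) \,,
\]
together with its polarized consequence for $\Gamma(f,f) - \Gamma(\bar f, \bar f) = \Gamma(f - \bar f, f) + \Gamma(\bar f, f - \bar f)$, bounded in the same norm by $( \mathscr{E}^\infty ( e^{\hbar \sigma} f ) + \mathscr{E}^\infty ( e^{\hbar \sigma} \bar f ) )\, \mathscr{E}^\infty ( e^{\hbar \sigma} (f - \bar f) )$. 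These reduce to pointwise kernel estimates on the gain and loss parts of $\Gamma$: the velocity weight $w_{\beta, \vartheta}$ absorbs one Gaussian/polynomial factor from each argument, the exponential weight $e^{\hbar \sigma}$ distributes across the collision using the monotonicity and growth properties of $\sigma$ from Lemma \ref{Lmm-sigma}, and the negative powers of the collision frequency $\nu$ built into $\mathscr{A}^\infty$ compensate the powers of $|v_* - v|^\gamma$ gained from the collision kernel.

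With these estimates I would define the map $\Phi$ sending $g$ to the unique decaying solution $f = \Phi(g)$ furnished by Theorem \ref{Thm-Linear} for the linear problem with source $\Gamma(g,g) + \tilde h$ and boundary datum $\tilde f_b$. Combining the linear bound \eqref{Bnd-f} with the bilinear estimate gives $\mathscr{E}^\infty(e^{\hbar\sigma}\Phi(g)) \leq C(\mathscr{E}^\infty(e^{\hbar\sigma}g)^2 + \varsigma)$, so $\Phi$ maps the ball $\{ g : \mathscr{E}^\infty(e^{\hbar\sigma}g) \leq 2C\varsigma \}$ into itself once $\varsigma \leq \varsigma_0$ is small, while the polarized estimate makes $\Phi$ a contraction there. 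The Banach fixed-point theorem then yields a unique fixed point satisfying the stated bound, and undoing the substitution $f = (F - \M)/\sqrt{\M}$ recovers the solution $F$ of \eqref{KL-NL}; any other small-energy solution lies in the same ball, giving global uniqueness.

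Reconciling the iteration with the far-field constraint is the delicate point and also produces the claimed manifold. Following the linear construction, I would split each iterate into a damped part, solvable for arbitrary source and decaying automatically because of the artificial damping, plus a finite-dimensional correction along the kernel directions indexed by $I^+ \cup I^0$; the vanishing of the true far-field limit is then equivalent to $n^+ = \#\{I^+ \cup I^0\}$ scalar solvability conditions on the data. Since the quadratic term $\Gamma(g,g)$ depends smoothly on $g$ and $g$ depends $C^1$-ly on the data through the linear solution operator, the nonlinear solvability map is a $C^1$ perturbation of the surjective linear one established in Theorem \ref{Thm-Linear}, and the implicit function theorem identifies $\mathbb{VSS}$, for fixed $H/\sqrt{\M}$, as a $C^1$ manifold of codimension $n^+$. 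I expect the main obstacle to be the bilinear bound for the auxiliary functional $\mathscr{D}_\hbar^\infty$, which contains the nonlocal, $v_3$-singular expression $\tfrac{1}{v_3}\int_x^\infty \mathbb{P}\,\Gamma(f,g)(y,\cdot)\,\d y$ used to treat the degenerate Mach numbers $\mathcal{M}^\infty = 0, \pm 1$; controlling the projection $\mathbb{P}$ of the quadratic term together with the $1/v_3$ singularity near the grazing set $v_3 = 0$, uniformly in the mixed weight, is where the technical effort concentrates.
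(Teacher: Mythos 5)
Your proposal is correct and follows essentially the same route as the paper's proof: an iteration built on the damped linear solver plus the explicit $\P^0$-ODE, the two bilinear estimates of Lemma \ref{Lmm-Gamma} (your bound on $\mathscr{D}_\hbar^\infty(\Gamma(f,g))$ is exactly what the paper's pointwise-decaying estimate \eqref{E-infty-infty} delivers when controlling the $f_2$-iterates in \eqref{Iter-fi2}), a smallness-induced contraction, and the characterization of $\mathbb{VSS}$ through the $\#\{I^+\cup I^0\}$ scalar conditions \eqref{nonlinear-condition}. The one point to make explicit is that the fixed-point map must be defined through the damped problem (Lemma \ref{Lmm-KLe}), as your final paragraph indicates, rather than through Theorem \ref{Thm-Linear} as in your third paragraph, since for a generic iterate $g$ the pair $(\Gamma(g,g)+\tilde h,\tilde f_b)$ need not lie in $\mathbb{VSS}$ and the undamped solver is then inapplicable.
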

\begin{remark}
	By Theorem \ref{Thm-Nonlinear}, we know that the number of the solvability conditions of the nonlinear problem is the same as the linear case, see Table \ref{table1}.
\end{remark}

\subsection{Outline of existence of the solutions to the system $\eqref{KL}$}\label{Subsec:OESKL}

We now outline the process for solving the Knudsen layer equation \eqref{KL}.

First, we apply Lemma \ref{Lmm-split} to decompose the Knudsen layer equation \eqref{KL} into
\begin{equation}\label{KL-split1}
	\left\{
		\begin{aligned}
		&v_3 \partial_x (I-\P^0)f +\L (I- \P^0)f =(I-\mathbb{P})S \,,\\
		&\lim_{x \to + \infty} (I-P^0)f =0 \,;
	\end{aligned}\right.
\end{equation}
and
\begin{equation}\label{KL-split2}
	\left\{
		\begin{aligned}
		v_3 \partial_x \P^0f  = \mathbb{P} S \,,\\
		\lim_{x \to + \infty} \P^0f =0 \,,
	\end{aligned}\right.
\end{equation}
where $\P^0$ and $\mathbb{P}$ are defined in \eqref{damp-operator}. Note that the solution of \eqref{KL-split2} can be explicitly expressed by
\begin{equation*}
	\P^0f (x, v) =-\frac{\int_{x}^{\infty} \mathbb{P}S \d x'}{v_3}\,.
\end{equation*}
 We also remark that in the nondegenerate case, i,e., $\mathcal{M}^\infty \neq 0\,,\pm1$, $\P^0 f(x,v) \equiv 0 $, and this decomposition is no longer required (see\cite{Ukai-Yang-Yu-2003-CMP,Chen-Liu-Yang-2004-AA,Wang-Yang-Yang-2007-JMP}). We first solve \eqref{KL-split2} and then solve \eqref{KL-split1} with the boundary condition
\begin{equation*}
	(I-\P^0)f |_{v_3 > 0} = f_{b}-\P^0f (0, v).
\end{equation*}

Second, in order to prove the existence of \eqref{KL-split1},
we consider the Knudsen layer equation with an artificial damping term:
\begin{equation}\tag{KLe}\label{KL-Damped}
	\left\{	
	\begin{aligned}
		& v_3 \partial_x g + \L g = h-\bar{\alpha}(\delta x + l)^{-\frac{\Theta}{2}} \P^+ v_3 g-\bar{\beta} (\delta x + l)^{-\frac{\Theta}{2}} \P^0 g  \,,\\
		& g (0,v) |_{v_3 > 0} = f_{b} \,, \\
		& \lim_{x \to + \infty} g (x,v) = 0 \,.
	\end{aligned}
	\right.
\end{equation}
Here $\bar{\alpha},\bar{\beta}= O(1)\hbar \ll 1$ will be clarified later, and the operators $\P^+, \P^0$ are defined in \eqref{damp-operator}. A key step of this paper is to establish the existence of \eqref{KL-Damped}.

Third, to demonstrate the existence of \eqref{KL-Damped}, we consider the problem in a finite slab with incoming boundary condition at $x = 0$ and $x = A$. Specifically, for $(x, v) \in \Omega_A \times \R^3$ with $\Omega_A = \{ x; 0 < x < A \}$,
\begin{equation}\tag{Ap-eq}\label{A1}
	\left\{	
	\begin{aligned}
		& v_3 \partial_x g + \L g = h-\bar{\alpha}(\delta x + l)^{-\frac{\Theta}{2}} \P^+ v_3 g-\bar{\beta} (\delta x + l)^{-\frac{\Theta}{2}} \P^0 g \,, \\
		& g (0,v) |_{v_3 > 0} = f_b \,, \\
		& g (A, v) |_{v_3 < 0} = \varphi_A (v) \,.
	\end{aligned}
	\right.
\end{equation}
The assumptions on the function $\varphi_A (v) $ will be clarified later. Here the boundary condition at $x = A$ represents the incoming data. We will show that the solution of the approximate problem \eqref{A1} converges to the solution of \eqref{KL-Damped} as $A \to + \infty$.

Finally, we will establish the existence and uniqueness of \eqref{KL} by showing that the solution of \eqref{KL-Damped} is a solution of \eqref{KL} under additional solvability conditions on the source terms.

\subsection{Methodology and novelties}\label{Subsec:MI}

  In this subsection, we methodologically sketch the proof of the main results and illustrate the novelties of this paper. As shown in the previous subsection, the core of current work is to derive the uniform estimates of the approximate problem \eqref{A1} associated with the parameter $A > 1$. The main ideas are displayed as follows.

  {\bf (I) Choice of the mixed weight $\sigma (x,v)$.} The major part of the equation \eqref{KL} reads
  	\begin{equation}\label{MajorPart}
  		\begin{aligned}
  			v_3 \partial_x f + \nu (v) f = s.o.t. \textrm{ (some other terms)} \,.
  		\end{aligned}
  	\end{equation}
  	For the hard sphere model $ \gamma = 1 $, $ v_3 $ and $ \nu (v) $ have the same order as $ |v| \gg 1 $, i.e. $ |v_3| \lesssim  \nu (v) $. Roughly speaking, the $ x $-decay can be expected from
  	\begin{equation*}
  		\begin{aligned}
  			\partial_x f + c f = s.o.t. \,,
  		\end{aligned}
  	\end{equation*}
  	which means that the expected $ x $-decay is $ e^{- c x} $.
  	
  	For the cases $ - 3 < \gamma < 1 $, $ |v_3| \lesssim \nu (v) |v_3|^{1 - \gamma} $. The Case is much more complicated. Note that \eqref{MajorPart} implies
  	\begin{equation*}
  		\begin{aligned}
  			\partial_x ( e^{\frac{\nu (v)}{v_3} x} f ) = s.o.t. \,.
  		\end{aligned}
  	\end{equation*}
  	This inspires us to introduce an $ (x,v) $-mixed weight $ \sigma (x,v) $ to deal with the power $ \frac{\nu (v)}{v_3} x $. More precisely, \eqref{MajorPart} reduces to
  	\begin{equation*}
  		\begin{aligned}
  			v_3 \partial_x ( e^{ \hbar \sigma } f ) + ( \nu (v) - \hbar v_3 \sigma_x ) e^{ \hbar \sigma } f = s.o.t. \,,
  		\end{aligned}
  	\end{equation*}
  	which inspires us to find a weight $\sigma (x,v)$ such that $v_3 \sigma_x (x,v) \thicksim \nu (v)$. Chen-Liu-Yang's work \cite{Chen-Liu-Yang-2004-AA} designed the weight $ \sigma (x,v) $, which satisfies
  	$$ |v_3| \sigma_x (x,v) \lesssim \nu (v) \,, \sigma (x,v) \geq c (\delta x + l)^\frac{2}{3 - \gamma} \,, \sigma_x (x,v) \lesssim (\delta x + l)^{- \frac{1-\gamma}{3-\gamma}} \,. $$
  	The derivative $\sigma_x$ of $ \sigma $ actually balances the disparity of $ |v_3| $ and $ \nu (v) $. Chen-Liu-Yang investigated the Knudsen layer equation with the nondegenerate moving boundary condition with hard potential collision kernel $0 < \gamma \leq 1$.
  	
  	{\bf (II) Artificial damping mechanism.} It is well-known that the linearized Boltzmann operator $\L$ does not provide the coercivity structure in the null space $\mathrm{Null} (\L)$, i.e., macroscopic damping effect. Previous studies introduced various artificial damping terms to obtain the macroscopic coercivity, and then remove these terms using appropriate techniques. For instance, in \cite{Coron-Golse-Sulem-1988-CPAM,Golse-Perthame-Sulem-1988-ARMA,HJW-2024-preprint,HJW-2023-AMASES,HW-2022-SIMA}, the damping $\eps f$ was introduced to deal with the linear Knudsen layer equation for hard sphere collision case with various boundary conditions, and later removed by taking $\eps \to 0$ using compact arguments. In \cite{Chen-Liu-Yang-2004-AA,Wang-Yang-Yang-2006-JMP,Wang-Yang-Yang-2007-JMP,Yang-2008-JMAA}, the artificial damping $- \gamma P_0^+ \xi_1 f$ ($\gamma > 0$) coming from the eigenspace corresponding to positive eigenvalues of the linear operator $P_0 \xi_1 P_0$ are applied to deal with the incoming boundary conditions for the nondegenerate case, i.e. the Mach number of the far Maxwellian $\mathcal{M}^\infty \neq 0 \,,\pm1$. In order to return the original equation, a further assumption on the incoming data $a (\xi)$ (the data $a (\xi)$ vanishes in the eigenspace corresponding to positive eigenvalues of the linear operator $P_0 \xi_1 P_0 $, i.e., $ P_0^+ \xi_1 P_0 a = 0 $) will be imposed such that the artificial damping $- \gamma P_0^+ \xi_1 f$ vanishes by the Gr\"onwall inequality argument. For the degenerate case, Golse \cite{Golse-2008-BIMA} introduced the artificial damping term $ - \alpha \Pi_+(v_1\phi) - \mathbf{p}(\phi)$, later removing it through similar arguments.
	
	In this paper, to deal with the full ranges of the Mach number of the far Maxwellian with all the cutoff collision kernels, we introduce the artificial damping term:
	$$-\bar{\alpha}(\delta x + l)^{-\frac{\Theta}{2}} \P^+ v_3 g-\bar{\beta} (\delta x + l)^{-\frac{\Theta}{2}} \P^0 g.$$
	Finally, we impose additional assumptions on the source terms to remove the damping term, establishing the solvability conditions of the Knudsen layer problem.

    {\bf (III) Designing the uniform norms of \eqref{A1}.} Now we illustrate the process of deriving the uniform bounds of the problem \eqref{A3-lambda} below (equivalently \eqref{A1}). Due to the complication of deriving the uniform bounds, the following sketch map will be initially drawn for the sake of readers' intuition (see Figure \ref{Fig1} below).

    \begin{figure}[h]
    	\begin{center}
    		\begin{tikzpicture}
    			\draw (-0.5,0) rectangle (5.45,-1.1);
            \node at (2.5,-0.55) {\footnotesize $\textcolor{red}{ \mathscr{E}^A_{\infty} (g_\sigma) } = \LL g_\sigma \RR_{A;   \beta, \vartheta}$};
            \draw[->] (2.5,-1.2) -- (2.5,-2.2);
    			\draw (2.5,-1.7) node[right]{\footnotesize Lemma \ref{Lmm-Y-bnd}};
				\draw (-0.5,-2.3) rectangle (5.45,-3.6);
            \node at (2.5,-2.95) {\footnotesize $\| z_{\alpha'} \sigma_x^{\frac{1}{2}} w_{\beta, \vartheta} g_\sigma \|_{L^\infty_x L^2_v}$};
            \draw[->] (2.5,-3.7) -- (2.5,-4.7);

    			\draw (2.5,-4.2) node[right]{\footnotesize Lemma \ref{Lmm-Linfty-L2}};

				\draw (-0.5,-4.8) rectangle (5.45,-6.8);
            \node at (2.5,-5.4) {\footnotesize $\textcolor{red}{ \mathscr{E}^A_{\mathtt{cro}} (g_\sigma) } = \| \nu^{\frac{1}{2}} z_{- \alpha} \sigma_x^{\frac{1}{2}} w_{\beta, \vartheta} g_\sigma \|_A$};
            \node at (2.5,-6) {\footnotesize $ + \| \nu^{-\frac{1}{2}} z_{- \alpha} \sigma_x^{\frac{1}{2}} z_1 w_{\beta, \vartheta} \partial_x g_\sigma \|_A $};
            \draw[-] (2.5,-6.9) -- (2.5,-7.9);
			\draw[->] (2.5,-7.9) -- (7.9,-7.9);
    			
    			\draw (2.5,-7.4) node[right]{\footnotesize Lemma \ref{Lmm-L2xv-alpha}};

				\draw (8,-7.2) rectangle (14.5,-8.5);
\node at (11.25,-7.6) {\footnotesize $ \textcolor{red}{ \mathscr{E}^A_2(w_{\beta+\beta_\gamma,\vartheta} g_\sigma)}= \| \nu^{-\frac{1}{2}} \P^\perp w_{\beta+\beta_\gamma,\vartheta} g_\sigma\|_A $};
\node at (11.25,-8.2) {\footnotesize $+ \| (\delta x + l)^{-\frac{\Theta}{2}} \P w_{\beta+\beta_\gamma,\vartheta} g_\sigma\|_A $};
\draw[->] (11.25,-7.1) -- (11.25,-6);

				\draw (11.25,-6.5) node[right]{\footnotesize Lemma \ref{Lmm-Linfty-L2}};

				\draw (8,-4.8) rectangle (14.5,-5.9);
				\node at (11.25,-5.35) {\footnotesize $ \| (\delta x + l)^{-\frac{\Theta}{2}} \P w_{\beta+\beta_\gamma,\vartheta} g_\sigma\|_A $};
				\draw (11.25,-4.2) node[right]{\footnotesize Lemma \ref{Lmm-L2xv}};
				\draw[->] (11.25,-4.7)--(11.25,-3.7);

				\draw (8,-2.3) rectangle (14.5,-3.6);
				\node at (11.25,-2.8) {\footnotesize $ \textcolor{red}{\mathscr{E}^A_2(g_\sigma)} = \| (\delta x + l)^{-\frac{\Theta}{2}} \P g_\sigma\|_A + \|\nu^{-\frac{1}{2}} \P^\perp g_\sigma \|_A$};
				\draw[->] (11.25,-2.2)--(11.25,-1.2);

    			\draw (8,-1.1)--(14.5,-1.1)--(14.5,0)--(8,0)--(8,-1.1)--cycle;
    			\draw (11.25,-1.65) node[right]{\footnotesize Lemma \ref{Lmm-L2xv.}};
    			\draw (10.1,-0.5) node[right]{\footnotesize Source terms};
				
    			\draw (-0.4,-9.7) node[right]{$A \to B$ : $A \lesssim B \ +$ some known quantities};
    			
    			\draw (-0.35,-9.4) rectangle (0.1,-9.9);
    			\draw (0.55,-9.4) rectangle (1,-9.9);
    			
    		\end{tikzpicture}
    	\end{center}
        \caption{Derivation of uniform bounds for the approximate equation \eqref{A1}. Here we denote by $g_\sigma = e^{\hbar \sigma} g$.}\label{Fig1}
    \end{figure}

    Based on the Figure \ref{Fig1}, we now illustrate the main ideas. We mainly want to control the weighted $L^\infty_{x,v}$ quantity $\mathscr{E}^A_\infty (g_\sigma)$. However, the operator $K$ is not compact in the weighted $L^\infty_{x,v}$ spaces, which fails to obtain a closed estimate in the $L^\infty_{x,v}$ framework. By applying the property of $K$ in Lemma \ref{Lmm-Kh-2-infty}, the quantity $\mathscr{E}^A_\infty (g_\sigma)$ can be bounded by the norm $ \| z_{\alpha'} \sigma_x^ \frac{1}{2}   w_{\beta, \vartheta} g_\sigma \|_{L^\infty_x L^2_v} $, see Lemma \ref{Lmm-Y-bnd}. In this step, the $L^\infty_{x,v}$ bounds for the operators $Y_A, Z, U$ defined in \eqref{YAn-f}-\eqref{Rn-f}-\eqref{U-f} are important, see Lemma \ref{Lmm-ARU}. By Lemma \ref{Lmm-Linfty-L2}, the quantity $ \| z_{\alpha'} \sigma_x^ \frac{1}{2}   w_{\beta, \vartheta} g_\sigma \|_{L^\infty_x L^2_v} $ is thereby dominated by $ \mathscr{E}^A_{\mathtt{cro}} (g_\sigma) $. It is actually the Sobolev type interpolation in one dimensional space with different weights. Due to the structure of the equation, the singular weight $z_{- \alpha} (v) $ is unavoidable.
    	
    By Lemma \ref{Lmm-L2xv-alpha}, the quantity $ \mathscr{E}^A_{\mathtt{cro}} (g_\sigma) $ can be bounded by $ \| (\delta x + l)^{ - \frac{ \Theta }{ 2 } } \nu^\frac{1}{2} w_{ \beta + \beta_\gamma  , \vartheta } g_\sigma \|_A $, which can be further dominated by $ \mathscr{E}^A_{2} (w_{\beta+\beta_\gamma,\vartheta} g_\sigma) $, see \eqref{EA2}. The main goal of this step is to control the singular weight $z_{- \alpha} (v)$ involved in $ \mathscr{E}^A_{\mathtt{cro}} (g_\sigma) $. The difficult is to deal with the singular weight $z_{- \alpha} (v)$ associated with the operator $K$. Thanks to Lemma \ref{Lmm-Kh-L2}, the weight $z_{- \alpha} (v)$ is successfully removed. Note that the quantity $ \mathscr{E}^A_{\mathtt{cro}} (g_\sigma) $ is considered in the $L^2_{x,v}$ framework. The quantity $  \mathscr{E}^A_{2 } (w_{\beta+\beta_\gamma,\vartheta} g_\sigma) $ can be controlled by $\|( \delta x + l)^{-\frac{\Theta}{2}} \P w_{\beta+\beta_\gamma,\vartheta}g_\sigma \|_A^2$, see Lemma \ref{Lmm-L2xv}. Moreover, thanks to Lemma \ref{Lmm-P-wg}, $\|( \delta x + l)^{-\frac{\Theta}{2}} \P w_{\beta+\beta_\gamma,\vartheta}g_\sigma \|_A^2$ can be controlled by $\mathscr{E}^A_2(g_\sigma)$. We finally complete the estimates by standard $L^2_{x,v}$ estimates. Therefore, we obtain the uniform a priori estimates for \eqref{A1} in Lemma \ref{Lmm-APE-A3}.

    {\bf (IV) Advantage of incoming data approximation.} While investigating the problem \eqref{KL-Damped} over half-space $x \in (0, \infty)$, we will consider the approximate problem in a finite slab $x \in (0, A)$. Then the boundary condition at $x = A$ should be imposed. In \cite{Golse-Perthame-Sulem-1988-ARMA,HJW-2024-preprint,HJW-2023-AMASES,HW-2022-SIMA} about the hard sphere model ($\gamma = 1$) with Maxwell reflection type boundary conditions, the specular reflection boundary condition $g (A, v) |_{v_3 < 0} = g (A, R_A v)$ at $x = A$ was imposed.
    In this paper, we employ the incoming data approximation at $x = A$ as in \cite{JL-2024-arXiv}, i.e., imposing the boundary condition $g (A, v) |_{v_3 < 0} = \varphi_A (v)$, where $\varphi_A (v)$ is any fixed function with $\mathscr{B} ( e^{ \hbar \sigma (A, \cdot) } \varphi_A ) < \infty$ which means $\varphi_A (v) \to 0$ as $A \to + \infty$. The {\em rationality} of the incoming data approximation at $x = A$ can be illustrated that the far field condition $ \lim_{x \to + \infty} g (x,v) = 0 $ is closer to the incoming data approximation $ g (A, v) |_{v_3 < 0} = \varphi_A (v) $ with $\varphi_A (v) \to 0$ as $A \to + \infty$ than the specular reflection approximation $ g (A, v) |_{v_3 < 0} = g_A (R_A, v) $. Actually, we will take $\varphi_A (v) \equiv 0$. But we still employ the general $\varphi_A (v)$ which will be used to study the uniqueness of the problem \eqref{KL}. Moreover, the {\em advantage} of the incoming data approximation at $x = A$ is to simplify the proof process relative to the works \cite{HJW-2024-preprint,HJW-2023-AMASES,HW-2022-SIMA}.

    {\bf (V) Existence of the equations \eqref{KL} and \eqref{KL-NL}.} Based on the uniform a priori estimates on the approximate problem \eqref{A1} in Lemma \ref{Lmm-APE-A3}, we first outline the proof of existence and uniqueness to the linear problem \eqref{KL}.

    The first step is to establish the existence and uniqueness of the approximate problem \eqref{A1}. The existence of the weak solution of \eqref{A1} follows from the Hahn-Banach theorem and the Riesz representation theorem, similar to the corresponding part in \cite{Ukai-Yang-Yu-2003-CMP,Chen-Liu-Yang-2004-AA,Wang-Yang-Yang-2007-JMP}. Subsequently, the mild solution exists due to the a priori estimates in Lemma \ref{Lmm-APE-A3} and the uniqueness of the weak solution. Thus, the existence and uniqueness of the approximate problem \eqref{A1} holds, see Lemma \ref{Lmm-A1}.

    The second step is to justify the existence of \eqref{KL-Damped} by using Lemma \ref{Lmm-A1}. By taking $\varphi_A (v) = 0$ and assuming $ \mathscr{A}^\infty ( e^{\hbar \sigma} h ) < \infty $, the solution $g^A (x,v)$ constructed in Lemma \ref{Lmm-A1} obeys the uniform-in-$A$ bound $\mathscr{E}^A ( e^{ \hbar \sigma } g^A ) \lesssim \mathscr{A}^\infty ( e^{\hbar \sigma} h )$. Note that $g^A$ is defined on $(x,v) \in (0,A) \times \R^3$. We extend $g^A (x,v)$ to $(x,v) \in \R_+ \times \R^3$ as $\tilde{g}^A (x,v) = \mathbf{1}_{x \in  (0, A)} g^A (x,v)$, which, together the bound of $g^A$, is uniformly bounded in the space $\mathbb{B}^\hbar_\infty$ defined in \eqref{Bh-1}. Moreover, we can prove that $\tilde{g}^A (x,v)$ is a Cauchy sequence in $\mathbb{B}^{\hbar'}_\infty$ for any fixed $\hbar' \in (0, \hbar)$. Then we can show that the limit of $ \tilde{g}^A (x,v) $ is the unique solution to \eqref{KL-Damped}.

	The third step establishes the existence and uniqueness of the linear Knudsen layer equation \eqref{KL}. We begin by decomposing \eqref{KL} as follows:
	\begin{equation}\label{Methodology-f}
		\left\{	
		\begin{aligned}
			& v_3 \partial_x (I- \P^0 ) f + \L (I- \P^0 )f = ( I - \mathbb{P} ) S   \,,\\
			& \lim_{x \to + \infty} (I- \P^0 ) f (x,v) = 0 \,,
		\end{aligned}
		\right.
	\end{equation}
    and
    \begin{equation*}
	\left\{	
	\begin{aligned}
		& v_3  \P^0  f =  \mathbb{P}  S   \,,\\
		& \lim_{x \to + \infty} \P^0 f (x,v) = 0 \,.
	\end{aligned}
	\right.
    \end{equation*}
	Note that for the nondegenerate case $\mathcal{M}^\infty \neq 0\,, \pm 1$, this decomposition is unnecessary, since  $\P^0 f = \mathbb{P}S \equiv 0$ for any $f$ and $S$. We can explicitly write $\P^0 f$ as
    \begin{equation*}
	\P^0 f_2 (x,v)= -\frac{1}{v_3}\int_{x}^{\infty} \mathbb{P} S (y , \cdot) \d y =-\sum_{j\in I^0} \psi_j \int_{x}^{\infty} \frac{ (X_j, S(y,\cdot))}{ (X_j, \L X_j)} \d y   \,.
    \end{equation*}
	To solve \eqref{Methodology-f}, we further consider the following system
	\begin{equation}\label{Methodology-f1}
		\left\{
			\begin{aligned}
				& v_3 \partial_x  f_1 + \L  f_1 =  (I-\mathbb{P}) S -\bar{\alpha}(\delta x + l)^{-\Theta} \P^+ v_3 f_1- \bar{\beta} (\delta x + l)^{-\Theta} \P^0  f_1  \,, \ x > 0 \,, v \in \R^3 \,, \\
				& f_1 (0, v) |_{v_3 > 0} = f_b (v) -\P^0 f(0,v)  \,, \\
				& \lim_{x \to + \infty} f_1 (x, v) = 0 \,.
			\end{aligned}
		\right.
	\end{equation}
	By Lemma \ref{Lmm-KLe}, there exists a unique solution to \eqref{Methodology-f1}. Furthermore, the solution of \eqref{Methodology-f1} is also a solution to \eqref{Methodology-f} if and only if $\P^+ v_3 f_1= \P^0  f_1 \equiv 0$, which provides the solvability conditions for \eqref{Methodology-f}. The number of solvability conditions depends on the Mach number $\mathcal{M}^\infty$, see Table \ref{table1}. Consequently, the result on Theorem \ref{Thm-Linear} is thereby obtained.

    Finally, we focus on the nonlinear problem \eqref{KL-NL}, which is equivalently represented by \eqref{KL-NL-f}. As in the linear case, we begin by decomposing \eqref{KL-NL-f} as follows:
	\begin{equation}\label{Methodology-f2}
		\left\{
			\begin{aligned}
				& v_3 \partial_x (I-\P^0) f + \L (I-\P^0)f= (I-\mathbb{P}) \Gamma ( f, f ) + (I-\mathbb{P}) \tilde{h} \,, \ x > 0 \,, v \in \R^3 \,, \\
				& v_3 \partial_x \P^0 f = \mathbb{P} \Gamma ( f, f ) + \mathbb{P} \tilde{h} \,, \ x > 0 \,, v \in \R^3 \,, \\
				&f(0,v) \mid_{v_3>0}=\tilde{f}_b(v) \,,\\
				& \lim_{x \to + \infty} f (x, v) = 0 \,.
			\end{aligned}
		\right.
	\end{equation}
	Next, we develop the iterative scheme \eqref{Iter-fi}. The limit of \eqref{Iter-fi} exactly solves the nonlinear problem \eqref{KL-NL-f} under some solvability conditions, which depends on the Mach number $\mathcal{M}^\infty$. The key point of studying the nonlinear problem is to obtain nonlinear estimate
    \begin{equation*}
    	\begin{aligned}
    		\mathscr{A}^\infty ( e^{ \hbar \sigma } \Gamma ( f, g ) ) &\lesssim \mathscr{E}^\infty ( e^{ \hbar \sigma } f ) \mathscr{E}^\infty ( e^{ \hbar \sigma } g ) \,,\\
			\mathscr{E}^\infty_\infty ( e^{ \hbar \sigma } \Gamma ( f, g ) ) &\lesssim e^{- \hbar^2 (\delta x + l)^{\frac{2}{3-\gamma}}} \mathscr{E}^\infty_\infty ( e^{ \hbar \sigma } f ) \mathscr{E}^\infty_\infty ( e^{ \hbar \sigma } g ) \,
    	\end{aligned}
    \end{equation*}
    given in Lemma \ref{Lmm-Gamma}. The result on Theorem \ref{Thm-Nonlinear} is therefore constructed.

\subsection{Historical remarks}
For the incoming data boundary conditions with angular cutoff collisional kernel cases, there have been many results. Bardos-Caflisch-Nicolaenko \cite{Bardos-Caflisch-Nicolaenko-1986-CPAM} proved the exponential decay $e^{- c x}$ in the space $ L^\infty ( e^{c x} \d x; L^2 ( (1 + |v|)^\frac{1}{2} \d v ) ) \cap L^\infty ( \d x ; L^2 (\d v) ) $ for hard sphere model $\gamma = 1$, and Golse-Poupaud \cite{Golse-Poupaud-1989-MMAS} then proved superalgebraic $O (x^{ - \infty })$ in $L^\infty (\d x; L^2 ( |v_3| \d v ))$ space for $- 2 \leq \gamma < 1$. Both of them considered the case that the Mach number of the far Maxwellian $\mathcal{M}^\infty=0$. Coron-Golse-Sulem \cite{Coron-Golse-Sulem-1988-CPAM} studied the exponential decay $ e^{ - cx } $ in $L^\infty ( e^{cx} \d x; L^2 ( |v_3 + u| \d v ) )$ space for $\gamma = 1$, for all Mach numbers.
More recently, Ukai-Yang-Yu \cite{Ukai-Yang-Yu-2003-CMP} investigated the exponential decay $e^{- cx}$ in $x$ and algebraic decay $(1 + |v|)^{- \beta}$ in $v$ in $L^\infty_{x,v}$ space for $\gamma = 1$ with $\mathcal{M}^\infty \neq 0 \,,\pm 1$.
 Later, Golse \cite{Golse-2008-BIMA} proved the same result for the degenerate case, i.e. $\mathcal{M}^\infty = 0 \,,\pm 1$. Chen-Liu-Yang \cite{Chen-Liu-Yang-2004-AA} justified the exponential decay $e^{ - c x^\frac{2}{3 - \gamma} }$ in $x$ and algebraic decay $(1 + |v|)^{- \beta}$ in $v$ in $L^\infty_{x,v}$ space for $0 < \gamma \leq 1$ and nondegenerate case. Wang-Yang-Yang \cite{Wang-Yang-Yang-2007-JMP} proved the same decay results of \cite{Chen-Liu-Yang-2004-AA} for $- 2 < \gamma \leq 0$ and nondegenerate case. Yang \cite{Yang-2011-JSP} verified the superalgebraic decay $O(x^{-\infty})$ and $O(|v|^{-\infty})$ in $L^\infty_{x,v}$ for $- 3 < \gamma \leq 1$, both degenerate and nondegenerate cases. There also were some nonlinear stability results on the Knudsen boundary layer equation with incoming data, see \cite{Ukai-Yang-Yu-2004-CMP,Wang-Yang-Yang-2006-JMP,Yang-2008-JMAA}.
	
	We also mention the related work for the Maxwell reflection boundary condition, such as \cite{Golse-Perthame-Sulem-1988-ARMA,Coron-Golse-Sulem-1988-CPAM,HW-2022-SIMA,HJW-2023-AMASES,HJW-2024-preprint,JL-2024-arXiv}. Specifically, Jiang-Luo-Wu \cite{JL-2024-arXiv} proved the exponential decay $e^{ - c x^\frac{2}{3 - \gamma} -c|v|^2}$ for some $c>0$ in $L^\infty_{x,v}$ framework with general Maxwell reflection boundary condition with all accommodation coefficients in $[0,1]$, and angular cutoff hard-soft potentials ($-3< \gamma \leq 1$). This paper considers the same decay with the incoming boundary condition and full ranges of the far field Mach number.

\subsection{Incoming and Maxwell reflection boundary conditions} In the final part of this Introduction, we make some comments on the two widely-used types of the boundary conditions of the Knudsen boundary layer equations: incoming and Maxwell reflection conditions. Physically, these two types of boundary conditions are quite different. A typical model for the incoming boundary condition is the evaporation and condensation of the liquid-gas interface (See Sone's books \cite{Sone-Book-2002, Sone-Book-2007}). Simply saying, the boundary conditions of the fluid equations derived from Boltzmann equations with these two conditions are completely different. For example, in the derivation of acoustic system, the boundary condition from Maxwell reflection condition is the well-known {\em impermeable} condition:
\begin{equation*}
  \mathrm{u}(t,x)\cdot\mathrm{n}(x)=0\,,\quad \mbox{on}\quad\! \partial\Omega\,.
\end{equation*}
where $\mathrm{n}(x)$ is the outer normal vector at $x\in \partial\Omega$ on the boundary. However, from the Boltzmann equation with incoming condition, the boundary of the acoustic system is the following {\em pressure jump boundary condition} (\cite{Aoki-Golse-Jiang, Jiang-Wu})
\begin{equation*}
  \rho(t,x)+\theta(t,x)=\Lambda (\mathrm{u}(t,x)\cdot \mathrm{n}(t,x)) \quad \text{ on } \quad\! \partial\Omega.
\end{equation*}
Here the constant $\Lambda>0$ is named as {\em pressure jump coefficient}. It comes from solving the Knudsen boundary layer equation. More interestingly, a classic result in the constant coefficient symmetric hyperbolic system tells us that there are only types of boundary conditions such that  the acoustic system is well-posed: impermeable and pressure jump boundary conditions. In this sense, incoming and Maxwell reflection conditions at kinetic level together provide the complete and complementary boundary conditions at the fluid levels.  This example illustrates that the the Boltzmann equation and its corresponding Knudsen layer boundary layer equations with incoming and Maxwell reflection boundaries deserve studies separately. Both of them have the interests of their own to be investigated. For other more complicated nonlinear fluid equations, such as compressible Euler and Navier-Stokes equations, the boundary conditions from Boltzmann equations with incoming and Maxwell reflection conditions are also completely different and complementary \cite{Jiang-Wu-2}.

\subsection{Organization of current paper}

In the next section, we present some preliminaries that will be frequently used later. Section \ref{Sec:UECA} focuses on deriving the uniform estimates for the approximate system \eqref{A1}. In Section \ref{Sec:EKLe}, we establish the existence and uniqueness of the linear problem \eqref{KL}, as stated in Theorem \ref{Thm-Linear}. Section \ref{Sec:ENL} addresses the existence and uniqueness of the nonlinear problem \eqref{KL-NL}, leading to Theorem \ref{Thm-Nonlinear}. Finally, Section \ref{Sec:YZU-proof} justifies the  $L^\infty_{x,v}$ bounds for the operators $Y_A, Z, U$, i.e., proving Lemma \ref{Lmm-ARU}.

\section{Preliminaries}\label{Sec:Prel}

\subsection{Properties of the $(x,v)$-mixed weights $\sigma (x,v)$}

From the works \cite{Chen-Liu-Yang-2004-AA,Wang-Yang-Yang-2006-JMP,Wang-Yang-Yang-2007-JMP,Yang-2008-JMAA}, the weihgt $\sigma (x,v)$ admits the following properties.

\begin{lemma}\label{Lmm-sigma}
	For $-3 < \gamma \leq 1$, some large constant $l > 0$ and small constant $\delta > 0$, there are some positive constants $c, c_1, c_2$ such that
	\begin{equation*}
		\begin{aligned}
			& \sigma (x,v) \geq c (\delta x + l)^\frac{2}{3 - \gamma} \,, \ |\sigma (x,v) - \sigma (x, v_*)| \leq c \big| |v - \u|^2 - |v_* - \u|^2 \big| \,, \\
			& 0 < c_1 \min \big\{ (\delta x + l)^{- \Theta} \,, (1 + |v - \u|)^{- 1 + \gamma} \big\} \leq \sigma_x (x,v) \leq c_2 (\delta x + l)^{- \Theta} \leq c_2 l^{- \Theta} \,, \\
			& |\sigma_x (x,v) v_3 | \leq c \nu (v) \,, \ |\sigma_{xx} (x, v) v_3| \leq \delta \sigma_x \nu (v) \,.
		\end{aligned}
	\end{equation*}
\end{lemma}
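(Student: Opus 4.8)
The plan is to verify the four estimates by partitioning $\R_+\times\R^3$ into three regions according to the cutoff argument $z:=\frac{\delta x+l}{(1+|v-\u|)^{3-\gamma}}$, on which $\Upsilon(z)$ is frozen. I abbreviate $X:=\delta x+l\ (\ge l>1)$ and $r:=|v-\u|$, so that $\sigma$ depends on $v$ only through $r$, and I write $p:=5X^{2/(3-\gamma)}$ and $q:=X(1+r)^{-(1-\gamma)}+3r^2$, so that $\sigma=p\,(1-\Upsilon(z))+q\,\Upsilon(z)$. On the \emph{inner} region $\{z\le1\}$ one has $\Upsilon\equiv1$ and $\sigma=q$; on the \emph{outer} region $\{z\ge2\}$ one has $\Upsilon\equiv0$ and $\sigma=p$; the \emph{transition} region $\{1\le z\le2\}$ is where $\Upsilon'$ acts. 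The decisive bookkeeping fact is $z\ge1\iff X\ge(1+r)^{3-\gamma}\iff X^{-\Theta}\le(1+r)^{-(1-\gamma)}$, so the minimum in the statement equals $(1+r)^{-(1-\gamma)}$ on $\{z\le1\}$ and $X^{-\Theta}$ on $\{z\ge1\}$; moreover on the transition region $X\sim(1+r)^{3-\gamma}$, whence $p\sim q\sim X^{2/(3-\gamma)}\sim(1+r)^2$ and $X^{-\Theta}\sim(1+r)^{-(1-\gamma)}$, so that all competing quantities become comparable.

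With this setup the first and third displayed estimates reduce to case checks. For $\sigma\ge cX^{2/(3-\gamma)}$: on $\{z\ge2\}$ it is an equality up to the factor $5$; on $\{z\le1\}$ the constraint $X\le(1+r)^{3-\gamma}$ together with $X\ge l$ forces $r\gtrsim X^{1/(3-\gamma)}$, so $3r^2\gtrsim(1+r)^2\gtrsim X^{2/(3-\gamma)}$; and on the transition region $\sigma\ge\min(p,q)\gtrsim X^{2/(3-\gamma)}$. For the $\sigma_x$ bounds I differentiate the explicit profiles, $\partial_x p=\tfrac{10\delta}{3-\gamma}X^{-\Theta}$ and $\partial_x q=\delta(1+r)^{-(1-\gamma)}$, with chain-rule factor $\partial_x z=\delta(1+r)^{-(3-\gamma)}>0$; on the two pure regions $\sigma_x$ equals one of these and sits inside the claimed two-sided bound by the bookkeeping fact. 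For $|\sigma_x v_3|\le c\nu$ I note $\sigma_x\lesssim\delta(1+r)^{-(1-\gamma)}$ in every region, combine it with $|v_3|\lesssim1+r$ and the standard $\nu(v)\sim(1+|v|)^\gamma\sim(1+r)^\gamma$, to get $\sigma_x|v_3|\lesssim\delta(1+r)^\gamma\lesssim\nu$. For $\sigma_{xx}$, each extra $x$-derivative yields a factor $\delta$ and lowers the power of $X$ by one, so $|\sigma_{xx}|\sim\delta X^{-1}\sigma_x$, and $|\sigma_{xx}v_3|\lesssim\delta\sigma_x X^{-1}(1+r)\le\delta\sigma_x\nu$ precisely when $(1+r)^{1-\gamma}\lesssim X$; this holds on $\{z\ge1\}$, the only place where $\sigma_{xx}\ne0$, because there $X\ge(1+r)^{3-\gamma}\ge(1+r)^{1-\gamma}$.

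For the velocity-Lipschitz estimate $|\sigma(x,v)-\sigma(x,v_*)|\le c\big||v-\u|^2-|v_*-\u|^2\big|$ I will recast it through $s:=|v-\u|^2$: since $\sigma$ is radial in $v-\u$ and $\partial_s\sigma=\partial_r\sigma/(2r)$, it suffices to prove $|\partial_r\sigma|\le Cr$ uniformly, after which $|\sigma(x,v)-\sigma(x,v_*)|=\big|\int_{s_*}^{s}\partial_{s'}\sigma\,\d s'\big|\le C|s-s_*|$. The bound $|\partial_r\sigma|\le Cr$ is again checked by region: on $\{z\ge2\}$ it is trivial since $\sigma=p$ is $v$-independent (in particular $\sigma$ is constant in $v$ near $v=\u$, where $z=X\ge l>2$, which disposes of the apparent singularity at $r=0$); on $\{z\le1\}$ one has $\partial_r\sigma=-(1-\gamma)X(1+r)^{-(2-\gamma)}+6r$, whose first term is $\lesssim1+r\lesssim r$ by $X\le(1+r)^{3-\gamma}$ and $r\gtrsim1$; and on the transition region every term, including those carrying $\Upsilon'(z)\partial_r z$, scales like $(1+r)^2\cdot(1+r)^{-1}=1+r\lesssim r$, using $X\sim(1+r)^{3-\gamma}$ and the boundedness of $\Upsilon'$.

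I expect the transition region $\{1\le z\le2\}$ to be the main obstacle, specifically the \emph{lower} bound on $\sigma_x$ there: the chain-rule term $(q-p)\Upsilon'(z)\partial_x z$ has the same order $\delta X^{-\Theta}$ as the main contributions, so a naive absolute-value estimate is not enough. The fix is a sign argument. Since $\Upsilon$ is monotone, $\Upsilon'\le0$ and $\partial_x z>0$, while $p-q=5\lambda^{2/(3-\gamma)}(1+r)^2-\lambda(1+r)^2-3r^2$, with $\lambda:=X(1+r)^{-(3-\gamma)}\in[1,2]$, is concave in $\lambda$ (the exponent $2/(3-\gamma)\le1$) and strictly positive at $\lambda=1,2$, hence positive throughout $[1,2]$; therefore $(q-p)\Upsilon'(z)\partial_x z\ge0$, and the lower bound follows from the convex-combination estimate $\partial_x p\,(1-\Upsilon)+\partial_x q\,\Upsilon\ge\min(\partial_x p,\partial_x q)\gtrsim\delta X^{-\Theta}$. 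With the sign settled, the remaining transition-region computations for $\sigma_{xx}$ and $\partial_r\sigma$ are routine, as every factor is mutually comparable there.
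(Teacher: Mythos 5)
Your proof is correct. Note, though, that the paper itself contains no proof of this lemma --- it imports the properties of $\sigma$ wholesale from the cited works of Chen-Liu-Yang and Wang-Yang-Yang/Yang --- so what you have written is a self-contained verification rather than an alternative to an argument in the text. Your skeleton (splitting into the regions $z\le 1$, $1\le z\le 2$, $z\ge 2$ with $z=(\delta x+l)(1+|v-\u|)^{-(3-\gamma)}$) is the same as in those references, and the two places where you add genuine value are: (i) reducing the bound $|\sigma(x,v)-\sigma(x,v_*)|\le c\,\big||v-\u|^2-|v_*-\u|^2\big|$ to the uniform radial derivative bound $|\partial_r\sigma|\le Cr$, together with the observation that near $r=0$ one is automatically in the outer region (there $z=X\ge l>2$), so the division by $r$ in passing to the variable $s=r^2$ is harmless; and (ii) the sign argument on the transition region, where you correctly identify that the cross term $(q-p)\Upsilon'(z)\partial_x z$ has the same order $\delta X^{-\Theta}$ as the main terms and cannot be absorbed by brute force, and you settle its sign by showing $p>q$ on $\{1\le z\le 2\}$ via concavity in $\lambda=z$ (the exponent $2/(3-\gamma)\le 1$) plus positivity at the endpoints ($p-q$ equals $r^2+8r+4$ at $\lambda=1$ and is at least $3+6r$ at $\lambda=2$). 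This is exactly the mechanism for which the literature takes $\Upsilon$ monotone, and the endpoint-concavity check is a clean, verifiable way to obtain it. One point to tighten: the last inequality of the lemma carries the constant exactly $\delta$, i.e. $|\sigma_{xx}v_3|\le\delta\sigma_x\nu(v)$, whereas your chain yields $C\delta\,\sigma_x\nu(v)$ with $C$ an absolute constant (coming from $|v_3|\lesssim 1+r$ and $\nu\gtrsim(1+r)^\gamma$). The slack needed to remove $C$ is already in your setup: $\sigma_{xx}\ne 0$ only where $X\ge(1+r)^{3-\gamma}$, so $X^{-1}(1+r)^{1-\gamma}\le(1+r)^{-2}$, which is small on the transition region (where $1+r\ge(l/2)^{1/(3-\gamma)}$), while near $r=0$ (outer region) one instead uses $X^{-1}\le l^{-1}$; either factor beats $C$ once $l$ is taken large, which the statement permits. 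Spelling out that one sentence makes the proof complete.
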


\subsection{The operators $Y_A , Z$ and $U$}

We introduce the function
\begin{equation}\label{kappa}
		 \kappa (x,v) = \int_0^x \big[ - \tfrac{ \sigma_{xx} (y,v) }{2 \sigma_x (y,v)} - \hbar \sigma_x (y, v) + \tfrac{\nu (v)}{v_3} \big] \d y \,.
\end{equation}
We then define the following linear operators
\begin{equation}\label{YAn-f}
	Y_A (f) =
	\left\{
	    \begin{aligned}
	    	0 \,, \quad & \textrm{if } v_3 > 0 \,, \\[10pt]
	    	e^{\kappa (A, v) - \kappa (x,v)} f (A, v) \,, \quad & \textrm{if } v_3 < 0 \,,
	    \end{aligned}
	\right.
\end{equation}
and
\begin{equation}\label{Rn-f}
	Z (f)=\left\{
	    \begin{aligned}
	    	e^{-\kappa(x,v)} f(0,v) \,, \quad & \textrm{if } v_3 > 0 \,, \\[10pt]
	    	0 \,, \quad & \textrm{if } v_3 < 0 \,,
	    \end{aligned}
	\right.
\end{equation}
and
\begin{equation}\label{U-f}
	U (f) =
	\left\{
	    \begin{aligned}
	    	\int_0^x e^{- [ \kappa (x,v) - \kappa (x', v) ]} \tfrac{1}{v_3} f (x' , v) \d x' \,, \quad & \textrm{if  } v_3 > 0 \,, \\
	    	- \int_x^A  e^{- [ \kappa (x,v) - \kappa (x', v) ]} \tfrac{1}{v_3} f (x' , v) \d x' \,, \quad & \textrm{if  } v_3 < 0 \,.
	    \end{aligned}
	\right.
\end{equation}
These operators will play an essential role in estimating the weighted $L^\infty_{x,v}$ norms of the Knudsen layer equations \eqref{KL-Damped}. More precisely, they obey the following estimates.

\begin{lemma}\label{Lmm-ARU}
	Let $- 3 < \gamma \leq 1$, $l \geq 1$, $\beta \in \mathbb{R}$, $A > 0$, $\vartheta > 0$. Then, for sufficiently small $\hbar, \delta > 0$, there are some positive constant $C > 0$, independent of $l$, $\delta$, $\hbar$ and $A$, such that
	\begin{enumerate}
    \item $Y_A (f)$ subjects to the estimate
    \begin{equation}\label{YAn-bnd}
    	\begin{aligned}
    		\| Y_A (f) \|_{L^\infty_{x,v}} \leq C \| f (A, \cdot) \|_{L^\infty_v} \,;
    	\end{aligned}
    \end{equation}
    \item $Z (f)$ enjoys to the estimate
    \begin{equation}\label{Z-bnd}
    	\begin{aligned}
    		\| Z (f) \|_{L^\infty_{v}} \leq C \| f (0, \cdot) \|_{L^\infty_v} \,;
    	\end{aligned}
    \end{equation}
    \item $U (f)$ obeys the bound
    \begin{equation}\label{U-bnd}
    	\begin{aligned}
    		& \| U (f) \|_{L^\infty_{x,v}} \leq C \| \nu^{-1} f \|_{L^\infty_{x,v}} \,.
    	\end{aligned}
    \end{equation}
    \end{enumerate}
\end{lemma}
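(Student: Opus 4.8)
The plan is to reduce all three bounds to pointwise control of the characteristic factor $e^{\pm[\kappa(\cdot,v)-\kappa(\cdot,v)]}$, using the explicit form of $\kappa$ in \eqref{kappa} together with the pointwise estimates of Lemma \ref{Lmm-sigma}. First I would integrate \eqref{kappa} in closed form: since $\int_{x'}^{x}-\tfrac{\sigma_{xx}}{2\sigma_x}\,\d y=\tfrac12\ln\tfrac{\sigma_x(x',v)}{\sigma_x(x,v)}$ and $\int_{x'}^{x}-\hbar\sigma_x\,\d y=-\hbar[\sigma(x,v)-\sigma(x',v)]$, one obtains
\[
\kappa(x,v)-\kappa(x',v)=\tfrac12\ln\tfrac{\sigma_x(x',v)}{\sigma_x(x,v)}-\hbar[\sigma(x,v)-\sigma(x',v)]+\tfrac{\nu(v)}{v_3}(x-x').
\]
The two inputs I will use throughout come from the last line of Lemma \ref{Lmm-sigma}, namely $\sigma_x\leq c\,\nu(v)/|v_3|$ and $|\sigma_{xx}|\leq\delta\,\sigma_x\,\nu(v)/|v_3|$; I abbreviate $\lambda:=\nu(v)/|v_3|$, which is the dominant decay rate hidden in the $\tfrac{\nu}{v_3}$ term.

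For $Z$ and $Y_A$ I would argue by monotonicity in the free endpoint. For $Z$ with $v_3>0$, set $G(x):=e^{-\kappa(x,v)}$; then $\tfrac{d}{dx}\ln G=\tfrac{\sigma_{xx}}{2\sigma_x}+\hbar\sigma_x-\lambda\leq(\tfrac\delta2+c\hbar-1)\lambda\leq0$ once $\delta,\hbar$ are small, so $G(x)\leq G(0)=1$ and hence $|Z(f)(x,v)|\leq|f(0,v)|$. For $Y_A$ with $v_3<0$, set $G(A):=e^{\kappa(A,v)-\kappa(x,v)}$ and differentiate in the upper limit: $\tfrac{d}{dA}\ln G=-\tfrac{\sigma_{xx}(A,v)}{2\sigma_x(A,v)}-\hbar\sigma_x(A,v)-\lambda\leq(\tfrac\delta2-1)\lambda\leq0$, so $G(A)\leq G(x)=1$ and $|Y_A(f)(x,v)|\leq|f(A,v)|$. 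In both cases the constant is $1$, manifestly independent of $l,\delta,\hbar,A$, which gives \eqref{YAn-bnd} and \eqref{Z-bnd}.

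For $U$ the naive pointwise estimate fails because the prefactor $\sqrt{\sigma_x(x,v)/\sigma_x(x',v)}$ produced by the $-\tfrac{\sigma_{xx}}{2\sigma_x}$ term grows polynomially in $x'$; instead I would integrate by parts. Writing $\kappa_{x'}=\tfrac{\nu}{v_3}+r$ with $r=-\tfrac{\sigma_{xx}}{2\sigma_x}-\hbar\sigma_x$ and $|r|\leq(\tfrac\delta2+c\hbar)\lambda$, I use $\tfrac{\nu}{v_3}e^{\kappa(x',v)}=\partial_{x'}e^{\kappa(x',v)}-r\,e^{\kappa(x',v)}$. For $v_3>0$, after multiplying by $e^{-\kappa(x,v)}$ and using $\kappa(0,v)=0$, this gives
\[
J:=\int_0^x e^{-[\kappa(x,v)-\kappa(x',v)]}\tfrac{\nu}{v_3}\,\d x'=1-e^{-\kappa(x,v)}-\int_0^x r\,e^{-[\kappa(x,v)-\kappa(x',v)]}\,\d x'.
\]
Since $e^{-\kappa(x,v)}\in[0,1]$ by the $Z$ estimate, the boundary term is $\leq1$, while the remainder is $\leq(\tfrac\delta2+c\hbar)J$; hence $J\leq1+(\tfrac\delta2+c\hbar)J$, so $J\leq2$ for $\delta,\hbar$ small. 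Bounding $|f|\leq\nu(v)\|\nu^{-1}f\|_{L^\infty_{x,v}}$ then yields \eqref{U-bnd}. The case $v_3<0$ is identical, the boundary term becoming $1-e^{\kappa(A,v)-\kappa(x,v)}\in[0,1]$ by the $Y_A$ estimate just proved (note $J$ is finite a priori, as the integration is over a bounded interval with a locally bounded integrand for fixed $v$ with $v_3\neq0$).

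The main obstacle is exactly this $\sqrt{\sigma_x(x,v)/\sigma_x(x',v)}$ factor, which encodes the $\sigma_x^{1/2}$ weight of the $L^\infty$ energy and, treated directly, would force a case analysis across the two regimes of $\sigma$ and a delicate balance between $\lambda$ and the $x$-growth of $\sigma$. The monotonicity and integration-by-parts devices circumvent this: the offending $\sigma_{xx}$ term is pointwise dominated by $\delta\lambda$, so its contribution is absorbed against the dominant decay $e^{-\lambda|x-x'|}$ using only the smallness of $\delta$ and $\hbar$. This is precisely what keeps every constant independent of $A,l,\delta,\hbar$, as required.
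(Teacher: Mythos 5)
Your proof is correct. For parts (1) and (2) it is essentially the paper's own argument: both rest on the two pointwise bounds $v_3\sigma_x\leq c\,\nu(v)$ and $|\sigma_{xx}v_3|\leq\delta\,\sigma_x\nu(v)$ from Lemma \ref{Lmm-sigma}, which force the exponent to have a sign; whether one phrases this as monotonicity of $\ln G$ (your version) or integrates the lower bound $\kappa_y\geq(1-c\hbar-\tfrac12\delta)\tfrac{\nu}{v_3}$ directly (the paper's \eqref{k-bnd-1} and \eqref{k-Ax}) is immaterial.

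For part (3) you take a genuinely different route, but your stated reason for the detour is mistaken. You claim the ``naive pointwise estimate fails'' because of the prefactor $\sqrt{\sigma_x(x,v)/\sigma_x(x',v)}$; in fact that factor is harmless: since $|\sigma_{xx}|/(2\sigma_x)\leq\tfrac{\delta}{2}\tfrac{\nu}{|v_3|}$ pointwise, one has $\sqrt{\sigma_x(x,v)/\sigma_x(x',v)}\leq e^{\frac{\delta}{2}\frac{\nu}{|v_3|}|x-x'|}$, so it is absorbed into the dominant decay, and the whole kernel obeys $e^{-[\kappa(x,v)-\kappa(x',v)]}\leq e^{-c_{\hbar,\delta}\frac{\nu}{|v_3|}|x-x'|}$ with $c_{\hbar,\delta}=1-c\hbar-\tfrac12\delta>\tfrac12$. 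This is exactly what the paper does, followed by the explicit integration $\int_0^x\tfrac{\nu}{v_3}e^{-c_{\hbar,\delta}\frac{\nu}{v_3}(x-x')}\,\d x'\leq 1/c_{\hbar,\delta}$ (and its mirror image for $v_3<0$). Your alternative --- integration by parts in $x'$ via $\tfrac{\nu}{v_3}e^{\kappa}=\partial_{x'}e^{\kappa}-r\,e^{\kappa}$, control of the boundary terms by the already-proved parts (1)--(2), and absorption of the remainder $(\tfrac{\delta}{2}+c\hbar)J$ into $J$ --- is valid: the kernel is positive, $|r|\leq(\tfrac{\delta}{2}+c\hbar)\tfrac{\nu}{|v_3|}$ comes from the same two inequalities of Lemma \ref{Lmm-sigma}, and your a priori finiteness remark makes the self-bounding step $J\leq 1+(\tfrac{\delta}{2}+c\hbar)J$ legitimate, since the integration interval is bounded for each fixed $(x,v)$ with $v_3\neq0$. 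What it buys is the clean constant $J\leq2$ without ever writing out the exponential decay of the kernel; what it costs is the extra absorption machinery for a bound the direct computation gives in one line. Since both proofs consume identical input from Lemma \ref{Lmm-sigma} and both keep every constant independent of $A$, $l$, $\delta$, $\hbar$, the difference is one of packaging rather than substance.
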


The proof of Lemma \ref{Lmm-ARU} will be given in Section \ref{Sec:YZU-proof} later.

\subsection{Properties of the operator $K$}

In this subsection, the goal is to derive some useful properties of the operator $K$ defined in \eqref{K-K1-K2}-\eqref{K1}-\eqref{K2}. It then is focused on the following decay results of the operator $K$.

\begin{lemma}\label{Lmm-K-Oprt}
	Let $- 3 < \gamma \leq 1$, $A > 0$, $ \beta \in \R$, $\hbar, \vartheta \geq 0$ with $\delta_0 : = \tfrac{1}{4} - T (c \hbar + \vartheta) > 0 $, where $c > 0$ is given in Lemma \ref{Lmm-sigma}. Then there is a constant $C > 0$, independent of $A, \hbar$, such that
	\begin{equation}\label{K-bnd}
		\begin{aligned}
			\| \sigma_x^ \frac{1}{2} e^{\hbar \sigma} K f \|_{A; \beta, \vartheta} \leq C \| \sigma_x^ \frac{1}{2} e^{\hbar \sigma} f \|_{A; \beta + \gamma - 1, \vartheta} \,.
		\end{aligned}
	\end{equation}
    where the norm $\| \cdot \|_{A; \beta, \vartheta}$ is defined in \eqref{XY-space}.
\end{lemma}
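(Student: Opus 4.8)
\emph{Approach.} The plan is to realise $K$ as a velocity integral operator and to reduce \eqref{K-bnd} to the uniform boundedness of a weighted collision kernel. Writing $Kf(x,v)=\int_{\R^3}k(v,v_*)f(x,v_*)\,\d v_*$ with $k=k_2-k_1$, the definitions \eqref{K1}--\eqref{K2} together with the cutoff normalisation \eqref{Cutoff} give the loss kernel $k_1(v,v_*)=|v-v_*|^\gamma\M^{\frac12}(v)\M^{\frac12}(v_*)$, while the classical Carleman/Grad change of variables produces for the gain kernel $k_2$ the pointwise bound
\begin{equation*}
  |k_2(v,v_*)|\lesssim |v-v_*|^{-1}(1+|v-v_*|)^{\gamma-1}\exp\Big(-\tfrac{1}{8T}|v-v_*|^2-\tfrac{1}{8T}\tfrac{(|v-\u|^2-|v_*-\u|^2)^2}{|v-v_*|^2}\Big),
\end{equation*}
valid for all $-3<\gamma\le1$ (for soft potentials the diagonal factor $|v-v_*|^{-1+\gamma}$ stays locally integrable in $v_*$ since $\gamma-1>-4$). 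Setting $G:=\sigma_x^{1/2}e^{\hbar\sigma}w_{\beta+\gamma-1,\vartheta}f$, so that $\|G\|_{L^\infty_{x,v}}$ is the right-hand side of \eqref{K-bnd}, it suffices to prove that the multiplier
\begin{equation*}
  \mathcal{I}(x,v):=\int_{\R^3}|k(v,v_*)|\,\frac{w_{\beta,\vartheta}(v)}{w_{\beta+\gamma-1,\vartheta}(v_*)}\Big(\tfrac{\sigma_x(x,v)}{\sigma_x(x,v_*)}\Big)^{\frac12}e^{\hbar(\sigma(x,v)-\sigma(x,v_*))}\,\d v_*
\end{equation*}
is bounded uniformly in $(x,v)$, because $\sigma_x^{1/2}(x,v)e^{\hbar\sigma(x,v)}w_{\beta,\vartheta}(v)|Kf(x,v)|\le\|G\|_{L^\infty_{x,v}}\,\mathcal{I}(x,v)$.

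\emph{Removing the $x$-dependent weights.} First I would dispose of the $\sigma$-factors. A direct computation from the explicit form \eqref{sigma} (sharpening the $v$-independent upper bound in Lemma \ref{Lmm-sigma}) gives the comparability $\sigma_x(x,v)\sim\delta\min\{(\delta x+l)^{-\Theta},(1+|v-\u|)^{\gamma-1}\}$; combined with the elementary inequality $1+|v_*-\u|\le(1+|v-\u|)(1+|v-v_*|)$ this yields $\big(\sigma_x(x,v)/\sigma_x(x,v_*)\big)^{1/2}\le C(1+|v-v_*|)^{(1-\gamma)/2}$, a pure polynomial in $|v-v_*|$. The bound $|\sigma(x,v)-\sigma(x,v_*)|\le c\,\big||v-\u|^2-|v_*-\u|^2\big|$ of Lemma \ref{Lmm-sigma} gives $e^{\hbar(\sigma(x,v)-\sigma(x,v_*))}\le e^{c\hbar\,||v-\u|^2-|v_*-\u|^2|}$. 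Writing out $w_{\beta,\vartheta}/w_{\beta+\gamma-1,\vartheta}$, the multiplier is thus dominated, uniformly in $x$, by
\begin{equation*}
  \mathcal{I}(x,v)\lesssim(1+|v|)^\beta\int_{\R^3}|k(v,v_*)|(1+|v-v_*|)^{\frac{1-\gamma}{2}}(1+|v_*|)^{1-\gamma-\beta}\,e^{\vartheta(|v-\u|^2-|v_*-\u|^2)+c\hbar||v-\u|^2-|v_*-\u|^2|}\,\d v_*,
\end{equation*}
which no longer involves $x$.

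\emph{Exponential transfer and the kernel integral.} The core step is to absorb the exponential weight into the Gaussian of $k$. For $k_1$ this is immediate: $\M^{1/2}(v)\sim e^{-|v-\u|^2/4T}$ directly dominates $e^{(\vartheta+c\hbar)|v-\u|^2}$ precisely when $\delta_0=\tfrac14-T(c\hbar+\vartheta)>0$, after which the Gaussian in $v_*$ and the factor $|v-v_*|^\gamma$ make the $v_*$-integral finite with the gain $(1+|v|)^{\gamma-1}$. For $k_2$ I would bound $(\vartheta+c\hbar)|X|$, with $X=|v-\u|^2-|v_*-\u|^2$, by completing the square against the kernel exponent via $|X|\le\epsilon\,X^2/|v-v_*|^2+\tfrac{1}{4\epsilon}|v-v_*|^2$ and choosing $\epsilon$ in the window $2T(\vartheta+c\hbar)<\epsilon<\tfrac{1}{8T(\vartheta+c\hbar)}$, which is nonempty exactly because $\delta_0>0$; this degrades $-\tfrac{1}{8T}|v-v_*|^2-\tfrac{1}{8T}X^2/|v-v_*|^2$ only to a Gaussian with strictly positive coefficients, and the surplus absorbs the polynomial $(1+|v-v_*|)^{(1-\gamma)/2}$. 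The claim then reduces to the standard velocity-integral estimate for the resulting Grad kernel $k_0$,
\begin{equation*}
  \int_{\R^3}|k_0(v,v_*)|(1+|v_*|)^m\,\d v_*\lesssim(1+|v|)^{m+\gamma-1},\qquad m\in\R,
\end{equation*}
applied with $m=1-\gamma-\beta$. This gives $\mathcal{I}(x,v)\lesssim(1+|v|)^\beta(1+|v|)^{(1-\gamma-\beta)+\gamma-1}=C$, which is precisely the asserted transfer from weight $\beta+\gamma-1$ to $\beta$, establishing \eqref{K-bnd}.

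\emph{Main obstacle.} I expect the principal difficulty to be the gain term $k_2$: establishing the sharp pointwise Grad bound uniformly over the full range $-3<\gamma\le1$—in particular keeping the diagonal singularity integrable in the soft-potential regime—and carrying out the completing-the-square transfer of the exponential weight without destroying the Gaussian, where the smallness encoded in $\delta_0>0$ is indispensable. By comparison, the loss kernel $k_1$, the $\sigma$-ratio reductions drawn from \eqref{sigma} and Lemma \ref{Lmm-sigma}, and the final polynomial kernel integral are comparatively routine.
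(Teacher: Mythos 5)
Your architecture is the standard Grad-kernel reduction and is essentially what the paper's cited source (Section 8.1 of \cite{JL-2024-arXiv}) does: the reduction to uniform boundedness of the multiplier $\mathcal{I}(x,v)$, the ratio bound $\big(\sigma_x(x,v)/\sigma_x(x,v_*)\big)^{1/2}\lesssim(1+|v-v_*|)^{(1-\gamma)/2}$, the use of $|\sigma(x,v)-\sigma(x,v_*)|\le c\big||v-\u|^2-|v_*-\u|^2\big|$, and the completing-the-square absorption with the window $2T(\vartheta+c\hbar)<\epsilon<\tfrac{1}{8T(\vartheta+c\hbar)}$, whose nonemptiness is exactly $\delta_0>0$, are all correct. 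The gap lies in the two kernel estimates that you yourself flag as the heart of the proof. First, the pointwise bound you assert for $k_2$ is false for $-3<\gamma\le-1$: in the Carleman representation the gain kernel carries the factor $\int_{z\perp(v-v_*)}\big(|v-v_*|^2+|z|^2\big)^{(\gamma-1)/2}e^{-c|z+\zeta_\perp|^2}\,\d z$, and for $\gamma\le-1$ this two-dimensional integral is not uniformly bounded as $|v-v_*|\to0$; it blows up like $|v-v_*|^{\gamma+1}$ (with a logarithm at $\gamma=-1$), so the true diagonal singularity of $k_2$ is of order $|v-v_*|^{\gamma}$ --- the same as that of the loss kernel --- and not $|v-v_*|^{-1}$. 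Your integrability remark ``since $\gamma-1>-4$'' also uses the wrong threshold: local integrability in $\R^3_{v_*}$ requires the exponent to exceed $-3$.

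Second, and more damaging, even on the range $\gamma>-1$ where your stated bound is true, it cannot yield your final estimate $\int_{\R^3}|k_0(v,v_*)|(1+|v_*|)^m\,\d v_*\lesssim(1+|v|)^{m+\gamma-1}$ when $\gamma<0$. The factor $(1+|v-v_*|)^{\gamma-1}$ is $O(1)$ on the effective region $|v-v_*|\lesssim1$ selected by the Gaussian, so your kernel produces only the gain $(1+|v|)^{-1}$, coming from the angular localization $\big||v-\u|^2-|v_*-\u|^2\big|\lesssim|v-v_*|$; but $(1+|v|)^{-1}\gg(1+|v|)^{\gamma-1}$ for $\gamma<0$, so the weight transfer from $\beta+\gamma-1$ to $\beta$ does not follow. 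For soft potentials the required gain must come from decay in the velocity magnitudes, not in the relative velocity: the correct Grad-type bound has the shape $|v-v_*|^{-1}(1+|v|+|v_*|)^{\gamma-1}e^{-\cdots}$ plus, for $-3<\gamma\le-1$, a more singular piece of order $|v-v_*|^{\gamma}$ accompanied by Gaussian decay in $|v|$ and $|v_*|$. With that corrected bound (still locally integrable since $\gamma>-3$), all of your remaining steps --- the $\sigma$-weight reductions, the exponential transfer under $\delta_0>0$, and the final integration --- go through unchanged. As written, however, the proof closes only for $0\le\gamma\le1$ (where $(1+|v|)^{-1}\le(1+|v|)^{\gamma-1}$), and fails precisely on the soft-potential range that makes the lemma's ``full range of cutoff kernels'' claim nontrivial.
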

\begin{proof}
	See \cite{JL-2024-arXiv} Section  8.1 for the proof.
\end{proof}

Next we prove the boundedness of the operator $K_\hbar$ from $Y_{  0, \vartheta}^\infty (A) \cap L^\infty_x L^2_v$ to $Y_{  - \gamma, \vartheta}^\infty (A)$. Here the operator $K_\hbar$ is defined as
\begin{equation}\label{Kh}
	\begin{aligned}
		K_\hbar g_\sigma = e^{\hbar \sigma} K ( e^{- \hbar \sigma} g_\sigma ) \,.
	\end{aligned}
\end{equation}
More precisely, the following results hold.

\begin{lemma}\label{Lmm-Kh-2-infty}
	Let $- 3 < \gamma \leq 1$, $A > 0 $, $m \in \R$, $0 \leq \alpha' < \frac{1}{2}$ and $\hbar, \vartheta \geq 0$ sufficiently small. Then for any $\eta_1 > 0$, there is a $C_{\eta_1} > 0$, independent of $A, \hbar$, such that
	\begin{equation}
		\begin{aligned}
			\LL \nu^{-1} K_\hbar g \RR_{A;   0, \vartheta} \leq \eta_1 \LL g \RR_{A;   0, \vartheta} + C_{\eta_1} \| \sigma_x^ \frac{1}{2}   z_{\alpha'} w_{- \gamma, \vartheta} g \|_{L^\infty_x L^2_v} \,.
		\end{aligned}
	\end{equation}
    Here the weight $z_{\alpha'}$ is given in \eqref{z-alpha}.
\end{lemma}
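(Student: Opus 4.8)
The plan is to work from the Carleman (kernel) representation of $K_\hbar$ and to run the standard high-velocity/near-diagonal truncation that substitutes for the non-compactness of $K$ in $L^\infty_{x,v}$: it splits $\nu^{-1}K_\hbar g$ into a small $L^\infty_{x,v}$ piece (the $\eta_1\LL g\RR_{A;0,\vartheta}$ term) and a truncated piece controlled in velocity-$L^2$ (the $C_{\eta_1}$ term). Writing $K=-K_1+K_2$ as $Kf(v)=\int_{\R^3}k(v,v_*)f(v_*)\,\d v_*$, the conjugated operator becomes
\[
K_\hbar g(x,v)=\int_{\R^3}e^{\hbar[\sigma(x,v)-\sigma(x,v_*)]}\,k(v,v_*)\,g(x,v_*)\,\d v_*\,.
\]
I will use the classical Grad-type bounds for $k$ (the same ones underlying Lemma \ref{Lmm-K-Oprt}): a local singularity of order $|v-v_*|^{-1}$ for the gain kernel and $|v-v_*|^\gamma$ for the loss kernel, both integrable across the diagonal since $\gamma>-3$, together with Gaussian decay in $|v-v_*|$ and in $(|v|^2-|v_*|^2)^2/|v-v_*|^2$. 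Abbreviating $\Delta\sigma:=\sigma(x,v)-\sigma(x,v_*)$, Lemma \ref{Lmm-sigma} gives $|\Delta\sigma|\le c\big||v-\u|^2-|v_*-\u|^2\big|$, so the conjugation factor is $\le e^{c\hbar||v-\u|^2-|v_*-\u|^2|}$, and the ratio $\sigma_x(x,v)/\sigma_x(x,v_*)$ grows at most polynomially in $|v_*-\u|$; for $\hbar,\vartheta$ small (the condition $\delta_0>0$) both are absorbed by the Gaussian decay of $k$.

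For a large parameter $R$, to be fixed in terms of $\eta_1$, I write $k=k_R+(k-k_R)$, where $k_R$ is a bounded cutoff of $k$ supported on the compact set $\{|v|\le R,\ |v_*|\le R,\ |v-v_*|\ge 1/R\}$, so that $k-k_R$ is supported on $\{|v|>R\}\cup\{|v_*|>R\}\cup\{|v-v_*|<1/R\}$. For the remainder I use $|g(x,v_*)|\le\LL g\RR_{A;0,\vartheta}\,\sigma_x^{-\frac12}(x,v_*)e^{-\vartheta|v_*-\u|^2}$ and estimate the weighted kernel integral
\[
\sigma_x^{\frac12}(x,v)\,e^{\vartheta|v-\u|^2}\nu^{-1}(v)\int|(k-k_R)|\,e^{\hbar\Delta\sigma}\,\sigma_x^{-\frac12}(x,v_*)e^{-\vartheta|v_*-\u|^2}\,\d v_*\,.
\]
On $\{|v|>R\}$ this is $\lesssim(1+|v|)^{-1}\le R^{-1}$ by the Grad balance of $k$ against $\nu^{-1}(v)$ exactly as in Lemma \ref{Lmm-K-Oprt}; on $\{|v_*|>R\}$ with $|v|\le R$ the Gaussian velocity tail makes it small; on $\{|v-v_*|<1/R\}$ the near-diagonal mass $\int_{|v-v_*|<1/R}|k|\,\d v_*\to0$ as $R\to\infty$ (using $\gamma>-3$), the weight factors reducing to a bounded multiple of $\nu^{-1}(v)$ there. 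Hence this part is bounded by $\eta_1(R)\LL g\RR_{A;0,\vartheta}$ with $\eta_1(R)\to 0$.

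For the bounded, nonsingular piece $k_R$ I apply Cauchy--Schwarz against $W:=\sigma_x^{\frac12}z_{\alpha'}w_{-\gamma,\vartheta}$, obtaining for each fixed $x$
\[
\int_{|v_*|\le R}|k_R|\,e^{\hbar\Delta\sigma}|g|\,\d v_*\le\Big(\int_{|v_*|\le R}|k_R|^2 e^{2\hbar\Delta\sigma}W^{-2}\,\d v_*\Big)^{\frac12}\|Wg(x,\cdot)\|_{L^2_v}\,.
\]
Since $k_R$ is bounded with $|v|,|v_*|\le R$, the only obstruction in $\int|k_R|^2 e^{2\hbar\Delta\sigma}W^{-2}\,\d v_*$ comes from $W^{-2}\sim\sigma_x^{-1}(x,v_*)|v_{*,3}|^{-2\alpha'}(1+|v_*|)^{2\gamma}e^{-2\vartheta|v_*-\u|^2}$; on the bounded set the $\sigma_x$-ratio, the conjugation factor and $w_{-\gamma,\vartheta}^{-2}$ are dominated by a constant $C_R$, while $\int_{|v_{*,3}|<1}|v_{*,3}|^{-2\alpha'}\,\d v_{*,3}<\infty$ precisely because $0\le\alpha'<\tfrac12$. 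Multiplying through by the outer weight $\sigma_x^{\frac12}(x,v)e^{\vartheta|v-\u|^2}\nu^{-1}(v)$ (which decays in $v$ against the Gaussian of $k_R$, so the prefactor stays bounded for all $v$) and taking $\sup_x$ yields the bound $C_{\eta_1}\|\sigma_x^{\frac12}z_{\alpha'}w_{-\gamma,\vartheta}g\|_{L^\infty_xL^2_v}$. Taking the supremum over $(x,v)$ and combining the two pieces gives the claim, with $R$ chosen so that $\eta_1(R)\le\eta_1$ and $C_{\eta_1}:=C_R$, all uniform in $A$ and $\hbar$.

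The main obstacle is the interplay between the singular velocity weight $z_{\alpha'}^{-2}=|v_{*,3}|^{-2\alpha'}$ and the kernel singularity for very soft potentials: one cannot square $k$ before removing the diagonal, since $|k|$ is only integrable (not square-integrable once $\gamma\le-\tfrac32$) near $v_*=v$. It is therefore essential that the near-diagonal region $\{|v-v_*|<1/R\}$ is placed entirely in the small $L^\infty$ piece, where only the first power of $|k|$ enters, and that squaring occurs only against the bounded $k_R$. Verifying that the remainder is genuinely small uniformly in $x$ and $A$ over the full range $-3<\gamma\le1$ — in particular for soft potentials where $\nu^{-1}(v)$ grows — is the delicate point, and it rests on the Grad decay of $k$ cancelling $\nu^{-1}(v)$; the hypothesis $\alpha'<\tfrac12$ then enters only through the $v_{*,3}$-integrability in the truncated piece.
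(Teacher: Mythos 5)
Your argument is correct and is essentially the proof the paper relies on: the paper defers this lemma to Section 8.2 of \cite{JL-2024-arXiv}, where the same Grad-kernel splitting is carried out --- a near-diagonal/large-velocity remainder made small in $L^\infty_{x,v}$ using only the first power of the kernel (with the conjugation factor $e^{\hbar(\sigma(x,v)-\sigma(x,v_*))}$, the $\vartheta$-weights and the $\sigma_x$-ratio absorbed by the kernel's Gaussian decay, and the balance $\nu^{-1}(v)\int k \sim (1+|v|)^{-1}$ giving the $\eta_1$ smallness), plus a bounded truncated piece estimated by Cauchy--Schwarz against the $z_{\alpha'}$-weighted $L^2_v$ norm, where $\alpha'<\tfrac12$ enters exactly as you say. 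One harmless imprecision: for $-3<\gamma<-2$ the gain kernel's diagonal singularity is of order $|v-v_*|^{\gamma+1}$, which is stronger than $|v-v_*|^{-1}$, but it remains integrable for $\gamma>-3$, and integrability at first power is all your truncation uses, so the argument stands.
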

\begin{proof}
	See \cite{JL-2024-arXiv} Section  8.2 for the proof.
\end{proof}

Next we show the boundedness of the operator $K_\hbar$ in the weighted $L^2_v$ space. More precisely, the following conclusions hold.

\begin{lemma}\label{Lmm-Kh-L2}
	Let $- 3 < \gamma \leq 1$, $\beta \in \R$ and $\hbar, \vartheta \geq 0$ be sufficiently small. Define two nonempty sets
	\begin{equation*}
		\begin{aligned}
			& \mathcal{S}_\gamma : = \{ b_0 \in \R | b_0 < 2 \,, 0 \leq b_0 \leq 1 - \gamma \,, b_0 + \gamma + 1 > 0 \} \,, \\
			& \mathcal{T}_\gamma : = \{ b_1 \in \R | b_1 < 3 \,, 0 \leq b_1 \leq 1 - \gamma \,, b_1 + \gamma > 0 \} \,.
		\end{aligned}
	\end{equation*}
	Denote by $\mu_\gamma : = \min \{ \frac{1}{2}, \frac{\gamma + 3}{2}, \frac{b_0 + \gamma + 1}{2} , \frac{b_1 + \gamma}{2} \} > 0$ with $b_0 \in \mathcal{S}_\gamma$ and $b_1 \in \mathcal{T}_\gamma$. Let $0 \leq \alpha < \mu_\gamma$. Then there is a positive constant $C > 0$, independent of $\hbar$, such that
	\begin{equation}\label{Kh-L2-Bnd}
		\begin{aligned}
			\int_{\R^3} |\nu^{- \frac{1}{2}} z_{- \alpha} \sigma_x^ \frac{1}{2} w_{\beta, \vartheta} K_\hbar g (x,v)|^2 \d v \leq C \int_{\R^3} |\nu^\frac{1}{2} \sigma_x^ \frac{1}{2} w_{\beta, \vartheta} g (x,v)|^2 \d v \,.
		\end{aligned}
	\end{equation}
\end{lemma}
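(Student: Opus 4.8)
The plan is to realize $K_\hbar$ as an integral operator and reduce the claimed $L^2_v$ bound to a weighted Schur estimate in which the singular weight $z_{-\alpha}$ is absorbed by the Schur profile. First I would use \eqref{Kh} together with the kernel representation of $K = K_2 - K_1$ from \eqref{K-K1-K2}--\eqref{K2} to write $K_\hbar g(v) = \int_{\R^3} k_\hbar(v,v') g(v')\,\d v'$ with $k_\hbar(v,v') = e^{\hbar(\sigma(x,v)-\sigma(x,v'))} k(v,v')$. By the second inequality in Lemma \ref{Lmm-sigma}, $|\sigma(x,v)-\sigma(x,v')| \le c\big||v-\u|^2-|v'-\u|^2\big|$, so the exponential prefactor is controlled by $\exp\{c\hbar\big||v-\u|^2-|v'-\u|^2\big|\}$ and, for $\hbar$ small, can be absorbed into the Gaussian decay of the classical Grad kernel bounds (cf. \cite{Grad-1963}) for $k_1$ and $k_2$, at the price of slightly shrinking the Gaussian rate. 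It then suffices to bound the operator $T$ with kernel $T(v,v') = W(v)\,\bar k(v,v')\,W'(v')$, where $W(v) = \nu^{-1/2}(v) z_{-\alpha}(v) \sigma_x^{1/2}(x,v) w_{\beta,\vartheta}(v)$, $W'(v') = \nu^{-1/2}(v') \sigma_x^{-1/2}(x,v') w_{\beta,\vartheta}^{-1}(v')$ and $\bar k$ is the Grad majorant; indeed, setting $\tilde g = \nu^{1/2}\sigma_x^{1/2} w_{\beta,\vartheta} g$ turns \eqref{Kh-L2-Bnd} into $\|T\tilde g\|_{L^2_v} \lesssim \|\tilde g\|_{L^2_v}$, i.e. $\|T\|_{L^2_v \to L^2_v} \lesssim 1$.

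The region $|v_3| \ge 1$, where $z_{-\alpha}\equiv 1$ by \eqref{z-alpha}, is harmless and is handled as in the $L^2$ counterpart of Lemma \ref{Lmm-K-Oprt}; the genuine difficulty is the region $|v_3| < 1$, on which $z_{-\alpha}(v) = |v_3|^{-\alpha}$. There I would apply the weighted Schur test with profile $p(v) = |v_3|^{-\alpha}$ (and $p\equiv 1$ for $|v_3|\ge 1$), which reduces matters to
\begin{equation*}
\sup_v\ \frac{1}{p(v)}\int_{\R^3}|T(v,v')|\,p(v')\,\d v' \lesssim 1,\qquad \sup_{v'}\ \frac{1}{p(v')}\int_{\R^3}|T(v,v')|\,p(v)\,\d v \lesssim 1.
\end{equation*}
In the first integral the factor $z_{-\alpha}(v)/p(v) = 1$ cancels exactly, leaving $\int |\bar k(v,v')|\,|v'_3|^{-\alpha} W'(v')\,\d v'$, which is finite uniformly in $v$ because $|v'_3|^{-\alpha}$ is locally integrable ($\alpha<1$) and, for $\hbar,\vartheta$ small, the Gaussian and Carleman factors of $\bar k$ dominate the growth of $w_{\beta,\vartheta}$. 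In the second integral the two $v_3$-weights combine to $z_{-\alpha}(v) p(v) = |v_3|^{-2\alpha}$, so convergence of $\int_{|v_3|<1}|v_3|^{-2\alpha}\,\d v_3$ forces $\alpha < \tfrac12$, which is exactly the first entry of $\mu_\gamma$.

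What remains is to estimate the velocity integrals $\int |v_3|^{-2\alpha} |\bar k(v,v')|\,(1+|v|)^{m}\,\d v$ that arise from the polynomial and collision-frequency weights (here $m$ collects the polynomial parts of $\nu^{\mp 1/2}$, $\sigma_x^{\pm 1/2}$ and $w_{\beta,\vartheta}^{\pm 1}$) together with the $|v-v'|^{-1}$ (gain) and $|v-v'|^{\gamma}$ (loss) local singularities of $\bar k$. The loss kernel $k_1(v,v') \lesssim |v-v'|^{\gamma}\sqrt{\M(v)\M(v')}$ is the easy case, since its Gaussian decay in both variables makes every moment finite once $\gamma > -3$. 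For the gain kernel one splits the polynomial weights across $v$ and $v'$ and balances the $|v-v'|^{-1}$ singularity against the Carleman factor; introducing the auxiliary exponents $b_0 \in \mathcal{S}_\gamma$ and $b_1 \in \mathcal{T}_\gamma$ to apportion powers of $(1+|v|)$ and $|v-v'|$ is precisely the device that makes these integrals converge, and the resulting convergence thresholds read $(\gamma+3)/2$, $(b_0+\gamma+1)/2$ and $(b_1+\gamma)/2$. Taking the minimum with $\tfrac12$ gives the admissible range $\alpha < \mu_\gamma$; one checks directly that $\mathcal{S}_\gamma$ and $\mathcal{T}_\gamma$ are nonempty for every $-3<\gamma\le 1$ (e.g. any $b_0 \in (-\gamma-1,\,1-\gamma]$ works), so a positive $\alpha$ is always available.

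The hard part will be the interaction, in the second Schur integral, of the one-dimensional singular weight $|v_3|^{-2\alpha}$ with the gain kernel's local singularity $|v-v'|^{-1}$, since both concentrate near the degenerate set $\{v\approx v',\ v_3\approx 0\}$ and a crude bound loses integrability there for soft potentials $-3<\gamma<0$. The resolution is to exploit the Carleman-type Gaussian $e^{-c_0(|v|^2-|v'|^2)^2/|v-v'|^2}$, which confines the mass of $\bar k$ to a thin tube around $v\approx v'$ and, after a change of variables adapted to the Carleman representation (integrating along and transverse to $v-v'$), supplies the extra decay that lets the singular $v_3$-power be integrated against an integrable profile. Verifying this tube estimate uniformly in $x$ — using only the two-sided control of $\sigma_x$ from Lemma \ref{Lmm-sigma}, so that the constant $C$ is independent of $\hbar$ — is the delicate technical core, and the soft-potential range is where the flexible exponents $b_0,b_1$ and the threshold $\mu_\gamma$ are genuinely needed.
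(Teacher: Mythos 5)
The paper itself does not prove this lemma: its ``proof'' is the single line ``See \cite{JL-2024-arXiv} Section 8.3'', so your proposal can only be measured against the standard kernel-estimate argument that the companion paper carries out. Your skeleton is that standard argument and is sound as far as it goes: representing $K_\hbar$ by the conjugated kernel $e^{\hbar(\sigma(x,v)-\sigma(x,v'))}k(v,v')$, absorbing the conjugation factor into the Gaussian decay of Grad's bounds via $|\sigma(x,v)-\sigma(x,v')|\le c\big||v-\u|^2-|v'-\u|^2\big|$ for $\hbar$ small, treating the ratio $\sigma_x(x,v)/\sigma_x(x,v')$ as a polynomial weight (uniformly in $x$), and running a Schur-type test in which the profile $z_{-\alpha}$ cancels in one integral and doubles to $|v_3|^{-2\alpha}$ in the other, whence the angular integrability condition $2\alpha<1$ and the first entry $\tfrac12$ of $\mu_\gamma$.

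There are, however, two genuine gaps. First, a concrete misattribution: you dismiss the loss kernel $k_1(v,v')\lesssim |v-v'|^{\gamma}\sqrt{\M(v)\M(v')}$ as ``the easy case'' because its moments are finite for $\gamma>-3$, but the constraint it generates has nothing to do with moments. Integrating $|v-v'|^{\gamma}$ against $|v_3|^{-2\alpha}$ near the degenerate set $\{v\approx v',\,v_3\approx 0\}$ (worst case $v_3'=0$) gives, in polar coordinates $u=v-v'$, $u_3=r\cos\theta$, the quantity $\int_0^1 r^{\gamma+2-2\alpha}\,\d r\,\int_0^\pi |\cos\theta|^{-2\alpha}\sin\theta\,\d\theta$, which converges iff $2\alpha<\min\{1,\gamma+3\}$. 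Thus the entry $\tfrac{\gamma+3}{2}$ of $\mu_\gamma$ --- the binding constraint precisely in the very soft range $\gamma<-2$ --- comes from the loss term you set aside, not from the gain term where you locate all three nontrivial thresholds. Relatedly, the gain kernel's local singularity is $|v-v'|^{-1}$ only for $\gamma>-2$; for $-3<\gamma\le -2$ the Carleman transverse integral degenerates and the singularity worsens to $|v-v'|^{\gamma+1}$, which your write-up never accounts for. Second, the decisive content of the lemma --- the thresholds $\tfrac{b_0+\gamma+1}{2}$ and $\tfrac{b_1+\gamma}{2}$ together with the constraints defining $\mathcal{S}_\gamma$ and $\mathcal{T}_\gamma$ (in particular $b_0,b_1\le 1-\gamma$, which must be traced to absorbing the polynomial prefactors $(1+|v|+|v'|)^{\gamma-1}$ of the gain kernel, and $b_0<2$, $b_1<3$ to transverse and full integrability) --- is exactly the computation you defer with ``is precisely the device that makes these integrals converge.'' Since $\mu_\gamma$, and hence the admissible range of $\alpha$ used throughout the paper, is \emph{defined} by these numbers, asserting that they ``come out'' is not a proof: the tube estimate you sketch in the last paragraph is the right tool, but until it is executed with the correct singularities (including $|v-v'|^{\gamma+1}$ for $\gamma\le-2$) and with the loss/gain contributions correctly apportioned, the lemma is not established.
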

\begin{proof}
	See \cite{JL-2024-arXiv} Section  8.3 for the proof.
\end{proof}


\section{Uniform estimates for the approximate problem \eqref{A1}}\label{Sec:UECA}

In this section, we primarily derive the a priori estimates for the approximate problem \eqref{A1}, specifically,
\begin{equation*}
	\begin{aligned}
		\left\{	
		\begin{aligned}
			& v_3 \partial_x g + \nu (v) g =  K g + h-\bar{\alpha}(\delta x + l)^{-{\Theta}} \P^+ v_3 g-\bar{\beta} (\delta x + l)^{-{\Theta}}\P^0 g \,, \\
			& g (0,v) |_{v_3 > 0} = f_b \,, \\
			& g (A, v) |_{v_3 < 0} = \varphi_A (v) \,.
		\end{aligned}
		\right.
	\end{aligned}
\end{equation*}
uniformly in the parameter $A > 1$ on the functional $\mathscr{E}^A$ given in \eqref{Eg-lambda}. This step is crucial for our current work.

\subsection{Statements of the uniform results}

We define an operator $\mathscr{L}$ as follows:
\begin{equation}\label{L-lambda}
	\left\{	
	\begin{aligned}
		& \mathscr{L} g : = v_3 \partial_x g + \nu (v) g -  K g = h -\bar{\alpha}(\delta x+l)^{-\Theta}\P^+(v_3 g)-\bar{\beta}(\delta x+l)^{-\Theta}\P^0 g \,, \\
		& g (0,v) |_{v_3 > 0} = f_b\,, \\
		& g (A, v) |_{v_3 < 0} = \varphi_A (v) \,.
	\end{aligned}
	\right.
\end{equation}
For notation convenience, let $g = \mathscr{L}^{-1} (h)$ be the solution to the problem \eqref{L-lambda}. First, we give the following a priori estimates for the operator $\mathscr{L}^{-1} (h)$.

Let $g_\sigma = e^{\hbar \sigma} g$. \eqref{L-lambda} can be equivalently expressed as
\begin{equation}\label{A3-lambda}
	\left\{
	\begin{aligned}
		& \mathscr{L}_{\hbar} g_\sigma : = v_3 \partial_x g_\sigma + [ - \hbar \sigma_x v_3 + \nu (v) ] g_\sigma -  K_\hbar g_\sigma = h_\sigma-D_\hbar g \,, \\
		& g_\sigma (0, v) |_{v_3 > 0} = f_{b, \sigma} (v) \,, \\
		& g_\sigma (A, v) |_{v_3 < 0} = \varphi_{A, \sigma} (v) \,,
	\end{aligned}
	\right.
\end{equation}
where $D_\hbar$ is defined as
\begin{equation}\label{D-def}
	D_\hbar g =\bar{\alpha}(\delta x+l)^{-\Theta}e^{\hbar \sigma}\P^+(v_3 e^{-\hbar \sigma} g_\sigma)+\bar{\beta}(\delta x+l)^{-\Theta}e^{\hbar \sigma}\P^0 (e^{-\hbar \sigma} g_\sigma)\,,
\end{equation}
and $K_\hbar$ is defined in \eqref{Kh}. Note that
\begin{equation*}
		\partial_x (\sigma_x^\frac{1}{2} g_\sigma ) + \partial_x \kappa (x,v)  (\sigma_x^ \frac{1}{2}   g_\sigma )
	 = \tfrac{1}{v_3} \underbrace{ (  \sigma_x^ \frac{1}{2}   K_\hbar g_\sigma + \sigma_x^ \frac{1}{2}   h_\sigma -\sigma_x^ \frac{1}{2} D_\hbar g ) }_{: = H (x,v)} \,.
\end{equation*}
This implies that
\begin{equation}\label{A3-1}
	\begin{aligned}
		\partial_x \big[ e^{\kappa (x,v)} \sigma_x^ \frac{1}{2}   g_\sigma (x,v) \big] = e^{\kappa (x,v)} \tfrac{1}{v_3} H (x,v) \,.
	\end{aligned}
\end{equation}

If $v_3 < 0$, integrating \eqref{A3-1} from $x$ to $A$ yields that
\begin{equation}\label{v3-n}
	\begin{aligned}
		& \sigma_x^ \frac{1}{2}   g_\sigma (x,v) \\
		= & e^{\kappa (A, v) - \kappa (x,v)} \sigma_x^ \frac{1}{2} (A, v)     \varphi_{A, \sigma} (v) - \int_x^A e^{- [\kappa (x,v) - \kappa (x', v)]} \tfrac{1}{v_3} H (x', v) \d x' \,.
	\end{aligned}
\end{equation}
If $v_3 > 0$, integrating \eqref{A3-1} from 0 to $x$ and together with the incoming boundary condition in \eqref{A3-lambda}, we obtain
\begin{equation}\label{v3-p-1}
		\sigma_x^ \frac{1}{2}   g_\sigma (x,v)
		=  e^{- \kappa (x,v)} \sigma_x^ \frac{1}{2} (0, v)  f_{b,\sigma} ( v) + \int_0^x e^{- [\kappa (x,v) - \kappa (x', v)]} \tfrac{1}{v_3} H (x', v) \d x' \,.
\end{equation}
Summarily, the equations \eqref{v3-n} and \eqref{v3-p-1} indicate that
	\begin{align}\label{A3-3}
		 \sigma_x^ \frac{1}{2}   g_\sigma = & Y_A (\sigma_x^ \frac{1}{2} (A, v) \varphi_{A, \sigma} (v),)+ Z(\sigma_x^ \frac{1}{2} (0, v) f_{b,\sigma}(v))  + U ( \sigma_x^ \frac{1}{2}   K_\hbar g_\sigma + \sigma_x^ \frac{1}{2}   h_\sigma-\sigma_x^ \frac{1}{2} D_\hbar g) \,,
	\end{align}
where the operators $Y_A (\cdot)$, $Z (\cdot)$ and $U (\cdot)$ are introduced in \eqref{YAn-f}, \eqref{Rn-f} and \eqref{U-f}, respectively.

We can then establish the uniform a priori estimates for the problem \eqref{A1} (or equivalently \eqref{A3-lambda}) in the following lemma.

\begin{lemma}[Uniform a priori estimates for \eqref{A3-lambda}]\label{Lmm-APE-A3}
	Let $- 3 < \gamma \leq 1$, sufficiently small $\delta, \hbar, \vartheta > 0$, large enough $A,\, l > 1$, integer $\beta \geq \max \{ 0, - \gamma \}$ and $0 < \alpha < \mu_\gamma$, where $\mu_\gamma > 0$ is given in Lemma \ref{Lmm-Kh-L2}. Assume that the source term $h_\sigma$ and the boundary source terms $\varphi_{A, \sigma}$ and $f_{b,\sigma}$ of the system \eqref{A3-lambda} satisfy
	\begin{equation}\label{Assmp-hsigma}
		\begin{aligned}
			\mathscr{A}^A (h_\sigma) \,, \mathscr{B} ( \varphi_{A, \sigma} )  \,, \mathscr{C} ( f_{b, \sigma} ) < \infty\,.
		\end{aligned}
	\end{equation}
	Let $ g_\sigma = g_\sigma (x,v) $ be the solution to \eqref{A3-lambda}. Then there exists a constant $C > 0$, independent of $A$ and $\delta$, but dependent on $\hbar$, such that $g_\sigma$ obeying the following bounds:
	\begin{equation}\label{Apriori-bnd}
		\begin{aligned}
			\mathscr{E}^A \big( g_\sigma \big) \leq C \big( \mathscr{A}^A (h_\sigma) + \mathscr{B} ( \varphi_{A, \sigma} ) + \mathscr{C} ( f_{b, \sigma} )  \big) \,.
		\end{aligned}
	\end{equation}
	Moreover, let $g_{\sigma i} = g_{\sigma i} (x,v) $ be the solutions corresponding to the source terms $h_{\sigma i }$ and $f_{b,\sigma i}$ $(i = 1,2)$, where $ \mathscr{A}^A (h_{\sigma i}) < \infty $ and $ \mathscr{C} ( f_{b, \sigma i} ) < \infty $  for $ i = 1,2 $. Then $g_{\sigma 2 } - g_{\sigma 1 } $ satisfies
	\begin{equation}\label{Apriori-bnd-diff}
		\begin{aligned}
			\mathscr{E}^A \big( g_{\sigma 2 } - g_{\sigma 1 } \big) \leq C \big(\mathscr{A}^A (h_{\sigma 2 } - h_{\sigma 1 } ) + \mathscr{C} ( f_{b, \sigma 2}-f_{b, \sigma 1} ) \big) \,.
		\end{aligned}
	\end{equation}
	Here the functionals $\mathscr{E}^A (\cdot)$, $\mathscr{A}^A (\cdot)$, $\mathscr{B} (\cdot)$ and $\mathscr{C} (\cdot)$ are defined in \eqref{Eg-lambda} and \eqref{Ah-lambda}.
\end{lemma}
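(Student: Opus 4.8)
The plan is to bound the full energy $\mathscr{E}^A(g_\sigma)$ by the three source functionals through the one-directional cascade of weighted estimates sketched in Figure~\ref{Fig1}, taking the integral representation \eqref{A3-3} as the starting point and descending from the weighted $L^\infty_{x,v}$ norm down to the macroscopic $L^2_{x,v}$ norm, which is finally closed against the data by an entropy-type energy identity. Each link of the chain will have the shape (controlled quantity) $\lesssim$ (next quantity) $+$ (source functionals), and at two places a copy of the controlled quantity itself reappears on the right with a small prefactor and is absorbed.

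First I would apply the weighted $L^\infty_{x,v}$ norm to \eqref{A3-3}. By the operator bounds in Lemma~\ref{Lmm-ARU}, the three pieces $Y_A$, $Z$, $U$ are controlled by $\mathscr{B}(\varphi_{A,\sigma})$, $\mathscr{C}(f_{b,\sigma})$, and the $\nu^{-1}$-weighted $L^\infty_{x,v}$ norms of $\sigma_x^{1/2}K_\hbar g_\sigma$, $\sigma_x^{1/2}h_\sigma$ and $\sigma_x^{1/2}D_\hbar g$, respectively. The middle one is exactly $\mathscr{A}^A_\infty(h_\sigma)$; the damping contribution $D_\hbar g$ from \eqref{D-def} carries the small factors $\bar\alpha,\bar\beta=O(\hbar)$ and the decay $(\delta x+l)^{-\Theta}$, so after estimating its projections by macroscopic $L^2$ quantities it is absorbed for $\hbar$ small; and the operator $K_\hbar$ is split by Lemma~\ref{Lmm-Kh-2-infty} into $\eta_1\,\mathscr{E}^A_\infty(g_\sigma)$ (absorbed for small $\eta_1$) plus $C_{\eta_1}\|\sigma_x^{1/2}z_{\alpha'}w_{\beta,\vartheta}g_\sigma\|_{L^\infty_xL^2_v}$. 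This realizes the first link $\mathscr{E}^A_\infty(g_\sigma)\lesssim \|\sigma_x^{1/2}z_{\alpha'}w_{\beta,\vartheta}g_\sigma\|_{L^\infty_xL^2_v}+\text{sources}$ (Lemma~\ref{Lmm-Y-bnd}). Next I would descend to the $L^2$ level: a one-dimensional Sobolev interpolation in $x$ with the weights present here (Lemma~\ref{Lmm-Linfty-L2}) passes from $L^\infty_xL^2_v$ to the cross functional $\mathscr{E}^A_{\mathtt{cro}}(g_\sigma)$, which also sees $\partial_x g_\sigma$. Then Lemma~\ref{Lmm-L2xv-alpha}---whose technical core is the removal of the singular factor $z_{-\alpha}(v)$ attached to $K_\hbar$ via the weighted $L^2_v$ bound of Lemma~\ref{Lmm-Kh-L2}---dominates $\mathscr{E}^A_{\mathtt{cro}}(g_\sigma)$ by $\|(\delta x+l)^{-\Theta/2}\nu^{1/2}w_{\beta+\beta_\gamma,\vartheta}g_\sigma\|_A$, hence by $\mathscr{E}^A_2(w_{\beta+\beta_\gamma,\vartheta}g_\sigma)$; applying the weighted $L^2_{x,v}$ estimate (Lemma~\ref{Lmm-L2xv}) and the macroscopic comparison (Lemma~\ref{Lmm-P-wg}) then reduces everything to the unweighted split $\mathscr{E}^A_2(g_\sigma)=\|(\delta x+l)^{-\Theta/2}\P g_\sigma\|_A+\|\nu^{1/2}\P^\perp g_\sigma\|_A$.

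The final and hardest link (Lemma~\ref{Lmm-L2xv.}) closes $\mathscr{E}^A_2(g_\sigma)$ against the data by the energy method: testing \eqref{A3-lambda} with $g$ and integrating, the coercivity of $\L$ on $\mathrm{Null}^\perp(\L)$ supplies the microscopic piece $\|\nu^{1/2}\P^\perp g_\sigma\|_A$ at once, but $\L$ gives \emph{no} control on the fluid part $\P g_\sigma$. \emph{This is the main obstacle:} the macroscopic damping must be manufactured from the artificial terms in \eqref{D-def} together with the entropy flux. The boundary flux $\int v_3 g^2$ and the damping terms $\bar\alpha(\delta x+l)^{-\Theta}\P^+(v_3 g)$, $\bar\beta(\delta x+l)^{-\Theta}\P^0 g$ produce, through the signs of $P(\psi_j,\psi_j)$ recorded in \eqref{I+0} and the positivity $(\P^0 f,f)\geq 0$ of Lemma~\ref{Lmm-split}, a nonnegative contribution controlling precisely the components of $\P g_\sigma$ indexed by $I^+\cup I^0$, while the $I^-$ components are absorbed by the incoming data and the $(\delta x+l)^{-\Theta/2}$ weight; the Mach-number dependence of the number of solvability conditions is exactly what is encoded at this step.

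Summing the four highlighted quantities and back-substituting the chain yields \eqref{Apriori-bnd}, and I would carefully verify that every constant is independent of $A$ and $\delta$ (depending only on $\hbar$): this holds because Lemmas~\ref{Lmm-ARU}, \ref{Lmm-Kh-2-infty} and \ref{Lmm-Kh-L2} furnish $A$-independent constants, and all absorbed terms carry the smallness of $\eta_1,\hbar,\bar\alpha,\bar\beta$, so the hierarchy of parameters can be fixed once and for all. Finally the difference bound \eqref{Apriori-bnd-diff} is immediate: by linearity of \eqref{A3-lambda}, $g_{\sigma 2}-g_{\sigma 1}$ solves the same system with data $h_{\sigma 2}-h_{\sigma 1}$ and $f_{b,\sigma 2}-f_{b,\sigma 1}$ and vanishing datum at $x=A$, so the $\mathscr{B}$-term drops out and the estimate just proved applies verbatim.
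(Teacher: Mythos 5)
Your proposal is correct and follows essentially the same route as the paper: it chains Lemma \ref{Lmm-Y-bnd} $\to$ Lemma \ref{Lmm-Linfty-L2} $\to$ Lemma \ref{Lmm-L2xv-alpha} $\to$ Lemma \ref{Lmm-L2xv} $\to$ Lemma \ref{Lmm-P-wg} $\to$ Lemma \ref{Lmm-L2xv.} exactly as in Figure \ref{Fig1} and Subsection \ref{Subsec:Uniform-Est}, with the same absorption of the $\eta_1$-small and $O(\hbar)$-small terms, and with the difference bound \eqref{Apriori-bnd-diff} obtained, as in the paper, by linearity of \eqref{A3-lambda} with vanishing datum at $x=A$ so that the $\mathscr{B}$-term drops out. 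The one misattribution in your sketch of the final link: the $I^-$ components of $\P g_\sigma$ are \emph{not} handled by the incoming data (which only enters through the boundary flux traces); in Lemma \ref{Lmm-macro-diss} they receive genuine interior coercivity from the weight-derivative term $-\hbar\sigma_x v_3 g_\sigma$ produced by the conjugation $e^{\hbar\sigma}$ --- after factoring out $(\delta x+l)^{-\Theta}$ this is the $-5\hbar\,\P v_3\P$ piece, whose quadratic form is positive on $I^-$ precisely because $P(\psi_j,\psi_j)<0$ there --- while the artificial damping terms of \eqref{D-def} supply the $I^+\cup I^0$ part, exactly as you say.
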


The proof of Lemma \ref{Lmm-APE-A3} will be completed in Subsection \ref{Subsec:Uniform-Est} later.

\subsection{Weighted $L^\infty_{x,v}$ estimates in $Y^\infty_{  \beta, \vartheta} (A)$ space}

In this subsection, we aim to control the norm $\LL g_\sigma \RR_{A;  \beta, \vartheta}$ with respect to the space $Y^\infty_{\beta, \vartheta} (A)$. Moreover, we will also investigate the continuous dependence of $g_\sigma $ on the source term $h_\sigma$ and the boundary source term $f_{b,\sigma}$. Let $g_{\sigma i} = \mathscr{L}_{ \hbar}^{-1} (h_{\sigma i})$ ($i = 1,2$) for sufficiently small $\hbar \geq 0$. Then the difference ${\vartriangle} g_\sigma : = g_{\sigma 2} - g_{\sigma 1}$ satisfies
\begin{equation}\label{L-lambda-Dif}
	\left\{	
	\begin{aligned}
		& v_3 \partial_x ( {\vartriangle} g_\sigma ) + [ - \hbar \sigma_x v_3 + \nu (v) ] ( {\vartriangle} g_\sigma ) -  K_\hbar ( {\vartriangle} g_\sigma ) = {\vartriangle} h_\sigma - D_\hbar {\vartriangle} g_\sigma \,, \\
		& ( {\vartriangle} g_\sigma ) (0, v) |_{v_3 > 0} = {\vartriangle} f_{b,\sigma} \,, \\
		& ( {\vartriangle} g_\sigma ) (A, v) |_{v_3 < 0} = 0 \,,
	\end{aligned}
	\right.
\end{equation}
where ${\vartriangle} h_\sigma : = h_{\sigma 2} - h_{\sigma 1}$ and ${\vartriangle} f_{b,\sigma} : = f_{b, \sigma 2} - f_{b, \sigma 1}$.

More precisely, the following lemma holds.

\begin{lemma}\label{Lmm-Y-bnd}
	Let $- 3 < \gamma \leq 1$, $A > 1$, $0 \leq \frac{1}{2} - \mu_\gamma < \alpha' < \frac{1}{2}$, the integer $\beta \geq \max \{ 0, - \gamma \}$, where the constant $\mu_\gamma > 0$ is given in Lemma \ref{Lmm-Kh-L2}. Then for sufficiently small $\vartheta, \hbar \geq 0$, there is a constant $C > 0$, independent of $A$ and $\hbar$, such that
	\begin{equation}\label{LA-g-sigma}
		\begin{aligned}
			& \mathscr{E}_\infty^A ( g_\sigma ) + \LL g_\sigma \RR_{  \beta, \vartheta, \Sigma} \leq C \| z_{\alpha'} \sigma_x^ \frac{1}{2}   w_{\beta, \vartheta} g_\sigma \|_{L^\infty_x L^2_v} + C \big( \mathscr{A}^A_\infty (h_\sigma) + \mathscr{B}_\infty ( \varphi_{A, \sigma} )+ \mathscr{C}_\infty ( f_{b, \sigma} ) \big) \,,
		\end{aligned}
	\end{equation}
	where the functionals $ \mathscr{E}_\infty^A ( \cdot ) $, $ \mathscr{A}^A_\infty ( \cdot ) $, $ \mathscr{B}_\infty ( \cdot ) $ and $ \mathscr{C}_\infty ( \cdot ) $ are defined in \eqref{E-infty}, \eqref{As-def}, \eqref{B-def} and \eqref{C-def}, respectively. Moreover, ${\vartriangle} g_{\sigma} (x,v)$ satisfying \eqref{L-lambda-Dif} enjoys the bound
	\begin{equation}\label{LA-g-sigma-Dif}
		\begin{aligned}
			\mathscr{E}_\infty^A ( {\vartriangle} g_\sigma ) + \LL {\vartriangle} g_\sigma \RR_{  \beta, \vartheta, \Sigma} \leq C \| z_{\alpha'} \sigma_x^ \frac{1}{2}   w_{\beta, \vartheta} {\vartriangle} g_\sigma \|_{L^\infty_x L^2_v} + C \mathscr{A}^A_\infty ( {\vartriangle} h_\sigma ) + C \mathscr{C}_\infty ( {\vartriangle} f_{b, \sigma} ) \,.
		\end{aligned}
	\end{equation}
\end{lemma}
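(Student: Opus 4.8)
The plan is to read the estimate directly off the mild representation \eqref{A3-3},
\[
\sigma_x^{\tfrac{1}{2}} g_\sigma = Y_A\big(\sigma_x^{\tfrac{1}{2}}(A,\cdot)\,\varphi_{A,\sigma}\big) + Z\big(\sigma_x^{\tfrac{1}{2}}(0,\cdot)\,f_{b,\sigma}\big) + U\big(\sigma_x^{\tfrac{1}{2}}K_\hbar g_\sigma + \sigma_x^{\tfrac{1}{2}}h_\sigma - \sigma_x^{\tfrac{1}{2}}D_\hbar g\big),
\]
and to bound each of the three operator contributions using the $L^\infty_{x,v}$ estimates in Lemma \ref{Lmm-ARU}. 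The crucial structural observation is that the weight $w_{\beta,\vartheta}(v)$ depends on $v$ alone, while $Y_A$, $Z$ and $U$ act only in the $x$-variable for each frozen $v$; hence $w_{\beta,\vartheta}$ commutes with all three operators and may be moved inside their arguments before invoking Lemma \ref{Lmm-ARU}. Multiplying \eqref{A3-3} by $w_{\beta,\vartheta}$ and taking the $L^\infty_{x,v}$ norm thus reduces everything to $L^\infty_v$ (resp. $L^\infty_{x,v}$) bounds on the weighted arguments.

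First I would dispose of the data terms. The $Y_A$ bound \eqref{YAn-bnd} gives $\|w_{\beta,\vartheta}Y_A(\sigma_x^{\tfrac{1}{2}}(A,\cdot)\varphi_{A,\sigma})\|_{L^\infty_{x,v}} \lesssim \|\sigma_x^{\tfrac{1}{2}}(A,\cdot)w_{\beta,\vartheta}\varphi_{A,\sigma}\|_{L^\infty_v} = \mathscr{B}_\infty(\varphi_{A,\sigma})$, the $Z$ bound \eqref{Z-bnd} gives the $\mathscr{C}_\infty(f_{b,\sigma})$ contribution, and the $U$ bound \eqref{U-bnd} applied to $\sigma_x^{\tfrac{1}{2}}h_\sigma$ produces $\lesssim \|\nu^{-1}\sigma_x^{\tfrac{1}{2}}w_{\beta,\vartheta}h_\sigma\|_{L^\infty_{x,v}} = \mathscr{A}_\infty^A(h_\sigma)$. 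These are exactly the three source functionals on the right-hand side of \eqref{LA-g-sigma}. The same bounds, being uniform up to the endpoints $x=0$ and $x=A$, simultaneously control the trace norm $\LL g_\sigma\RR_{\beta,\vartheta,\Sigma}$, so no separate boundary argument is needed.

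The genuine work lies in the two remaining pieces $U(\sigma_x^{\tfrac{1}{2}}K_\hbar g_\sigma)$ and $U(\sigma_x^{\tfrac{1}{2}}D_\hbar g)$. For the collision contribution the $U$ bound turns it into $\lesssim \LL\nu^{-1}K_\hbar g_\sigma\RR_{A;\beta,\vartheta}$, to which I would apply the $w_{\beta,\vartheta}$-weighted analogue of Lemma \ref{Lmm-Kh-2-infty}, dominating it by $\eta_1 \LL g_\sigma\RR_{A;\beta,\vartheta} + C_{\eta_1}\|z_{\alpha'}\sigma_x^{\tfrac{1}{2}}w_{\beta,\vartheta}g_\sigma\|_{L^\infty_x L^2_v}$. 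Since $\LL g_\sigma\RR_{A;\beta,\vartheta} = \mathscr{E}_\infty^A(g_\sigma)$ is precisely the quantity on the left, choosing $\eta_1$ small lets us absorb it, leaving the desired $L^\infty_x L^2_v$ term. For the damping term I would exploit that $D_\hbar g$ (see \eqref{D-def}) is, up to the small factors $\bar{\alpha},\bar{\beta}=O(\hbar)$ and the decaying weight $(\delta x+l)^{-\Theta}$, a finite linear combination of the Gaussian profiles $\psi_j$ whose coefficients are $v$-integrals of $g_\sigma$ against $\psi_j$ and $X_j$; after the $U$ bound it is therefore controlled by a small constant times an $L^\infty_x L^2_v$ norm of $g_\sigma$, which either joins the target term or is absorbed when $\hbar$ is small.

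Collecting the estimates and absorbing $\eta_1\mathscr{E}_\infty^A(g_\sigma)$ together with the small damping contribution into the left side yields \eqref{LA-g-sigma}; the difference bound \eqref{LA-g-sigma-Dif} follows verbatim by applying the same argument to the linear system \eqref{L-lambda-Dif}, whose $x=A$ data vanishes so that no $\mathscr{B}_\infty$ term appears. The main obstacle I anticipate is the collision piece: extending Lemma \ref{Lmm-Kh-2-infty} to general $\beta$ requires conjugating $K_\hbar$ by $w_{\beta,\vartheta}$ and tracking the velocity-weight shift (the native $w_{-\gamma,\vartheta}$ of that lemma versus the target $w_{\beta,\vartheta}$) together with the singular factor $z_{\alpha'}$, and one must verify that the absorbing constant $\eta_1$ and the damping smallness can be chosen \emph{uniformly in $A$ and $\delta$}, which is what makes the closing of the estimate delicate.
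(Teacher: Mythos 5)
Your overall strategy is the paper's: read the estimate off the mild representation \eqref{A3-3}, bound $Y_A$, $Z$, $U$ with Lemma \ref{Lmm-ARU}, identify the data terms with $\mathscr{B}_\infty$, $\mathscr{C}_\infty$, $\mathscr{A}^A_\infty$, and close by absorbing the collision and damping contributions. However, there is a genuine gap at exactly the step you flag as your ``main obstacle'': you propose to control the collision piece by a ``$w_{\beta,\vartheta}$-weighted analogue of Lemma \ref{Lmm-Kh-2-infty}'', i.e.
\begin{equation*}
	\LL \nu^{-1} K_\hbar g_\sigma \RR_{A;\,\beta,\vartheta} \leq \eta_1 \LL g_\sigma \RR_{A;\,\beta,\vartheta} + C_{\eta_1} \| z_{\alpha'} \sigma_x^{\frac{1}{2}} w_{\beta,\vartheta} g_\sigma \|_{L^\infty_x L^2_v}\,,
\end{equation*}
but no such statement is available: Lemma \ref{Lmm-Kh-2-infty} is stated only at weight index $0$ (with $w_{-\gamma,\vartheta}$ inside the $L^\infty_x L^2_v$ norm), and proving the general-$\beta$ version is a nontrivial extension you do not supply. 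The paper avoids this entirely by a two-stage argument: first, Lemma \ref{Lmm-K-Oprt} gives the weight-gain property $\LL \nu^{-1} K_\hbar g_\sigma \RR_{A;\,\beta,\vartheta} \leq C \LL g_\sigma \RR_{A;\,\beta-1,\vartheta}$, so a finite induction on the integer $\beta$ reduces the whole estimate to the case $\beta = 0$ (the source terms at intermediate levels being dominated by those at level $\beta$); second, only at $\beta = 0$ is Lemma \ref{Lmm-Kh-2-infty} invoked and the small factor $\eta_1$ absorbed, after which $w_{-\gamma,\vartheta} \leq w_{\beta,\vartheta}$ (valid because $\beta \geq \max\{0,-\gamma\}$) upgrades the $L^\infty_x L^2_v$ term to the weight appearing in \eqref{LA-g-sigma}. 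In other words, no absorption at weight level $\beta$ is ever needed, and the missing idea in your proposal is precisely this bootstrap in the polynomial weight.

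A secondary imprecision concerns the damping term: since $\P^+$ and $\P^0$ integrate in $v$, bounding $\mathscr{A}^A_\infty(D_\hbar g)$ by $C\hbar\,\mathscr{E}^A_\infty(g_\sigma)$ requires exchanging $e^{\hbar\sigma(x,v)}$ and $\sigma_x^{1/2}(x,v)$ against their values at the integration variable $v_*$; the paper does this via the bound $|\sigma(x,v)-\sigma(x,v_*)| \lesssim \big||v-\u|^2-|v_*-\u|^2\big|$ from Lemma \ref{Lmm-sigma} and the ratio estimate for $\sigma_x(x,v)/\sigma_x(x,v_*)$ quoted from Lemma 8.1 of \cite{JL-2024-arXiv}. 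Your sketch (Gaussian profiles, $O(\hbar)$ smallness, absorb) is correct in spirit but silently assumes these weight-exchange estimates. The difference bound \eqref{LA-g-sigma-Dif} is indeed verbatim once the main estimate is in place, as you say.
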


\begin{proof}
	By Lemma \ref{Lmm-ARU}, the equation \eqref{A3-3} shows
		\begin{align}\label{Y-bnd-0}
			\no  \LL g_\sigma \RR_{A;   \beta, \vartheta} + \LL g_\sigma \RR_{  \beta, \vartheta, \Sigma} = & \| \sigma_x^ \frac{1}{2}   w_{\beta, \vartheta} g_\sigma \|_{ L^\infty_{x,v} } + \| \sigma_x^ \frac{1}{2}   w_{\beta, \vartheta} g_\sigma \|_{L^\infty_\Sigma} \\
			\no \leq & \| w_{\beta, \vartheta} Y_A (\sigma_x^ \frac{1}{2} \varphi_{A, \sigma})\|_{L^\infty_{x,v}}  + \| w_{\beta, \vartheta} Z ( \sigma_x^ \frac{1}{2} f_{b,\sigma}) \|_{L^\infty_{x,v}} \\
			 \no &+  \| w_{\beta, \vartheta} U (  \sigma_x^ \frac{1}{2}   K_\hbar g_\sigma + \sigma_x^ \frac{1}{2}   h_\sigma - \sigma_x^ \frac{1}{2} D_\hbar g ) \|_{L^\infty_{x,v}} \\
			\leq & \mathscr{B}_\infty ( \varphi_{A, \sigma} )+ \mathscr{C}_\infty ( f_{b, \sigma} ) + C \| \nu^{-1} (  \sigma_x^ \frac{1}{2} K_\hbar g_\sigma + \sigma_x^ \frac{1}{2} h_\sigma -\sigma_x^ \frac{1}{2} D_\hbar g ) \|_{A; \beta, \vartheta} \\
			\no \leq & \mathscr{B}_\infty ( \varphi_{A, \sigma} )+ \mathscr{C}_\infty ( f_{b, \sigma} ) + C \big( \LL \nu^{-1} K_\hbar g_\sigma \RR_{A;   \beta, \vartheta} + \mathscr{A}^A_\infty (h_\sigma)+\mathscr{A}^A_\infty(D_\hbar g) \big) \,.
		\end{align}
	By Lemma \ref{Lmm-K-Oprt}, one has $ \LL \nu^{-1} K_\hbar g_\sigma \RR_{A;   \beta, \vartheta} \leq C \LL g_\sigma \RR_{A;   \beta - 1, \vartheta} $. It therefore infers that
	\begin{equation*}
		\begin{aligned}
			\LL g_\sigma \RR_{A;   \beta, \vartheta} + \LL g_\sigma \RR_{  \beta, \vartheta, \Sigma} \leq C \LL g_\sigma \RR_{A;   \beta - 1, \vartheta} + C \mathscr{A}^A_\infty (h_\sigma) + C \mathscr{A}^A_\infty(D_\hbar g) + C \mathscr{B}_\infty ( \varphi_{A, \sigma} )+ C \mathscr{C}_\infty ( f_{b, \sigma} ) \,.
		\end{aligned}
	\end{equation*}
	Inductively, for any given integer $\beta \geq \max \{ 0, - \gamma \}$, it follows that
	\begin{equation}\label{Y-bnd-1}
		\begin{aligned}
			\LL g_\sigma \RR_{A;   \beta, \vartheta} + \LL g_\sigma \RR_{  \beta, \vartheta, \Sigma} \leq C \LL g_\sigma \RR_{A;   0, \vartheta} + C \big( \mathscr{A}^A_\infty (h_\sigma) + \mathscr{A}^A_\infty(D_\hbar g)+  \mathscr{B}_\infty ( \varphi_{A, \sigma} ) +  \mathscr{C}_\infty ( f_{b, \sigma} ) \big) \,,
		\end{aligned}
	\end{equation}
	where we have utilized $ \LL \nu^{-1} h_\sigma \RR_{A;   i, \vartheta} \leq C \LL \nu^{-1} h_\sigma \RR_{A;   \beta, \vartheta} $ for $0 \leq i \leq \beta$.
	
	It remains to dominate the quantity $\LL g_\sigma \RR_{A;   0, \vartheta}$. Together with \eqref{A3-3}, the similar arguments in \eqref{Y-bnd-0} indicate that
	\begin{multline*}
			 \LL g_\sigma \RR_{A;   0, \vartheta} + \LL g_\sigma \RR_{  0, \vartheta, \Sigma}
			\leq  C \LL \nu^{-1} K_\hbar g_\sigma \RR_{A;   0, \vartheta} \\
			+ C \big( \LL \nu^{-1} h_\sigma  \RR_{A;   0, \vartheta} + \LL \nu^{-1} D_\hbar g \RR_{A;   0, \vartheta} + \| \sigma_x^ \frac{1}{2} (A, \cdot)  w_{0, \vartheta} \varphi_{A, \sigma} \|_{L^\infty_v} + \| \sigma_x^ \frac{1}{2} (0, \cdot)  w_{0, \vartheta} f_{b, \sigma} \|_{L^\infty_v} \big) \,.
	\end{multline*}
	Lemmas \ref{Lmm-Kh-2-infty} show that
	\begin{equation*}
		\begin{aligned}
			\LL \nu^{-1} K_\hbar g_\sigma \RR_{A;   0, \vartheta} \leq \eta_1 \LL g_\sigma \RR_{A;   0, \vartheta} + C_{\eta_1} \| z_{\alpha'} \sigma_x^ \frac{1}{2}   w_{- \gamma, \vartheta} g_\sigma \|_{L^\infty_x L^2_v}
		\end{aligned}
	\end{equation*}
	for any small $\eta_1 > 0$. Taking $\eta_1 > 0$ such that $C \eta_1 \leq \frac{1}{2}$, it follows that
	\begin{equation}\label{Y-bnd-2}
		\begin{aligned}
			& \LL g_\sigma \RR_{A;   0, \vartheta} + \LL g_\sigma \RR_{  0, \vartheta, \Sigma} \\
			\leq & C \| z_{\alpha'} \sigma_x^ \frac{1}{2}   w_{\beta, \vartheta} g_\sigma \|_{L^\infty_x L^2_v} + C \big( \mathscr{A}^A_\infty (h_\sigma) + \mathscr{A}^A_\infty (D_\hbar g) +  \mathscr{B}_\infty ( \varphi_{A, \sigma} ) +  \mathscr{C}_\infty ( f_{b, \sigma} ) \big)
		\end{aligned}
	\end{equation}
	for integer $\beta \geq \max \{ 0, - \gamma \}$ and $- 3 < \gamma \leq 1$.
Moreover, by Lemma 8.1 of \cite{JL-2024-arXiv}, we have
\begin{equation*}
	\frac{\sigma_x(x,v)}{\sigma_x(x,v_*)} \lesssim \max \Big\{1, \frac{(1+|v-\u|)^{-1+\gamma}}{(1+|v_*-\u|)^{-1+\gamma}}\Big\}.
\end{equation*}
Together with $\sigma(x,v)-\sigma(x,v_*) \leq | c \left||v-\u|^2-|v_*-\u |^2\right|$ from Lemma \ref{Lmm-sigma}, one establishes
	\begin{equation}\label{Y-bnd-3}
		\begin{aligned}
			\mathscr{A}^A_\infty (D_\hbar g)& \leq \bar{\alpha} \mathscr{A}^A_\infty ((\delta x + l)^{-\Theta} e^{\hbar \sigma}\P^+ v_3 e^{-\hbar \sigma}g_\sigma)
			+\bar{\beta} \mathscr{A}^A_\infty ((\delta x + l)^{-\Theta} e^{\hbar \sigma}\P^0 e^{-\hbar \sigma}g_\sigma)\\
			 &\leq C\hbar \mathscr{E}^A_\infty (g_\sigma).
		\end{aligned}
	\end{equation}
	Then \eqref{Y-bnd-1}, \eqref{Y-bnd-2} and \eqref{Y-bnd-3} imply the bound \eqref{LA-g-sigma} for integer $\beta \geq \max \{ 0, - \gamma \}$, $- 3 < \gamma \leq 1$, $0 \leq \frac{1}{2} - \mu_\gamma < \alpha' < \frac{1}{2}$ and sufficiently small $\vartheta, \hbar \geq 0$.
	
	By the virtue of the similar arguments in \eqref{LA-g-sigma}, ${\vartriangle} g_\sigma$ subjecting to the equation \eqref{L-lambda-Dif} satisfies the bound \eqref{LA-g-sigma-Dif}. Then the proof of Lemma \ref{Lmm-Y-bnd} is finished.
\end{proof}

\subsection{Estimate for $L^\infty_x L^2_v$ norm with weight $ z_{\alpha'} \sigma_x^ \frac{1}{2}   w_{\beta, \vartheta} $}

In this subsection, we will dominate the quantity $\| z_{\alpha'} \sigma_x^ \frac{1}{2}   w_{\beta, \vartheta} g_\sigma \|_{L^\infty_x L^2_v}$ appeared in the right hand side of \eqref{LA-g-sigma}, Lemma \ref{Lmm-Y-bnd}. The following lemma holds.

\begin{lemma}\label{Lmm-Linfty-L2}
	Let $- 3 < \gamma \leq 1$, $A > 1$, $ 0 \leq \hbar, \vartheta \ll 1 $, the integer $\beta \geq \max \{ 0, - \gamma \}$ and $\alpha = \frac{1}{2} - \alpha'$, where $\alpha'$ is given in Lemma \ref{Lmm-Y-bnd}. Then there is a constant $C > 0$, independent of $A$ and $\hbar$, such that
	\begin{equation}\label{Y-bnd-4}
		\begin{aligned}
			\| z_{\alpha'} \sigma_x^ \frac{1}{2}   w_{\beta, \vartheta} g_\sigma \|_{L^\infty_x L^2_v} \leq C \mathscr{E}_{\mathtt{cro}}^A (g_\sigma) \,,
		\end{aligned}
	\end{equation}
	where the functional $\mathscr{E}_{\mathtt{cro}}^A ( \cdot )$ is given in \eqref{E-cro}. Moreover, a similar result holds for the difference:
	\begin{equation}\label{Y-bnd-4-Dif}
		\begin{aligned}
			\| z_{\alpha'} \sigma_x^ \frac{1}{2}   w_{\beta, \vartheta} {\vartriangle} g_\sigma \|_{L^\infty_x L^2_v} \leq C \mathscr{E}_{\mathtt{cro}}^A ( {\vartriangle} g_\sigma ) \,.
		\end{aligned}
	\end{equation}
\end{lemma}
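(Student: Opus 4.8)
The plan is to prove \eqref{Y-bnd-4} by a one--dimensional Sobolev (Agmon--type) inequality in the $x$--variable, applied for each fixed $v$ to the scalar function $\psi_v(x):=\sigma_x^{\frac12}w_{\beta,\vartheta}g_\sigma$, and then to read off the right-hand side as the two pieces of $\mathscr{E}_{\mathtt{cro}}^A$ in \eqref{E-cro}. Since $\alpha>0$ and $\alpha'>0$ satisfy $\alpha+\alpha'=\frac12$, the singular velocity weight must be split \emph{asymmetrically} between the two resulting factors; this is the ``different weights'' interpolation mentioned in the outline, and the exact balance $\alpha+\alpha'=\frac12$ is precisely what makes the factors line up with the $z_{-\alpha}$ and $z_{-\alpha}z_1$ weights built into \eqref{E-cro}. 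First I would exchange the supremum and the velocity integral via $\sup_x\int_{\R^3}\le\int_{\R^3}\sup_x$, reducing the target norm to an integral in $v$ of $\sup_x\psi_v(x)^2$.

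For each fixed $v$, averaging the identity $\psi_v(x)^2=\psi_v(y)^2+\int_y^x 2\psi_v\partial_s\psi_v\,ds$ over $y\in(0,A)$ yields
\begin{equation*}
\sup_{x}\psi_v(x)^2 \le \tfrac1A\int_0^A\psi_v^2\,dx + 2\int_0^A|\psi_v|\,|\partial_x\psi_v|\,dx .
\end{equation*}
Multiplying by $z_{\alpha'}^2(v)$ and integrating in $v$, the first term is harmless: with $A>1$ one has $\tfrac1A\le 1$, and using $z_{\alpha'}\le z_{-\alpha}$ together with $1\lesssim \nu^{\frac12}(1+|v|)^{\beta_\gamma}$ (the defining property of $\beta_\gamma$ in \eqref{beta-gamma}) it is bounded by $C\|\nu^{\frac12}z_{-\alpha}\sigma_x^{\frac12}w_{\beta+\beta_\gamma,\vartheta}g_\sigma\|_A^2\le C(\mathscr{E}_{\mathtt{cro}}^A(g_\sigma))^2$.

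The heart of the matter is the second term. Here I would use the pointwise weight factorization, valid because $\alpha+\alpha'=\frac12$ (so $z_{\alpha'}^2=z_{1-2\alpha}=z_{-2\alpha}z_1$) and $\beta_\gamma\ge0$,
\begin{equation*}
z_{\alpha'}^2 \le z_{-2\alpha}\,z_1\,(1+|v|)^{2\beta_\gamma},
\end{equation*}
and then apply Cauchy--Schwarz in $(x,v)$, distributing the weight $\nu^{\frac12}z_{-\alpha}(1+|v|)^{\beta_\gamma}$ onto $\psi_v$ and $\nu^{-\frac12}z_{-\alpha}z_1(1+|v|)^{\beta_\gamma}$ onto $\partial_x\psi_v$. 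The first factor is exactly the first piece $\|\nu^{\frac12}z_{-\alpha}\sigma_x^{\frac12}w_{\beta+\beta_\gamma,\vartheta}g_\sigma\|_A$ of $\mathscr{E}_{\mathtt{cro}}^A$. In the second factor I would expand $\partial_x\psi_v=\sigma_x^{\frac12}w_{\beta,\vartheta}\partial_x g_\sigma+\tfrac12\sigma_x^{-\frac12}\sigma_{xx}w_{\beta,\vartheta}g_\sigma$: the leading term reproduces the second piece $\|\nu^{-\frac12}z_{-\alpha}\sigma_x^{\frac12}z_1 w_{\beta+\beta_\gamma,\vartheta}\partial_x g_\sigma\|_A$, while the $\sigma_{xx}$ term is absorbed back into the first piece using $|\sigma_{xx}v_3|\le\delta\sigma_x\nu$ from Lemma \ref{Lmm-sigma} together with $z_1/|v_3|\le1$. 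Combining the two factors gives $\le C(\mathscr{E}_{\mathtt{cro}}^A(g_\sigma))^2$, which yields \eqref{Y-bnd-4}.

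The main obstacle is exactly this weight bookkeeping. One must verify the uniform pointwise inequalities $1\lesssim\nu^{\frac12}(1+|v|)^{\beta_\gamma}$ and $\nu^{-\frac12}(1+|v|)^{\beta_\gamma}\gtrsim 1$-type balances so that the gap between $w_{\beta,\vartheta}$ and $w_{\beta+\beta_\gamma,\vartheta}$ is covered for \emph{both} hard and soft potentials by the choice \eqref{beta-gamma}, and one must check that the asymmetric split of the $z$-weight respects $0<\alpha<\mu_\gamma\le\frac12$ so that the factors land \emph{exactly} on the $z_{-\alpha}$ and $z_{-\alpha}z_1$ weights of $\mathscr{E}_{\mathtt{cro}}^A$ rather than on a worse singularity at $v_3=0$. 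Once these pointwise inequalities are in place, the difference estimate \eqref{Y-bnd-4-Dif} follows verbatim, since ${\vartriangle} g_\sigma$ obeys the same identity for $\psi_v$ with identical weights.
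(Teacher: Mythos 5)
Your proposal is correct and follows essentially the same route as the paper's proof: the same asymmetric weight splitting $z_{\alpha'}^2 = z_{-\alpha}^2 z_1$, the same Cauchy--Schwarz distribution of $\nu^{\pm\frac12}$ so that the factors land exactly on the two pieces of $\mathscr{E}_{\mathtt{cro}}^A$, the same absorption of the $\sigma_{xx}$ term via $|v_3\sigma_{xx}|\le\delta\sigma_x\nu(v)$, and the same mean-value device (needed because $g_\sigma$ need not vanish on the boundary). The only difference is organizational: you run the one-dimensional Agmon inequality pointwise in $v$ and then integrate, whereas the paper works directly with $\phi(x)=\int_{\R^3}|z_{\alpha'}\sigma_x^{\frac12}w_{\beta,\vartheta}g_\sigma|^2\,\d v$ and a max/min case analysis; these are equivalent.
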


\begin{proof}
	Denote by
	\begin{equation*}
		\begin{aligned}
			& \phi (x) = \int_{\R^3} | z_{\alpha'} \sigma_x^ \frac{1}{2}   w_{\beta, \vartheta} g_\sigma (x,v)|^2 \d v \,, \bar{\phi} (x,v) = \tfrac{\partial}{\partial x} | z_{\alpha'} \sigma_x^ \frac{1}{2}   w_{\beta, \vartheta} g_\sigma |^2 (x, v) \,.
		\end{aligned}
	\end{equation*}
	For any $x, y \in \bar{\Omega}_A$, we have
	\begin{equation}\label{phi}
		\begin{aligned}
			\phi (x) - \phi (y) = \int_y^x \int_{\R^3} \bar{\phi} (x', v) \d v \d x' \,.
		\end{aligned}
	\end{equation}
	
	We claim that
	\begin{equation}\label{Claim-phi}
		\begin{aligned}
			\| \bar{\phi} \|_{L^1_{x,v}} \leq C \mathscr{E}_{\mathtt{cro}}^A (g_\sigma) \,.
		\end{aligned}
	\end{equation}
	Indeed, a direct computation yields
		\begin{align}\label{M1-M2}
			\no \| \bar{\phi} \|_{L^1_{x,v}} \leq & \underbrace{ 2 \int_0^A \int_{\R^3} |\sigma_x^ \frac{1}{2}   z_{\alpha'} w_{\beta, \vartheta} g_\sigma| \cdot |\sigma_x^ \frac{1}{2}   z_{\alpha'} w_{\beta, \vartheta} \partial_x g_\sigma| \d v \d x }_{: = \scorpio_1 } \\
			& + \underbrace{  \int_0^A \int_{\R^3} |\sigma_x^ \frac{1}{2}   z_{\alpha'} w_{\beta, \vartheta} g_\sigma| \cdot |\sigma_x^{ - \frac{1}{2} } \sigma_{xx} z_{\alpha'}   w_{\beta, \vartheta} g_\sigma| \d v \d x }_{: = \scorpio_2} \,.
		\end{align}
	By the virtue of $|z_1 \sigma_{xx}| \leq |v_3 \sigma_{xx}| \leq \delta \nu (v) \sigma_x$ derived from Lemma \ref{Lmm-sigma}, we have
	\begin{equation}\label{M2-bnd}
		\begin{aligned}
			\scorpio_2 \leq & \delta  \int_0^A \int_{\R^3} \nu (v) \sigma_x (x,v)  z_{2 \alpha' - 1} w_{\beta, \vartheta}^2 (v) g_\sigma^2 (x,v) \d v \d x \\
			= & \delta  \| \nu^\frac{1}{2} z_{- \alpha} \sigma_x^ \frac{1}{2}   w_{\beta, \vartheta} g_\sigma \|^2_A \,,
		\end{aligned}
	\end{equation}
	where the fact $z_{2 \alpha' - 1} = z_{- 2 \alpha} = z^2_{- \alpha}$ with $\alpha = \frac{1}{2} - \alpha'$ has been used.
	Furthermore, the relation $z_{2 \alpha' - 1} = z^2_{- \alpha}$ also implies
	\begin{equation}\label{M1-bnd}
		\begin{aligned}
			\scorpio_1 = & 2 \int_0^A \int_{\R^3} |\nu^\frac{1}{2} z_{- \alpha} \sigma_x^ \frac{1}{2}   w_{\beta, \vartheta} g_\sigma| \cdot |\nu^{- \frac{1}{2}} z_{- \alpha} \sigma_x^ \frac{1}{2}   z_1 w_{\beta, \vartheta} \partial_x g_\sigma| \d v \d x \\
			\leq & 2 \| \nu^\frac{1}{2} z_{- \alpha} \sigma_x^ \frac{1}{2}   w_{\beta, \vartheta} g_\sigma \|_A \| \nu^{- \frac{1}{2}} z_{- \alpha} \sigma_x^ \frac{1}{2}   z_1 w_{\beta, \vartheta} \partial_x g_\sigma \|_A \\
			\leq & \| \nu^\frac{1}{2} z_{- \alpha} \sigma_x^ \frac{1}{2}   w_{\beta, \vartheta} g_\sigma \|^2_A + \| \nu^{- \frac{1}{2}} z_{- \alpha} \sigma_x^ \frac{1}{2}   z_1 w_{\beta, \vartheta} \partial_x g_\sigma \|^2_A \,.
		\end{aligned}
	\end{equation}
	By using $w_{\beta, \vartheta} \leq w_{\beta + \beta_\gamma, \vartheta}$, one can conclude the claim \eqref{Claim-phi} from \eqref{M1-M2}-\eqref{M2-bnd}-\eqref{M1-bnd}.
	
	Since the claim \eqref{Claim-phi} holds with the finite value in the right hand side of \eqref{Claim-phi}, the relation \eqref{phi} tells us that $\phi \in C (\bar{\Omega}_A)$. Let $ M_\phi = \max_{x \in \bar{\Omega}_A} \phi (x) \geq 0 $ and $ m_\phi = \min_{x \in \bar{\Omega}_A} \phi (x) \geq 0 $. Then there are two points $x_M$, $x_m \in \bar{\Omega}_A$ such that
	\begin{equation*}
		\begin{aligned}
			0 \leq M_\phi - m_\phi = \phi (x_M) - \phi (x_m) = \int_{x_m}^{x_M} \int_{\R^3} \bar{\phi} (x,v) \d v \d x \leq \| \bar{\phi} \|_{L^1_{x,v}} \,.
		\end{aligned}
	\end{equation*}
	If $M_\phi \geq \frac{3}{2} m_\phi$, one has $ M_\phi \leq \tfrac{2}{3} M_\phi + \| \bar{\phi} \|_{L^1_{x,v}} $, which means that
	\begin{equation}\label{Mphi-1}
		\begin{aligned}
			M_\phi \leq 3 \| \bar{\phi} \|_{L^1_{x,v}} \,.
		\end{aligned}
	\end{equation}
	If $M_\phi < \tfrac{3}{2} m_\phi$, one has
	\begin{equation}\label{Mphi-2}
		\begin{aligned}
			M_\phi < & \tfrac{3}{2} m_\phi \leq \tfrac{3}{2 A} \int_0^A \phi (x) \d x = \tfrac{3}{2 A} \| \nu^{- \frac{1}{2}} z_{\alpha} z_{\alpha'} \nu^\frac{1}{2} z_{- \alpha} \sigma_x^ \frac{1}{2}   w_{\beta, \vartheta} g_\sigma \|^2_A \\
			\leq & C \| \nu^\frac{1}{2} z_{- \alpha} \sigma_x^ \frac{1}{2}   w_{\beta + \beta_\gamma, \vartheta} g_\sigma \|^2_A \,,
		\end{aligned}
	\end{equation}
	where the last inequality is derived from $A^{-1} < 1$ for $A > 1$ and $\nu^{- \frac{1}{2}} z_{\alpha} z_{\alpha'} w_{\beta, \vartheta} \leq C w_{\beta + \beta_\gamma, \vartheta}$. We remark that the index $\beta_\gamma$ given in \eqref{beta-gamma} is required here. Consequently, \eqref{Claim-phi}, \eqref{Mphi-1} and \eqref{Mphi-2} indicate that
	\begin{equation*}
		\begin{aligned}
			\| z_{\alpha'} \sigma_x^ \frac{1}{2}   w_{\beta, \vartheta} g_\sigma \|_{L^\infty_x L^2_v} = M_\phi^\frac{1}{2} \leq C \mathscr{E}_{\mathtt{cro}}^A (g_\sigma) \,.
		\end{aligned}
	\end{equation*}
	Namely, the bound \eqref{Y-bnd-4}. Furthermore, the estimate of the bound \eqref{Y-bnd-4-Dif} is similar to that of \eqref{Y-bnd-4}. Thus the proof of Lemma \ref{Lmm-Linfty-L2} is completed.
\end{proof}

\subsection{Estimate for $L^2_{x,v}$ with weight $\nu^\frac{1}{2} \sigma_x^ \frac{1}{2}   z_{- \alpha} w_{\beta + \beta_\gamma, \vartheta}$}

In this subsection, we will control the quantity $ \mathscr{E}_{\mathtt{cro}}^A (g_\sigma) = \| \nu^{- \frac{1}{2}} z_{- \alpha} \sigma_x^ \frac{1}{2}   z_1 w_{\beta + \beta_\gamma, \vartheta} \partial_x g_\sigma \|_A $$+ \| \nu^\frac{1}{2} z_{- \alpha} \sigma_x^ \frac{1}{2}   w_{\beta + \beta_\gamma, \vartheta} g_\sigma \|_A $ in the right hand side of \eqref{Y-bnd-4}.

\begin{lemma}\label{Lmm-L2xv-alpha}
	Let $- 3 < \gamma \leq 1$, $A > 1$, $ 0 \leq \hbar, \vartheta \ll 1 $, the integer $\beta \geq \max \{ 0, - \gamma \}$, and $\alpha = \frac{1}{2} - \alpha'$, where $\alpha'$ is given in Lemma \ref{Lmm-Y-bnd}. Moreover, $l \geq l_0$ for some constant $l_0 \geq 1$ independent of $A, \delta, \hbar$. Then there is a constant $C > 0$, independent of $A$, $ \delta$ and $\hbar$, such that
	\begin{equation}\label{C-L2}
		\begin{aligned}
			&  \| |v_3|^\frac{1}{2} \sigma_x^ \frac{1}{2}   z_{- \alpha} w_{\beta + \beta_\gamma, \vartheta} g_\sigma \|^2_{L^2_{\Sigma_+^0}} + \| |v_3|^\frac{1}{2} \sigma_x^ \frac{1}{2}     z_{- \alpha} w_{\beta + \beta_\gamma, \vartheta} g_\sigma \|^2_{L^2_{\Sigma_+^A}} + [ \mathscr{E}_{\mathtt{cro}}^A (g_\sigma) ]^2 \\
			\leq &  C \|(\delta x + l)^{-\frac{\Theta}{2}}  \nu^{\frac{1}{2}}  w_{\beta + \beta_\gamma, \vartheta} g_\sigma \|^2_A + C [ \mathscr{A}^A_{\mathtt{cro}} (h_\sigma) ]^2 \\
			&+ C [ \mathscr{B}_{\mathtt{cro}} ( \varphi_{A, \sigma} ) ]^2+ C [ \mathscr{C}_{\mathtt{cro}} ( f_{b, \sigma} ) ]^2 \,,
		\end{aligned}
	\end{equation}
	where the functionals $ \mathscr{E}_{\mathtt{cro}}^A (\cdot) $, $ \mathscr{A}^A_{\mathtt{cro}} (\cdot) $, $ \mathscr{B}_{\mathtt{cro}} ( \cdot ) $ and $ \mathscr{C}_{\mathtt{cro}} ( \cdot ) $ are introduced in \eqref{E-cro}, \eqref{As-def}, \eqref{B-def} and \eqref{C-def}, respectively. Similarly, there holds
	\begin{equation}\label{L2w-g-sigma-Bnd-Dif}
		\begin{aligned}
			&  \| |v_3|^\frac{1}{2} \sigma_x^ \frac{1}{2}   z_{- \alpha} w_{\beta + \beta_\gamma, \vartheta} {\vartriangle} g_\sigma \|^2_{L^2_{\Sigma_+^0}} + \| |v_3|^\frac{1}{2} \sigma_x^ \frac{1}{2}     z_{- \alpha} w_{\beta + \beta_\gamma, \vartheta} {\vartriangle} g_\sigma \|^2_{L^2_{\Sigma_+^A}} + [ \mathscr{E}_{\mathtt{cro}}^A ({\vartriangle} g_\sigma) ]^2 \\
			\leq &C\|(\delta x + l)^{-\frac{\Theta}{2}}  \nu^{\frac{1}{2}}  w_{\beta + \beta_\gamma, \vartheta} {\vartriangle} g_\sigma \|^2_A + C [ \mathscr{A}^A_{\mathtt{cro}} ({\vartriangle} h_\sigma) ]^2 + C [ \mathscr{C}_{\mathtt{cro}} ( {\vartriangle} f_{b, \sigma} ) ]^2\,.
		\end{aligned}
	\end{equation}
\end{lemma}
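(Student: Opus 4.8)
The plan is to run a weighted $L^2_{x,v}$ energy estimate on \eqref{A3-lambda}: I would multiply the equation by the test function $\sigma_x z_{-\alpha}^2 w_{\beta+\beta_\gamma,\vartheta}^2 g_\sigma$ and integrate over $\Omega_A \times \R^3$. This weight is chosen precisely so that the transport term $v_3\partial_x g_\sigma$, after integrating by parts in $x$, yields the four boundary integrals of $|v_3|\,\sigma_x z_{-\alpha}^2 w_{\beta+\beta_\gamma,\vartheta}^2 g_\sigma^2$ on $\Sigma_\pm^0$ and $\Sigma_\pm^A$. The two outgoing pieces (on $\Sigma_+^0$, where $v_3<0$, and on $\Sigma_+^A$, where $v_3>0$) carry a favorable sign and stay on the left as the two boundary norms in \eqref{C-L2}, while the two incoming pieces are the prescribed data $f_{b,\sigma}$ on $\Sigma_-^0$ and $\varphi_{A,\sigma}$ on $\Sigma_-^A$, producing the $\mathscr{C}_{\mathtt{cro}}$ and $\mathscr{B}_{\mathtt{cro}}$ terms on the right. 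The same integration by parts leaves the interior remainder $-\tfrac12\iint v_3\sigma_{xx} z_{-\alpha}^2 w_{\beta+\beta_\gamma,\vartheta}^2 g_\sigma^2$, which by $|v_3\sigma_{xx}|\leq \delta\sigma_x\nu$ from Lemma \ref{Lmm-sigma} is bounded by $\tfrac{\delta}{2}\|\nu^{1/2}z_{-\alpha}\sigma_x^{1/2}w_{\beta+\beta_\gamma,\vartheta}g_\sigma\|_A^2$ and absorbed for small $\delta$.

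Next, pairing the multiplier term $[\nu-\hbar\sigma_x v_3]g_\sigma$ with the weight produces the principal coercive quantity $\|\nu^{1/2}z_{-\alpha}\sigma_x^{1/2}w_{\beta+\beta_\gamma,\vartheta}g_\sigma\|_A^2$, which is the first half of $\mathscr{E}_{\mathtt{cro}}^A(g_\sigma)$; the $\hbar$-part is absorbed using $|v_3\sigma_x|\leq c\nu$ from Lemma \ref{Lmm-sigma}. The collision contribution $-\iint K_\hbar g_\sigma\cdot\sigma_x z_{-\alpha}^2 w_{\beta+\beta_\gamma,\vartheta}^2 g_\sigma$ is where the main work lies: writing it as the $L^2_{x,v}$ pairing of $\nu^{-1/2}z_{-\alpha}\sigma_x^{1/2}w_{\beta+\beta_\gamma,\vartheta}K_\hbar g_\sigma$ against the coercive factor and invoking Lemma \ref{Lmm-Kh-L2} removes the singular weight $z_{-\alpha}$ from the first factor, bounding it by $C\|\nu^{1/2}\sigma_x^{1/2}w_{\beta+\beta_\gamma,\vartheta}g_\sigma\|_A$. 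Crucially, $\sigma_x\lesssim(\delta x+l)^{-\Theta}$ (again Lemma \ref{Lmm-sigma}) converts this into $\|(\delta x+l)^{-\Theta/2}\nu^{1/2}w_{\beta+\beta_\gamma,\vartheta}g_\sigma\|_A$, the target right-hand side, after which a Young's inequality peels off an absorbable copy of the coercive term. The source $h_\sigma$ gives, by Cauchy--Schwarz, $\mathscr{A}^A_{\mathtt{cro}}(h_\sigma)$ times the coercive term; the damping $D_\hbar g$, whose coefficients $\bar\alpha,\bar\beta=O(\hbar)$ are small and which already carries the factor $(\delta x+l)^{-\Theta}$, is controlled by the boundedness of the projections $\P^+,\P^0$ and absorbed into the coercive term together with the right-hand side.

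It remains to recover the $\partial_x g_\sigma$ half of $\mathscr{E}_{\mathtt{cro}}^A(g_\sigma)$, namely $\|\nu^{-1/2}z_{-\alpha}\sigma_x^{1/2}z_1 w_{\beta+\beta_\gamma,\vartheta}\partial_x g_\sigma\|_A$, which the energy identity does not see directly. Here I would use the equation \eqref{A3-lambda} itself to write $\partial_x g_\sigma=\tfrac{1}{v_3}\{[\hbar\sigma_x v_3-\nu]g_\sigma+K_\hbar g_\sigma+h_\sigma-D_\hbar g\}$ and note that $|z_1/v_3|\leq 1$ pointwise by the definition \eqref{z-alpha} of $z_1$. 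Each resulting term is then estimated exactly as above: the $\nu g_\sigma$ piece is the coercive term, the $K_\hbar$ piece is handled by Lemma \ref{Lmm-Kh-L2} and $\sigma_x\lesssim(\delta x+l)^{-\Theta}$, and the $h_\sigma$, $D_\hbar g$ pieces give $\mathscr{A}^A_{\mathtt{cro}}(h_\sigma)$ and a small contribution. Combining the energy estimate, after absorbing coercive terms by choosing $\delta,\hbar$ small and $l$ large, with this algebraic bound for $\partial_x g_\sigma$ yields \eqref{C-L2}. I expect the main obstacle to be the collision term: taming the singular weight $z_{-\alpha}$ attached to $K_\hbar$ in the $L^2_v$ setting (the content of Lemma \ref{Lmm-Kh-L2}) and matching its output to the $(\delta x+l)^{-\Theta/2}$-weighted right-hand side through the decay of $\sigma_x$. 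Finally, the difference estimate \eqref{L2w-g-sigma-Bnd-Dif} follows verbatim from the same argument applied to \eqref{L-lambda-Dif}, where the vanishing datum at $x=A$ removes the $\mathscr{B}_{\mathtt{cro}}$ contribution.
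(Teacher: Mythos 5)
Your proposal follows essentially the same route as the paper's proof: the same multiplier $\sigma_x z_{-\alpha}^2 w_{\beta+\beta_\gamma,\vartheta}^2 g_\sigma$, the same treatment of boundary and $\sigma_{xx}$ terms, the same use of Lemma \ref{Lmm-Kh-L2} combined with $\sigma_x \lesssim (\delta x + l)^{-\Theta}$ to shift the collision contribution onto the target right-hand side, and the same recovery of the $\partial_x g_\sigma$ norm directly from the equation via $|z_1/v_3|\leq 1$. The only detail you gloss over is that bounding $D_\hbar g$ in the weighted norm requires the weight-transfer estimate $|\sigma(x,v)-\sigma(x,v_*)|\leq c\,\big||v-\u|^2-|v_*-\u|^2\big|$ from Lemma \ref{Lmm-sigma} (since the projections integrate in $v$ under the $e^{\hbar\sigma}$ conjugation), which the paper invokes explicitly; this is a minor omission, not a gap.
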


\begin{proof}
	For $0 \leq \alpha < \mu_\gamma$ with $\mu_\gamma > 0$ given in Lemma \ref{Lmm-Kh-L2}, multiplying \eqref{A3-lambda} by $\sigma_x z_{- \alpha}^2 w_{\beta + \beta_\gamma, \vartheta}^2 g_\sigma$ and integrating by parts over $(x,v) \in \Omega_A \times \R^3$, one has
		\begin{align*}
			& \iint_{\Omega_A \times \R^3} \partial_x \big( \tfrac{1}{2} v_3 z_{- \alpha}^2 \sigma_x  w_{\beta + \beta_\gamma, \vartheta}^2 g_\sigma^2 \big) \d v \d x  - \tfrac{1}{2} \iint_{\Omega_A \times \R^3}  v_3 z_{- \alpha}^2 \sigma_{xx}  w_{\beta + \beta_\gamma, \vartheta}^2 g_\sigma^2 \d v \d x \\
			& + \iint_{\Omega_A \times \R^3} [ - \hbar \sigma_x v_3 + \nu (v) ] \sigma_x z_{- \alpha}^2 w_{\beta + \beta_\gamma, \vartheta}^2 g_\sigma^2 \d v \d x -  \iint_{\Omega_A \times \R^3} K_\hbar g_\sigma  \sigma_x z_{- \alpha}^2 w_{\beta + \beta_\gamma, \vartheta}^2 g_\sigma^2 \d v \d x \\
			= & \iint_{\Omega_A \times \R^3} (h_\sigma-D_\hbar g)  \sigma_x z_{- \alpha}^2 w_{\beta + \beta_\gamma, \vartheta}^2 g_\sigma \d v \d x \,.
		\end{align*}
	Notice that by Lemma \ref{Lmm-sigma},
	\begin{equation}\label{N1}
		\begin{aligned}
			\iint_{\Omega_A \times \R^3} [ - \hbar \sigma_x v_3 + \nu (v) ] \sigma_x z_{- \alpha}^2 & w_{\beta + \beta_\gamma, \vartheta}^2 g_\sigma^2 \d v \d x
			 \geq c_\hbar \| \nu^\frac{1}{2} z_{- \alpha} \sigma_x^ \frac{1}{2}   w_{\beta + \beta_\gamma, \vartheta} g_\sigma \|^2_A \,,
		\end{aligned}
	\end{equation}
	where $c_\hbar = 1 - c \hbar > 0$ for sufficiently small $\hbar \geq 0$. Observe that
	\begin{equation*}
		\begin{aligned}
			& \iint_{\Omega_A \times \R^3} \partial_x \big( \tfrac{1}{2} v_3 z_{- \alpha}^2 \sigma_x w_{\beta + \beta_\gamma, \vartheta}^2 g_\sigma^2 \big) \d v \d x \\
			= & \tfrac{1}{2} \int_{\R^3} v_3 z_{- \alpha}^2 \sigma_x w_{\beta + \beta_\gamma, \vartheta}^2 g_\sigma^2 (A, v) \d v \\
			& - \tfrac{1}{2}  \int_{\R^3} v_3 z_{- \alpha}^2 \sigma_x (0, v) w_{\beta + \beta_\gamma, \vartheta}^2 (v) g_\sigma^2 (0, v) \d v \,.
		\end{aligned}
	\end{equation*}
	By the boundary conditions in \eqref{A3-lambda}, it infers that
		\begin{align*}
			& \int_{\R^3} v_3 z_{- \alpha}^2 \sigma_x w_{\beta + \beta_\gamma, \vartheta}^2 g_\sigma^2 (A, v) \d v \\
			= & \int_{v_3 > 0} v_3 z_{- \alpha}^2 \sigma_x w_{\beta + \beta_\gamma, \vartheta}^2 g_\sigma^2 (A, v) \d v + \int_{v_3 < 0} v_3 z_{- \alpha}^2 \sigma_x w_{\beta + \beta_\gamma, \vartheta}^2 g_\sigma^2 (A, v) \d v \\
			= & \int_{v_3 > 0} | v_3 | z_{- \alpha}^2 \sigma_x w_{\beta + \beta_\gamma, \vartheta}^2 g_\sigma^2 (A, v) \d v \\
			& - \int_{v_3 < 0} |v_3| z_{- \alpha}^2 \sigma_x w_{\beta + \beta_\gamma, \vartheta}^2 \varphi_{A, \sigma}^2 (A, v) \d v \\
			= & \| |v_3|^\frac{1}{2} z_{- \alpha} \sigma_x^ \frac{1}{2}     w_{\beta + \beta_\gamma, \vartheta} g_\sigma \|^2_{L^2_{\Sigma_+^A}} - \| |v_3|^\frac{1}{2} z_{- \alpha} \sigma_x^ \frac{1}{2}     w_{\beta + \beta_\gamma, \vartheta} \varphi_{A, \sigma} \|^2_{L^2_{\Sigma_-^A}} \,.
		\end{align*}
	Similarly, it follows that
	\begin{equation*}
		\begin{aligned}
			& - \int_{\R^3} v_3 z_{- \alpha}^2 \sigma_x (0, v) w_{\beta + \beta_\gamma, \vartheta}^2 (v) g_\sigma^2 (0, v) \d v \\
			= & \| |v_3|^\frac{1}{2} z_{- \alpha} \sigma_x^ \frac{1}{2}     w_{\beta + \beta_\gamma, \vartheta} g_\sigma \|^2_{L^2_{\Sigma_+^0}} - \| |v_3|^\frac{1}{2} z_{- \alpha} \sigma_x^ \frac{1}{2}     w_{\beta + \beta_\gamma, \vartheta} f_{b, \sigma} \|^2_{L^2_{\Sigma_-^0}} \,.
		\end{aligned}
	\end{equation*}
	As a result, one has
		\begin{align}\label{N2}
			\no & \iint_{\Omega_A \times \R^3} \partial_x \big( \tfrac{1}{2} v_3 z_{- \alpha}^2 \sigma_x w_{\beta + \beta_\gamma, \vartheta}^2 g_\sigma^2 \big) \d v \d x \\
			\no \geq & \tfrac{1}{2} \| |v_3|^\frac{1}{2} z_{- \alpha} \sigma_x^ \frac{1}{2} w_{\beta + \beta_\gamma, \vartheta} g_\sigma \|^2_{L^2_{\Sigma_+^0}}  \\
			 & + \tfrac{1}{2} \| |v_3|^\frac{1}{2} z_{- \alpha} \sigma_x^ \frac{1}{2}     w_{\beta + \beta_\gamma, \vartheta} g_\sigma \|^2_{L^2_{\Sigma_+^A}} \\
			\no & - \tfrac{1}{2}  \| |v_3|^\frac{1}{2} z_{- \alpha} \sigma_x^ \frac{1}{2}     w_{\beta + \beta_\gamma, \vartheta} f_{b, \sigma} \|^2_{L^2_{\Sigma_-^0}} \\
			\no & - \tfrac{1}{2}  \| |v_3|^\frac{1}{2} z_{- \alpha} \sigma_x^ \frac{1}{2}     w_{\beta + \beta_\gamma, \vartheta} \varphi_{A, \sigma} \|^2_{L^2_{\Sigma_-^A}} \,.
		\end{align}
	By Lemma \ref{Lmm-sigma}, $|v_3 \sigma_{xx}| \leq \delta \sigma_x \nu (v)$. Then it holds
	\begin{equation*}
		\begin{aligned}
			& \big| \tfrac{1}{2} \iint_{\Omega_A \times \R^3} v_3 z_{- \alpha}^2 \sigma_{xx}  w_{\beta + \beta_\gamma, \vartheta}^2 g_\sigma^2 \d v \d x \big| \\
			\leq & C \delta \| \nu^\frac{1}{2} z_{- \alpha} \sigma_x^ \frac{1}{2}   w_{\beta + \beta_\gamma, \vartheta} g_\sigma \|^2_A \leq \tfrac{c_\hbar}{16} \| \nu^\frac{1}{2} z_{- \alpha} \sigma_x^ \frac{1}{2}   w_{\beta + \beta_\gamma, \vartheta} g_\sigma \|^2_A \,,
		\end{aligned}
	\end{equation*}
	where $\delta > 0$ is taken small enough such that $C \delta \leq \frac{c_\hbar}{16}$. Moreover, the H\"older inequality shows
		\begin{align*}
			& \big| \iint_{\Omega_A \times \R^3} h_\sigma  \sigma_x z_{- \alpha}^2 w_{\beta + \beta_\gamma, \vartheta}^2 g_\sigma \d v \d x \big| \\
			\leq & \tfrac{c_\hbar}{16} \| \nu^\frac{1}{2} z_{- \alpha} \sigma_x^ \frac{1}{2}   w_{\beta + \beta_\gamma, \vartheta} g_\sigma \|^2_A + C \| \nu^{- \frac{1}{2}} z_{- \alpha} \sigma_x^ \frac{1}{2}   w_{\beta + \beta_\gamma, \vartheta} h_\sigma \|^2_A \,,
		\end{align*}
	and
	\begin{equation*}
		\begin{aligned}
			& \big| \iint_{\Omega_A \times \R^3} K_\hbar g_\sigma  \sigma_x z_{- \alpha}^2 w_{\beta + \beta_\gamma, \vartheta}^2 g_\sigma^2 \d v \d x \big| \\
			\leq & \tfrac{c_\hbar}{8} \| \nu^\frac{1}{2} z_{- \alpha} \sigma_x^ \frac{1}{2}   w_{\beta + \beta_\gamma, \vartheta} g_\sigma \|^2_A + C \| \nu^{- \frac{1}{2}} z_{- \alpha} \sigma_x^ \frac{1}{2}   w_{\beta + \beta_\gamma, \vartheta} K_\hbar g_\sigma \|^2_A \,.
		\end{aligned}
	\end{equation*}
	Recall the definition of $D_\hbar g$ in \eqref{D-def}. Lemma \ref{Lmm-sigma} shows
	$$|\sigma(x,v)-\sigma(x,v_*)|\leq c \left| |v-\u|^2-|v_*- \u|^2 \right|\,,$$
	which further implies
	\begin{align*}
		& \big| \iint_{\Omega_A \times \R^3} D_\hbar g  \sigma_x z_{- \alpha}^2 w_{\beta + \beta_\gamma, \vartheta}^2 g_\sigma \d v \d x \big| \\
		\leq & \tfrac{c_\hbar}{4} \| \nu^\frac{1}{2} z_{- \alpha} \sigma_x^ \frac{1}{2}   w_{\beta + \beta_\gamma, \vartheta} g_\sigma \|^2_A + C \| \nu^{- \frac{1}{2}} z_{- \alpha} \sigma_x^ \frac{1}{2}   w_{\beta + \beta_\gamma, \vartheta} D_\hbar g \|^2_A \,,\\
		\leq & \tfrac{c_\hbar}{4} \| \nu^\frac{1}{2} z_{- \alpha} \sigma_x^ \frac{1}{2}   w_{\beta + \beta_\gamma, \vartheta} g_\sigma \|^2_A + C \hbar \|(\delta x + l)^{-\frac{\Theta}{2}}  \nu^{\frac{1}{2}}  w_{\beta + \beta_\gamma, \vartheta} g_\sigma \|^2_A \,.
	\end{align*}
	We thereby establish
		\begin{align}\label{M1}
			\no &   \| |v_3|^\frac{1}{2} \sigma_x^ \frac{1}{2}   z_{- \alpha} w_{\beta + \beta_\gamma, \vartheta} g_\sigma \|^2_{L^2_{\Sigma_+}} \\
			\no & + \| |v_3|^\frac{1}{2} \sigma_x^ \frac{1}{2}     z_{- \alpha} w_{\beta + \beta_\gamma, \vartheta} g_\sigma \|^2_{ L^2_{ \Sigma_+^A } } + \| \nu^\frac{1}{2} z_{- \alpha} \sigma_x^ \frac{1}{2}   w_{\beta + \beta_\gamma, \vartheta} g_\sigma \|^2_A \\
			\leq & C \| \nu^{- \frac{1}{2}} z_{- \alpha} \sigma_x^ \frac{1}{2}   w_{\beta + \beta_\gamma, \vartheta} K_\hbar g_\sigma \|^2_A + C \hbar \|(\delta x + l)^{-\frac{\Theta}{2}}  \nu^{\frac{1}{2}}  w_{\beta + \beta_\gamma, \vartheta} g_\sigma \|^2_A  \\
			\no&+ C [ \mathscr{A}^A_{\mathtt{cro}} (h_\sigma) ]^2 + C [ \mathscr{B}_{\mathtt{cro}} ( \varphi_{A, \sigma} ) ]^2+ C [ \mathscr{C}_{\mathtt{cro}} ( f_{b, \sigma} ) ]^2 \,.
		\end{align}
	Lemma \ref{Lmm-Kh-L2} show that for $0 \leq \alpha < \mu_\gamma$ and $- 3 < \gamma \leq 1$,
	\begin{equation}\label{M2}
		\begin{aligned}
			\| \nu^{- \frac{1}{2}} z_{- \alpha} \sigma_x^ \frac{1}{2}   w_{\beta + \beta_\gamma, \vartheta} K_\hbar g_\sigma \|^2_A \lesssim \| \nu^\frac{1}{2} \sigma_x^ \frac{1}{2}   w_{\beta + \beta_\gamma, \vartheta} g_\sigma \|^2_A \,.
		\end{aligned}
	\end{equation}
	It follows from Lemma \ref{Lmm-sigma} that
	\begin{equation*}
		\sigma_x \leq c_2 (\delta x + l)^{-\Theta}.
	\end{equation*}
	It thereby infers
	\begin{equation*}
		\begin{aligned}
			\| \nu^{- \frac{1}{2}} z_{- \alpha} \sigma_x^ \frac{1}{2}   w_{\beta + \beta_\gamma, \vartheta} K_\hbar g_\sigma \|^2_A \lesssim \| \nu^\frac{1}{2} (\delta x + l)^{-\frac{\Theta}{2}}   w_{\beta + \beta_\gamma, \vartheta} g_\sigma \|^2_A \,.
		\end{aligned}
	\end{equation*}
	As a result, one has
	\begin{equation}\label{L2w-g-sigma-Bnd}
		\begin{aligned}
			&   \| |v_3|^\frac{1}{2} \sigma_x^ \frac{1}{2}   z_{- \alpha} w_{\beta + \beta_\gamma, \vartheta} g_\sigma \|^2_{L^2_{\Sigma_+^0}} \\
			& + \| |v_3|^\frac{1}{2} \sigma_x^ \frac{1}{2}     z_{- \alpha} w_{\beta + \beta_\gamma, \vartheta} g_\sigma \|^2_{ L^2_{ \Sigma_+^A } } + \| \nu^\frac{1}{2} z_{- \alpha} \sigma_x^ \frac{1}{2}   w_{\beta + \beta_\gamma, \vartheta} g_\sigma \|^2_A \\
			\leq & C \|(\delta x + l)^{-\frac{\Theta}{2}}  \nu^{\frac{1}{2}}  w_{\beta + \beta_\gamma, \vartheta} g_\sigma \|^2_A + C [ \mathscr{A}^A_{\mathtt{cro}} (h_\sigma) ]^2 \\
			&+ C [ \mathscr{B}_{\mathtt{cro}} ( \varphi_{A, \sigma} ) ]^2+ C [ \mathscr{C}_{\mathtt{cro}} ( f_{b, \sigma} ) ]^2
		\end{aligned}
	\end{equation}
	for $0 \leq \alpha < \mu_\gamma$. Here the constant $C > 0$ is independent of $A$ and $\hbar$.
	
	Recalling \eqref{A3-lambda}, one has $ \partial_x g_\sigma = - [ - \hbar \sigma_x + \tfrac{\nu (v)}{v_3} ] g_\sigma +  \tfrac{1}{v_3} K_\hbar g_\sigma + \tfrac{1}{v_3} (h_\sigma-D_\hbar g) $, which means
	\begin{equation*}
		\begin{aligned}
			& | \nu^{- \frac{1}{2}} z_{- \alpha} \sigma_x^ \frac{1}{2}   z_1 w_{\beta + \beta_\gamma, \vartheta} \partial_x g_\sigma | \\
			\leq & C (\hbar \nu^{-1} z_1 \sigma_x + \tfrac{z_1}{|v_3|}) |\nu^\frac{1}{2} z_{- \alpha} \sigma_x^ \frac{1}{2}   w_{\beta + \beta_\gamma, \vartheta} g_\sigma | \\
			& + C | \tfrac{z_1}{|v_3|} \nu^{- \frac{1}{2}} \sigma_x^ \frac{1}{2}   z_{- \alpha} w_{\beta + \beta_\gamma, \vartheta} K_\hbar g_\sigma | + C | \tfrac{z_1}{|v_3|} \nu^{- \frac{1}{2}} \sigma_x^ \frac{1}{2}   z_{- \alpha} w_{\beta + \beta_\gamma, \vartheta} (h_\sigma-D_\hbar g) | \,.
		\end{aligned}
	\end{equation*}
	Observe that $\frac{z_1}{|v_3|} \leq 1$, and $\sigma_x \leq c \frac{\nu (v)}{|v_3|}$ by Lemma \ref{Lmm-sigma}, which imply $ \hbar \nu^{-1} z_1 \sigma_x + \tfrac{z_1}{|v_3|} \leq c \hbar + 1 $. It therefore follows that
	\begin{equation*}
		\begin{aligned}
			\| \nu^{- \frac{1}{2}} z_{- \alpha} \sigma_x^ \frac{1}{2}   z_1 w_{\beta + \beta_\gamma, \vartheta} & \partial_x g_\sigma \|^2_A \leq C \| \nu^\frac{1}{2} z_{- \alpha} \sigma_x^ \frac{1}{2}   w_{\beta + \beta_\gamma, \vartheta} g_\sigma \|^2_A \\
			& + C \| \nu^{- \frac{1}{2}} \sigma_x^ \frac{1}{2}   z_{- \alpha} w_{\beta + \beta_\gamma, \vartheta} K_\hbar g_\sigma \|^2_A + C [ \mathscr{A}^A_{\mathtt{cro}} (h_\sigma- D_\hbar g) ]^2 \,.
		\end{aligned}
	\end{equation*}
	Lemma \ref{Lmm-Kh-L2} reads that $ \int_{\R^3} | \nu^{- \frac{1}{2}} \sigma_x^ \frac{1}{2} z_{- \alpha} w_{\beta + \beta_\gamma, \vartheta} K_\hbar g_\sigma |^2 \d v \leq C \int_{\R^3} |\nu^\frac{1}{2} z_{- \alpha} \sigma_x^ \frac{1}{2} w_{\beta + \beta_\gamma, \vartheta} g_\sigma|^2 \d v $. Then, together with \eqref{L2w-g-sigma-Bnd}, one has
	\begin{equation}\label{L2w-pxg-sigma-Bnd}
		\begin{aligned}
			& \| \nu^{- \frac{1}{2}} z_{- \alpha} \sigma_x^ \frac{1}{2}   z_1 w_{\beta + \beta_\gamma, \vartheta} \partial_x g_\sigma \|^2_A \\
			\leq & C \| \nu^\frac{1}{2} z_{- \alpha} \sigma_x^ \frac{1}{2}   w_{\beta + \beta_\gamma, \vartheta} g_\sigma \|^2_A + C \| \nu^{- \frac{1}{2}} \sigma_x^ \frac{1}{2}   z_{- \alpha} w_{\beta + \beta_\gamma, \vartheta} h_\sigma \|^2_A +C \| \nu^{- \frac{1}{2}} \sigma_x^ \frac{1}{2}   z_{- \alpha} w_{\beta + \beta_\gamma, \vartheta} D_\hbar g \|^2_A\\
			\leq & C \|(\delta x + l)^{-\frac{\Theta}{2}}  \nu^{\frac{1}{2}}  w_{\beta + \beta_\gamma, \vartheta} g_\sigma \|^2_A + C [ \mathscr{A}^A_{\mathtt{cro}} (h_\sigma) ]^2 \\
			&+ C [ \mathscr{B}_{\mathtt{cro}} ( \varphi_{A, \sigma} ) ]^2+ C [ \mathscr{C}_{\mathtt{cro}} ( f_{b, \sigma} ) ]^2\,.
		\end{aligned}
	\end{equation}
	Then the bounds \eqref{L2w-g-sigma-Bnd} and \eqref{L2w-pxg-sigma-Bnd} conclude the estimate \eqref{C-L2}.

	Furthermore, as the similar arguments in \eqref{L2w-g-sigma-Bnd} and \eqref{L2w-pxg-sigma-Bnd}, one can easily knows that ${\vartriangle} g_\sigma$ obeying the equation \eqref{L-lambda-Dif} satisfies the bound \eqref{L2w-g-sigma-Bnd-Dif}. Consequently, the proof of Lemma \ref{Lmm-L2xv-alpha} is completed.
\end{proof}

\subsection{$w_{\beta + \beta_\gamma, \vartheta}$-weighted $L^2_{x,v}$ estimates}

In this subsection, the majority is to control the quantity $\|(\delta x + l)^{-\frac{\Theta}{2}}  \nu^{\frac{1}{2}}  w_{\beta + \beta_\gamma, \vartheta} g_\sigma \|^2_A$
appeared in the right-hand side of \eqref{C-L2} in Lemma \ref{Lmm-L2xv-alpha}. Note that
\begin{equation}\label{EA2}
	\|(\delta x + l)^{-\frac{\Theta}{2}}\nu^\frac{1}{2} w_{\beta + \beta_\gamma , \vartheta} g_\sigma \|^2_A\leq C\mathscr{E}^A_{2}(w_{\beta + \beta_\gamma, \vartheta} g_\sigma)\,,
\end{equation}
it thereby suffices to control $\mathscr{E}^A_{2}(g_\sigma)$\,. More precisely, the following lemma holds.

\begin{lemma}\label{Lmm-L2xv}
	Let $- 3 < \gamma \leq 1$, $A > 1$, $l \gg 1$, $0 < \delta \ll 1$, the integer $\beta \geq \max \{ 0, - \gamma \}$, $0 \leq \hbar, \vartheta \ll 1$.
	Then there is a constant $C > 0$, independent of $A, \delta$ and $\hbar$, such that
	\begin{equation}\label{L2-g-sigma-Bnd}
		\begin{aligned}
			&   \| |v_3|^\frac{1}{2} w_{\beta + \beta_\gamma, \vartheta} g_\sigma \|^2_{L^2_{\Sigma_+^0}} + \| |v_3|^\frac{1}{2} w_{\beta + \beta_\gamma, \vartheta} g_\sigma \|^2_{L^2_{\Sigma_+^A}} + [ \mathscr{E}^A_{2 } (w_{\beta + \beta_\gamma, \vartheta} g_\sigma ) ]^2 \\
			\leq &  C  \| (\delta x + l)^{-\frac{\Theta}{2}} \P w_{\beta + \beta_\gamma, \vartheta}  g_\sigma \|^2_A + C [ \mathscr{A}^A_{ 2} (w_{\beta + \beta_\gamma, \vartheta} h_\sigma) ]^2 \\
			&+ C [ \mathscr{B}_2 (w_{\beta + \beta_\gamma, \vartheta} \varphi_{A, \sigma}) ]^2 + C [ \mathscr{C}_2 (w_{\beta + \beta_\gamma, \vartheta} f_{b, \sigma}) ]^2 \,,
		\end{aligned}
	\end{equation}
	where the functionals $ \mathscr{E}^A_\infty ( \cdot ) $, $ \mathscr{E}^A_{2 } ( \cdot ) $, $ \mathscr{A}^A_{ 2} ( \cdot ) $ $ \mathscr{B}_2 ( \cdot ) $ and $ \mathscr{C}_2 ( \cdot ) $ are defined in \eqref{E-infty}, \eqref{E2-lambda}, \eqref{As-def}, \eqref{B-def} and \eqref{C-def}, respectively. Moreover, the difference ${\vartriangle} g_\sigma$ enjoys the bound
	\begin{equation}\label{L2-g-sigma-Bnd-Dif}
		\begin{aligned}
			 & \| |v_3|^\frac{1}{2} w_{\beta + \beta_\gamma, \vartheta} {\vartriangle} g_\sigma \|^2_{L^2_{\Sigma_+}} + \| |v_3|^\frac{1}{2} w_{\beta + \beta_\gamma, \vartheta} {\vartriangle} g_\sigma \|^2_{L^2_{\Sigma_+^A}} + [ \mathscr{E}^A_{2 } ( w_{\beta + \beta_\gamma, \vartheta} {\vartriangle} g_\sigma ) ]^2 \\
			\leq & C  \| (\delta x + l)^{-\frac{\Theta}{2}} \P w_{\beta + \beta_\gamma, \vartheta} {\triangle}  g_\sigma \|^2_A  + C [ \mathscr{A}^A_{ 2} (w_{\beta + \beta_\gamma, \vartheta} {\vartriangle} h_\sigma) ]^2 \\
			& + C [ \mathscr{C}_2 (w_{\beta + \beta_\gamma, \vartheta}{\vartriangle}f_{b, \sigma}) ]^2 \,.
		\end{aligned}
	\end{equation}
\end{lemma}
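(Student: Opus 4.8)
The plan is to run a weighted $L^2_{x,v}$ energy estimate directly on \eqref{A3-lambda}. Writing $W:=w_{\beta+\beta_\gamma,\vartheta}$, I would multiply the equation by $W^2 g_\sigma$ and integrate over $\Omega_A\times\R^3$, organizing the resulting identity into four groups: the transport term $v_3\partial_x g_\sigma$, the streaming-damping term $-\hbar\sigma_x v_3 g_\sigma$, the collision part $\nu g_\sigma-K_\hbar g_\sigma$, and the data term $h_\sigma-D_\hbar g$. The target is to show that the microscopic dissipation $\|\nu^{1/2}\P^\perp(Wg_\sigma)\|_A^2$ together with the outgoing traces on $\Sigma_+^0,\Sigma_+^A$ is dominated by the macroscopic quantity $\|(\delta x+l)^{-\frac{\Theta}{2}}\P(Wg_\sigma)\|_A^2$ and the source functionals; adding $\|(\delta x+l)^{-\frac{\Theta}{2}}\P(Wg_\sigma)\|_A^2$ to both sides then reconstitutes $[\mathscr{E}^A_2(Wg_\sigma)]^2$ on the left and yields \eqref{L2-g-sigma-Bnd}.

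For the transport term, since $W$ is $x$-independent, $\iint v_3\partial_x g_\sigma\,W^2 g_\sigma=\tfrac12\int_{\R^3}v_3 W^2 g_\sigma^2\big|_{x=0}^{x=A}$; inserting the boundary conditions of \eqref{A3-lambda} produces $\tfrac12\||v_3|^{\frac12}Wg_\sigma\|^2_{L^2_{\Sigma_+^0}}+\tfrac12\||v_3|^{\frac12}Wg_\sigma\|^2_{L^2_{\Sigma_+^A}}$ on the left and the boundary-source contributions controlled by $\mathscr{B}_2(W\varphi_{A,\sigma})$ and $\mathscr{C}_2(Wf_{b,\sigma})$ on the right. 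For the streaming-damping term I would use the two bounds on $\sigma_x$ from Lemma \ref{Lmm-sigma} in tandem: on the microscopic part, $|v_3\sigma_x|\le c\nu$ turns it into a small ($O(\hbar)$) multiple of the collision dissipation, to be absorbed; on the macroscopic part, $\sigma_x\le c_2(\delta x+l)^{-\Theta}$ (together with the finite $|v_3|$-moments of the null basis $\psi_i$) converts it into $O(\hbar)\|(\delta x+l)^{-\frac{\Theta}{2}}\P(Wg_\sigma)\|_A^2$, which may stay on the right.

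The collision term is the crux. Using $K_\hbar=e^{\hbar\sigma}Ke^{-\hbar\sigma}$ and $\L=\nu-K$, one has $\iint W^2 g_\sigma(\nu g_\sigma-K_\hbar g_\sigma)=\iint \bar W^2\,g\,\L g$ with $\bar W:=We^{\hbar\sigma}$ and $g=e^{-\hbar\sigma}g_\sigma$; because $\L\P=0$, only the microscopic part survives. The hard part is the weighted coercivity: I would invoke the spectral gap of $\L$ in the combined weight $\bar W$, valid for $\vartheta,\hbar$ small precisely under the condition $\delta_0=\tfrac14-T(c\hbar+\vartheta)>0$ already used in Lemma \ref{Lmm-K-Oprt}, to extract $c\|\nu^{\frac12}\P^\perp(Wg_\sigma)\|_A^2$ up to (i) a velocity-localized microscopic remainder, absorbed via the clean unweighted spectral gap, and (ii) commutator terms between $\P$ and the multiplier $\bar W$, which land in the finite-dimensional $\mathrm{Null}(\L)$ and are dominated by $\|(\delta x+l)^{-\frac{\Theta}{2}}\P(Wg_\sigma)\|_A^2$ again through $\sigma_x\le c_2(\delta x+l)^{-\Theta}$ and Lemma \ref{Lmm-sigma}.

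Finally, I would estimate the data term by Cauchy--Schwarz exploiting the dual $x$-weights $(\delta x+l)^{\pm\frac{\Theta}{2}}$ built into $\mathscr{A}^A_2$ and $\mathscr{E}^A_2$: splitting $Wh_\sigma$ and $Wg_\sigma$ through $\P\oplus\P^\perp$, the macro--macro pairing is bounded by $\tfrac12\|(\delta x+l)^{-\frac{\Theta}{2}}\P(Wg_\sigma)\|_A^2+C[\mathscr{A}^A_2(Wh_\sigma)]^2$ and the micro--micro pairing by a small multiple of the dissipation plus $C[\mathscr{A}^A_2(Wh_\sigma)]^2$, while the $O(\hbar)$ damping $D_\hbar g$, which maps into $\mathrm{Null}(\L)$ and carries the factor $(\delta x+l)^{-\Theta}$, contributes only small multiples of the macro norm, handled exactly as in \eqref{Y-bnd-3} via the ratio bound for $\sigma_x(x,v)/\sigma_x(x,v_*)$. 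Collecting all terms and choosing $\hbar,\delta$ small and $l$ large to absorb the microscopic remainders gives \eqref{L2-g-sigma-Bnd}; the difference bound \eqref{L2-g-sigma-Bnd-Dif} then follows verbatim from the linear equation \eqref{L-lambda-Dif} for ${\vartriangle}g_\sigma$, whose vanishing data at $x=A$ removes the $\varphi_{A,\sigma}$ contribution. The single genuine obstacle is the weighted coercivity in the third step, and in particular the bookkeeping that forces its macroscopic remainder to carry the correct $(\delta x+l)^{-\frac{\Theta}{2}}$ factor rather than an unweighted one.
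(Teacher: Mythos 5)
Your proposal is correct and follows essentially the same route as the paper's proof: multiply \eqref{A3-lambda} by $w_{\beta+\beta_\gamma,\vartheta}^2 g_\sigma$, extract the outgoing traces and the $\mathscr{B}_2$, $\mathscr{C}_2$ boundary sources from the transport term, split the $O(\hbar)$ streaming and damping contributions along $\P\oplus\P^\perp$ (absorbing the microscopic part and leaving the $(\delta x+l)^{-\Theta/2}$-weighted macroscopic part on the right), and close with dual-weight Cauchy--Schwarz on $h_\sigma$. The weighted coercivity you single out as the crux is precisely what the paper imports from Lemma 2.2 of \cite{Chen-Liu-Yang-2004-AA} (or Lemma 2.4 of \cite{Wang-Yang-Yang-2007-JMP}) together with the argument of Corollary 1 of \cite{Strain-Guo-2008-ARMA}, namely $\iint_{\Omega_A\times\R^3} \L_\hbar g_\sigma \cdot w_{\beta+\beta_\gamma,\vartheta}^2 g_\sigma \,\d v\,\d x \geq \mu_2 \| \nu^{1/2}\P^\perp w_{\beta+\beta_\gamma,\vartheta} g_\sigma\|_A^2 - C\hbar^2 \|(\delta x+l)^{-\Theta/2}\P w_{\beta+\beta_\gamma,\vartheta} g_\sigma\|_A^2$, which carries exactly the weighted macroscopic remainder your bookkeeping demands.
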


\begin{proof}[Proof of Lemma \ref{Lmm-L2xv}]
	Multiplying \eqref{A3-lambda} by $w_{\beta + \beta_\gamma, \vartheta}^2 g_\sigma$ and integrating by parts over $(x,v) \in \Omega_A \times \R^3$, we have
			\begin{multline*}
				\iint_{\Omega_A \times \R^3} v_3 \partial_x g_\sigma \cdot w_{\beta + \beta_\gamma, \vartheta}^2 g_\sigma \d v \d x + \mathscr{W} (g_\sigma) \\
				= \iint_{\Omega_A \times \R^3} h_\sigma \cdot w_{\beta + \beta_\gamma, \vartheta}^2 g_\sigma \d v \d x
				 -\iint_{\Omega_A \times \R^3} D_\hbar g \cdot w_{\beta + \beta_\gamma, \vartheta}^2 g_\sigma \d v \d x \,,
			\end{multline*}
	where
	\begin{equation*}
		\begin{aligned}
			\mathscr{W} (g_\sigma) = & \iint_{\Omega_A \times \R^3} [ - \hbar \sigma_x v_3 + \nu (v) ] g_\sigma \cdot w_{\beta + \beta_\gamma, \vartheta}^2 g_\sigma \d v \d x -  \iint_{\Omega_A \times \R^3} K_\hbar g_\sigma \cdot w_{\beta + \beta_\gamma, \vartheta}^2 g_\sigma \d v \d x \,.
		\end{aligned}
	\end{equation*}
	
	Note that
	\begin{equation*}
		\begin{aligned}
			\iint_{\Omega_A \times \R^3} v_3 \partial_x g_\sigma \cdot w_{\beta + \beta_\gamma, \vartheta}^2 g_\sigma \d v \d x = \tfrac{1}{2} \int_{\R^3} v_3 w_{\beta + \beta_\gamma, \vartheta}^2 g_\sigma^2 (A, v) \d v - \tfrac{1}{2} \int_{\R^3} v_3 w_{\beta + \beta_\gamma, \vartheta}^2 g_\sigma^2 (0, v) \d v \,.
		\end{aligned}
	\end{equation*}
	The boundary condition in \eqref{A3-lambda} reduces to
	\begin{equation*}
		\begin{aligned}
			\tfrac{1}{2} \int_{\R^3} v_3 w_{\beta + \beta_\gamma, \vartheta}^2 g_\sigma^2 (A, v) \d v = \tfrac{1}{2} \| |v_3|^\frac{1}{2} w_{\beta + \beta_\gamma, \vartheta} g_\sigma \|^2_{L^2_{\Sigma_+^A}} - \tfrac{1}{2} \| |v_3|^\frac{1}{2} w_{\beta + \beta_\gamma, \vartheta} \varphi_{A, \sigma} \|^2_{L^2_{\Sigma_-^A}} \,,
		\end{aligned}
	\end{equation*}
	and
	\begin{equation*}
		\begin{aligned}
			-\tfrac{1}{2} \int_{\R^3} v_3 w_{\beta + \beta_\gamma, \vartheta}^2 g_\sigma^2 (0, v) \d v = \tfrac{1}{2} \| |v_3|^\frac{1}{2} w_{\beta + \beta_\gamma, \vartheta} g_\sigma \|^2_{L^2_{\Sigma_+^0}} - \tfrac{1}{2} \| |v_3|^\frac{1}{2} w_{\beta + \beta_\gamma, \vartheta} f_{b, \sigma} \|^2_{L^2_{\Sigma_-^0}} \,.
		\end{aligned}
	\end{equation*}
	As a consequence, one has
		\begin{align}\label{BC-L2}
			\no & \iint_{\Omega_A \times \R^3} v_3 \partial_x g_\sigma \cdot w_{\beta + \beta_\gamma, \vartheta}^2 g_\sigma \d v \d x \\
			\no \geq & \tfrac{1}{2} \| |v_3|^\frac{1}{2} w_{\beta + \beta_\gamma, \vartheta} g_\sigma \|^2_{L^2_{\Sigma_+^0}} + \tfrac{1}{2} \| |v_3|^\frac{1}{2} w_{\beta + \beta_\gamma, \vartheta} g_\sigma \|^2_{L^2_{\Sigma_+^A}} \\
			& - C [ \mathscr{B}_2 (w_{\beta + \beta_\gamma, \vartheta} \varphi_{A, \sigma} ) ]^2 - C [ \mathscr{B}_2 (w_{\beta + \beta_\gamma, \vartheta} f_{b, \sigma} ) ]^2 \,.
		\end{align}
	
	Recalling $\L_\hbar g_\sigma = \nu (v) g_\sigma - K_\hbar g_\sigma$, there holds
	\begin{equation*}
		\begin{aligned}
			\mathscr{W} (g_\sigma) = &  \iint_{\Omega_A \times \R^3} \L_\hbar g_\sigma \cdot w_{\beta + \beta_\gamma, \vartheta}^2 g_\sigma \d v \d x \\
			& +  \iint_{\Omega_A \times \R^3} ( - \hbar \sigma_x v_3 g_\sigma ) w_{\beta + \beta_\gamma, \vartheta}^2 g_\sigma \d v \d x \,.
		\end{aligned}
	\end{equation*}
    Note from Lemma \ref{Lmm-sigma} that  $|\sigma_x| \leq c_2 ( \delta x + l)^{-\Theta}$ and $|\sigma_x v_3|\leq c \nu(v)$, it holds that
	\begin{equation*}
		\begin{aligned}
			& \big|  \iint_{\Omega_A \times \R^3} ( - \hbar \sigma_x v_3 g_\sigma ) w_{\beta + \beta_\gamma, \vartheta}^2 g_\sigma \d v \d x \big|\\
			\leq &  C \hbar  \iint_{\Omega_A \times \R^3}\sigma_x v_3 (\P w_{\beta + \beta_\gamma, \vartheta} g_\sigma)^2 \d v \d x + C \hbar \iint_{\Omega_A \times \R^3}\sigma_x v_3 (\P^\perp w_{\beta + \beta_\gamma, \vartheta})^2 \d v \d x \\
			\leq & C \hbar \|(\delta x + l)^{-\frac{\Theta}{2}} \P w_{\beta + \beta_\gamma, \vartheta} g_\sigma \|^2_A+ C \hbar \| \nu^{\frac{1}{2}} \P^\perp w_{\beta + \beta_\gamma, \vartheta} g_\sigma \|^2_A\,.
		\end{aligned}
	\end{equation*}

	Moreover, by Lemma 2.2 of \cite{Chen-Liu-Yang-2004-AA} (or Lemma 2.4 of \cite{Wang-Yang-Yang-2007-JMP}) and the similar arguments in Corollary 1 of \cite{Strain-Guo-2008-ARMA}, it infers that
	\begin{equation*}
		\begin{aligned}
			 \iint_{\Omega_A \times \R^3} \L_\hbar g_\sigma \cdot w_{\beta + \beta_\gamma, \vartheta}^2 g_\sigma \d v \d x \geq &  \mu_2 \| \nu^\frac{1}{2} \P^\perp w_{\beta + \beta_\gamma, \vartheta} g_\sigma \|^2_A \\
			& -  C \hbar^2 \| (\delta x + l)^{- \frac{\Theta}{2}} \P w_{\beta + \beta_\gamma, \vartheta} g_\sigma \|^2_A
		\end{aligned}
	\end{equation*}
	for $\mu_2 > 0$. Then
	\begin{equation}\label{Wg-bnd}
		\begin{aligned}
			\mathscr{W} (g_\sigma) \geq & \tfrac{1}{2}  \mu_2 \| \nu^\frac{1}{2} \P^\perp w_{\beta + \beta_\gamma, \vartheta} g_\sigma \|^2_A \\
			& - C \hbar \| (\delta x + l)^{-\frac{\Theta}{2}} \P w_{\beta + \beta_\gamma, \vartheta}  g_\sigma \|^2_A  \,\\
		 \geq & c_0 [ \mathscr{E}_{2 }^A (w_{\beta + \beta_\gamma, \vartheta} g_\sigma) ]^2 -  C   \| (\delta x + l)^{-\frac{\Theta}{2}} \P w_{\beta + \beta_\gamma, \vartheta}  g_\sigma \|^2_A \,,
	\end{aligned}
   \end{equation}
	where the functional $\mathscr{E}_{2 }^A ( \cdot )$ is given in \eqref{E2-lambda}.
	
	As for the quantity $\iint_{\Omega_A \times \R^3} h_\sigma \cdot w_{\beta + \beta_\gamma, \vartheta}^2 g_\sigma \d v \d x$, one has
	\begin{equation}\label{h-bnd-2}
		\begin{aligned}
			& \iint_{\Omega_A \times \R^3} h_\sigma \cdot w_{\beta + \beta_\gamma, \vartheta}^2 g_\sigma \d v \d x \\
			= & \iint_{\Omega_A \times \R^3} \P w_{\beta + \beta_\gamma, \vartheta} h_\sigma \cdot \P w_{\beta + \beta_\gamma, \vartheta} g_\sigma \d v \d x + \iint_{\Omega_A \times \R^3} \P^\perp w_{\beta + \beta_\gamma, \vartheta} h_\sigma \cdot \P^\perp w_{\beta + \beta_\gamma, \vartheta} g_\sigma \d v \d x \\
			\leq & \tfrac{1}{2} c_0 \| (\delta x + l)^{- \frac{\Theta}{2}} \P w_{\beta + \beta_\gamma, \vartheta} g_\sigma \|^2_A + C \| (\delta x + l)^{ \frac{\Theta}{2}} \P w_{\beta + \beta_\gamma, \vartheta} h_\sigma \|^2_A \\
			& + \tfrac{1}{2} c_0 \| \nu^\frac{1}{2} \P^\perp w_{\beta + \beta_\gamma, \vartheta} g_\sigma \|^2_A + C \| \nu^{- \frac{1}{2}} \P^\perp w_{\beta + \beta_\gamma, \vartheta} h_\sigma \|^2_A \\
			= & \tfrac{1}{2} c_0  [ \mathscr{E}_{2 }^A (w_{\beta + \beta_\gamma, \vartheta} g_\sigma) ]^2 + C  [ \mathscr{A}_{2 }^A (w_{\beta + \beta_\gamma, \vartheta} h_\sigma) ]^2 \,.
		\end{aligned}
	\end{equation}
	
	We then control the quantity $-\iint_{\Omega_A \times \R^3} D_\hbar g \cdot w_{\beta + \beta_\gamma, \vartheta}^2 g_\sigma \d v \d x$. Recalling \eqref{D-def},
    \begin{equation}
       \begin{aligned}
			-\iint_{\Omega_A \times \R^3} D_\hbar g \cdot w_{\beta + \beta_\gamma, \vartheta}^2 g_\sigma \d v \d x =& \underbrace{-\bar{\alpha} \iint_{\Omega_A \times \R^3} ( \delta x + l)^{-\Theta} e^{\hbar \sigma } \P^+ v_3 e^{-\hbar \sigma} g_\sigma \cdot w_{\beta + \beta_\gamma, \vartheta}^2 g_\sigma \d v \d x}_{:= I_1}\\
			&\underbrace{-\bar{\beta} \iint_{\Omega_A \times \R^3} ( \delta x + l)^{-\Theta} e^{\hbar \sigma } \P^0 v_3 e^{-\hbar \sigma} g_\sigma \cdot w_{\beta + \beta_\gamma, \vartheta}^2 g_\sigma \d v \d x}_{:= I_2}\,,
        \end{aligned}
	\end{equation}
    where $\bar{\alpha}, \bar{\beta} = O(1) \hbar \ll 1$. Lemma \ref{Lmm-sigma} infers that $\sigma(x,v)-\sigma(x,v_*) \leq | c \left||v-\u|^2-|v_*-\u |^2\right|$, one thereby has
	\begin{equation}\label{I1-bnd}
		\begin{aligned}
			|I_{1}| &\leq  C \big| \bar{\alpha} \int_{\Omega_A} ( \delta x + l)^{-\Theta} \| w_{\beta + \beta_\gamma, \vartheta} g_\sigma \|^2_{L^2_v}  \d x \big| \\
			&\leq C \hbar [ \mathscr{E}_{2 }^A (w_{\beta + \beta_\gamma, \vartheta} g_\sigma) ]^2\,.
		\end{aligned}
	\end{equation}
    Similarly, one has
\begin{equation}\label{I2-bnd}
	|I_2| \leq C \hbar [\mathscr{E}^A_{2}(w_{\beta+\beta_\gamma,\vartheta} g_\sigma )]^2 \,.
\end{equation}
	Collecting the all above estimates \eqref{BC-L2}, \eqref{Wg-bnd}, \eqref{h-bnd-2}, \eqref{I1-bnd} and \eqref{I2-bnd}, one  concludes the bound \eqref{L2-g-sigma-Bnd}. Moreover, as similar arguments as in \eqref{L2-g-sigma-Bnd}, the difference ${\vartriangle} g_\sigma : = g_{\sigma 2} - g_{\sigma 1}$ enjoys the estimate \eqref{L2-g-sigma-Bnd-Dif}. Therefore, the proof of Lemma \ref{Lmm-L2xv} is completed.
\end{proof}

\subsection{$L^2_{x,v}$ estimates}

In this subsection, the majority is to control the quantity $\| (\delta x + l)^{-\frac{\Theta}{2}} \P w_{\beta+\beta_\gamma ,\vartheta} g_\sigma \|_{A}^2$
appeared in the right-hand side of \eqref{L2-g-sigma-Bnd} in Lemma \ref{Lmm-L2xv} by employing the energy method.
We first state the following lemma:
\begin{lemma}\label{Lmm-P-wg}
	Let $- 3 < \gamma \leq 1$, $A > 1$, $l \gg 1$, $0 < \delta \ll 1$, the integer $\beta \geq \max \{ 0, - \gamma \}$, $0 \leq \hbar, \vartheta \ll 1$.
	Then there is a constant $C > 0$, independent of $A, \delta$ and $\hbar$, such that
	\begin{equation}\label{P-wg}
		\| (\delta x + l)^{-\frac{\Theta}{2}} \P w_{\beta+\beta_\gamma ,\vartheta} g_\sigma \|_{A}^2 \leq C [\mathscr{E}^A_{2}(g_\sigma)]^2\,.
	\end{equation}
\end{lemma}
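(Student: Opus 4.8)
The plan is to prove the bound pointwise in $x$ and then integrate, exploiting that $\P$ projects onto the finite-dimensional space $\mathrm{Null}(\L)=\mathrm{span}\{\psi_0,\dots,\psi_4\}$, whose elements all carry the Gaussian factor $\sqrt{\M}$ and hence decay rapidly in $v$. First I would expand
\begin{equation*}
	\P(w_{\beta+\beta_\gamma,\vartheta}\, g_\sigma)(x,\cdot)=\sum_{i=0}^4 a_i(x)\,\psi_i,\qquad a_i(x)=\frac{\int_{\R^3} w_{\beta+\beta_\gamma,\vartheta}(v)\, g_\sigma(x,v)\,\psi_i(v)\,\d v}{\int_{\R^3}\psi_i^2\,\d v},
\end{equation*}
so that by the orthogonality \eqref{psi-P} one has $\|\P(w_{\beta+\beta_\gamma,\vartheta}g_\sigma)(x,\cdot)\|_{L^2_v}^2\thicksim\sum_{i}|a_i(x)|^2$. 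The strategy is then to split $g_\sigma=\P g_\sigma+\P^\perp g_\sigma$ inside each numerator and estimate the macroscopic and microscopic contributions separately.

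For the macroscopic part, writing $\P g_\sigma=\sum_j a_j(g_\sigma)\psi_j$ gives $\int w_{\beta+\beta_\gamma,\vartheta}(\P g_\sigma)\psi_i\,\d v=\sum_j a_j(g_\sigma)M_{ij}$ with $M_{ij}=\int_{\R^3} w_{\beta+\beta_\gamma,\vartheta}\psi_i\psi_j\,\d v$. Since $\psi_i\psi_j$ contains the factor $\M(v)=\frac{\rho}{(2\pi T)^{3/2}}e^{-|v-\u|^2/(2T)}$, the integrand behaves like $e^{-(1/(2T)-\vartheta)|v-\u|^2}$ times a polynomial, which is integrable for $\vartheta$ small (say $\vartheta<1/(2T)$); hence each $M_{ij}$ is a finite constant and this contribution is $\lesssim\|\P g_\sigma(x,\cdot)\|_{L^2_v}$. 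For the microscopic part, a Cauchy--Schwarz splitting against $\nu$ gives
\begin{equation*}
	\Big|\int_{\R^3} w_{\beta+\beta_\gamma,\vartheta}\,(\P^\perp g_\sigma)\,\psi_i\,\d v\Big|\le\Big(\int_{\R^3}\nu^{-1}w_{\beta+\beta_\gamma,\vartheta}^2\psi_i^2\,\d v\Big)^{1/2}\|\nu^{\frac12}\P^\perp g_\sigma(x,\cdot)\|_{L^2_v},
\end{equation*}
and the first factor is again a finite constant: even for soft potentials $\nu^{-1}(v)\lesssim(1+|v|)^{-\gamma}$ grows only polynomially and is dominated by the Gaussian tail of $\psi_i^2$ (this is where $\vartheta$ small, say $\vartheta<1/(4T)$, is again used). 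Combining the two estimates yields the pointwise bound $\|\P(w_{\beta+\beta_\gamma,\vartheta}g_\sigma)(x,\cdot)\|_{L^2_v}^2\lesssim\|\P g_\sigma(x,\cdot)\|_{L^2_v}^2+\|\nu^{\frac12}\P^\perp g_\sigma(x,\cdot)\|_{L^2_v}^2$.

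To finish, I would multiply by $(\delta x+l)^{-\Theta}$ and integrate over $x\in\Omega_A$. The macroscopic term produces exactly $\|(\delta x+l)^{-\frac{\Theta}{2}}\P g_\sigma\|_A^2$, while in the microscopic term I use $(\delta x+l)^{-\Theta}\le l^{-\Theta}\le 1$ (valid since $l\ge 1$ and $\Theta\ge 0$ by \eqref{Theta}) to absorb the prefactor, bounding it by $\|\nu^{\frac12}\P^\perp g_\sigma\|_A^2$. By definition \eqref{E2-lambda} both are controlled by $[\mathscr{E}^A_2(g_\sigma)]^2$, which establishes \eqref{P-wg}. The argument is purely functional-analytic and uses no PDE structure; the only genuine obstacle is verifying the finiteness of the velocity integrals $M_{ij}$ and $\int\nu^{-1}w_{\beta+\beta_\gamma,\vartheta}^2\psi_i^2\,\d v$, which relies entirely on the smallness of $\vartheta$ together with the collision-frequency lower bound $\nu(v)\gtrsim(1+|v|)^\gamma$, ensuring the polynomial weights are swallowed by the Gaussian decay.
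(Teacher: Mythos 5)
Your proposal is correct and follows essentially the same route as the paper: expand $\P(w_{\beta+\beta_\gamma,\vartheta}g_\sigma)$ in the basis $\{\psi_j\}$, split $g_\sigma=\P g_\sigma+\P^\perp g_\sigma$ inside the coefficient integrals, bound the macroscopic contribution by constants (finite velocity moments of $w_{\beta+\beta_\gamma,\vartheta}\psi_i\psi_j$) and the microscopic one by Cauchy--Schwarz against $\nu$, then integrate in $x$ using $(\delta x+l)^{-\Theta}\le 1$ since $\Theta\ge 0$. The only difference is that you spell out the finiteness of the matrix entries $M_{ij}$ and of $\int\nu^{-1}w_{\beta+\beta_\gamma,\vartheta}^2\psi_i^2\,\d v$, which the paper compresses into a single application of the H\"older inequality.
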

\begin{proof}
	 It suffices to show that
	 \begin{equation*}
		\| (\delta x + l)^{-\frac{\Theta}{2}} \P w_{\beta+\beta_\gamma ,\vartheta} g_\sigma \|_{A}^2 \leq C \big(\| (\delta x + l)^{-\frac{\Theta}{2}} \P  g_\sigma \|_{A}^2+\| \P^\perp  g_\sigma \|_{A}^2\big)\,.
	\end{equation*}

To do this, we decompose $g_\sigma$ and $w_{\beta+\beta_\gamma ,\vartheta} g_\sigma$ into
\begin{equation*}
	g_\sigma=\sum_{j=0}^{4}a_j \psi_j +\P^\perp g_\sigma, \quad  \text{and}\quad \!
	w_{\beta+\beta_\gamma ,\vartheta} g_\sigma =\sum_{j=0}^{4} \bar{a}_j \psi_j +\P^\perp w_{\beta+\beta_\gamma ,\vartheta} g_\sigma \,.
\end{equation*}
Then, the H\"older inequality shows
\begin{equation}\label{a}
\begin{aligned}
		\bar{a}_j =\tfrac{1}{(\psi_j,\psi_j)} \int_{\R^3} w_{\beta+\beta_\gamma ,\vartheta} g_\sigma \psi_j \d v &= \tfrac{1}{(\psi_j,\psi_j)} \int_{\R^3} w_{\beta+\beta_\gamma ,\vartheta} \big(\sum_{i=0}^{4}a_i \psi_i +\P^\perp g_\sigma \big) \psi_j \d v \\
		&\leq C \big(\sum_{i=0}^{4} | a_i| + \| \nu^\frac{1}{2} \P^\perp g_\sigma \|_{L^2_v}\big) \,,
\end{aligned}
\end{equation}
for $j=0,\cdots, 4$. It follows from \eqref{a} that
\begin{equation*}
	\begin{aligned}
		\| (\delta x + l)^{-\frac{\Theta}{2}} \P w_{\beta+\beta_\gamma ,\vartheta} g_\sigma \|_{A}^2 = &  \int_{\Omega_A} (\delta x + l)^{-\Theta}\sum_{j=0}^{4} \bar{a}_j^2 \d x\\
		\leq & \int_{\Omega_A} (\delta x + l)^{-\Theta}\big(\sum_{j=0}^{4} a_j^2+ \| \nu^\frac{1}{2} \P^\perp g_\sigma \|^2_{L^2_v} \big) \d x\\
		\leq & C \big(\| (\delta x + l)^{-\frac{\Theta}{2}} \P  g_\sigma \|_{A}^2+\| \P^\perp  g_\sigma \|_{A}^2\big)\,,
	\end{aligned}
\end{equation*}
since $\Theta =\frac{1-\gamma}{3-\gamma}\geq 0$. Hence, we have the estimate \eqref{P-wg}.
\end{proof}
Lemma \ref{Lmm-P-wg} shows that we only need to control the quantity $\mathscr{E}^A_{2}(g_\sigma)$. Before go into the proof, we first state the following Lemma.
\begin{lemma}\label{Lmm-macro-diss}
	Let $l \gg 1$, then there exists $c_3>0$, such that for $\bar{\alpha},\bar{\beta} =O(1)\hbar \ll 1$ suitable large,
	\begin{equation}
		\int_{2(1+|v-\u |)^{3 - \gamma} \leq l} \big( \alpha \P^+ v_3 \P g+\beta \P^0 \P g - 5 \hbar \P v_3 \P g\big) g \d v \geq \hbar c_3 \|\P g\|_{L^2_v}^2\,.
	\end{equation}
\end{lemma}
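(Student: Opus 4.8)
The plan is to reduce the stated inequality to a finite-dimensional quadratic form in the macroscopic coefficients of $\P g$ and then exploit the sign table of $P(\psi_j,\psi_j)$ together with the positivity of $\P^0$ from Lemma \ref{Lmm-split}. First I would observe that on the integration region $\Omega^-:=\{2(1+|v-\u|)^{3-\gamma}\le l\}$ one has $\tfrac{\delta x+l}{(1+|v-\u|)^{3-\gamma}}\ge 2$, so $\Upsilon(\cdot)=0$ there and the weight degenerates to the purely $x$-dependent $\sigma=5(\delta x+l)^{2/(3-\gamma)}$; this is exactly why the integrand carries the term $-5\hbar\,\P v_3\P g$, which is the macroscopic image of the $v$-independent weight contribution $-\hbar\sigma_x v_3$ on $\Omega^-$. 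Setting $\Phi:=\bar\alpha\,\P^+ v_3\P g+\bar\beta\,\P^0\P g-5\hbar\,\P v_3\P g\in\mathrm{Null}(\L)$ (the coefficients written $\alpha,\beta$ in the statement are the damping constants $\bar\alpha,\bar\beta=O(1)\hbar$), I would use $\int_{\R^3}\Phi\,g\,\d v=\int_{\R^3}\Phi\,\P g\,\d v$ and write $\int_{\Omega^-}\Phi\,g\,\d v=\int_{\R^3}\Phi\,\P g\,\d v-\int_{(\Omega^-)^c}\Phi\,g\,\d v$. Since every $\psi_i$ carries the Gaussian $\sqrt{\M}$ and $(\Omega^-)^c$ is contained in $\{|v-\u|\gtrsim l^{1/(3-\gamma)}\}$, the tail integral is exponentially small in $l$; its $\P^\perp g$ part is the piece that, when the lemma is applied inside the $L^2_{x,v}$ energy estimate, is absorbed against $\|\nu^{1/2}\P^\perp g\|$, so for $l$ large it is harmless.

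Next I would compute the quadratic forms explicitly from the orthogonality \eqref{psi-P}, i.e.\ $(\psi_j,v_3\psi_i)=P(\psi_j,\psi_i)=P(\psi_j,\psi_j)\delta_{ij}$. Writing $\P g=\sum_{i=0}^4 a_i\psi_i$ this gives $\int(\P^+ v_3\P g)\P g\,\d v=\sum_{j\in I^+}a_j^2P(\psi_j,\psi_j)$ and $\int(\P v_3\P g)\P g\,\d v=\sum_{j}a_j^2P(\psi_j,\psi_j)$, so the $\bar\alpha$- and $5\hbar$-contributions combine, using $P(\psi_j,\psi_j)=0$ on $I^0$, into $\sum_{j\in I^+}(\bar\alpha-5\hbar)\,a_j^2P(\psi_j,\psi_j)+5\hbar\sum_{j\in I^-}a_j^2|P(\psi_j,\psi_j)|$. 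Choosing the $O(1)$ constant in $\bar\alpha$ larger than $5$ makes both sums nonnegative, and since the numbers $P(\psi_j,\psi_j)$ are explicit and bounded away from $0$ on $I^+\cup I^-$, this is already $\gtrsim\hbar\sum_{j\in I^+\cup I^-}a_j^2$. Thus the first and third terms control every mode outside $I^0$.

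It remains to control the $I^0$-modes through the $\bar\beta\,\P^0$ term, and I expect this to be the main obstacle. Using $\L X_j=v_3\psi_j$ I would write $\P^0\P g=\sum_{j\in I^0}\tfrac{(X_j,v_3\P g)}{(X_j,\L X_j)}\psi_j$ with $(X_j,v_3\P g)=\sum_i a_i(X_j,v_3\psi_i)$. A parity argument in $(\xi_1,\xi_2)$ — recall $\M$ and $v_3$ are even in $\xi_1,\xi_2$ while $\psi_0,\psi_1$ are odd, and $\L$ preserves these parities — shows that the $I^0\times I^0$ block of the matrix $(X_j,v_3\psi_i)$ is diagonal, equal to $(X_j,\L X_j)\,\delta_{ij}$, so the self-part of $\P^0\P g$ is exactly $\sum_{j\in I^0}a_j\psi_j$ and yields the genuinely positive $\bar\beta\sum_{j\in I^0}a_j^2(\psi_j,\psi_j)$, consistent with $(\P^0 f,f)\ge0$. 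The off-diagonal couplings $(X_j,v_3\psi_i)$ with $j\in I^0$, $i\in I^+\cup I^-$ do not vanish in the degenerate cases $\mathcal{M}^\infty=\pm1$, where $\u_3\ne0$ breaks the $\xi_3$-parity, so I would estimate them by Young's inequality, routing an $\epsilon a_j^2$ term (absorbed by the positive $I^0$ self-term for $\epsilon$ a fixed small constant) and an $\epsilon^{-1}a_i^2$ term into the $I^+\cup I^-$ coercivity from the previous step.

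The delicate point is the balancing of this absorption, which dictates the phrase ``suitably chosen'' for $\bar\alpha,\bar\beta$. When $I^+\ne\varnothing$ (cases $\mathcal{M}^\infty=0,1$) the $\epsilon^{-1}a_i^2$ terms are absorbed by enlarging the constant in $\bar\alpha$, whose coercivity scales with it; in the case $\mathcal{M}^\infty=-1$, where $I^+=\varnothing$ and the available $I^-$ coercivity $5\hbar|P(\psi_j,\psi_j)|$ is fixed, one instead shrinks the constant in $\bar\beta$ so that the cross terms stay below it. In every case all three contributions are of size $O(\hbar)$, and after discarding the exponentially small $l$-corrections one collects $\int_{\Omega^-}\Phi\,g\,\d v\ge c\hbar\sum_i a_i^2(\psi_i,\psi_i)=c\hbar\|\P g\|_{L^2_v}^2$, which is the asserted bound.
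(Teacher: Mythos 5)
Your proposal cannot be checked line by line against the paper, because the paper gives no proof of Lemma \ref{Lmm-macro-diss}: it only cites Lemma 3.1 of \cite{Chen-Liu-Yang-2004-AA} (which treats the nondegenerate case, hence only the $\P^+$-damping) and omits all details. Judged as a reconstruction, your argument is correct and follows the route that citation suggests, while also supplying the ingredient that is genuinely new here, namely the $I^0$ block. The finite-dimensional reduction is right: by the $P$-orthogonality \eqref{psi-P} the $\P^+ v_3$ and $\P v_3$ contributions diagonalize to $\sum_{j\in I^+}(\bar\alpha-5\hbar)P(\psi_j,\psi_j)a_j^2+5\hbar\sum_{j\in I^-}|P(\psi_j,\psi_j)|a_j^2$; the identity $(X_j,v_3\psi_j)=(X_j,\L X_j)$ produces the positive self-term $\bar\beta\sum_{j\in I^0}(\psi_j,\psi_j)a_j^2$; the $(\xi_1,\xi_2)$-parity argument does make the $I^0\times I^0$ block diagonal (needed only when $\mathcal{M}^\infty=0$, where $I^0=\{0,1,2\}$) and kills all couplings of $\psi_0,\psi_1$ to other modes; the surviving couplings are fixed constants times $\bar\beta a_ja_i$, for which Young's inequality is the right tool. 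Your reading of the truncation is also the honest one: the $\P^\perp g$ contribution over the tail region is \emph{not} controlled by $\|\P g\|_{L^2_v}^2$, so the inequality as literally stated holds with $g$ replaced by $\P g$, i.e.\ up to an exponentially-small-in-$l$ error involving $\|\P^\perp g\|_{L^2_v}$; this is exactly how the paper uses the lemma, since the $D$-estimate in the proof of Lemma \ref{Lmm-L2xv.} carries the correction terms $-C\hbar\|\nu^{1/2}\P^\perp g_\sigma\|_{L^2_v}^2$ and an exponentially small multiple of $\sum_j a_j^2$.

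One concrete slip in your balancing step. For $\mathcal{M}^\infty=0$ you claim the cross terms are absorbed ``by enlarging the constant in $\bar\alpha$''. But in that case $I^0=\{0,1,2\}$, $I^+=\{3\}$, $I^-=\{4\}$, and $\psi_2$ couples to \emph{both}: since $\u_3=0$ gives the extra $\xi_3\to-\xi_3$ parity, $X_2$ is odd in $\xi_3$ and one computes $(X_2,v_3\psi_3)=(X_2,v_3\psi_4)=\tfrac{1}{\sqrt{30}}(X_2,v_3|\xi|^2\sqrt{\M})$, generically nonzero. The coupling $\bar\beta a_2a_4$ must be absorbed against the $I^-$ coercivity $5\hbar|P(\psi_4,\psi_4)|a_4^2$, which is fixed independently of $\bar\alpha,\bar\beta$; enlarging $\bar\alpha$ does nothing for it, and the cross term scales with $\bar\beta$. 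So the case $\mathcal{M}^\infty=0$ requires exactly the same upper bound on the $O(1)$ constant in $\bar\beta$ that you correctly impose for $\mathcal{M}^\infty=-1$: fix the Young parameter first, then take $\bar\beta$'s constant small enough for the $I^0$--$I^-$ couplings, then $\bar\alpha$'s constant large enough for the $I^0$--$I^+$ couplings. This also shows that the hypothesis ``$\bar\alpha,\bar\beta$ suitably large'' should really be read as ``suitably chosen'': whenever $I^-\neq\varnothing$ and an $I^0$--$I^-$ coupling survives, the constraint on $\bar\beta$ is an upper bound, not a lower one. With this correction your argument closes in all four cases.
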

\begin{proof}
	The proof is similar to Lemma 3.1 of \cite{Chen-Liu-Yang-2004-AA}. We omit the details of the proof for brevity.
\end{proof}
\begin{lemma}\label{Lmm-L2xv.}
	Let $- 3 < \gamma \leq 1$, $A > 1$, $l \gg 1$, $0 < \delta \ll 1$, the integer $\beta \geq \max \{ 0, - \gamma \}$, $0 \leq \hbar, \vartheta \ll 1$.
	Then there is a constant $C > 0$, independent of $A$ and $ \delta$, but dependent on $\hbar$, such that
	\begin{equation}\label{L2-g-sigma-Bnd.}
		\begin{aligned}
			&   \| |v_3|^\frac{1}{2}  g_\sigma \|^2_{L^2_{\Sigma_+^0}} + \| |v_3|^\frac{1}{2}  g_\sigma \|^2_{L^2_{\Sigma_+^A}} + [ \mathscr{E}^A_{2 } ( g_\sigma ) ]^2 \\
			\leq &  C [ \mathscr{A}^A_{ 2} (h_\sigma) ]^2 + C [ \mathscr{B}_2 (\varphi_{A, \sigma}) ]^2+ C [ \mathscr{C}_2 (f_{b, \sigma}) ]^2 \,,
		\end{aligned}
	\end{equation}
	where the functionals $ \mathscr{E}^A_{2 } ( \cdot ) $, $ \mathscr{A}^A_{ 2} ( \cdot ) $ and $ \mathscr{B}_2 ( \cdot ) $ are defined in \eqref{E2-lambda}, \eqref{As-def} and \eqref{B-def}, respectively. Moreover, the difference ${\vartriangle} g_\sigma$ enjoys the bound
	\begin{equation}\label{L2-g-sigma-Bnd-Dif.}
		\begin{aligned}
			  \| |v_3|^\frac{1}{2}  {\vartriangle} g_\sigma \|^2_{L^2_{\Sigma_+}} + \| |v_3|^\frac{1}{2}  {\vartriangle} g_\sigma \|^2_{L^2_{\Sigma_+^A}} + [ \mathscr{E}^A_{2 } ( {\vartriangle} g_\sigma ) ]^2 \\
			\leq C [ \mathscr{A}^A_{ 2} ( {\vartriangle} h_\sigma) ]^2 + C [ \mathscr{C}_2 ({\vartriangle}f_{b, \sigma}) ]^2 \,,
		\end{aligned}
	\end{equation}
\end{lemma}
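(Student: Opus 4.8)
The plan is to close the plain (i.e.\ $w_{\beta+\beta_\gamma,\vartheta}$-free) weighted $L^2_{x,v}$ energy estimate for \eqref{A3-lambda}; the essential new feature compared with Lemma~\ref{Lmm-L2xv} is that the macroscopic part $\P g_\sigma$ must now be \emph{fully} absorbed into the left-hand side rather than left on the right. First I would multiply the equation $v_3\partial_x g_\sigma + [-\hbar\sigma_x v_3+\nu(v)]g_\sigma - K_\hbar g_\sigma = h_\sigma - D_\hbar g$ by $g_\sigma$ and integrate over $\Omega_A\times\R^3$. Integrating the transport term by parts in $x$ and using the boundary conditions of \eqref{A3-lambda} yields the two outgoing traces $\tfrac12\||v_3|^{\frac12}g_\sigma\|^2_{L^2_{\Sigma_+^0}}+\tfrac12\||v_3|^{\frac12}g_\sigma\|^2_{L^2_{\Sigma_+^A}}$ on the left, while the incoming data generate $\tfrac12\||v_3|^{\frac12}f_{b,\sigma}\|^2_{L^2_{\Sigma_-^0}}+\tfrac12\||v_3|^{\frac12}\varphi_{A,\sigma}\|^2_{L^2_{\Sigma_-^A}}$ on the right, i.e.\ precisely $C[\mathscr{C}_2(f_{b,\sigma})]^2+C[\mathscr{B}_2(\varphi_{A,\sigma})]^2$.

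The collision contribution $\iint \L_\hbar g_\sigma\cdot g_\sigma\,\d v\,\d x$ is the easy part: by the spectral gap of the linearized operator (Lemma~2.2 of \cite{Chen-Liu-Yang-2004-AA}, Lemma~2.4 of \cite{Wang-Yang-Yang-2007-JMP} and Corollary~1 of \cite{Strain-Guo-2008-ARMA}) it is bounded below by $\mu_2\|\nu^{\frac12}\P^\perp g_\sigma\|^2_A$ minus an $O(\hbar)$ error arising from $K_\hbar-K$, which produces the microscopic half of $\mathscr{E}^A_2(g_\sigma)$.

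The hard part is recovering the macroscopic half $\|(\delta x+l)^{-\frac{\Theta}{2}}\P g_\sigma\|^2_A$, for which $\L$ provides no control. This is exactly where the weight--transport term $-\hbar\iint\sigma_x v_3 g_\sigma^2$ (which, unlike in Lemma~\ref{Lmm-L2xv}, I would \emph{retain} rather than discard) and the damping $-\iint D_\hbar g\cdot g_\sigma$ must be played off against each other. I would move both to the left, split the velocity integral into the moderate region $2(1+|v-\u|)^{3-\gamma}\leq l$ and its complement, use $\sigma_x\thicksim(\delta x+l)^{-\Theta}$ there (Lemma~\ref{Lmm-sigma}) to factor out $(\delta x+l)^{-\Theta}$, and then apply Lemma~\ref{Lmm-macro-diss} fiberwise in $x$ to gain $\hbar c_3\|(\delta x+l)^{-\frac{\Theta}{2}}\P g_\sigma\|^2_A$. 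Here the three mechanisms act on disjoint pieces of $\mathrm{Null}(\L)$: $\bar\alpha\P^+ v_3$ controls the $I^+$ modes, $\bar\beta\P^0$ controls the $I^0$ modes, and the sign of $-\hbar\P v_3$ against the negative entropy fluxes controls the $I^-$ modes. Three auxiliary points must be dealt with: the $e^{\pm\hbar\sigma}$ mismatch inside $D_\hbar g$ is absorbed via $|\sigma(x,v)-\sigma(x,v_*)|\leq c\big|\,|v-\u|^2-|v_*-\u|^2\,\big|$ from Lemma~\ref{Lmm-sigma}; the complementary large-$|v|$ region contributes only a remainder that is exponentially small in $l$, since the basis $\psi_i$ is Gaussian; and the $\P g_\sigma$--$\P^\perp g_\sigma$ cross term coming from expanding $g_\sigma^2$ in $-\hbar\sigma_x v_3 g_\sigma^2$ is dispatched by Young's inequality with the small prefactor $\hbar$ and absorbed into the two coercive terms.

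Finally, the source pairing $\iint h_\sigma\cdot g_\sigma\,\d v\,\d x$ is split into its $\P$--$\P$ and $\P^\perp$--$\P^\perp$ parts; since the weights $(\delta x+l)^{\pm\frac{\Theta}{2}}$ and $\nu^{\mp\frac12}$ in $\mathscr{A}^A_2$ and $\mathscr{E}^A_2$ are exactly dual, Cauchy--Schwarz bounds it by half of the coercive terms plus $C[\mathscr{A}^A_2(h_\sigma)]^2$. Absorbing the coercive terms into the left-hand side then gives \eqref{L2-g-sigma-Bnd.}. The difference estimate \eqref{L2-g-sigma-Bnd-Dif.} follows by running the identical argument on \eqref{L-lambda-Dif}; since ${\vartriangle}g_\sigma$ has vanishing incoming data at $x=A$, the $\mathscr{B}_2$ term simply drops out. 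I expect the macroscopic coercivity step---balancing the damping against the weight--transport term through Lemma~\ref{Lmm-macro-diss} while controlling the region split---to be the crux.
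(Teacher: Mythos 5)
Your proposal follows the paper's proof essentially step for step: the same energy identity with the outgoing traces on the left and $\mathscr{B}_2,\mathscr{C}_2$ data on the right, the same spectral-gap bound for $\|\nu^{1/2}\P^\perp g_\sigma\|_A^2$, the same key device of retaining $-\hbar\sigma_x v_3 g_\sigma^2$ and playing it against $D_\hbar g$ via the region split and Lemma \ref{Lmm-macro-diss} (with the Gaussian decay of the $\psi_j$ controlling the complementary region and the bound $|\sigma(x,v)-\sigma(x,v_*)|\leq c\,\big||v-\u|^2-|v_*-\u|^2\big|$ handling the $e^{\pm\hbar\sigma}$ mismatch), and the same dual-weight Cauchy--Schwarz treatment of $\iint h_\sigma g_\sigma$. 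The difference bound \eqref{L2-g-sigma-Bnd-Dif.} is obtained exactly as you describe, so your outline reproduces the paper's argument.
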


\begin{proof}[Proof of Lemma \ref{Lmm-L2xv.}]
	Multiplying \eqref{A3-lambda} by  $g_\sigma$ and integrating by parts over $(x,v) \in \Omega_A \times \R^3$, we have
	\begin{multline}\label{L2-g-sigma}
			\iint_{\Omega_A \times \R^3} v_3 \partial_x g_\sigma   g_\sigma \d v \d x  +  \iint_{\Omega_A \times \R^3} \mathcal{L}_\hbar g_\sigma  g_\sigma \d v \d x \\
			+ \iint_{\Omega_A \times \R^3} (D_\hbar g-\hbar \sigma_x v_3 g_\sigma) g_\sigma \d v \d x = \iint_{\Omega_A \times \R^3} h_\sigma g_\sigma \d v \d x \,.
	\end{multline}
	The boundary condition in \eqref{A3-lambda} reduces to
		\begin{align}\label{BC-L2.}
			\no  \iint_{\Omega_A \times \R^3} v_3 \partial_x g_\sigma  g_\sigma \d v \d x \geq & \tfrac{1}{2} \| |v_3|^\frac{1}{2}  g_\sigma \|^2_{L^2_{\Sigma_+^0}} + \tfrac{1}{2} \| |v_3|^\frac{1}{2}  g_\sigma \|^2_{L^2_{\Sigma_+^A}} \\
			& - C [ \mathscr{B}_2 ( \varphi_{A, \sigma} ) ]^2 - C [ \mathscr{C}_2 ( f_{b, \sigma} ) ]^2 \,.
		\end{align}
	Moreover, by Lemma 2.2 of \cite{Chen-Liu-Yang-2004-AA} (or Lemma 2.4 of \cite{Wang-Yang-Yang-2007-JMP}), it infers that
	\begin{equation*}
			 \iint_{\Omega_A \times \R^3} \L_\hbar g_\sigma  g_\sigma \d v \d x \geq   \mu_3 \| \nu^\frac{1}{2} \P^\perp  g_\sigma \|^2_A
	\end{equation*}
	for $\mu_3 > 0$.

	We then control the quantity $\iint_{\Omega_A \times \R^3} (D_\hbar g-\hbar \sigma_x v_3 g_\sigma) g_\sigma \d v \d x $.
	\begin{equation}
	\begin{aligned}
			& \iint_{\Omega_A \times \R^3} (D_\hbar g-\hbar \sigma_x v_3 g_\sigma) g_\sigma \d v \d x \\
			= &\iint_{\Omega_A \times \R^3} \big[ D_\hbar g-\bar{\alpha} (\delta x + l)^{-\Theta}  \P^+  v_3 g_\sigma -\bar{\beta} (\delta x + l)^{-\Theta} \P^0  g_\sigma \big] g_\sigma \d v \d x  \\
			 & +\iint_{\Omega_A \times \R^3} \big[ \bar{\alpha} (\delta x + l)^{-\Theta}  \P^+  v_3 g_\sigma +\bar{\beta} (\delta x + l)^{-\Theta}  \P^0  g_\sigma -\hbar \sigma_x v_3 g_\sigma \big] g_\sigma \d v \d x \\
			:=& \int_{\Omega_A} E + D \d x\,.
	\end{aligned}
	\end{equation}
	We further decompose $E$ into
	\begin{equation*}
		\begin{aligned}
				E =& \underbrace{\bar{\alpha}  (\delta x + l)^{- \Theta }\int_{\R^3} g_\sigma \big( e^{\hbar \sigma} \P^+( e^{-\hbar \sigma} v_3 g_\sigma) -  \P^+  v_3 g_\sigma \big) \d v}_{:=E_1}\\
				&+\underbrace{\bar{\beta} (\delta x + l)^{- \Theta }\int_{\R^3} g_\sigma \big( e^{\hbar \sigma} \P^0( e^{-\hbar \sigma} g_\sigma) -  \P^0  g_\sigma \big) \d v}_{:=E_2}\,.
		\end{aligned}
		\end{equation*}
	By using the fact that $\sigma (x , v ) = 5 (\delta x + l)^{\frac{ 2}{3-\gamma}} $ for $(\delta x + l)> 2(1+ |v - \u |)^{3-\gamma}$, we have for $l$ sufficiently large
	\begin{equation*}
		\begin{aligned}
			|E_1| \leq& C \hbar (\delta x + l)^{- \Theta }\int_{\R^3} g_\sigma \big( e^{\hbar \sigma} \P^+( e^{-\hbar \sigma} v_3 g_\sigma) -\P^+ v_3 \P g_\sigma \big) \d v \\
			&\qquad \qquad \qquad\qquad \qquad+C\hbar (\delta x + l)^{-\Theta} \|\P g_\sigma \|_{L^2_v}\|\P^\perp g_\sigma \|_{L^2_v} \\
			\leq&\hbar (\delta x + l)^{- \Theta } \big(\tfrac{c}{8}\|\P g_\sigma \|_{L^2_v}^2 +C \|\nu^{\frac{1}{2}}\P^\perp g_\sigma \|_{L^2_v}^2\big) \\
			& + C \hbar (\delta x + l)^{- \Theta }\int_{\delta x + l \leq 2 (1 +|v-\u|)^{3-\gamma}} g_\sigma \big( e^{\hbar \sigma} \P^+( e^{-\hbar \sigma} v_3 g_\sigma) -  \P^+  v_3 \P g_\sigma \big) \d v \\
			\leq& \hbar (\delta x + l)^{- \Theta } \big(\tfrac{ c }{4}\|\P g_\sigma \|_{L^2_v}^2 +C \|\nu^{\frac{1}{2}}\P^\perp g_\sigma \|_{L^2_v}^2\big) \,,
		\end{aligned}
	\end{equation*}
	for some $c>0$ to be chosen later.	Similarly, it holds that
	\begin{equation*}
		|E_2|\leq \hbar (\delta x + l)^{- \Theta } \big(\tfrac{c}{4}\|\P g_\sigma \|_{L^2_v}^2 +C \|\nu^{\frac{1}{2}}\P^\perp g_\sigma \|_{L^2_v}^2\big) \,. \\
	\end{equation*}
	As a consequence,
	\begin{equation}\label{E-bnd}
		|E|\leq \hbar (\delta x + l)^{- \Theta } \big(\tfrac{c}{2}\|\P g_\sigma \|_{L^2_v}^2 +C \|\nu^{\frac{1}{2}}\P^\perp g_\sigma \|_{L^2_v}^2\big) \,.\\
	\end{equation}

    Next, we control the term $D$. By using Lemma \ref{Lmm-macro-diss} and the fact that $\psi_j$ decays like $e^{-c|v|^2}$, we have
    \begin{equation}
		\begin{aligned}
			D=& \partial_x (\delta x + l)^{\frac{2}{3-\gamma}}\int_{\delta x + l \geq 2 (1 +|v-\u|)^{3-\gamma}} g_\sigma \big( \tfrac{ \bar{\alpha} (3-\gamma)}{2} \P^+ v_3 \P g_\sigma+\tfrac{\bar{\beta}(3-\gamma)}{2} \P^0 \P g_\sigma - 5 \hbar \P v_3 \P g_\sigma \big) \d v\\
			&+(\delta x + l)^{-\Theta}\int_{\delta x + l \leq 2 (1 +|v-\u|)^{3-\gamma}} g_\sigma \big( \bar{\alpha}  \P^+  v_3 \P g_\sigma + \bar{\beta}  \P^0   \P g_\sigma -g_\sigma   \hbar \sigma_x \P v_3 \P g_\sigma \big) \d v \\
			&-\hbar (\delta x + l)^{-\Theta} \int_{\delta x + l \leq 2 (1 +|v-\u|)^{3-\gamma}} g_\sigma \sigma_x( \P v_3 \P^\perp g_\sigma  +\P^\perp v_3 g_\sigma )\d v \\
			\geq & c\hbar (\delta x + l)^{-\Theta} \|\P g_\sigma \|^2_{L^2_v} - c_1 \hbar e^{-c(\delta x + l)^{-\frac{2}{3-\gamma}}} \sum_{j=0}^{4} a_j^2 -C \hbar \|\nu^{\frac{1}{2}} \P^\perp g_\sigma \|^2_{L^2_v}\\
			\geq& c \hbar (\delta x + l)^{-\Theta} \|\P g_\sigma \|^2_{L^2_v} -C \hbar\|\nu^{\frac{1}{2}}\P^\perp g_\sigma \|^2_{L^2_v}\,
 		\end{aligned}
	\end{equation}
	for some $c>0$. As a result,
	\begin{equation}\label{Wg-bnd.}
		\iint_{\Omega_A \times \R^3}(D_\hbar g - \hbar \sigma_x v_3 g_\sigma )g_\sigma \d v \d x \geq c\hbar \|(\delta x + l)^{-\Theta} \P g_\sigma \|^2_A -C \hbar\|\nu^{\frac{1}{2}}\P^\perp g_\sigma \|^2_A\,.
	\end{equation}
	We then control the quantity $\iint_{\Omega_A \times \R^3} h_\sigma g_\sigma \d v \d x$.
	\begin{equation}\label{h-bnd-2.}
		\begin{aligned}
			& \iint_{\Omega_A \times \R^3} h_\sigma g_\sigma \d v \d x \\
			= & \iint_{\Omega_A \times \R^3} \P  h_\sigma  \P  g_\sigma \d v \d x + \iint_{\Omega_A \times \R^3} \P^\perp  h_\sigma  \P^\perp  g_\sigma \d v \d x \\
			\leq & \tfrac{1}{2} c_0 \| (\delta x + l)^{- \frac{\Theta}{2}} \P  g_\sigma \|^2_A + C \| (\delta x + l)^{ \frac{\Theta}{2}} \P  h_\sigma \|^2_A \\
			& + \tfrac{1}{2} c_0 \| \nu^\frac{1}{2} \P^\perp  g_\sigma \|^2_A + C \| \nu^{- \frac{1}{2}} \P^\perp  h_\sigma \|^2_A \\
			= & \tfrac{1}{2} c_0  [ \mathscr{E}_{2 }^A (g_\sigma) ]^2 + C  [ \mathscr{A}_{2 }^A (h_\sigma) ]^2 \,.
		\end{aligned}
	\end{equation}
	where the functional $ \mathscr{A}_{2 }^A ( \cdot ) $ is defined in \eqref{As-def}.

	Collecting the all above estimates \eqref{BC-L2.}, \eqref{Wg-bnd.} and \eqref{h-bnd-2.} and using \eqref{L2-g-sigma}, one  concludes the bound \eqref{L2-g-sigma-Bnd.}. Moreover, as similar arguments as in \eqref{L2-g-sigma-Bnd.}, the difference ${\vartriangle} g_\sigma : = g_{\sigma 2} - g_{\sigma 1}$ enjoys the estimate \eqref{L2-g-sigma-Bnd-Dif.}. Therefore, the proof of Lemma \ref{Lmm-L2xv.} is completed.
\end{proof}

\subsection{Close the uniform estimates: Proof of Lemma \ref{Lmm-APE-A3}}\label{Subsec:Uniform-Est}

In this subsection, based on Lemma \ref{Lmm-Y-bnd}-\ref{Lmm-L2xv.}, we will close the required uniform a priori estimate in Lemma \ref{Lmm-APE-A3}.

First, it follows from the inequalities \eqref{LA-g-sigma} in Lemma \ref{Lmm-Y-bnd} and \eqref{Y-bnd-4} in Lemma \ref{Lmm-Linfty-L2} that
\begin{equation}\label{X1}
	\begin{aligned}
		\mathscr{E}^A_\infty ( g_\sigma ) + \LL g_\sigma \RR_{  \beta, \vartheta, \Sigma} \lesssim \mathscr{E}^A_{\mathtt{cro}} (g_\sigma) + \mathscr{A}_\infty^A ( h_\sigma ) + \mathscr{B}_\infty ( \varphi_{A, \sigma} ) +\mathscr{C}_\infty (f_{b,\sigma}) \,.
	\end{aligned}
\end{equation}
Together with \eqref{C-L2} in Lemma \ref{Lmm-L2xv-alpha} and \eqref{X1}, one then gains
\begin{equation}\label{X2}
	\begin{aligned}
		& \mathscr{E}^A_\infty ( g_\sigma ) + \LL g_\sigma \RR_{  \beta, \vartheta, \Sigma} + \mathscr{E}^A_{\mathtt{cro}} (g_\sigma) \\
		\lesssim &\|(\delta x + l)^{-\frac{\Theta}{2}}  \nu^{\frac{1}{2}}  w_{\beta + \beta_\gamma, \vartheta} g_\sigma \|_A+ \mathscr{A}^A_{ \mathtt{cro} } ( h_\sigma ) + \mathscr{B}_{ \mathtt{cro} } ( \varphi_{A, \sigma} ) + \mathscr{C}_{ \mathtt{cro} } ( f_{b, \sigma} )\\
		& + \mathscr{A}_\infty^A ( h_\sigma ) + \mathscr{B}_\infty ( \varphi_{A, \sigma} ) + \mathscr{C}_{\infty } ( f_{b, \sigma} )\,.
	\end{aligned}
\end{equation}
Observe that $ \|(\delta x + l)^{-\frac{\Theta}{2}}  \nu^{\frac{1}{2}}  w_{\beta + \beta_\gamma, \vartheta} g_\sigma \|_A \lesssim \mathscr{E}^A_{ 2} ( w_{\beta + \beta_\gamma , \vartheta} g_\sigma) $. Combining with the bounds \eqref{X2} and \eqref{L2-g-sigma-Bnd} in Lemma \ref{Lmm-L2xv}, one easily sees that
\begin{equation}\label{X3}
	\begin{aligned}
		& \mathscr{E}^A_\infty ( g_\sigma ) + \LL g_\sigma \RR_{  \beta, \vartheta, \Sigma} + \mathscr{E}^A_{\mathtt{cro}} (g_\sigma) + \mathscr{E}^A_{2} (w_{\beta + \beta_\gamma , \vartheta} g_\sigma) \\
		\lesssim &  \| (\delta x + l)^{-\frac{\Theta}{2}} \P w_{\beta + \beta_\gamma, \vartheta}  g_\sigma \|^2_A
		+ \mathscr{A}^A_{2} (w_{\beta + \beta_\gamma, \vartheta} h_\sigma) +\mathscr{B}_2(w_{\beta + \beta_\gamma, \vartheta} \varphi_{A,\sigma}) \\
		& + \mathscr{C}_2 (w_{\beta + \beta_\gamma, \vartheta} f_{b,\sigma})
		 + \mathscr{A}^A_{ \mathtt{cro} } ( h_\sigma ) + \mathscr{B}_{ \mathtt{cro} } ( \varphi_{A, \sigma} )+ \mathscr{C}_{ \mathtt{cro} } ( f_{b, \sigma} )\\
		&  + \mathscr{A}_\infty^A ( h_\sigma ) + \mathscr{B}_\infty ( \varphi_{A, \sigma} )+ \mathscr{C}_{\infty } ( f_{b, \sigma} ) \,.
	\end{aligned}
\end{equation}
Furthermore, the estimates \eqref{P-wg} in Lemma \ref{Lmm-P-wg} show that $\| (\delta x + l)^{-\frac{\Theta}{2}} \P w_{\beta + \beta_\gamma, \vartheta}  g_\sigma \|^2_A \lesssim [\mathscr{E}^A_{2} (g_\sigma )]^2$. Then, combining the estimates \eqref{X3} and \eqref{L2-g-sigma-Bnd.} in Lemma \ref{Lmm-L2xv}, one establishes
\begin{equation}
	\begin{aligned}\label{X4}
		& \mathscr{E}^A_\infty ( g_\sigma ) + \LL g_\sigma \RR_{  \beta, \vartheta, \Sigma} + \mathscr{E}^A_{\mathtt{cro}} (g_\sigma) + \mathscr{E}^A_{ 2} (g_\sigma) + \mathscr{E}^A_{ 2} (w_{\beta + \beta_\gamma, \vartheta} g_\sigma) \\
		\lesssim &  \mathscr{A}^A_{2} (h_\sigma) + \mathscr{B}_2 ( \varphi_{A, \sigma} ) + \mathscr{C}_2 ( f_{b, \sigma} ) + \mathscr{A}^A_{2} (w_{\beta + \beta_\gamma, \vartheta} h_\sigma) \\
		& +\mathscr{B}_2(w_{\beta + \beta_\gamma, \vartheta} \varphi_{A,\sigma}) + \mathscr{C}_2 (w_{\beta + \beta_\gamma, \vartheta} f_{b,\sigma})
		 + \mathscr{A}^A_{ \mathtt{cro} } ( h_\sigma ) + \mathscr{B}_{ \mathtt{cro} } ( \varphi_{A, \sigma} )+ \mathscr{C}_{ \mathtt{cro} } ( f_{b, \sigma} )\\
		&  + \mathscr{A}_\infty^A ( h_\sigma ) + \mathscr{B}_\infty ( \varphi_{A, \sigma} )+ \mathscr{C}_{\infty } ( f_{b, \sigma} )\\
		= & \mathscr{A}^A (h_\sigma) + \mathscr{B} ( \varphi_{A, \sigma} ) + \mathscr{C} ( f_{b, \sigma} )   \,,
	\end{aligned}
\end{equation}
where the functionals $ \mathscr{A}^A ( \cdot ) $ and $ \mathscr{B} ( \cdot ) $ are defined in \eqref{Ah-lambda}. Thereby \eqref{X4} concludes the uniform estimate \eqref{Apriori-bnd}. Moreover, by the similar arguments in \eqref{X4}, we can also easily prove the bound \eqref{Apriori-bnd-diff} for the difference ${\vartriangle} g_\sigma$. Therefore, the proof of Lemma \ref{Lmm-APE-A3} is finished.


\section{Existence of linear problem \eqref{KL}: Proof of Theorem \ref{Thm-Linear}}\label{Sec:EKLe}

\subsection{Existence of $\eqref{A1}$}

In this subsection, based on Lemma \ref{Lmm-APE-A3}, we will prove the existence and uniqueness of \eqref{A1} (or equivalently \eqref{A3-lambda}). First we define the Banach space $\mathbb{B}_1$ as
\begin{equation*}
	\begin{aligned}
		\mathbb{B}_1 : = \big\{ f = f(x,v) :  \mathscr{E}^A (f) < \infty \big\} \,,
	\end{aligned}
\end{equation*}
which are endowed with the norm
\begin{equation}\label{Bi}
	\begin{aligned}
		\| f \|_{\mathbb{B}_1} : = \mathscr{E}^A (f) \,.
	\end{aligned}
\end{equation}
 Here the functional $\mathscr{E}^A (f)$ is defined in \eqref{Eg-lambda}.

\begin{lemma}\label{Lmm-A1}
	Let $- 3 < \gamma \leq 1$, sufficiently small $\delta, \hbar, \vartheta > 0$, large enough $A, l > 1$, integer $\beta \geq \max \{ 0, - \gamma \}$ and $0 < \alpha < \mu_\gamma$, where $\mu_\gamma > 0$ is given in Lemma \ref{Lmm-Kh-L2}. Assume that the source term $h (x,v) = e^{- \hbar \sigma (x,v)} h_\sigma (x,v)$ and the boundary source term $f_b (v) = e^{ - \hbar \sigma (0, v) } f_{b, \sigma} (v)$ $\varphi_A (v) = e^{ - \hbar \sigma (A, v) } \varphi_{A, \sigma} (v)$ satisfy \eqref{Assmp-hsigma}, i.e.,
	\begin{equation*}
		\begin{aligned}
			\mathscr{A}^A (h_\sigma) \,, \mathscr{B} ( \varphi_{A, \sigma} )\,,\mathscr{C} (f_{b,\sigma}) < \infty \,.
		\end{aligned}
	\end{equation*}
	Then the problem \eqref{A1} admits a unique mild solution $g = g (x,v)$ satisfying $g_\sigma (x,v) = g (x, v) e^{ \hbar \sigma (x,v)} \in \mathbb{B}_1 $ with the bound
	\begin{equation}\label{A2-bnd}
		\begin{aligned}
			\mathscr{E}^A ( g_\sigma ) \leq C \big( \mathscr{A}^A (h_\sigma) + \mathscr{B} ( \varphi_{A, \sigma} )+ \mathscr{C} ( f_{b, \sigma} ) \big) \,.
		\end{aligned}
	\end{equation}
    Here the constant $C > 0$ is independent of $A$, $\delta$, but it depends on $\hbar$;  all functionals are defined in Subsection \ref{Subsubsec:EF}.
\end{lemma}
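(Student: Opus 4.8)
The plan is to construct first a weak $L^2$ solution of \eqref{A1} by a duality (Hahn--Banach plus Riesz) argument, then to upgrade it to a mild solution and control it in $\mathbb{B}_1$ by invoking the a priori estimate of Lemma \ref{Lmm-APE-A3}, and finally to read off uniqueness from the difference estimate \eqref{Apriori-bnd-diff}. First I would remove the inhomogeneous incoming data: using the operators $Y_A$ and $Z$ from \eqref{YAn-f}--\eqref{Rn-f} I would subtract from $g_\sigma$ the explicit lift $Y_A(\sigma_x^{1/2}(A,\cdot)\varphi_{A,\sigma}) + Z(\sigma_x^{1/2}(0,\cdot)f_{b,\sigma})$ carrying the boundary values at $\{x=0,\,v_3>0\}$ and $\{x=A,\,v_3<0\}$. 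By Lemma \ref{Lmm-ARU} this lift is controlled by $\mathscr{B}(\varphi_{A,\sigma}) + \mathscr{C}(f_{b,\sigma})$, so it suffices to solve the problem with homogeneous incoming data and a modified source that still satisfies \eqref{Assmp-hsigma}.

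For the homogeneous problem I would argue by duality, as in \cite{Ukai-Yang-Yu-2003-CMP,Chen-Liu-Yang-2004-AA,Wang-Yang-Yang-2007-JMP}. Let $\mathscr{L}^\ast$ denote the formal adjoint of the transport-plus-collision-plus-damping operator appearing in \eqref{A3-lambda}, equipped with the adjoint incoming conditions $\phi(0,\cdot)|_{v_3<0}=0$ and $\phi(A,\cdot)|_{v_3>0}=0$. On the range $\mathcal{R} = \{\mathscr{L}^\ast\phi\}$ I would define the linear functional
\[
\ell(\mathscr{L}^\ast\phi) = \iint_{\Omega_A\times\R^3} h_\sigma\,\phi \,\d v\,\d x .
\]
The point is that $\ell$ is well-defined and bounded \emph{precisely because} the adjoint problem obeys an a priori estimate of the same type as Lemma \ref{Lmm-APE-A3}: integrating the adjoint equation against $\phi$ and using the coercivity of $\L$ on $\mathrm{Null}^\perp(\L)$, the favorable sign of the boundary terms under the adjoint conditions, and the macroscopic coercivity supplied by the artificial damping $-\bar\alpha(\delta x+l)^{-\Theta}\P^+ v_3 - \bar\beta(\delta x+l)^{-\Theta}\P^0$ through Lemma \ref{Lmm-macro-diss}, one obtains $\|\phi\|_\ast \lesssim \|\mathscr{L}^\ast\phi\|$ in the relevant weighted $L^2$ norm, whence $|\ell(\mathscr{L}^\ast\phi)| \le C\,\mathscr{A}^A(h_\sigma)\,\|\mathscr{L}^\ast\phi\|$. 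The Hahn--Banach theorem then extends $\ell$ to a bounded functional on $L^2_{x,v}$, and the Riesz representation theorem produces $g_\sigma\in L^2_{x,v}$ with $\iint g_\sigma\,\mathscr{L}^\ast\phi = \iint h_\sigma\,\phi$ for all admissible $\phi$; that is, $g_\sigma$ is a weak solution of \eqref{A3-lambda}.

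Finally I would promote this weak solution to a mild one. Testing against suitable $\phi$ supported near individual characteristics shows that $g_\sigma$ has a weak derivative along the flow of $v_3\partial_x$, so $g_\sigma$ satisfies the Duhamel representation \eqref{A3-3}; equivalently $g = e^{-\hbar\sigma}g_\sigma$ is a mild solution of \eqref{A1}. With a genuine solution now in hand, the a priori bound \eqref{Apriori-bnd} of Lemma \ref{Lmm-APE-A3} applies and yields $g_\sigma\in\mathbb{B}_1$ together with \eqref{A2-bnd}. Uniqueness is immediate: the difference ${\vartriangle} g_\sigma$ of two solutions solves the homogeneous version \eqref{L-lambda-Dif} of \eqref{A3-lambda} with ${\vartriangle} h_\sigma = {\vartriangle} f_{b,\sigma} = 0$, and \eqref{Apriori-bnd-diff} forces $\mathscr{E}^A({\vartriangle} g_\sigma) = 0$.

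I expect the genuine obstacle to be the \emph{closedness and boundedness below} of $\mathscr{L}^\ast$ in the weighted $L^2$ space, i.e. establishing $\|\phi\|_\ast \lesssim \|\mathscr{L}^\ast\phi\|$ so that $\ell$ is well-defined on $\mathcal{R}$. This requires reconciling the non-symmetric transport term $-v_3\partial_x$, the sign of the boundary contributions under the adjoint incoming conditions, and the signs of the damping terms so that the energy identity underlying Lemmas \ref{Lmm-L2xv} and \ref{Lmm-L2xv.} can be run on the adjoint problem. By contrast, the passage from weak to mild solution and the uniqueness are routine once the a priori estimates of Lemma \ref{Lmm-APE-A3} are available.
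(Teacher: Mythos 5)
Your proposal follows essentially the same route as the paper's proof, which simply invokes the Hahn--Banach and Riesz representation theorems (citing Theorem 3.2 of \cite{Chen-Liu-Yang-2004-AA}) to produce a unique weak solution, then upgrades to a mild solution and the bound \eqref{A2-bnd} via the a priori estimate \eqref{Apriori-bnd}, with uniqueness from \eqref{Apriori-bnd-diff}. Your sketch is in fact more detailed than the paper's (the boundary-data lift via $Y_A$, $Z$ and the adjoint coercivity issue you flag are left implicit there), but the underlying argument is the same.
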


\begin{proof}[Proof of Lemma \ref{Lmm-A1}]
    In fact, similar to the proof of Theorem 3.2 in \cite{Chen-Liu-Yang-2004-AA}, using the Hahn-Banach theorem and the Riesz representation theorem, there exists a unique weak solution of the linear problem \eqref{A3-lambda} satisfying
	\begin{equation*}\label{L2-g-sigma-Bnd..}
		\begin{aligned}
			&   \| |v_3|^\frac{1}{2}  g_\sigma \|^2_{L^2_{\Sigma_+^0}} + \| |v_3|^\frac{1}{2}  g_\sigma \|^2_{L^2_{\Sigma_+^A}} + [ \mathscr{E}^A_{2 } ( g_\sigma ) ]^2 \\
			\leq &  C [ \mathscr{A}^A_{ 2} (h_\sigma) ]^2 + C [ \mathscr{B}_2 (\varphi_{A, \sigma}) ]^2+ C [ \mathscr{C}_2 (f_{b, \sigma}) ]^2 \,.
		\end{aligned}
	\end{equation*}
	 Moreover, by the uniqueness of the weak solution and the a priori estimate \eqref{Apriori-bnd}, one establishes the uniqueness of the mild solution and the estimate \eqref{A2-bnd}. This completes the proof of Lemma \ref{Lmm-A1}.
\end{proof}

\subsection{Limits from \eqref{A1} to \eqref{KL-Damped}}

In this section, based on Lemma \ref{Lmm-A1}, we will construct the existence of the problem \eqref{KL-Damped} by taking limit $A \to + \infty$ in the equation \eqref{A1}.

For small $\hbar > 0$ given in Lemma \ref{Lmm-A1}, we initially define a Banach space
\begin{equation}\label{Bh-1}
	\begin{aligned}
		\mathbb{B}_\infty^\hbar : = \{ f (x,v) ; \mathscr{E}^\infty ( e^{\hbar \sigma} f ) < \infty \}
	\end{aligned}
\end{equation}
with the norm
\begin{equation}\label{Bh-2}
	\begin{aligned}
		\| f \|_{ \mathbb{B}_\infty^\hbar } = \mathscr{E}^\infty ( e^{\hbar \sigma} f ) \,,
	\end{aligned}
\end{equation}
where the functional $\mathscr{E}^\infty (\cdot ) $ is given in \eqref{Eg-lambda} with $A = \infty$.

Now we state the existence result on the problem \eqref{KL-Damped} as follows.

\begin{lemma}\label{Lmm-KLe}
	Let $- 3 < \gamma \leq 1$, sufficiently small $\delta, \hbar, \vartheta > 0$, large enough $l > 1$, $0 < \alpha < \mu_\gamma$ and integer $\beta \geq \beta_\gamma + \frac{1 - \gamma}{2} + \max \{ 0, - \gamma \}$, where $\mu_\gamma > 0$ is given in Lemma \ref{Lmm-Kh-L2}. Assume that the source term $h (x,v) $ and the boundary source term $f_b(v)$ satisfies
	\begin{equation}\label{Asmp-h}
		\begin{aligned}
			\mathscr{A}^\infty ( e^{\hbar \sigma} h ) < \infty \,, \mathscr{C} ( e^{\hbar \sigma} f_{b} ) < \infty\,.
		\end{aligned}
	\end{equation}
    where the functional $\mathscr{A}^\infty (\cdot)$ is defined in \eqref{Ah-lambda} with $A = \infty$. Then the problem \eqref{KL-Damped} admits a unique solution $g = g (x,v) \in \mathbb{B}_\infty^\hbar$ such that
    \begin{equation}
    	\begin{aligned}
    		\| g \|_{ \mathbb{B}_\infty^\hbar } \leq C (\mathscr{A}^\infty ( e^{\hbar \sigma} h ) + \mathscr{C} ( e^{\hbar \sigma} f_{b} ))
    	\end{aligned}
    \end{equation}
    for some constant $C > 0$.
\end{lemma}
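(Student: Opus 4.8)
The plan is to realize the half-space solution as the limit of the finite-slab solutions furnished by Lemma~\ref{Lmm-A1}, exploiting the $A$-uniform a priori bound of Lemma~\ref{Lmm-APE-A3}. For each large $A>1$ I set the outgoing datum $\varphi_A\equiv 0$ and invoke Lemma~\ref{Lmm-A1} to obtain the unique mild solution $g^A$ of \eqref{A1} on $\Omega_A\times\R^3$. Since $\varphi_{A,\sigma}=0$, the estimate \eqref{Apriori-bnd} together with the hypothesis \eqref{Asmp-h} yields $\mathscr{E}^A(e^{\hbar\sigma}g^A)\leq C(\mathscr{A}^\infty(e^{\hbar\sigma}h)+\mathscr{C}(e^{\hbar\sigma}f_b))$ with $C$ independent of $A$. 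Extending by zero, $\tilde g^A:=\mathbf{1}_{x\in(0,A)}\,g^A$, makes $\{\tilde g^A\}$ a family uniformly bounded in the space $\mathbb{B}_\infty^\hbar$ of \eqref{Bh-1}.

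The heart of the argument, and the step I expect to be the main obstacle, is to upgrade this uniform bound to convergence. I would show that $\{\tilde g^A\}$ is Cauchy in the weaker space $\mathbb{B}_\infty^{\hbar'}$ for any fixed $\hbar'\in(0,\hbar)$; the \emph{loss of weight} $\hbar\to\hbar'$ is exactly what buys the smallness. Fix $A_1<A_2$. On $\Omega_{A_1}$ both $g^{A_2}|_{\Omega_{A_1}}$ and $g^{A_1}$ solve the damped equation with the same source $h$ and the same incoming datum $f_b$ at $x=0$, so their difference solves the homogeneous version of \eqref{A3-lambda} on $\Omega_{A_1}$ with vanishing source, vanishing datum at $x=0$, and outgoing datum $g^{A_2}(A_1,\cdot)|_{v_3<0}$ at $x=A_1$. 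Applying the $\hbar'$-weighted form of \eqref{Apriori-bnd} on $\Omega_{A_1}$ controls this difference by $\mathscr{B}\big(e^{\hbar'\sigma(A_1,\cdot)}g^{A_2}(A_1,\cdot)\big)$. Writing $e^{\hbar'\sigma}=e^{(\hbar'-\hbar)\sigma}e^{\hbar\sigma}$ and using $\sigma(A_1,v)\geq c(\delta A_1+l)^{2/(3-\gamma)}$ from Lemma~\ref{Lmm-sigma}, the $\hbar$-uniform bound on $g^{A_2}$ produces the gain $e^{(\hbar'-\hbar)c(\delta A_1+l)^{2/(3-\gamma)}}\to 0$ as $A_1\to\infty$. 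The remaining piece of $\tilde g^{A_2}-\tilde g^{A_1}$, supported on $(A_1,A_2)$, equals $g^{A_2}$ there and is likewise dominated by the uniform $\hbar$-estimate times the same weight-loss factor $e^{(\hbar'-\hbar)\sigma}$, which is exponentially small on $x\geq A_1$. Combining the two contributions gives $\mathscr{E}^\infty\!\big(e^{\hbar'\sigma}(\tilde g^{A_2}-\tilde g^{A_1})\big)\to 0$, the desired Cauchy property.

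Let $g:=\lim_{A\to\infty}\tilde g^A$ in $\mathbb{B}_\infty^{\hbar'}$. Passing to the limit in the mild formulation of \eqref{A1} shows that $g$ solves the damped half-space problem \eqref{KL-Damped}; the far-field condition $\lim_{x\to\infty}g=0$ follows because membership in $\mathbb{B}_\infty^\hbar$ forces $|g(x,v)|\lesssim e^{-\hbar\sigma}\to 0$ through $\sigma\geq c(\delta x+l)^{2/(3-\gamma)}$. To recover the bound in the stronger space I combine the $A$-uniform $\mathbb{B}_\infty^\hbar$ estimate with lower semicontinuity of the norm under the $\mathbb{B}_\infty^{\hbar'}$ limit, obtaining $\|g\|_{\mathbb{B}_\infty^\hbar}\leq C(\mathscr{A}^\infty(e^{\hbar\sigma}h)+\mathscr{C}(e^{\hbar\sigma}f_b))$. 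Finally, uniqueness follows from the difference estimate: any two $\mathbb{B}_\infty^\hbar$-solutions have a difference solving the homogeneous problem with zero datum at $x=0$ and zero far field, to which the half-space analogue of \eqref{Apriori-bnd-diff}, obtained as the $A\to\infty$ limit of the slab estimates, applies and forces the difference to vanish.
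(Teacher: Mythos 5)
Your proposal is correct and follows essentially the same route as the paper: finite-slab solutions with $\varphi_A\equiv 0$ from Lemma \ref{Lmm-A1}, the $A$-uniform bound of Lemma \ref{Lmm-APE-A3}, zero-extension, a Cauchy argument in $\mathbb{B}_\infty^{\hbar'}$ driven by the weight loss $e^{-(\hbar-\hbar')\sigma(A_1,\cdot)}\leq e^{-c(\hbar-\hbar')(\delta A_1+l)^{2/(3-\gamma)}}$, passage to the limit in the mild formulation, and uniqueness by the same exponential-smallness mechanism on the trace at $x=A$. The only point you compress is that bounding $\mathscr{B}\big(e^{\hbar'\sigma(A_1,\cdot)}g^{A_2}(A_1,\cdot)\big)$ requires controlling the $L^2_v$ boundary traces by interior norms (via the $L^\infty_xL^2_v$ bound of Lemma \ref{Lmm-Linfty-L2} and $\mathscr{E}^{A_2}_{\mathtt{cro}}$), which is exactly what the paper's Cases 1--3 carry out, but this is a detail the established lemmas supply rather than a gap in the argument.
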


\begin{proof}[Proof of Lemma \ref{Lmm-KLe}]
We now give a family of functions $\{ \varphi_A (v) \}_{A > 1}$ satisfying
\begin{equation}\label{Asmp-phi}
	\begin{aligned}
		\sup_{A > 1} \mathscr{B} ( e^{\hbar \sigma} \varphi_A ) < \infty \,,
	\end{aligned}
\end{equation}
where the functional $\mathscr{B} (\cdot)$ is defined in \eqref{Ah-lambda}. The assumptions \eqref{Asmp-phi} and \eqref{Asmp-h} tell us that for any $1 < A < \infty$,
\begin{equation}
	\begin{aligned}
		\mathscr{A}^A ( e^{\hbar \sigma} h ) + \mathscr{B} ( e^{\hbar \sigma} \varphi_A ) + \mathscr{C} ( e^{\hbar \sigma} f_{b} )  \leq \mathfrak{C}_\flat : = \mathscr{A}^\infty ( e^{\hbar \sigma} h ) + \sup_{A > 1} \mathscr{B} ( e^{\hbar \sigma} \varphi_A ) + \mathscr{C} ( e^{\hbar \sigma} f_{b} ) < \infty.
	\end{aligned}
\end{equation}
Then Lemma \ref{Lmm-A1} indicates that there is a unique solution $g^A = g^A (x,v)$, $(x,v) \in (0, A) \times \R^3$ satisfying
\begin{equation}
	\begin{aligned}
		\mathscr{E}^A ( e^{ \hbar \sigma } g^A ) \leq C ( \mathscr{A}^A ( e^{\hbar \sigma} h ) + \mathscr{B} ( e^{\hbar \sigma} \varphi_A )+ \mathscr{C} ( e^{\hbar \sigma} f_{b} )  ) \leq C \mathfrak{C}_\flat \,,
	\end{aligned}
\end{equation}
where $C > 0$ is independent of $A > 1$ and the functional $\mathscr{E}^A (\cdot)$ is given in \eqref{Eg-lambda}.

We now extend $ g^A (x,v) $ as follows
\begin{equation}\label{Extnd-g}
	\begin{aligned}
		\tilde{g}^A (x,v) = \mathbf{1}_{x \in (0, A)} g^A (x,v) \,, (x,v) \in (0, \infty) \times \R^3 \,.
	\end{aligned}
\end{equation}
It is easy to see that $\tilde{g}^A (x,v) \in \mathbb{B}_\infty^\hbar$ with
\begin{equation}\label{gA-h-bnd}
	\begin{aligned}
		\| \tilde{g}^A \|_{ \mathbb{B}_\infty^\hbar } = \mathscr{E}^\infty ( e^{\hbar \sigma} \tilde{g}^A ) \leq C \mathfrak{C}_\flat \,.
	\end{aligned}
\end{equation}
Then there is a $g' = g' (x,v) \in \mathbb{B}_\infty^\hbar $ such that
\begin{equation}\label{Cnv-g1}
	\begin{aligned}
		\tilde{g}^A (x,v) \to g' (x,v) \quad \textrm{weakly in } \mathbb{B}_\infty^\hbar
	\end{aligned}
\end{equation}
as $A \to + \infty$ (in the sense of subsequence, if necessary). Moreover, $g' (x,v)$ obeys
\begin{equation}\label{g-prime-bnd}
	\begin{aligned}
		\| g' \|_{ \mathbb{B}_\infty^\hbar } = \mathscr{E}^\infty ( e^{\hbar \sigma} g' ) \leq C \mathfrak{C}_\flat \,.
	\end{aligned}
\end{equation}

For any $1 < A_1 < A_2 < + \infty$, let $g^{A_i} (x, v)$ $(i = 1,2)$ be the solution to the problem
\begin{equation*}
	\begin{aligned}
		\left\{
		  \begin{aligned}
		  	& v_3 \partial_x g^{A_i} + \L g^{A_i} = h - \bar{\alpha} ( \delta x + l)^{-\frac{\Theta}{2}} \P^+ v_3 g^{A_i} -\bar{\beta} ( \delta x + l)^{-\frac{\Theta}{2}} \P^0 g^{A_i}  \,, \quad x \in (0, A_i) \,, \\
		  	& g^{A_i} (0, v) |_{v_3 > 0} = f_b \,, \\
		  	& g^{A_i} (A_i, v) |_{v_3 < 0} = \varphi_{A_i} (v) \,,
		  \end{aligned}
		\right.
	\end{aligned}
\end{equation*}
constructed in Lemma \ref{Lmm-A1}. Then $g^{A_i} (x, v)$ $(i = 1,2)$ subject to the estimates
\begin{equation}\label{gAi-bnd}
	\begin{aligned}
		\mathscr{E}^{A_i} ( e^{ \hbar \sigma } g^{A_i} ) \leq C \big( \mathscr{A}^{A_i} (e^{\hbar \sigma} h ) + \mathscr{B} ( \varphi_{A_i, \sigma} ) + \mathscr{C} ( f_{b,\sigma}) \big) \leq C \mathfrak{C}_\flat \,.
	\end{aligned}
\end{equation}
Let $\tilde{g}^{A_i}$ be the extension of $g^{A_i}$ as given in \eqref{Extnd-g}. Then
\begin{equation}\label{E1-bnd1}
	\begin{aligned}
		( \tilde{g}^{A_1} - \tilde{g}^{A_2} ) (x,v) = \mathbf{1}_{x \in (0, A_1)} ( g^{A_1} - g^{A_2} ) (x,v) - \mathbf{1}_{x \in [A_1, A_2) } g^{A_2} (x,v) \,.
	\end{aligned}
\end{equation}

It is easy to see that $g^{A_1} - g^{A_2}$ obeys
\begin{equation*}
	\begin{aligned}
		\left\{
		  \begin{aligned}
		  	& v_3 \partial_x ( \tilde{g}^{A_1} - \tilde{g}^{A_2} ) + \L( \tilde{g}^{A_1} - \tilde{g}^{A_2} ) =- \bar{\alpha} ( \delta x + l)^{-\frac{\Theta}{2}} \P^+ v_3 (g^{A_2}- g^{A_1}) \\
			&\qquad\qquad\qquad\qquad\qquad\qquad\qquad\quad-\bar{\beta} ( \delta x + l)^{-\frac{\Theta}{2}} \P^0 (g^{A_2}- g^{A_1}) \,, \ 0 < x < A_1 \,, \\
		  	& ( \tilde{g}^{A_1} - \tilde{g}^{A_2} ) (0, v) |_{v_3 > 0} = 0 \,, \\
		  	& ( \tilde{g}^{A_1} - \tilde{g}^{A_2} ) (A_1, v) |_{v_3 < 0} = \varphi_{A_1} (v) - g^{A_2} (A_1, v) \,.
		  \end{aligned}
		\right.
	\end{aligned}
\end{equation*}
It thereby follows from the similar arguments in Lemma \ref{Lmm-A1} that for any fixed $\hbar' \in (0, \hbar)$,
\begin{equation}\label{EA2A1-bnd1}
	\begin{aligned}
		\mathscr{E}^{A_1} ( e^{\hbar' \sigma} ( g^{A_1} - g^{A_2} ) ) \leq & C \mathscr{B} ( e^{\hbar' \sigma (A_1, v) } (\varphi_{A_1} (v) - g^{A_2} (A_1, v) ) ) \\
		\leq & C \mathscr{B} ( e^{\hbar' \sigma (A_1, v) } \varphi_{A_1} (v) ) + C \mathscr{B} ( e^{\hbar' \sigma (A_1, v) } g^{A_2} (A_1, v) ) \,.
	\end{aligned}
\end{equation}

By the definition of $\mathscr{B} (\cdot)$ given in \eqref{Ah-lambda}, one has
\begin{equation*}
	\begin{aligned}
		\mathscr{B} ( e^{\hbar' \sigma (A_1, v) } \varphi_{A_1} (v) ) \leq \sup_{v \in \R^3} e^{ - (\hbar - \hbar') \sigma (A_1, v) } \mathscr{B} ( e^{\hbar \sigma (A_1, v) } \varphi_{A_1} (v) ) \leq \mathfrak{C}_\flat \sup_{v \in \R^3} e^{ - (\hbar - \hbar') \sigma (A_1, v) } \,.
	\end{aligned}
\end{equation*}
Lemma \ref{Lmm-sigma} shows that $ \sigma (A_1, v) \geq c (\delta A_1 + l)^\frac{2}{3 - \gamma} $ uniformly in $v \in \R^3$, which indicates that
\begin{equation}\label{B-1}
	\begin{aligned}
		\sup_{v \in \R^3} e^{ - (\hbar - \hbar') \sigma (A_1, v) } \leq e^{ - c (\hbar - \hbar') (\delta A_1 + l)^\frac{2}{3 - \gamma} } \,.
	\end{aligned}
\end{equation}
Then
\begin{equation}\label{EA1A2-bnd2}
	\begin{aligned}
		\mathscr{B} ( e^{\hbar' \sigma (A_1, v) } \varphi_{A_1} (v) ) \leq  \mathfrak{C}_\flat e^{ - c (\hbar - \hbar') (\delta A_1 + l)^\frac{2}{3 - \gamma} } \,.
	\end{aligned}
\end{equation}

In the following, we focus on controlling the quantity $\mathscr{B} ( e^{\hbar' \sigma (A_1, v) } g^{A_2} (A_1, v) )$. Recalling \eqref{Ah-lambda}, one has
\begin{equation}\label{Bbnd-0}
	\begin{aligned}
		\mathscr{B} (\cdot) = \mathscr{B}_\infty (\cdot) + \mathscr{B}_{\mathtt{cro}} (\cdot) + \mathscr{B}_2 (w_{\beta+\beta_\gamma,\vartheta}\cdot) + \mathscr{B}_2 (\cdot) \,.
	\end{aligned}
\end{equation}

\underline{\em Case 1. Control of $ \mathscr{B}_\infty (e^{\hbar' \sigma (A_1, v) } g^{A_2} (A_1, v)) $.}

By the definition of $\mathscr{B}_\infty (\cdot)$ in \eqref{B-def}, one has
\begin{equation}\label{Bbnd-1}
	\begin{aligned}
		& \mathscr{B}_\infty (e^{\hbar' \sigma (A_1, v) } g^{A_2} (A_1, v)) \\
		= & \| e^{\hbar' \sigma (A_1, v) } \sigma_x^ \frac{1}{2} (A_1, v)  w_{\beta, \vartheta} (v) g^{A_2} (A_1, v) \|_{L^\infty_v} \\
		\leq & \sup_{v \in \R^3} e^{ - (\hbar - \hbar') \sigma (A_1, v) } \| e^{\hbar \sigma (A_1, v) } \sigma_x^ \frac{1}{2} (A_1, v)  w_{\beta, \vartheta} (v) g^{A_2} (A_1, v) \|_{L^\infty_v} \\
		\leq & e^{ - c (\hbar - \hbar') (\delta A_1 + l)^\frac{2}{3 - \gamma} } \LL e^{\hbar \sigma} g^{A_2} \RR_{A_2;   \beta, \vartheta} \\
		= & e^{ - c (\hbar - \hbar') (\delta A_1 + l)^\frac{2}{3 - \gamma} } \mathscr{E}_\infty^{A_2} ( e^{\hbar \sigma} g^{A_2} ) \,,
	\end{aligned}
\end{equation}
where the inequality \eqref{B-1} is utilized, and $\mathscr{E}_\infty^{A_2} (\cdot)$ is defined in \eqref{E-infty}.

\underline{\em Case 2. Control of $ \mathscr{B}_2 ( w_{\beta + \beta_\gamma, \vartheta} e^{\hbar' \sigma (A_1, v) } g^{A_2} (A_1, v)) + \mathscr{B}_2 (e^{\hbar' \sigma (A_1, v) } g^{A_2} (A_1, v)) $.}

By the definition of $\mathscr{B}_2 (\cdot)$ in \eqref{B-def}, one has
\begin{equation*}
	\mathscr{B}_2 (e^{\hbar' \sigma (A_1, v) } g^{A_2} (A_1, v)) \leq
	\mathscr{B}_2 ( w_{\beta + \beta_\gamma, \vartheta} e^{\hbar' \sigma (A_1, v) } g^{A_2} (A_1, v)) \,.
\end{equation*}
Then, it is sufficient to control  the quantity $\mathscr{B}_2 ( w_{\beta + \beta_\gamma, \vartheta} e^{\hbar' \sigma (A_1, v) } g^{A_2} (A_1, v))$. One has
	\begin{align*}
		& \mathscr{B}_2 ( w_{\beta + \beta_\gamma, \vartheta} e^{\hbar' \sigma (A_1, v) } g^{A_2} (A_1, v)) \\
		= & \| |v_3|^\frac{1}{2} w_{\beta + \beta_\gamma, \vartheta} e^{\hbar' \sigma (A_1, v) } g^{A_2} (A_1, v) \|_{L^2_{\Sigma_+^{A_1}}} \\
		= & \Big( \int_{v_3 > 0} |v_3| w_{\beta + \beta_\gamma, \vartheta}^2 e^{2 \hbar' \sigma (A_1, v) } | g^{A_2} (A_1, v) |^2 \d v \Big)^\frac{1}{2} \\
		= & \Big( \int_{v_3 > 0} \Phi (A_1, v) | z_{\alpha'} (v) \sigma_x^ \frac{1}{2} (A_1, v)  w_{\beta, \vartheta} (v) e^{ \hbar \sigma (A_1, v) } g^{A_2} (A_1, v) |^2 \d v \Big)^\frac{1}{2} \,,
	\end{align*}
where $\alpha' \in (\tfrac{1}{2} - \mu_\gamma, \frac{1}{2})$ is given in Lemma \ref{Lmm-Y-bnd}, and
\begin{equation*}
	\begin{aligned}
		\Phi (A_1, v) = |v_3| z_{- \alpha'}^2 (v) (1 + |v|)^{2 \beta_\gamma} \sigma_x^{- 1} (A_1, v)e^{ - 2 (\hbar - \hbar') \sigma (A_1, v) } \,.
	\end{aligned}
\end{equation*}
Lemma \ref{Lmm-sigma} implies that
\begin{equation}\label{B-2}
	\begin{aligned}
		\sigma_x^{- 1} (A_1, v) \leq C (\delta A_1 + l)^{ \Theta } (1 + |v - \u|)^{ 1 - \gamma } \,.
	\end{aligned}
\end{equation}
It is also easy to verify that $ \sigma (A_1, v) \geq c' (1 + |v - \u|^2) $ for some constant $c' > 0$, which means that
\begin{equation}\label{B-3}
	\begin{aligned}
		e^{ - (\hbar- \hbar') \sigma (A_1, v) } \leq e^{ - c' (\hbar - \hbar') } e^{ - c' (\hbar - \hbar') |v - \u|^2 } \leq e^{ - c' (\hbar - \hbar') |v - \u|^2 } \,.
	\end{aligned}
\end{equation}
Then the bounds \eqref{B-1}, \eqref{B-2} and \eqref{B-3} show that
\begin{equation*}
	\begin{aligned}
		\Phi (A_1, v) \leq & C |v_3| z_{- \alpha'}^2 (v) (1 + |v|)^{2 \beta_\gamma + (1 - \gamma)} e^{ - c' (\hbar - \hbar') |v - \u|^2 } \\
		& \times (\delta A_1 + l)^{\Theta } e^{ - c (\hbar - \hbar') (\delta A_1 + l)^\frac{2}{3 - \gamma} } \\
		\leq & C e^{ - \frac{c}{2} (\hbar - \hbar') (\delta A_1 + l)^\frac{2}{3 - \gamma} }
	\end{aligned}
\end{equation*}
uniformly in $v \in \R^3$. It therefore follows that
\begin{equation*}
	\begin{aligned}
		& \mathscr{B}_2 ( w_{\beta + \beta_\gamma, \vartheta} e^{\hbar' \sigma (A_1, v) } g^{A_2} (A_1, v)) \\
		\leq & C e^{ - \frac{c}{4} (\hbar - \hbar') (\delta A_1 + l)^\frac{2}{3 - \gamma} } \Big( \int_{v_3 > 0} | z_{\alpha'} (v) \sigma_x^ \frac{1}{2} (A_1, v)  w_{\beta, \vartheta} (v) e^{ \hbar \sigma (A_1, v) } g^{A_2} (A_1, v) |^2 \d v \Big)^\frac{1}{2} \\
		\leq & C e^{ - \frac{c}{4} (\hbar - \hbar') (\delta A_1 + l)^\frac{2}{3 - \gamma} } \| z_{\alpha'} \sigma_x^ \frac{1}{2}   w_{\beta, \vartheta} e^{\hbar \sigma} g^{A_2} \|_{L^\infty_x L^2_v} \,.
	\end{aligned}
\end{equation*}
From Lemma \ref{Lmm-Linfty-L2}, it is derived that
\begin{equation*}
	\begin{aligned}
		& \| z_{\alpha'} \sigma_x^ \frac{1}{2}   w_{\beta, \vartheta} e^{\hbar \sigma} g^{A_2} \|_{L^\infty_x L^2_v} \\
		\leq & C \| \nu^{- \frac{1}{2}} z_{- \alpha} \sigma_x^ \frac{1}{2}   z_1 w_{\beta + \beta_\gamma, \vartheta} \partial_x ( e^{\hbar \sigma} g^{A_2} ) \|_{A_2} + C \| \nu^{ \frac{1}{2}} z_{- \alpha} \sigma_x^ \frac{1}{2}   w_{\beta + \beta_\gamma, \vartheta} e^{\hbar \sigma} g^{A_2} \|_{A_2} \\
		= & C \mathscr{E}_{\mathtt{cro}}^{A_2} ( e^{\hbar \sigma} g^{A_2} ) \,,
	\end{aligned}
\end{equation*}
where the functional $\mathscr{E}_{\mathtt{cro}}^{A_2} ( \cdot ) $ is defined in \eqref{E-cro}. Then there holds
\begin{equation}\label{Bbnd-2}
	\begin{aligned}
		&\mathscr{B}_2 ( w_{\beta + \beta_\gamma, \vartheta} e^{\hbar' \sigma (A_1, v) } g^{A_2} (A_1, v)) + \mathscr{B}_2 (e^{\hbar' \sigma (A_1, v) } g^{A_2} (A_1, v)) \\
		\leq & C e^{ - \frac{c}{4} (\hbar - \hbar') (\delta A_1 + l)^\frac{2}{3 - \gamma} } \mathscr{E}_{\mathtt{cro}}^{A_2} ( e^{\hbar \sigma} g^{A_2} ) \,.
	\end{aligned}
\end{equation}

\underline{\em Case 3. Control of $ \mathscr{B}_{\mathtt{cro}} (e^{\hbar' \sigma (A_1, v) } g^{A_2} (A_1, v))$.}

By employing the similar arguments of controlling the quantity $\mathscr{B}_2 (e^{\hbar' \sigma (A_1, v) } g^{A_2} (A_1, v))$, one obtains
\begin{equation}\label{Bbnd-3}
	\begin{aligned}
		& \mathscr{B}_{\mathtt{cro}} (e^{\hbar' \sigma (A_1, v) } g^{A_2} (A_1, v))  \\
		\leq & C e^{ - \frac{c}{4} (\hbar - \hbar') (\delta A_1 + l)^\frac{2}{3 - \gamma} } \| z_{\alpha'} \sigma_x^ \frac{1}{2}   w_{\beta, \vartheta} e^{\hbar \sigma} g^{A_2} \|_{L^\infty_x L^2_v} \\
		\leq & C e^{ - \frac{c}{4} (\hbar - \hbar') (\delta A_1 + l)^\frac{2}{3 - \gamma} } \mathscr{E}_{\mathtt{cro}}^{A_2} ( e^{\hbar \sigma} g^{A_2} ) \,.
	\end{aligned}
\end{equation}

As a consequence, the relations \eqref{Bbnd-0}, \eqref{Bbnd-1}, \eqref{Bbnd-2} and \eqref{Bbnd-3} reduce to
\begin{equation}\label{EA1A2-bnd3}
	\begin{aligned}
		\mathscr{B} ( e^{\hbar' \sigma (A_1, v) } g^{A_2} ( A_1, v) ) \leq C e^{ - \frac{c}{4} (\hbar - \hbar') (\delta A_1 + l)^\frac{2}{3 - \gamma} } ( \mathscr{E}_\infty^{A_2} ( e^{\hbar \sigma} g^{A_2} ) + \mathscr{E}_{\mathtt{cro}}^{A_2} ( e^{\hbar \sigma} g^{A_2} ) ) \\
		\leq C e^{ - \frac{c}{4} (\hbar - \hbar') (\delta A_1 + l)^\frac{2}{3 - \gamma} } \mathscr{E}^{A_2} ( e^{\hbar \sigma} g^{A_2} ) \leq C \mathfrak{C}_\flat e^{ - \frac{c}{4} (\hbar - \hbar') (\delta A_1 + l)^\frac{2}{3 - \gamma} } \,,
	\end{aligned}
\end{equation}
where the last inequality is derived from the bound \eqref{gAi-bnd}. Collecting the estimates \eqref{EA2A1-bnd1}, \eqref{EA1A2-bnd2} and \eqref{EA1A2-bnd3}, one gains
\begin{equation}\label{E1-bnd2}
	\begin{aligned}
		\mathscr{E}^\infty ( \mathbf{1}_{x \in (0, A_1)} e^{\hbar' \sigma} ( g^{A_1} - g^{A_2} ) ) = \mathscr{E}^{A_1} ( e^{\hbar' \sigma} ( g^{A_1} - g^{A_2} ) ) \leq C \mathfrak{C}_\flat e^{ - \frac{c}{4} (\hbar - \hbar') (\delta A_1 + l)^\frac{2}{3 - \gamma} }
	\end{aligned}
\end{equation}
for any $\hbar' \in (0, \hbar)$.

Note that
\begin{equation*}
	\begin{aligned}
		\mathscr{E}^\infty ( \mathbf{1}_{x \in [ A_1, A_2 ) } e^{\hbar' \sigma} g^{A_2} ) = & \mathscr{E}^{A_2} ( \mathbf{1}_{x \in [ A_1, A_2 ) } e^{\hbar' \sigma} g^{A_2} ) \\
		\leq & \mathscr{E}^{A_2} ( e^{\hbar \sigma} g^{A_2} ) \sup_{x > 0, v \in \R^3} \mathbf{1}_{x \in [ A_1, A_2 ) } e^{ - (\hbar - \hbar') \sigma (x,v) } \\
		\leq & C \mathfrak{C}_\flat \sup_{x > 0, v \in \R^3} \mathbf{1}_{x \in [ A_1, A_2 ) } e^{ - (\hbar - \hbar') \sigma (x,v) } \,,
	\end{aligned}
\end{equation*}
where the last inequality is deduced from \eqref{gAi-bnd}. Since $\sigma (x,v) \geq c (\delta x + l)^\frac{2}{3 - \gamma}$ by Lemma \ref{Lmm-sigma}, one has $ \mathbf{1}_{x \in [ A_1, A_2 ) } e^{ - (\hbar - \hbar') \sigma (x,v) } \leq \mathbf{1}_{x \in [ A_1, A_2 ) } e^{ - c (\hbar - \hbar') (\delta x + l)^\frac{2}{3 - \gamma} } \leq e^{ - c (\hbar - \hbar') (\delta A_1 + l)^\frac{2}{3 - \gamma} } $ uniformly in $x$ and $v$. Then
\begin{equation}\label{E1-bnd3}
	\begin{aligned}
		\mathscr{E}^\infty ( \mathbf{1}_{x \in [ A_1, A_2 ) } e^{\hbar' \sigma} g^{A_2} ) \leq C \mathfrak{C}_\flat e^{ - c (\hbar - \hbar') (\delta A_1 + l)^\frac{2}{3 - \gamma} } \,.
	\end{aligned}
\end{equation}
Therefore, the decomposition \eqref{E1-bnd1} and the estimates \eqref{E1-bnd2}-\eqref{E1-bnd3} imply that for any fixed $1 < A_1 < A_2 < \infty$ and $\hbar' \in (0, \hbar)$,
\begin{equation}\label{E-bnd1}
	\begin{aligned}
		\mathscr{E}^\infty ( e^{\hbar' \sigma} ( \tilde{g}^{A_1} - \tilde{g}^{A_2} ) ) \leq & \mathscr{E}^\infty ( \mathbf{1}_{x \in (0, A_1)} e^{\hbar' \sigma} ( g^{A_1} - g^{A_2} ) ) + \mathscr{E}^\infty ( \mathbf{1}_{x \in [ A_1, A_2 ) } e^{\hbar' \sigma} g^{A_2} ) \\
		\leq & C \mathfrak{C}_\flat e^{ - \frac{c}{4} (\hbar - \hbar') (\delta A_1 + l)^\frac{2}{3 - \gamma} } \to 0
	\end{aligned}
\end{equation}
as $A_1 \to + \infty$. Moreover, by \eqref{gA-h-bnd},
\begin{equation}\label{E-bnd2}
	\begin{aligned}
		\mathscr{E}^\infty ( e^{\hbar' \sigma} \tilde{g}^A ) \leq \| \tilde{g}^A \|_{ \mathbb{B}_\infty^\hbar } = \mathscr{E}^\infty ( e^{\hbar \sigma} \tilde{g}^A ) \leq C \mathfrak{C}_\flat
	\end{aligned}
\end{equation}
for any $\hbar' \in (0, \hbar)$.

Denote by $\mathbb{B}_\infty^{\hbar'}$ be a Banach space defined as the same way of $\mathbb{B}_\infty^\hbar$ in \eqref{Bh-1}-\eqref{Bh-2}. It is easy to see that $\mathbb{B}_\infty^\hbar \subseteq \mathbb{B}_\infty^{\hbar'}$, i.e., $\| g \|_{ \mathbb{B}_\infty^{\hbar'} } \leq \| g \|_{ \mathbb{B}_\infty^\hbar }$. Then the estimates \eqref{E-bnd1} and \eqref{E-bnd2} tell us that $\{ \tilde{g}^A \}_{A > 1}$ is a bounded Cauchy sequence in $\mathbb{B}_\infty^{\hbar'}$. As a result, there is a unique $g = g (x,v) \in \mathbb{B}_\infty^{\hbar'}$ such that
\begin{equation}\label{Cnv-g2}
	\begin{aligned}
		\tilde{g}^A (x,v) \to g (x,v) \quad \textrm{strongly in } \mathbb{B}_\infty^{\hbar'}
	\end{aligned}
\end{equation}
as $A \to + \infty$. Combining with the convergences \eqref{Cnv-g1} and \eqref{Cnv-g2}, the uniqueness of the limit shows that $ g (x,v) = g' (x,v) $. Moreover, the bound \eqref{g-prime-bnd} infers that $g (x,v)$ satisfies the bound
\begin{equation}\label{g-bnd}
	\begin{aligned}
		\| g \|_{\mathbb{B}_\infty^\hbar} \leq C \mathfrak{C}_\flat \,.
	\end{aligned}
\end{equation}
Taking limit $A \to + \infty$ in the mild solution form of \eqref{A1}, i.e.,
\begin{equation*}
	\left\{\begin{aligned}
		 v_3 \partial_x \tilde{g}^A + \L \tilde{g}^A  &= h -\bar{\alpha} (\delta x + l)^{-\Theta} \P^+ v_3\tilde{g}^A  -\bar{\beta} (\delta x + l)^{-\Theta} \P^0 \tilde{g}^A \,, \ 0 < x < A \,, \\
		 \tilde{g}^A (0, v) |_{v_3 > 0} &= f_b \,,
	\end{aligned}\right.
\end{equation*}
we easily know that the limit $g (x,v)$ of $\tilde{g}^A (x,v)$ subjects to
\begin{equation}\label{g-equ}
	\left\{\begin{aligned}
		 v_3 \partial_x g + \L g &= h -\bar{\alpha} (\delta x + l)^{-\Theta} \P^+ v_3 g -\bar{\beta} (\delta x + l)^{-\Theta} \P^0 g \,, x > 0 \,, \\
		 g (0, v) |_{v_3 > 0} &= f_b
	\end{aligned}\right.
\end{equation}
in the sense of mild solution. Furthermore, the bounds \eqref{B-2} and \eqref{g-bnd} imply
\begin{equation}\label{g-point-bnd}
	\begin{aligned}
		| g (x, v) | \leq & e^{- \hbar \sigma (x,v)} \sigma_x^{-  \frac{1}{2}} (x, v) w^{-1}_{\beta, \vartheta} (v) \| g \|_{ \mathbb{B}_\infty^\hbar } \\
		\leq & C \mathfrak{C}_\flat e^{- c \hbar (\delta x + l)^\frac{2}{3 - \gamma} } \cdot C (\delta x + l)^{ \Theta } (1 + |v - \u|)^{ 1 - \gamma } w^{-1}_{\beta, \vartheta} (v) \\
		\leq & C' \mathfrak{C}_\flat e^{- \frac{c}{2} \hbar (\delta x + l)^\frac{2}{3 - \gamma} } \to 0
	\end{aligned}
\end{equation}
as $x \to + \infty$. Hence, $ \lim_{x \to + \infty} g (x,v) = 0 $. Therefore, the function $g (x,v)$ solves the problem \eqref{KL-Damped}. Note that we should take the approximate incoming data $\varphi_A (v)$ merely satisfying \eqref{Asmp-phi}, namely, $\sup_{A > 1} \mathscr{B} ( e^{\hbar \sigma} \varphi_A ) < \infty$. Without loss of generality, we can take $ \varphi_A (v) \equiv 0 $, whose corresponding solution $g (x,v)$ subjects to the estimate
\begin{equation}\label{g-Bnd-Final}
	\begin{aligned}
		\| g \|_{\mathbb{B}_\infty^\hbar} \leq C( \mathscr{A}^\infty (e^{\hbar \sigma} h) +\mathscr{C} (f_{b,\sigma}))< \infty \,.
	\end{aligned}
\end{equation}

At the end, we justify the uniqueness. Assume that $g_i (x,v)$ $(i = 1,2)$ are both the solutions to the problem \eqref{KL-Damped} enjoying the bound \eqref{g-Bnd-Final}. Then, for any fixed $A > 1$, the difference $g_1 - g_2$ obeys
\begin{equation*}
	\begin{aligned}
		\left\{
		  \begin{aligned}
		  	& v_3 \partial_x (g_1 - g_2) + \L (g_1 - g_2) = -\bar{\alpha} (\delta x + l)^{-\Theta} \P^+ v_3(g_1 - g_2) -\bar{\beta} (\delta x + l)^{-\Theta} \P^0(g_1 - g_2)\,, \\
			&\qquad\qquad\qquad\qquad\qquad\qquad\qquad\qquad\qquad\qquad\qquad\qquad\qquad\qquad\qquad\qquad \ 0 < x < A \,, \\
		  	& (g_1 - g_2) (0, v) |_{v_3 > 0} = 0 \,, \\
		  	& (g_1 - g_2) (A, v) |_{v_3 < 0} = \phi_A (v) : = \mathbf{1}_{v_3 < 0} (g_1 - g_2) (A, v) \,.
		  \end{aligned}
		\right.
	\end{aligned}
\end{equation*}
Following the similar arguments in Lemma \ref{Lmm-A1}, one knows that
\begin{equation*}
	\begin{aligned}
		\mathscr{E}^A ( e^{ \hbar' \sigma } (g_1 - g_2) ) \leq C \mathscr{B} ( e^{\hbar' \sigma (A, v)} \phi_A (v) ) = C \mathscr{B} ( e^{\hbar' \sigma (A, v)} \mathbf{1}_{v_3 < 0} (g_1 - g_2) (A, v) )
	\end{aligned}
\end{equation*}
for any fixed $\hbar' \in (0, \hbar)$. By employing the same arguments of \eqref{EA1A2-bnd3}, one has
\begin{equation*}
	\begin{aligned}
		\mathscr{B} ( e^{\hbar' \sigma (A, v)} \mathbf{1}_{v_3 < 0} (g_1 - g_2) (A, v) ) \leq & C e^{ - \frac{c}{4} (\hbar - \hbar') (\delta A + l)^\frac{2}{3 - \gamma} } \sum_{i=1,2} \mathscr{E}^A ( e^{\hbar \sigma} g_i ) \\
		\leq & C \mathscr{A}^\infty (e^{\hbar \sigma} h) e^{ - \frac{c}{4} (\hbar - \hbar') (\delta A + l)^\frac{2}{3 - \gamma} } \,,
	\end{aligned}
\end{equation*}
where the last inequality is derived from \eqref{g-Bnd-Final}. As a result,
\begin{equation*}
	\begin{aligned}
		\mathscr{E}^\infty ( e^{ \hbar' \sigma } (g_1 - g_2) ) = \lim_{A \to + \infty} \mathscr{E}^A ( e^{ \hbar' \sigma } (g_1 - g_2) ) \leq \lim_{A \to + \infty} \big[ C \mathscr{A}^\infty (e^{\hbar \sigma} h) e^{ - \frac{c}{4} (\hbar - \hbar') (\delta A + l)^\frac{2}{3 - \gamma} } \big] = 0 \,,
	\end{aligned}
\end{equation*}
which means that $g_1 = g_2$. Therefore, the uniqueness holds. The proof of Lemma \ref{Lmm-KLe} is finished.
\end{proof}

\subsection{Linear problem \eqref{KL}: Proof of Theorem \ref{Thm-Linear}}
In this subsection, based on Lemma \ref{Lmm-KLe}, we will prove Theorem \ref{Thm-Linear} by showing the system \eqref{KL-Damped} becomes the system \eqref{KL} under suitable solvability conditions.
\begin{proof}[Proof of Theorem \ref{Thm-Linear}]
	Recall \eqref{damp-operator} the definitions of the operators $\P^+\,,\P^0$ and $\mathbb{P}$. We  first split \eqref{KL} into
	\begin{equation}\label{}
		\left\{	
		\begin{aligned}
			& v_3 \partial_x (I- \P^0 ) f + \L (I- \P^0 )f = ( I - \mathbb{P} ) S   \,,\\
			& \lim_{x \to + \infty} (I- \P^0 ) f (x,v) = 0 \,,
		\end{aligned}
		\right.
	\end{equation}
and
\begin{equation}\label{}
	\left\{	
	\begin{aligned}
		& v_3  \P^0  f =  \mathbb{P}  S   \,,\\
		& \lim_{x \to + \infty} \P^0 f (x,v) = 0 \,.
	\end{aligned}
	\right.
\end{equation}

 We now consider the following system
 \begin{equation}\label{linear-split}
	\left\{
		\begin{aligned}
			& v_3 \partial_x  f_1 + \L  f_1 =  (I-\mathbb{P}) S -\bar{\alpha}(\delta x + l)^{-\Theta} \P^+ v_3 f_1- \bar{\beta} (\delta x + l)^{-\Theta} \P^0  f_1  \,, \ x > 0 \,, v \in \R^3 \,, \\
			& v_3 \partial_x f_2 =  \mathbb{P} S \,, \ x > 0 \,, v \in \R^3 \,, \\
			&f=f_1+f_2 \,,\\
			& f (0, v) |_{v_3 > 0} = f_b (v) \,, \\
			& \lim_{x \to + \infty} f_1 (x, v) = \lim_{x \to + \infty} f_2 (x, v) = 0 \,,
		\end{aligned}
	\right.
\end{equation}
    We define $\mathbb{I}^{\hbar}$ as the linear solution operator given by
	\begin{equation}
		\mathbb{I}^\hbar({f_b}) := f_1(0,\cdot)\,.
	\end{equation}
	Recalling \eqref{Xj}, we can explicitly write $f_2$  as
\begin{equation}\label{P0-expression}
	 \P^0 f_2 (x,v)= -\frac{1}{v_3}\int_{x}^{\infty} \mathbb{P} S (y , \cdot) \d y =-\sum_{j\in I^0} \psi_j \int_{x}^{\infty} \frac{ (X_j, S(y,\cdot))}{ (X_j, \L X_j)} \d y   \,.
\end{equation}
	Multiplying both sides of $\eqref{linear-split}_1$ by $\psi_j$ with ${j\in I^+}$ and integrating respect to $v$, we obtain
\begin{equation}
	\tfrac{\d}{\d x} (\psi_j, v_3 f_1)= (\delta x + l)^{-\Theta} (\psi_j, v_3 f_1)\,.
\end{equation}
Then if $(\psi_j, v_3 f_1)(0)=0$, we immediately have
\begin{equation}\label{P+}
	(\psi_j, v_3 f_1) \equiv 0\,, \quad \text{for}\quad \! x \geq 0.
\end{equation}
Multiplying $\eqref{linear-split}_1$ by $\psi_j$ with ${j\in I^0}$ and integrating respect to $v$ leads to
 \begin{equation}\label{A_1}
	\tfrac{\d }{\d x} ( \psi_j , v_3 f_1)= -\bar{\beta} (\delta x+ l )^{-\Theta} (X_j, v_3 f_1) \frac{(\psi_j,\psi_j)}{(X_j, \L X_j)} \,,
 \end{equation}
where $\tfrac{(\psi_j,\psi_j)}{(X_j, \L X_j)} >0$.

Notice that the definitions of $\mathbb{P}$ and $X_j$ imply that $(I-\mathbb{P}) S$ satisfies
\begin{equation*}
	\begin{aligned}
		((I-\mathbb{P}) S, X_j)= (S, X_j)-\sum_{i\in I^0} \frac{(X_i,S)}{(X_i, \L X_i)}(\L X_i,X_j)
		= 0\,, \quad \text{for}\quad \! j\in I^0\,,
	\end{aligned}
\end{equation*}
where we used the fact that $(\L X_i, X_j)=0$ for $i\neq j$. Therefore, by multiplying both sides of $\eqref{linear-split}_1$ by $X_j$ with ${j\in I^0}$, one obtains that
\begin{equation*}
	\tfrac{\d }{\d x} ( X_j , v_3 f_1) + (X_j, \L f_1)=0 \,.
\end{equation*}
Note that $X_j= \L^{-1} v_3\psi_j$, where $\L^{-1}$ is the pseudo-inverse of $\L$ defined on $\operatorname{Null}^\perp(\L)$, whose decay properties can be found in \cite{JLT-2022-arXiv}. We obtain that
\begin{equation*}
	\tfrac{\d }{\d x} ( X_j , v_3 f_1) + (\psi_j, v_3 f_1)=0 \,.
\end{equation*}
Together with \eqref{A_1}, we have
\begin{equation*}
	\tfrac{\d^2 }{\d x^2} ( X_j , v_3 f_1)=  -\bar{\beta} (\delta x+ l )^{-\Theta} (X_j, v_3 f_1) \frac{(\psi_j,\psi_j)}{(X_j, \L X_j)}\,, \quad \text{for}\quad \!  j\in I^0 \,.
 \end{equation*}
Using the similar arguments based on the so-called {\em Freezing Point Method} from \cite{JL-2024-arXiv}, we have
 \begin{equation*}
	(X_j, v_3 f_1) \equiv 0\,, \quad \text{for} \quad \! x \geq 0  \,,
 \end{equation*}
if and only if $(X_j, v_3 f_1)(0)=0$. Consequently, once
 \begin{equation}\label{condition}
    \P^+\mathbb{I}^\hbar(f_b)= \P^0 \mathbb{I}^\hbar(f_b)=0  \,,
 \end{equation}
 then the solution to \eqref{linear-split} is a solution to \eqref{KL}, with $f_1 =(I-\P^0) f$ and $f_2 =\P^0 f$.
 Moreover, by Lemma \ref{Lmm-KLe}, it holds that
 \begin{equation}\label{1A}
	\mathscr{E}^\infty(e^{\hbar \sigma} f_1)  \leq C (\mathscr{A}^\infty ( e^{\hbar \sigma} (I-\mathbb{P})S ) + \mathscr{C} ( e^{\hbar \sigma} (f_{b}-f_2)(0,v)  )).
 \end{equation}
Direct calculations show that
 \begin{equation}\label{2A}
	\mathscr{C}(e^{\hbar \sigma} f_2(0,v)) \leq C \mathscr{A}^\infty(e^{\hbar \sigma} S) \,, \quad \textrm{ and} \quad \!
	\mathscr{A}^\infty(e^{\hbar \sigma} \mathbb{P} S) \leq C\mathscr{A}^\infty(e^{\hbar \sigma} S) \,.
 \end{equation}
 Recall \eqref{D-def1} the definition of $\mathscr{D}_\hbar$. It holds from \eqref{P0-expression}, \eqref{1A} and \eqref{2A} that
 \begin{equation*}
	\begin{aligned}
		\mathscr{E}^\infty(e^{\hbar \sigma}f)  &\leq \mathscr{E}^\infty(e^{\hbar \sigma}f_1)+ \mathscr{E}^\infty(e^{\hbar \sigma} f_2) \\
		&\leq C (\mathscr{A}^\infty(e^{\hbar \sigma} S) +\mathscr{D}_\hbar (S) + \mathscr{C} ( e^{\hbar \sigma} f_{b}(0,v)  ) )  .
	\end{aligned}
 \end{equation*}
Therefore, the estimate \eqref{Bnd-f} holds. Moreover, the condition \eqref{condition} defines a codimension $\#\{ I^+\cup I^0 \}$ subset of boundary data $f_b$. The classification of the boundary data satisfying the solvability condition \eqref{nonlinear-condition} with respect to the  Mach number can be summarized in Table \ref{table3}.  The proof of Theorem \ref{Thm-Linear} is completed.
	\begin{table}[h!]
	\begin{center}
	\caption{}
	\begin{tabular}{c|c}\label{table3}
	$\mathcal{M}^\infty <-1$ & $\operatorname{Codim}\left(\left\{f_b \in L_{v_3,+}^{2} \mid \P^+\mathbb{I}^\hbar(f_b)= \P^0 \mathbb{I}^\hbar(f_b)=0\right\}\right)=0$; \\
	$-1 \leq \mathcal{M}^\infty < 0$  & $\operatorname{Codim}\left(\left\{f_b \in L_{v_3,+}^{2} \mid\P^+\mathbb{I}^\hbar(f_b)= \P^0 \mathbb{I}^\hbar(f_b)=0\right\}\right)=1$;\\
	$0 \leq\mathcal{M}^\infty <1 $  & $\operatorname{Codim}\left(\left\{f_b \in L_{v_3,+}^{2} \mid \P^+\mathbb{I}^\hbar(f_b)= \P^0 \mathbb{I}^\hbar(f_b)=0\right\}\right)=4$; \\
	$1 \leq \mathcal{M}^\infty $  & $\operatorname{Codim}\left(\left\{f_b \in L_{v_3,+}^{2} \mid \P^+\mathbb{I}^\hbar(f_b)= \P^0 \mathbb{I}^\hbar(f_b)=0\right\}\right)=5$.
		\end{tabular}
	\end{center}
	\end{table}
\end{proof}

\section{Nonlinear problem \eqref{KL-NL}: Proof of Theorem \ref{Thm-Nonlinear}}\label{Sec:ENL}

In this section, we devote to investigating the nonlinear problem \eqref{KL-NL} near the Maxwellian $\M (v)$ by employing the linear theory constructed in Theorem \ref{Thm-Linear}. It is equivalent to study the equation \eqref{KL-NL-f}. The key point is to dominate the quantity $ \mathscr{A}^\infty ( e^{ \hbar \sigma } \Gamma ( f, g ) ) $, where the functional $ \mathscr{A}^\infty ( \cdot ) $ is given in \eqref{Ah-lambda}.

Notice that $ \mathscr{A}^\infty ( e^{ \hbar \sigma } \Gamma ( f, g ) ) $ is composed of the weighted $L^\infty_{x,v}$-norms and $L^2_{x,v}$-norms of $ e^{ \hbar \sigma } \Gamma ( f, g ) $. Concerning the weights involved in $ \mathscr{A}^\infty ( e^{ \hbar \sigma } \Gamma ( f, g ) ) $, one has
\begin{equation}\label{Gamma-sigma}
	\begin{aligned}
		e^{ \hbar \sigma } \Gamma ( f, g ) = e^{ \hbar \sigma } \Gamma ( e^{ - \hbar \sigma } e^{ \hbar \sigma } f, e^{ - \hbar \sigma } e^{ \hbar \sigma } g ) \,.
	\end{aligned}
\end{equation}
By the properties of $\sigma$ in Lemma \ref{Lmm-sigma}, it holds $ e^{\hbar \sigma} \geq e^{ c (\delta x + l)^\frac{2}{3 - \gamma} } e^{ c |v|^2 } $. As shown in \eqref{Gamma-sigma}, loosely speaking, one of $ e^{ - \hbar \sigma } $ can absorb the factor $e^{ \hbar \sigma }$ out of $\Gamma$, and the other decay factor $ e^{ - \hbar \sigma } \leq e^{ - c (\delta x + l)^\frac{2}{3 - \gamma} } e^{ - c |v|^2 } $ can be used to adjust the required weights.

It is easy to see that $\Gamma (f, g)$ can be pointwise bounded by the $L^\infty_{x,v}$-norms of $f$ and $g$. By the similar arguments in Lemma 3 of \cite{Strain-Guo-2008-ARMA}, the weighted $L^2_{x,v}$-norms of $ e^{ \hbar \sigma } \Gamma ( f, g ) $ can be bounded by the quantity with form $\| w_1 e^{ \hbar \sigma } f \|_{L^2_{x,v}} \| w_2 e^{ \hbar \sigma } g \|_{L^\infty_{x,v}} + \| w_1 e^{ \hbar \sigma } g \|_{L^2_{x,v}} \| w_2 e^{ \hbar \sigma } f \|_{L^\infty_{x,v}} $, where $w_1, w_2$ are some required weights. As a result, we can establish the following lemma.

\begin{lemma}\label{Lmm-Gamma}
	For any fixed functions $f (x,v)$ and $g (x,v)$, there holds
	\begin{equation}\label{Star-1}
		\begin{aligned}
			\mathscr{A}^\infty ( e^{ \hbar \sigma } \Gamma ( f, g ) ) \leq C  \mathscr{E}^\infty ( e^{ \hbar \sigma } f ) \mathscr{E}^\infty ( e^{ \hbar \sigma } g ) \,,
		\end{aligned}
	\end{equation}
	and
	\begin{equation}\label{E-infty-infty}
		\begin{aligned}
			\mathscr{E}^\infty_\infty ( e^{ \hbar \sigma } \Gamma ( f, g ) ) \leq C e^{- \hbar^2 (\delta x + l)^{\frac{2}{3-\gamma}}} \mathscr{E}^\infty_\infty ( e^{ \hbar \sigma } f ) \mathscr{E}^\infty_\infty ( e^{ \hbar \sigma } g ) \,,
		\end{aligned}
	\end{equation}
    where the functionals $ \mathscr{E}^\infty ( \cdot ) $, $\mathscr{E}^\infty_\infty (\cdot) $ and, $ \mathscr{A}^\infty ( \cdot ) $ are defined in \eqref{Eg-lambda}, \eqref{E-infty} and, \eqref{Ah-lambda}.
\end{lemma}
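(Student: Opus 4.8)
The plan is to prove the two estimates by decomposing each functional into its constituent pieces and exploiting the bilinear collision structure through the weight-insertion identity \eqref{Gamma-sigma}. First I would split $\Gamma(f,g)$ into its gain and loss parts using the explicit formulas obtained from the collisional invariance $\M(v)\M(v_*)=\M(v')\M(v_*')$: the loss part is $f(v)\iint g(v_*)\sqrt{\M(v_*)}\,\d\omega\,\d v_*$ and the gain part is $\iint f(v')g(v_*')\sqrt{\M(v_*)}\,\d\omega\,\d v_*$. I would also record at the outset that $\Gamma(f,g)\in\mathrm{Null}^\perp(\L)$, so that $\P\,\Gamma(f,g)=0$ and the functional $\mathscr{A}^\infty_2$ reduces to its $\P^\perp$-part; this removes the macroscopic term entirely and leaves only $\nu$-weighted $L^2$ quantities to control.

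The core mechanism for transferring the exponential weight across a collision is Lemma \ref{Lmm-sigma}, specifically the comparison $|\sigma(x,v)-\sigma(x,v_*)|\leq c\big||v-\u|^2-|v_*-\u|^2\big|$ together with energy conservation $|v-\u|^2+|v_*-\u|^2=|v'-\u|^2+|v_*'-\u|^2$. Writing $f=e^{-\hbar\sigma}(e^{\hbar\sigma}f)$ and $g=e^{-\hbar\sigma}(e^{\hbar\sigma}g)$ inside $\Gamma$, the weight $e^{\hbar\sigma(x,v)}$ multiplying the gain term becomes $e^{\hbar[\sigma(x,v)-\sigma(x,v')-\sigma(x,v_*')]}$ acting on $[e^{\hbar\sigma}f](v')\,[e^{\hbar\sigma}g](v_*')$. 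I would bound this exponent above using the weight comparison and absorb the resulting Gaussian growth in $v,v',v_*'$ into the velocity weight $w_{\beta,\vartheta}$ and the factor $\sqrt{\M(v_*)}$, which is legitimate precisely because $\hbar$ and $\vartheta$ are small. For the loss term the decay is immediate, since the collision frequency it generates carries a factor $e^{-\hbar\sigma(x,v_*)}\leq e^{-c\hbar(\delta x+l)^{2/(3-\gamma)}}$ uniformly in $v_*$. To extract the claimed prefactor $e^{-\hbar^2(\delta x+l)^{2/(3-\gamma)}}$ in \eqref{E-infty-infty} I would reserve only an $O(\hbar^2)$ fraction of the available exponential decay, using the lower bound $\sigma\geq c(\delta x+l)^{2/(3-\gamma)}$, and spend the remaining $O(\hbar)$ portion on the weight bookkeeping.

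For the $L^\infty$ bound \eqref{E-infty-infty} I would then estimate $\Gamma$ pointwise: after the weight transfer the gain term is controlled by the standard change-of-variables estimate for the cutoff gain operator, yielding $|\Gamma(f,g)(v)|\lesssim\nu(v)\,(\ldots)$ with the two factors measured in $\mathscr{E}^\infty_\infty$. For the $L^2$-type components of $\mathscr{A}^\infty$ — namely $\mathscr{A}^\infty_{\mathtt{cro}}$ and $\mathscr{A}^\infty_2$ — I would invoke the Strain--Guo estimate (as in Lemma 3 and Corollary 1 of \cite{Strain-Guo-2008-ARMA}) in the form $\|\nu^{-1/2}w\,\Gamma(f,g)\|_{L^2_v}\lesssim\|wf\|_{L^2_v}\|wg\|_{L^\infty_v}+\|wg\|_{L^2_v}\|wf\|_{L^\infty_v}$ with $w=\sigma_x^{1/2}z_{-\alpha}w_{\beta+\beta_\gamma,\vartheta}e^{\hbar\sigma}$; integrating in $x$ and matching the $L^2_v$ factors to $\mathscr{E}^\infty_{\mathtt{cro}}$ or $\mathscr{E}^\infty_2$ and the $L^\infty_{x,v}$ factors to $\mathscr{E}^\infty_\infty$ then closes \eqref{Star-1}.

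The main obstacle I anticipate is the simultaneous bookkeeping of three competing weight families across a single collision: the $(x,v)$-mixed exponential weight $e^{\hbar\sigma}$, the velocity weight $w_{\beta,\vartheta}$ with its \emph{growing} Gaussian $e^{\vartheta|v-\u|^2}$, and the singular factor $z_{-\alpha}(v)$ appearing in $\mathscr{A}^\infty_{\mathtt{cro}}$. Ensuring that the genuine Gaussian gain from $\sqrt{\M(v_*)}$ and the small-angle cutoff of $\tilde b(\theta)$ dominate the growth introduced by the weight comparison requires the joint smallness of $\hbar$ and $\vartheta$ (the threshold $\tfrac14-T(c\hbar+\vartheta)>0$ from Lemma \ref{Lmm-K-Oprt} being the relevant one), and for soft potentials $-3<\gamma<0$ the degeneracy of $\nu$ forces careful use of the index $\beta_\gamma$ and the singular weight $z_{-\alpha}$ exactly as in the linear estimates. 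This is the step where the precise parameter choices in hypotheses \textbf{(PH)} are indispensable.
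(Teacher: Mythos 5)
Your overall route (the weight-insertion identity \eqref{Gamma-sigma}, the comparison $|\sigma(x,v)-\sigma(x,v_*)|\le c\,\big||v-\u|^2-|v_*-\u|^2\big|$ from Lemma \ref{Lmm-sigma} combined with energy conservation, and Strain--Guo's Lemma 3 for the $L^2_{x,v}$ pieces) is the same as the paper's sketch, and your treatment of the loss term and of the $O(\hbar^2)$-versus-$O(\hbar)$ bookkeeping is correct. However, two steps as you state them would fail. First, the reduction of $\mathscr{A}^\infty_2$ to its $\P^\perp$-part is wrong for a reason independent of any symmetrization convention: the functional in \eqref{Star-1} is applied to $e^{\hbar\sigma}\Gamma(f,g)$, not to $\Gamma(f,g)$, and multiplication by the $v$-dependent weight $e^{\hbar\sigma(x,v)}$ does not commute with $\P$; even when $\Gamma(f,g)\perp\mathrm{Null}(\L)$ one has $\P\big(e^{\hbar\sigma}\Gamma(f,g)\big)\neq 0$. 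This is compounded by two further points: the definition \eqref{Ah-lambda} also contains $\mathscr{A}^\infty_{2}(w_{\beta+\beta_\gamma,\vartheta}\,\cdot\,)$, whose $\P$-part involves $\P\big(w_{\beta+\beta_\gamma,\vartheta}e^{\hbar\sigma}\Gamma\big)$ and likewise does not vanish; and for the non-symmetrized bilinear operator --- which the lemma must cover, since it is applied to $\Gamma(f^i-f^{i-1},f^i)$ and $\Gamma(f^{i-1},f^i-f^{i-1})$ in the iteration --- one has $\Gamma(f,g)\notin\mathrm{Null}^\perp(\L)$ anyway, because $\int_{\R^3}\mathcal{B}(F,G)\,v_i\,\d v$ reduces to a multiple of $\iint F G_*(v_{*i}-v_i)|v_*-v|^\gamma\,\d v_*\,\d v$, which is nonzero for $F\neq G$. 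So the macroscopic terms $\|(\delta x+l)^{\Theta/2}\P(\cdot)\|$, with their \emph{growing} $x$-weight, do not disappear; they must be estimated (this is doable: the Gaussian decay of the $\psi_i$ in \eqref{psi-i-bases} localizes the $v$-integration and the spare factor $e^{-c\hbar(\delta x+l)^{2/(3-\gamma)}}$ dominates $(\delta x+l)^{\Theta/2}$), but your plan as written omits exactly this step.

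Second, and more seriously, the absorption mechanism you invoke for the gain term cannot work as described. After the comparison and energy conservation, the dangerous exponent is $\hbar c\,\big||v_*'-\u|^2-|v_*-\u|^2\big|$: the part $e^{\hbar c|v_*-\u|^2}$ is indeed absorbed by $\sqrt{\M(v_*)}$ (a fixed Gaussian rate against a small one), but the part $e^{\hbar c|v_*'-\u|^2}$ can be absorbed neither by $w_{\beta,\vartheta}$ --- the $\vartheta$-Gaussians of $w_{\beta,\vartheta}(v)w_{\beta,\vartheta}^{-1}(v')w_{\beta,\vartheta}^{-1}(v_*')$ cancel \emph{exactly} by energy conservation, leaving only $e^{-\vartheta|v_*-\u|^2}$ --- nor by $\sqrt{\M(v_*)}$, which decays in the wrong variable. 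The only available absorber is the reserved factor $e^{-\hbar\sigma(x,v_*')}$, and since growth and absorber both carry the prefactor $\hbar$, smallness of $\hbar$ and $\vartheta$ is irrelevant here: what is needed is the structural inequality that the Lipschitz constant of $\sigma$ in the variable $|v-\u|^2$ is strictly below the coercivity constant in $\sigma(x,w)\gtrsim |w-\u|^2+(\delta x+l)^{2/(3-\gamma)}$, with enough margin left to output the $x$-decay claimed in \eqref{E-infty-infty}. Verifying this requires the case analysis over the regions of the cutoff $\Upsilon$ in \eqref{sigma}, where $\sigma$ equals $5(\delta x+l)^{2/(3-\gamma)}$, respectively behaves like $3|v-\u|^2$ plus a term that degenerates at large $|v|$ when $\gamma<1$; this constant-versus-constant comparison is the genuine crux of the lemma (it is exactly what the paper compresses into ``applying the properties of the weight $\sigma$''), and your phrase ``absorb into $w_{\beta,\vartheta}$ and $\sqrt{\M(v_*)}$ because $\hbar$ and $\vartheta$ are small'' does not establish it.
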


The proof of Lemma \ref{Lmm-Gamma} can be finished by applying the properties of the weight $\sigma (x,v)$ and employing the similar arguments in Lemma 3 of \cite{Strain-Guo-2008-ARMA}. For simplicity, we omit the details here.

Based on Lemma \ref{Lmm-Gamma}, we will study the nonlinear problem \eqref{KL-NL-f} (equivalently \eqref{KL-NL}) by employing the iterative approach, hence, prove Theorem \ref{Thm-Nonlinear}.

\begin{proof}[\bf Proof of Theorem \ref{Thm-Nonlinear}]
Similar to the linear case, we split $f=(I-\P^0)f + \P^0 f$. Then the nonlinear problem \eqref{KL-NL-f} becomes
\begin{equation}\label{splitf}
	\left\{
		\begin{aligned}
			& v_3 \partial_x (I-\P^0) f + \L (I-\P^0)f= (I-\mathbb{P}) \Gamma ( f, f ) + (I-\mathbb{P}) \tilde{h} \,, \ x > 0 \,, v \in \R^3 \,, \\
			& v_3 \partial_x \P^0 f = \mathbb{P} \Gamma ( f, f ) + \mathbb{P} \tilde{h} \,, \ x > 0 \,, v \in \R^3 \,, \\
			&f(0,v) \mid_{v_3>0}=\tilde{f}_b(v) \,,\\
			& \lim_{x \to + \infty} f (x, v) = 0 \,.
		\end{aligned}
	\right.
\end{equation}
To solve \eqref{splitf}, we design the following iteration scheme with an artificial damping term
	\begin{equation}\label{Iter-fi}
		\left\{
		    \begin{aligned}
		    	& v_3 \partial_x  f^{i+1}_1 + \L  f^{i+1}_1 = (I-\mathbb{P})\Gamma ( f^i, f^i ) + (I-\mathbb{P}) \tilde{h} \\
				&\qquad \qquad\qquad \qquad-\bar{\alpha}(\delta x + l)^{-\Theta} \P^+ v_3 f^{i+1}_1- \bar{\beta} (\delta x + l)^{-\Theta} \P^0  f^{i+1}_1  \,, \ x > 0 \,, v \in \R^3 \,, \\
				& v_3 \partial_x f^{i+1}_2 = \mathbb{P} \Gamma ( f^i, f^i ) + \mathbb{P} \tilde{h} \,, \ x > 0 \,, v \in \R^3 \,, \\
				&f^{i}=f^{i}_1+f^i_2 \,,\\
		    	& f^{i+1} (0, v) |_{v_3 > 0} = \tilde{f}_b (v) \,, \\
		    	& \lim_{x \to + \infty} f_1^{i+1} (x, v) = \lim_{x \to + \infty} f_2^{i+1} (x, v) = 0 \,,
		    \end{aligned}
		\right.
	\end{equation}
	which starts from $f^0_1(x,v)=f^0_2(x,v) \equiv 0$. By Lemma \ref{Lmm-KLe}, one has
	\begin{equation}
		\begin{aligned}
			\mathscr{E}^\infty ( e^{ \hbar \sigma }  f_1^{i+1} ) \leq C \mathscr{A}^\infty ( e^{ \hbar \sigma } \Gamma ( f^i, f^i ) ) + C \varsigma
		\end{aligned}
	\end{equation}
	for some constant $ C > 0$, where the quantity $\varsigma$ is defined in \eqref{varsigma}. Together with \eqref{Star-1}, it infers that
	\begin{equation}\label{Iter-fi1}
		\begin{aligned}
			\mathscr{E}^\infty ( e^{ \hbar \sigma } f_1^{i+1} ) \leq C [ \mathscr{E}^\infty ( e^{ \hbar \sigma } f^i ) ]^2 + C \varsigma \,.
		\end{aligned}
	\end{equation}

	Moreover, using \eqref{E-infty-infty} and \eqref{varsigma}, we claim that
	\begin{equation}\label{Iter-fi2}
		\begin{aligned}
			\mathscr{E}^\infty ( e^{ \hbar \sigma }  f_2^{i+1} )
			&\leq C_1 [\mathscr{E}^\infty (e^{\hbar \sigma} f^i)]^2  +  \varsigma
		\end{aligned}
	\end{equation}
    for some $C_1>0$. We only consider the weighted $L^2_{x,v}$-norm $\| \nu^\frac{1}{2} z_{-\alpha} \sigma^{\frac{1}{2}}_x w_{\beta + \beta_\gamma,\vartheta} e^{ \hbar \sigma }  f_2^{i+1} \|_\infty$, and the proof of other cases is easier. In this case,
	\begin{equation*}
		\begin{aligned}
			&\| \nu^\frac{1}{2} z_{-\alpha} \sigma^{\frac{1}{2}}_x w_{\beta + \beta_\gamma,\vartheta} e^{ \hbar \sigma }  f_2^{i+1} \|_\infty \\
			\leq & C\sum_{j\in I^0} \int_{0}^{\infty} \int_{\R^3} \nu^{\frac{1}{2}} z_{-\alpha} \sigma_x^{\frac{1}{2}} w_{\beta+\beta_\gamma,\vartheta} \psi_j  \d v \d x  \\
			&\times \int_{x}^{\infty}\int_{\R^3} e^{\hbar\sigma(x ,v )- \hbar \sigma( y, v_*)} \sigma_y^{-\frac{1}{2}}(y,v_*) w_{\beta,\vartheta}^{-1} X_j(v_*) \mathscr{E}^\infty_\infty(e^{\hbar \sigma} \Gamma (f^i ,f^i )) \d v_* \d y + \varsigma \\
			\leq & C [\mathscr{E}^\infty_\infty(e^{\hbar \sigma} f^i)]^2 \int_{0}^{\infty}\int_{x}^{\infty} (\delta y + l)^{\frac{\Theta}{2}} e^{-\hbar^2 ( \delta y + l)^{\frac{2}{3-\gamma}}} \d y \d x + \varsigma\\
			\leq & C[\mathscr{E}^\infty_\infty(e^{\hbar \sigma} f^i)]^2 + \varsigma \,,
		\end{aligned}
	\end{equation*}
	where we have used \eqref{varsigma}, Lemma \ref{Lmm-sigma} and the fact that for $x\leq y$,
	\begin{equation*}
		e^{\hbar \sigma(x ,v )- \hbar\sigma( y, v_*)}=e^{\hbar(\sigma(x ,v )-\sigma(x, v_*)+ \sigma ( x, v_*) -\sigma( y, v_*))}\leq e^{c\hbar \left| |v-\u|^2- |v_*-\u|^2 \right|}\,.
	\end{equation*}
	Consequently, \eqref{Iter-fi2} holds. One thereby concludes from \eqref{Iter-fi1} and \eqref{Iter-fi2} that
	\begin{equation}\label{Star-2}
		\mathscr{E}^\infty ( e^{ \hbar \sigma } f^{i+1} ) \leq \mathscr{E}^\infty ( e^{ \hbar \sigma } f_1^{i+1} ) +\mathscr{E}^\infty ( e^{ \hbar \sigma } f_2^{i+1} ) \leq C [\mathscr{E}^\infty ( e^{\hbar \sigma } f^i )]^2 + C_0 \varsigma \,.
	\end{equation}
	
	Now we assert that there is a small $\varsigma_0 > 0$ such that if $ \varsigma \leq \varsigma_0 $, then for all $i \geq 1$,
	\begin{equation}\label{Claim-fi}
		\begin{aligned}
			\mathscr{E}^\infty ( e^{ \hbar \sigma } f^{i+1} ) \leq 2 C_0 \varsigma \,.
		\end{aligned}
	\end{equation}
	
	Note that $f^0 \equiv 0$. \eqref{Star-2} shows that $ \mathscr{E}^\infty ( e^{ \hbar \sigma } f^1 ) \leq C_0 \varsigma < 2 C_0 \varsigma $, i.e., the claim \eqref{Claim-fi} holds for $i = 1$. Assume that the claim \eqref{Claim-fi} holds for $1, 2, \cdots, i$. Then the case $i + 1$ can be carried out by \eqref{Star-2} that $ \mathscr{E}^\infty ( e^{ \hbar \sigma } f^{i+1} ) \leq C ( 2 C_0 \varsigma )^2 + C_0 \varsigma = ( 4 C C_0 \varsigma + 1  ) C_0 \varsigma $. Take $\varsigma_0 > 0$ small such that $ 4 C C_0 \varsigma_0 \leq 1 $. Then if $\varsigma \leq \varsigma_0$, one has $ \mathscr{E}^\infty ( e^{ \hbar \sigma } f^{i+1} ) \leq ( 4 C C_0 \varsigma + 1  ) C_0 \varsigma \leq 2 C_0 \varsigma $. Therefore, the Induction Principle concludes the claim \eqref{Claim-fi}, which indicates that $\{ f^i \}_{i \geq 1}$ is bounded in the Banach space $ \mathbb{B}_\hbar^\infty $ defined in \eqref{Bh-1}.
	
	Next we will show that $\{ f^i \}_{i \geq 1}$ is a Cauchy sequence in $ \mathbb{B}_{\hbar}^\infty $. Observe that $f^{i+1} - f^i$ subjects to
	\begin{equation*}
		\left\{
		\begin{aligned}
			& v_3 \partial_x ( f_1^{i+1} - f_1^i ) + \L ( f_1^{i+1} - f_1^i ) = (I-\mathbb{P})\Gamma ( f^i - f^{i-1}, f^i ) + (I-\mathbb{P})\Gamma ( f^{i-1}, f^i - f^{i-1} )\\
			&\quad \quad-\bar{\alpha}(\delta x + l)^{-\Theta} \P^+ v_3 (f_1^{i+1}-f_1^i)-\bar{\beta}(\delta x + l)^{-\Theta} \P^0 (f_1^{i+1}-f_1^i)
			\,, \ x > 0 \,, v \in \R^3 \,, \\
			&v_3 \partial_x ( f_2^{i+1} - f_2^i ) =\mathbb{P}\Gamma ( f^i - f^{i-1}, f^i ) + \mathbb{P} \Gamma ( f^{i-1}, f^i - f^{i-1} ) \,, \ x > 0 \,, v \in \R^3 \,,\\
			& (f^{i+1}-f^i)(0, v) |_{v_3 > 0} = 0 \,, \\
			& \lim_{x \to + \infty} ( f^{i+1} - f^i ) (x, v) = 0 \,.
		\end{aligned}
		\right.
	\end{equation*}
	Lemma \ref{Lmm-KLe} reduces to
	\begin{equation*}
		\begin{aligned}
			\mathscr{E}^\infty ( e^{ \hbar \sigma }( f_1^{i+1} - f_1^i ) ) \leq C  \mathscr{A}^\infty ( e^{ \hbar \sigma } \Gamma ( f^i - f^{i-1}, f^i ) ) + C \mathscr{A}^\infty ( e^{ \hbar \sigma } \Gamma ( f^{i-1}, f^i - f^{i-1} ) ) \,.
		\end{aligned}
	\end{equation*}
	By \eqref{Star-1} and \eqref{Claim-fi}, one obtains that
	\begin{equation}\label{differE1}
		\begin{aligned}
			& \mathscr{A}^\infty ( e^{ \hbar \sigma } \Gamma ( f^i - f^{i-1}, f^i ) ) + \mathscr{A}^\infty ( e^{ \hbar \sigma } \Gamma ( f^{i-1}, f^i - f^{i-1} ) ) \\
			\leq & C \big[ \mathscr{E}^\infty ( e^{ \hbar \sigma } f^i ) + \mathscr{E}^\infty ( e^{ \hbar \sigma } f^{i-1} ) \big] \mathscr{E}^\infty ( e^{ \hbar \sigma } ( f^i - f^{i-1} ) )
			\leq 4 C C_0 \varsigma \mathscr{E}^\infty ( e^{ \hbar \sigma } ( f^i - f^{i-1} ) )\,.
		\end{aligned}
	\end{equation}
	Similarly to obtaining \eqref{Iter-fi2} and using \eqref{Star-1}, one has
	\begin{equation}\label{differE2}
		\begin{aligned}
			\mathscr{E}^\infty (e^{\hbar \sigma}(f_2^{i+1} -f_2^{i}) )
			&\leq C_1 \big[ \mathscr{E}^\infty ( e^{ \hbar \sigma } f^i ) + \mathscr{E}^\infty ( e^{ \hbar \sigma } f^{i-1} ) \big] \mathscr{E}^\infty ( e^{  \hbar \sigma } ( f^i - f^{i-1} ) )  \\
			&\leq  4 C_1 C_0 \varsigma \mathscr{E}^\infty ( e^{ \hbar \sigma } ( f^i - f^{i-1} ) ) \,.
		\end{aligned}
	\end{equation}
    We thereby establish
	\begin{equation}\label{Star-3}
		\begin{aligned}
			\mathscr{E}^\infty ( e^{ \hbar \sigma } ( f^{i+1} - f^i ) )
			\leq \left(4 C_0 (C^2+C_1) \varsigma\right)\mathscr{E}^\infty ( e^{ \hbar \sigma } ( f^i - f^{i-1} ) )
			\leq \tfrac{1}{2} \mathscr{E}^\infty ( e^{ \hbar \sigma } ( f^i - f^{i-1} ) )
		\end{aligned}
	\end{equation}
	by further taking $\varsigma_0 > 0$ suitable small.
	
	Note that $f^0 \equiv 0$. It follows from iterating \eqref{Star-3} and employing \eqref{Claim-fi} that
	\begin{equation}\label{Star-4}
		\begin{aligned}
			\mathscr{E}^\infty ( e^{ \hbar \sigma } ( f^{i+1} - f^i ) ) \leq ( \tfrac{1}{2} )^i \mathscr{E}^\infty ( e^{ \hbar \sigma } f^1 ) \leq 2 C_0 \varsigma ( \tfrac{1}{2} )^i \to 0
		\end{aligned}
	\end{equation}
    as $i \to + \infty$. Consequently, \eqref{Claim-fi} and \eqref{Star-4} tell us that $ \{ f^i \}_{i \geq 1} $ is a bounded Cauchy sequence in $ \mathbb{B}_{\hbar}^\infty $. Then there is a unique $ f (x, v)\in \mathbb{B}_{\hbar}^\infty $ such that
    \begin{equation*}
    	\begin{aligned}
    		f^i (x, v) \to f (x,v) \quad \textrm{strongly in } \mathbb{B}_{\hbar}^\infty \,,
    	\end{aligned}
    \end{equation*}
	with the estimate $ \mathscr{E}^\infty ( e^{ \hbar \sigma } f ) \leq 2 C_0\varsigma $. Furthermore, $\{ f_1^i \}_{i \geq 1}$ and $\{ f_2^i \}_{i \geq 1}$ are also Cauchy sequences in $ \mathbb{B}_{\hbar}^\infty $. There exist unique limits $f_1$ and $f_2$ such that
	\begin{equation*}
		f_j^i (x, v) \to f_j (x,v) \quad \textrm{strongly in } \mathbb{B}_{\hbar}^\infty \,
	\end{equation*}
    for $j=1,2$, with $f=f_1+f_2$.

	Finally, it remains to show that $f(x,v)$ is a solution of the nonlinear problem \eqref{KL-NL-f} under suitable solvability conditions.
	We define the operator
	\begin{equation*}
		\mathcal{I}^\gamma (\tilde{f}_b) \equiv f_1 (0,\cdot)\,.
	\end{equation*}
	Similar to the arguments in the linear case,  $f (x,v)$ solves the nonlinear problem \eqref{KL-NL-f} if and only if
	\begin{equation}\label{nonlinear-condition}
		\P^+v_3 \mathcal{I}^\gamma (\tilde{f}_b)=\P^0  \mathcal{I}^\gamma (\tilde{f}_b)=0\,.
	\end{equation}
	This provides $\# \{ I^+\cup I^0 \}$ solvability conditions for the source term $\tilde{h}$ and the boundary source term $\tilde{f}_b$.
	The classification of the boundary data near the far field, which satisfies the solvability condition \eqref{nonlinear-condition} with respect to the  Mach number, is summarized in Table \ref{table4}. The proof of Theorem \ref{Thm-Nonlinear} is therefore finished.
	
	\begin{table}[h!]
		\begin{center}
		\caption{}
		\begin{tabular}{c|c}\label{table4}
		$\mathcal{M}^\infty <-1$ & $\operatorname{Codim}\left(\left\{\tilde{f}_b \in L_{v_3,+}^{2} \mid \P^+\mathcal{I}^\hbar(\tilde{f}_b)= \P^0 \mathcal{I}^\hbar(\tilde{f}_b)=0\right\}\right)=0$; \\
		$-1 \leq \mathcal{M}^\infty < 0$  & $\operatorname{Codim}\left(\left\{\tilde{f}_b \in L_{v_3,+}^{2} \mid\P^+\mathcal{I}^\hbar(\tilde{f}_b)= \P^0 \mathcal{I}^\hbar(\tilde{f}_b)=0\right\}\right)=1$;\\
		$0 \leq\mathcal{M}^\infty <1 $  & $\operatorname{Codim}\left(\left\{\tilde{f}_b \in L_{v_3,+}^{2} \mid \P^+\mathcal{I}^\hbar(\tilde{f}_b)= \P^0 \mathcal{I}^\hbar(\tilde{f}_b)=0\right\}\right)=4$; \\
		$1 \leq \mathcal{M}^\infty $  & $\operatorname{Codim}\left(\left\{\tilde{f}_b \in L_{v_3,+}^{2} \mid \P^+\mathcal{I}^\hbar(\tilde{f}_b)= \P^0 \mathcal{I}^\hbar(\tilde{f}_b)=0\right\}\right)=5$.
			\end{tabular}
		\end{center}
		\end{table}
\end{proof}

\section{Bounds for operators $Y_A, Z$ and $U$: Proof of Lemma \ref{Lmm-ARU}}\label{Sec:YZU-proof}

In this section, we mainly aim at verifying the proof of Lemma \ref{Lmm-ARU}.

\begin{proof}[Proof of Lemma \ref{Lmm-ARU}]
	The proof of this lemma will be divided into three steps as follows.
	
	{\bf Step 1. $L^\infty_{x,v}$ bound for the operators $Y_A$ and $Z$.}
	
	We now show the inequality \eqref{YAn-bnd}.
	
	If $v_3 > 0$, Lemma \ref{Lmm-sigma} indicates that $0 < v_3 \sigma_x \leq c \nu (v)$ and $|\sigma_{xx} v_3| \leq \delta \sigma_x \nu (v)$, which mean that
	\begin{equation}\label{k-bnd-1}
		\begin{aligned}
			\kappa (x,v) = & \int_0^x \big[ - \tfrac{ \sigma_{xx} (y,v) }{2 \sigma_x (y,v)} - \hbar \sigma_x (y, v) + \tfrac{\nu (v)}{v_3} \big] \d y \\
			\geq & \int_0^x  (1 - c \hbar - \tfrac{1}{2} \delta ) \tfrac{\nu (v)}{v_3} \d y = (1 - c \hbar - \tfrac{1}{2} \delta) \tfrac{\nu (v)}{v_3} x \geq 0 \,,
		\end{aligned}
	\end{equation}
	provided that $c_{\hbar, \delta} : = 1 - c \hbar - \tfrac{1}{2} \delta > 0$. Actually, one can take sufficiently small $\hbar, \delta > 0$ such that $c_{\hbar, \delta} > \frac{1}{2}$. Then one has
	\begin{equation}\label{k-bnd1-prime}
		\begin{aligned}
			e^{- \kappa (x,v)} \leq 1
		\end{aligned}
	\end{equation}
	for $1 - c \hbar - \tfrac{1}{2} \delta > 0$.

	Then
	\begin{equation}\label{Z-1}
		 \| \mathbf{1}_{v_3 >0} e^{-k(x,v)} f(0,v) \|_{L^\infty_v} \leq C \|f(0,\cdot)\|_{L^\infty_v} \,.
	\end{equation}

	Moreover, if $ v_3 < 0$, one similarly knows that
	\begin{equation}\label{k-Ax}
		\begin{aligned}
			\kappa (A, v) - \kappa (x,v) = \int_x^A [ - \tfrac{\sigma_{xx} (y,v)}{2 \sigma_x (y,v)} - \hbar \sigma_x (y,v) - \tfrac{\nu (v)}{|v_3|}] \d y \\
			\leq - (1 - c \hbar - \tfrac{1}{2} \delta ) \tfrac{\nu (v)}{|v_3|} (A - x) \leq 0 \,,
		\end{aligned}
	\end{equation}
	which means that
	\begin{equation}\label{k-bnd2-prime}
		\begin{aligned}
			e^{\kappa (A, v) - \kappa (x,v)} \leq e^{ - (1 - c \hbar -  \frac{1}{2} \delta) \tfrac{\nu (v)}{|v_3|} (A - x) } \leq 1 \,.
		\end{aligned}
	\end{equation}	
	Consequently, there holds
	\begin{equation}\label{Y-3}
		\begin{aligned}
			\| \mathbf{1}_{v_3 < 0} e^{\kappa (A, v) - \kappa (x,v)} f (A, v) \|_{L^\infty_{x,v}} \leq \| f (A, \cdot ) \|_{L^\infty_v} \,.
   		\end{aligned}
	\end{equation}
	Recalling the definition of the operators  $Z(f)$ and $Y_A (f)$ in \eqref{Rn-f} and \eqref{YAn-f}, the estimates \eqref{Z-1} and \eqref{Y-3} infer that
	\begin{equation*}
		\begin{aligned}
			\| Z (f) \|_{L^\infty_{v}} \leq C \| f (0, \cdot) \|_{L^\infty_v}\,,\\
			\| Y_A (f) \|_{L^\infty_{v}} \leq C \| f (A, \cdot) \|_{L^\infty_v}
		\end{aligned}
	\end{equation*}
	for some constant $C > 0$ independent of $A$, $\hbar$. Namely the bounds \eqref{YAn-bnd} and \eqref{Z-bnd} holds.

	{\bf Step 2. $L^\infty_{x,v}$ bound for the operator $U$.}
	
	We now justify the bound \eqref{U-bnd}. By the definition of $\kappa (x,v)$ in \eqref{kappa}, one has
	\begin{equation*}
		\begin{aligned}
			\mathbf{1}_{x' \in (0, x)} \mathbf{1}_{v_3 > 0} e^{ - [ \kappa (x,v) - \kappa (x', v) ] } = e^{ - \int_{x'}^x [ - \frac{\sigma_{xx} (y, v)}{2 \sigma_x (y,v)} - \hbar \sigma_x (y, v) + \frac{\nu (v)}{v_3} ] \d y } \,.
		\end{aligned}
	\end{equation*}
	By the similar arguments in \eqref{k-bnd-1}, one has
	\begin{equation*}
		\begin{aligned}
			 \mathbf{1}_{x' \in (0, x)} \mathbf{1}_{v_3 > 0} e^{ - [ \kappa (x,v) - \kappa (x', v) ] }
			& \leq \mathbf{1}_{x' \in (0, x)} \mathbf{1}_{v_3 > 0}  e^{ - c_{\hbar, \delta} \frac{\nu (v)}{v_3} (x - x') } \\
			& \leq C e^{ - c_{\hbar, \delta} \frac{\nu (v)}{v_3} (x - x') } \,.
		\end{aligned}
	\end{equation*}
	Together with the definition of the operator $U$ in \eqref{U-f}, it infers that
		\begin{align}\label{U-1}
			\no | \mathbf{1}_{v_3 > 0} U (f) | \leq & C \int_0^x \tfrac{\nu (v)}{v_3} e^{ -  c_{\hbar, \delta} \frac{\nu (v)}{v_3} (x - x') } | \nu^{-1} f (x' , v) | \d x' \\
			\no \leq & C \| \nu^{-1} f \|_{L^\infty_{x,v}} \int_0^x \tfrac{\nu (v)}{v_3} e^{ -  c_{\hbar, \delta} \frac{\nu (v)}{v_3} (x - x') } \d x' \\
			= & \tfrac{ C}{c_{\hbar, \delta}} \| \nu^{-1} f \|_{L^\infty_{x,v}} ( 1 - e^{ -  c_{\hbar, \delta} \frac{\nu (v)}{v_3} x } ) \leq \tfrac{ C}{c_{\hbar, \delta}} \| \nu^{-1} f \|_{L^\infty_{x,v}} \,.
		\end{align}
	
	We then consider the case $v_3 < 0$. Combining with the definition of $\kappa (x,v)$ in \eqref{kappa} and the inequality \eqref{k-Ax}, one derives
	\begin{equation*}
		\begin{aligned}
			\mathbf{1}_{x' \in (x,A)} \mathbf{1}_{v_3 < 0} e^{ - [ \kappa (x,v) - \kappa (x', v) ] } = & \mathbf{1}_{x' \in (x,A)} \mathbf{1}_{v_3 < 0} e^{ \int^{x'}_x [ - \frac{\sigma_{xx} (y, v)}{2 \sigma_x (y,v)} - \hbar \sigma_x (y, v) - \frac{\nu (v)}{|v_3|} ] \d y } \\
			\leq & \mathbf{1}_{x' \in (x,A)} \mathbf{1}_{v_3 < 0} e^{ - c_{\hbar, \delta} \frac{\nu (v)}{|v_3|} (x' - x) } \,.
		\end{aligned}
	\end{equation*}
	Then
	\begin{equation}\label{U-2}
		\begin{aligned}
			| \mathbf{1}_{v_3 < 0} U (f) | \leq & \int_x^A e^{ - c_{\hbar, \delta} \frac{\nu (v)}{|v_3|} (x' - x) } \tfrac{\nu (v)}{|v_3|} | \nu^{-1} (v) f (x', v) | \d x' \\
			\leq & \| \nu^{-1} f \|_{L^\infty_{x,v}} \int_x^A e^{ - c_{\hbar, \delta} \frac{\nu (v)}{|v_3|} (x' - x) } \tfrac{\nu (v)}{|v_3|} \d x' \\
			= & \tfrac{1}{c_{\hbar, \delta}} \| \nu^{-1} f \|_{L^\infty_{x,v}} ( 1 - e^{ - c_{\hbar, \delta} \frac{\nu (v)}{|v_3|} (A - x) } ) \leq \tfrac{1}{c_{\hbar, \delta}} \| \nu^{-1} f \|_{L^\infty_{x,v}} \,.
		\end{aligned}
	\end{equation}
	Then the bounds \eqref{U-1} and \eqref{U-2} conclude the estimate \eqref{U-bnd}. The proof of Lemma \ref{Lmm-ARU} is therefore completed.
\end{proof}

\section*{Acknowledgments}
This work was supported by National Key R\&D Program of China under the grant 2023Y-FA1010300. The author N. Jiang is supported by the grants from the National Natural Foundation of China under contract Nos. 11971360, 12371224 and 12221001. The author Y.-L. Luo is supported by grants from the National Natural Science Foundation of China under contract No. 12201220, the Guang Dong Basic and Applied Basic Research Foundation under contract No. 2021A1515110210 and 2024A1515012358. The author T. Yang is supported by a fellowship award from the Research Grants Council of the Hong Kong Special Administrative Region, China (Proiect no. SRFS2021-1S01). T. Yang would also like to thank the Kuok foundation for its generous support.


\bibliography{reference}

\begin{thebibliography}{99}

\bibitem{Aoki-2017-JSP} K. Aoki, C. Baranger, M. Hattori, S. Kosuge, G. Martal\`o, J. Mathiaud and L. Mieussens. Slip boundary conditions for the compressible Navier-Stokes equations. {\em J. Stat. Phys.}, {\bf 169} (2017), no. 4, 744-781.

\bibitem{Aoki-Golse-Jiang} K. Aoki, F. Golse and N. Jiang, Derivation of acoustic system with pressure jump boundary condition from linearized Boltzmann equation with absorbing boundary. Preprint 2024.
	
\bibitem{Bardos-Caflisch-Nicolaenko-1986-CPAM} C. Bardos, R. E. Caflisch and B. Nicolaenko. The Milne and Kramers problems for the Boltzmann equation of a hard sphere gas. {\em Comm. Pure Appl. Math.}, {\bf 39} (1986), no. 3, 323-352.

\bibitem{Chen-Liu-Yang-2004-AA} C.-C. Chen, T.-P. Liu and T. Yang. Existence of boundary layer solutions to the Boltzmann equation. {\em Anal. Appl. (Singap.)} {\bf 2} (2004), no. 4, 337-363.

\bibitem{Coron-Golse-Sulem-1988-CPAM} F. Coron, F. Golse and C. Sulem. A classification of well-posed kinetic layer problems. {\em Comm. Pure Appl. Math.}, {\bf 41} (1988), no. 4, 409-435.

\bibitem{Golse-2008-BIMA} F. Golse. Analysis of the boundary layer equation in the kinetic theory of gases. {\em  Bull. Inst. Math. Acad. Sin. (N.S.)} {\bf 3} (2008), no. 1, 211-242.

\bibitem{Golse-Perthame-Sulem-1988-ARMA} F. Golse, B. Perthame and C. Sulem. On a boundary layer problem for the nonlinear Boltzmann equation. {\em Arch. Ration. Mech. Anal.}, {\bf 103} (1988), no. 1, 81-96.

\bibitem{Golse-Poupaud-1989-MMAS} F. Golse and F. Poupaud. Stationary solutions of the linearized Boltzmann equation in a half-space. {\em Math. Methods Appl. Sci.} {\bf 11} (1989), no. 4, 483-502.

\bibitem{Guo-Huang-Wang-2021-ARMA} Y. Guo, F.M. Huang and Y. Wang. Hilbert expansion of the Boltzmann equation with specular boundary condition in half-space. {\em Arch. Ration. Mech. Anal.} {\bf 241} (2021), no. 1, 231-309.

\bibitem{Grad-1963} H. Grad. Asymptotic theory of the Boltzmann equation. II. {\em Rarefied Gas Dynamics} (Proc. 3rd Internat. Sympos., Palais de l'UNESCO, Paris, 1962), Vol. I (1963) pp. 26-59, Academic Press, New York.

\bibitem{HJW-2024-preprint} L.-B. He, N. Jiang and Y.L. Wu. Boundary layer solution of the Boltzmann equation for Maxwell reflection boundary condition. 2024, {\em preprint}.

\bibitem{HJW-2023-AMASES} F.M. Huang, Z.-H. Jiang and Y. Wang. Boundary layer solution of the Boltzmann equation for specular boundary condition. {\em Acta Math. Appl. Sin. Engl. Ser.}, {\bf 39} (2023), no. 1, 65-94.

\bibitem{HW-2022-SIMA} F.M. Huang and Y. Wang. Boundary layer solution of the Boltzmann equation for diffusive reflection boundary conditions in half-space. {\em SIAM J. Math. Anal.}, {\bf 54 } (2022), no. 3, 3480-3534.

\bibitem{JL-2024-arXiv}  N. Jiang and Y.-L. Luo. Knudsen boundary layer equations for full ranges of cutoff collision kernels: Maxwell reflection boundary with all accommodation coefficients in [0,1]. {\em arXiv:2407.02852 [math.AP]}, 2024.

\bibitem{JLT-2021-arXiv} N. Jiang, Y.-L. Luo and S.J. Tang. Compressible Euler limit from Boltzmann equation with complete diffusive boundary condition in half-space. {\em arXiv:2104.11964 [math.AP]}, 2021.

\bibitem{JLT-2021-arXiv-2} N. Jiang, Y.-L. Luo and S.J. Tang. Compressible Euler limit from Boltzmann equation with Maxwell reflection boundary condition in half-space. {\em arXiv:2101.11199 [math.AP]}, 2021.

\bibitem{JLT-2022-arXiv} N. Jiang, Y.-L. Luo and S.J. Tang. Grad-Caflisch type decay estimates of pseudo-inverse of linearized Boltzmann operator and application to Hilbert expansion of compressible Euler scaling. {\em arXiv:2206.02677 [math.AP]}, 2022.

\bibitem{Jiang-Wu} N. Jiang and Y.L. Wu,
Acoustic limit for the Boltzmann equation with incoming boundary condition: renormalized solutions. Preprint 2024.

\bibitem{Jiang-Wu-2} N. Jiang and Y.L. Wu,
Formal derivation of the boundary conditions for the compressible Navier-Stokes-Fourier system from Boltzmann equations with incoming and Maxwell reflection conditions. Preprint 2024.

\bibitem{LS-2010-KRM} C.D. Levermore and W.R. Sun. Compactness of the gain parts of the linearized Boltzmann operator with weakly cutoff kernels. {\em Kinet. Relat. Models} {\bf 3} (2010), no. 2, 335-351.

\bibitem{LLS-2017-MC} Q. Li, J.F. Lu and W.R. Sun. Half-space kinetic equations with general boundary conditions. {\em Math. Comp.} {\bf 86} (2017), no. 305, 1269-1301.

\bibitem{Sone-Book-2002} Y. Sone. {\em Kinetic theory and fluid dynamics.} Modeling and Simulation in Science, Engineering and Technology. Birkhäuser Boston, Inc., Boston, MA, 2002.

\bibitem{Sone-Book-2007} Y. Sone. {\em Molecular gas dynamics. Theory, techniques, and applications.} Modeling and Simulation in Science, Engineering and Technology. Birkhäuser Boston, Inc., Boston, MA, 2007.

\bibitem{Strain-Guo-2008-ARMA} R. M. Strain and Y. Guo. Exponential decay for soft potentials near Maxwellian. {\em Arch. Ration. Mech. Anal.}, {\bf 187} (2008), no. 2, 287-339.

\bibitem{Ukai-Yang-Yu-2003-CMP} S. Ukai, T. Yang and S.-H. Yu. Nonlinear boundary layers of the Boltzmann equation. I. Existence. {\em Comm. Math. Phys.} {\bf 236} (2003), no. 3, 373-393.

\bibitem{Ukai-Yang-Yu-2004-CMP} S. Ukai, T. Yang and S.-H. Yu. Nonlinear stability of boundary layers of the Boltzmann equation. I. The case $\mathcal{M}^\infty < - 1$. {\em Comm. Math. Phys.} {\bf 244} (2004), no. 1, 99-109.

\bibitem{Wang-Yang-Yang-2006-JMP} W.K. Wang, T. Yang and X.F. Yang. Nonlinear stability of boundary layers of the Boltzmann equation for cutoff hard potentials. {\em J. Math. Phys.} {\bf 47} (2006), no. 8, 083301, 15 pp.

\bibitem{Wang-Yang-Yang-2007-JMP} W.K. Wang, T. Yang and X.F. Yang. Existence of boundary layers to the Boltzmann equation with cutoff soft potentials. {\em J. Math. Phys.} {\bf 48} (2007), no. 7, 073304, 21 pp.

\bibitem{Yang-2008-JMAA} X.F. Yang. Nonlinear stability of boundary layers for the Boltzmann equation with cutoff soft potentials. {\em J. Math. Anal. Appl.} {\bf 345} (2008), no. 2, 941-953.

\bibitem{Yang-2011-JSP} X.F. Yang. The solutions for the boundary layer problem of Boltzmann equation in a half-space. {\em J. Stat. Phys.} {\bf 143 } (2011), no. 1, 168-196.

\end{thebibliography}

\end{document}